\newcommand{\Aut}[0]{\operatorname{Aut}}
\newcommand{\PP}{{\mathbb{P} }}
\newcommand{\Z}{{\mathbb{Z} }}
\newcommand{\bA}{{\mathbb{A} }}
\newcommand{\bN}{{\mathbb{N} }}
\newcommand{\bG}{{\mathbb{G} }}
\newcommand{\bR}{{\mathbb{R} }}
\newcommand{\bZ}{{\mathbb{Z} }}
\newcommand{\tv}{{\tilde{v}}}
\newcommand{\tI}{{\tilde{I}}}
\newcommand{\rd}{\mathrm{d}}
\newcommand{\cR}{{\mathcal{R}}}
\newcommand{\cX}{{\mathcal{X}}}
\newcommand{\cL}{{\mathcal{L}}}
\newcommand{\cF}{\mathcal{F}}
\newcommand{\cW}{{\mathcal{W}}}
\newcommand{\cY}{{\mathcal{Y}}}
\newcommand{\kX}{{\mathfrak{X}}}
\newcommand{\DNA}{\mathbf{D}^{\mathrm{NA}}}
\newcommand{\JNA}{\mathbf{J}^{\mathrm{NA}}}
\newcommand{\LNA}{\mathbf{L}^{\mathrm{NA}}}
\newcommand{\bfJ}{\mathbf{J} }
\newcommand{\mult}{{\rm mult}}
\newcommand{\Supp}{{\rm Supp}}
\newcommand{\Hom}{{\rm Hom}}
\newcommand{\ord}{{\rm ord}}
\newcommand{\gr}{{\rm gr}}
\newtheorem{thm}{Theorem}[section]
\newtheorem*{theorem*}{Theorem}
\newtheorem*{conj*}{Conjecture}
\newtheorem*{ques*}{Question}
\newtheorem{lem}[thm]{Lemma}
\newtheorem{cor}[thm]{Corollary}
\newtheorem{prop}[thm]{Proposition}
\newtheorem{conj}[thm]{Conjecture}
\theoremstyle{definition}
\newtheorem{defn}[thm]{Definition}
\newtheorem{say}[thm]{}
\newtheorem{exmp}[thm]{Example}
\newtheorem{ques}[thm]{Question}    
\newtheorem{rem}[thm]{Remark}
\newtheorem{defn-thm}[thm]{Definition--Theorem}  
\newtheorem{defn-prop}[thm]{Definition--Proposition}  
\newtheorem{defn-lem}[thm]{Definition--Lemma}  
\newtheorem{keyidea}[thm]{Key idea}
\newtheorem*{thm*}{Theorem}
\theoremstyle{remark}
\newtheorem{claim}[thm]{Claim}
\newcommand{\Fut}{{\rm Fut}}
\newcommand{\Ding}{{\rm Ding}}
\newcommand{\Proj}{{\rm Proj}}
\newcommand{\Val}{{\rm Val}}
\newcommand{\vol}{{\rm vol}}
\newcommand{\hvol}{\widehat{\rm vol}}
\newcommand{\cD}{{\mathcal{D}}}
\newcommand{\cM}{{\mathcal{M}}}
\newcommand{\cG}{{\mathcal{G}}}
\newcommand{\cO}{{\mathcal{O}}}
\newcommand{\fa}{{\mathfrak{a}}}
\newcommand{\fm}{{\mathfrak{m}}}
\newcommand{\lct}{{\rm lct}}
\newcommand{\im}{{\rm im}}
\newcommand{\bQ}{{\mathbb{Q}}}
\newcommand{\Q}{{\mathbb{Q}}}
\newcommand{\N}{{\mathbb{N}}}
\newcommand{\fb}{{\mathfrak{b}}}
\newcommand{\ra}{{\rangle}}
\newcommand{\wt}{{\rm wt}}
\newcommand{\Spec}{{\rm Spec}}
\newcommand{\Gr}{{\rm Gr}}
\renewcommand{\d}{\delta}
\newcommand{\la}{\lambda}
\newcommand{\cI}{\mathcal{I}}
\newcommand{\R}{{\mathbb{R}}}
\begin{document}
\title{K-stability of Fano varieties: an algebro-geometric approach}
\date{\today}

\author {Chenyang Xu}

\address   { Current address: Princeton University, Princeton NJ 08544, USA}
\email     {chenyang@princeton.edu}

\address   { MIT, Cambridge, MA 02139, USA}
\email     {cyxu@mit.edu}

\address   { BICMR, Beijing 100871, China}
\email     {cyxu@math.pku.edu.cn}

\begin{abstract}{
We give a survey of the recent progress on the study of K-stability of Fano varieties by an algebro-geometric approach.}
\end{abstract}

\maketitle{}
\setcounter{tocdepth}{2}
\tableofcontents

{\let\thefootnote\relax\footnotetext{
CX is partially supported by a Chern Professorship of the MSRI (NSF No. DMS-1440140) and by the
National Science Fund for Distinguished Young Scholars (NSFC 11425101) `Algebraic Geometry' (2015-2018), NSF DMS-1901849 (2019-now). }
\marginpar{}
}


\emph{Throughout, we work over an algebraically closed  field $k$ of characteristic 0.}

\section{Introduction}\label{s-intro}

The concept of K-stability is one of the most precious gifts differential geometers brought to algebraic geometers. It was first introduced in \cite{Tia97} as a criterion to characterize the existence of K\"ahler-Einstein metrics on Fano manifolds, which is a central topic in complex geometry. The definition involves the sign of an analytic invariant, namely {\it the generalized Futaki invariant}, of all possible normal $\mathbb{C}^*$-degenerations of a Fano manifold $X$.  Later, in \cite{Don02}, the notion of K-stability was extended to general polarized manifolds $(X,L)$, and  {the generalized Futaki invariant} was reformulated in completely algebraic terms, which then allows arbitrary flat degenerations.

In this survey, we will discuss the recent progress on the algebraic study of K-stability of Fano varieties, using the ideas developed in higher dimensional geometry, especially the techniques centered around the minimal model program (MMP). While there is a long history for complex geometers to study  K\"ahler-Einstein metrics on Fano varieties, algebraic geometers, especially higher dimensional geometers, only started to look at the K-stability question relatively recently.  One possible reason is that only until the necessary background knowledge from the minimal model program was developed (e.g. \cite{BCHM10}), a systematic study in general could rise above the horizon.  Nevertheless, there has been spectacular progress from the algebro-geometric side in the last a few years, which we aim to survey in this note. 

\subsection{K-stability in algebraic geometry} 
Unlike many other stability notions, when K-stability was first defined in \cite{Tia97} and later formulated using purely algebraic geometric terms in \cite{Don02}, it was not immediately clear to algebraic geometers what this really means and whether it is going to be useful in algebraic geometry.  The definition itself is clear: one considers all one parameter subgroup flat degenerations $\cX/\bA^1$, and attach an invariant $\Fut(\cX)$ to it, and being K-stable is amount to saying that $\Fut(\cX)$ is always positive (except the suitably defined trivial ones). Following the philosophy of Donaldson-Uhlenbeck-Yau Theorem, which established the Hitchin-Kobayashi Correspondence between stable bundles and Einstein-Hermitian bundles, one naturally would like compare it to the geometric invariant theory (GIT) (see e.g. \cite{PT04, RT07} etc.). This may make algebraic geometers feel more comfortable with the concept. However, an apparently similar nature to the asymptotic GIT stability also makes one daunted, since the latter is notoriously known to be hard to check. Later people found examples which are K-stable but not asymptotically GIT stable varieties (see \cite{OSY12}), which made the picture even less clear. 

\smallskip

However,  it is remarkable that  in \cite{Oda13},  concepts and technicals from the minimal model program were first noticed to be closely related to K-stability question. In particular, it was shown K-semistability of a Fano variety implies at worst it only has Kawamata log terminal (klt) singularities, which is a measure of singularities invented in the minimal model program (MMP) theory.  Then by running a meticulously designed MMP process, in \cite{LX14} we show to study K-stability of Fano varieties, one only needs to consider $\cX/\bA^1$ where the special fiber over $0\in \bA^1$ is also a klt Fano variety. These works make it clear that to study K-stability of Fano varieties, the MMP would play a prominent role. In fact, as a refinement of Tian's original perspective, \cite{Oda13} and \cite{LX14} suggested that in the study of K-stability of Fano varieties, we should focus on Kawamata log terminal (klt) Fano varieties, since this class of varieties is equipped with the necessary compactness. The class of klt Fano varieties is a theme that has been investigated in higher dimensional geometry for decades, however such compactness was not noticed before by birational geometers. It was then foreseeable that not far from the future, there would be an intensive interplay between two originally disconnected subjects, and a purely  \emph{algebraic K-stability theory} would be given a birth as a result.

\smallskip

Nevertheless, such a view only fully arose after a series of intertwining works (contributed by K. Fujita, C. Li and others) which established a number of equivalent characterizations of notations in K-stability, including the ones using invariants defined on valuations (instead of the original definition using one parameter subgroup degenerations $\cX/\bA^1$). More specificly, in \cite{Ber16}, inspired by the analytic work \cite{Din88}, Berman realized that one can replace the generalized Futaki invariants by Ding invariants and define the corresponding notions of Ding stability. Soon after that, it was noticed in \cite{BBJ15, Fuj19b} that one can apply the argument in \cite{LX14} for Ding invariants, and conclude that Ding stability and K-stability are indeed the same for Fano varieties. A key technical advantage for Ding invariant, observed by Fujita in \cite{Fuj18}, is that we can define $\Ding(\cF)$ more generally for a linearly bounded multiplicative filtration $\cF$ on the anti-pluri-canonical ring $R=\oplus_{m\in \mathbb N} H^0(-mrK_X)$ which satisfies that if we approximate a filtration by its finitely generated $m$-truncation $\cF_m$, then $\lim_m\Ding(\cF_m)\to \Ding (\cF)$ (this is not known for the generalized Futaki invariants!). Therefore, Ding semistability implies the non-negativity of any $\Ding(\cF)$.
Based on this, a key invariant for the later development  was independently formulated by  Fujita and Li (see \cite{Fuj19b,Li18}), namely $\beta(E)$ for any divisorial valuation $E$ over $K(X)$. They also show that there is a close relation between $\Ding$ and $\beta$, and conclude for instance $X$ is Ding-semistable if and only if $\beta(E)\ge 0$ for any divisorial valuation.   Another remarkable conceptual progress is the definition of the normalized volumes  in \cite{Li18} which provides the right framework to study the local K-stability theory for klt singularities. As a consequence, one can investigate the local-to-global principle for K-stability as in many other higher dimensional geometry problems.   In a different direction, a uniform version of the definition of K-stability was introduced in \cite{BHJ17} and \cite{Der16} independently, which is more natural when one consider the space of all valuations instead of only divisorial valuations (see \cite{BJ20, BX19}).   With all these progress on our foundational understanding of K-stability, we can then turn to study various questions on K-stable Fano varieties using purely algebraic geometry.  

\smallskip
 
One of the most important reasons for algebraic geometers' interests in K-stability of Fano varieties is the possibility of using it to construct a well behaved moduli space, called {\it K-moduli}. 
Constructing moduli spaces of Fano varieties once seemed to be out of reach for higher dimensional geometers, as one primary reason is that degenerations of a family of Fano varieties are often quite complicated. Nevertheless,  it is not completely clueless to believe that by adding the K-stability assumption, one can overcome the difficulty. In fact, explicit examples of K-moduli spaces, especially parametrizing families of surfaces, appeared in \cite{Tia90, MM93, OSS16}.  Then general compact moduli spaces of smoothable K-polystable Fano varieties were first abstractly constructed in \cite{LWX19} (also see partial results in \cite{SSY16, Oda15}). However, the proofs of all these heavily rely on analytics input e.g.  \cites{DS14, CDS,Tia15}. To proceed, naturally people were trying to construct the moduli space purely algebraically, and therefore remove the `smoothable' assumption. However, this only became plausible after the progress of the foundation theory of K-stability described in the previous paragraph. Indeed,  together with major results from birational geometry (e.g. \cite{BCHM10, HMX14, Bir19}) and general moduli theory (e.g. \cite{AHH18}), by now we have successfully provided an algebraic construction of the moduli space and established a number of fundamental properties, though there are still some missing ones. More precisely, in \cite{Jia17} it was shown that the boundedness of all K-semistable Fano varieties with a fixed dimension $n$ and volume $V$ can be deduced from \cite{Bir19}. After the work of \cite{XZ20}, the same boundedness can be also concluded using a weaker result proved in \cite{HMX14}. Then in \cite{BLX19, Xu19}, two proofs of the openness of K-semistability in a family of Fano varieties  are given, both of which rely on the boundedness of complements proved in \cite{Bir19}. Putting this together, it implies that there exists an Artin stack $\mathfrak{X}^{\rm Kss}_{n,V}$ of finite type, called K-moduli stack, which parametrizes all $n$-dimensional K-semistable Fano varieties with the volume $V$. Next, we want to proceed to show $\mathfrak{X}^{\rm Kss}_{n,V}$ admits a good moduli space (in the sense of \cite{Alp13}). This was done in a trilogy of works: It was first realized in \cite{LWX18} that  for the closure of a single orbit, the various notions in K-stability share the same nature as the intrinsic geometry in GIT stability.  By further developing the idea in \cite{LWX18}, in \cite{BX19} we prove that the K-semistable degeneration of a family of Fano varieties is unique up to the S-equivalence, which amounts to saying that the good moduli space, if exists is separated. Based on this, in \cite{ABHX19}, applying the general theory developed in \cite{AHH18}, we eventually prove the good moduli space $\mathfrak{X}^{\rm Kss}_{n,V}\to {X}^{\rm Kps}_{n,V}$ does exist as a separated algebraic space. The major remaining challenge is to show the properness of ${X}^{\rm Kps}_{n,V}$. In \cite{CP18, XZ19}, it is also shown that CM line bundle is ample on ${X}^{\rm Kps}_{n,V}$, provided it is proper and an affirmative answer to the conjecture that K-polystability is equivalent to the reduced uniformly K-stability.  

\medskip

Another major question of K-stability theory is to verify it for explicit examples, which has been intensively studied, started from the very beginning when people were searching for K\"ahler-Einstein Fano manifolds. A famous sufficient condition found by Tian in \cite{Tia87} is $\alpha(X)>\frac{n}{n+1}$ where $\alpha(X)$ is the alpha-invariant (see \cite{OS12} for an algebraic treatment).  This criterion and its variants have been applied for a long time by people to verify K-stability, although there are many cases which people expect to be K-stable but have an alpha-invariant not bigger than $\frac{n}{n+1}$. After the new equivalent characterizations of K-stability were established, we can define the $\delta$-invariant $\delta(X)$, which satisfies $\delta(X)\ge 1$ (resp. $>1$) {\it if and only if} $X$ is K-semistable (resp. uniformly K-stable) (cf. \cite{FO18, BJ20}), where $\delta(X)$ can be calculated as  the limit of the infima of the log canonical thresholds for $m$-basis type divisors. Calculating $\delta(X)$ has a somewhat similar nature with calculating $\alpha(X)$ but could be more difficult. However, it is also much more rewarding, as it carries the precise information about K-stability. Since then, many new examples have been verified by estimating $\delta(X)$.   
Another approach is using the moduli space to continuously identify the K-(semi,poly)stable Fano varieties from the deformation and degeneration of one that we know to be K-(sem-poly)stable. While this approach was implicitly contained in \cite{Tia90} and first explicitly appeared in \cite{MM93}, it became more powerful only after combining with the recent progress, especially explicit estimates of the normalized volume of singularities and a connection between local and global stability.

\begin{rem}There is a huge body of complex geometry study on this topic. We deliberately avoid any detailed discussion on them, except occasionally referring as a background. For readers who are interested, one could look at \cite{Sze18} for a recent survey. 

While we try to explain  various aspects of the recent progress on the algebro-geometric theory of K-stability of Fano varieties, the choice of the materials is of course based on the author's knowledge and taste.  
\end{rem}

\subsection{Organization of the paper} The paper is divided into three parts. 

In Part \ref{p-what}, we will discuss algebraic geometers' gradually evolved understanding of K-stability. As we mentioned, although in \cite{Don02}, the formulation was already algebraic, the more recent equivalent characterization using valuations turns out to fit much better into higher dimensional geometry. Therefore, we focus on explaining this new characterization of K-stability. In Section \ref{s-degeneration}, we will first briefly review the definition of K-stability given in \cite{Tia97, Don02}, then we will discuss the main result on special degenerations in \cite{LX14}. In the rest of Part \ref{p-what}, we will concentrate on proving the valuation criteria of  K-(semi)stability established by \cite{Fuj19b, Li17} and others, as well as introduce more variants of the notion of K-stability. In Section \ref{s-twoinvariants}, we will first introduce Fujita's $\beta$-invariant. It is then easy to deduce from the special degeneration theorem discussed in Section \ref{s-degeneration} that the positivity (resp. non-negativity) of $\beta$ implies K-stability (resp. K-semistability). We will also discuss two equivalent definitions of $\delta(X)$ for a Fano variety $X$ from \cite{FO18, BJ20}, which is an invariant precisely telling whether a given Fano is K-(semi)stable. To finish the converse direction that K-stability (resp. K-semistability) implies $\beta> 0$ (resp. $\beta\ge0$), we will present two approaches. First, in Section \ref{s-specialvaluation}, we follow the approach in \cite{BLX19}, which develops a theory on special divisors, corresponding to special degenerations. Then in Section \ref{s-Ding}, we discuss Fujita's work in \cite{Fuj18} of extending the definition of Ding-invariant for test configurations as in \cite{Ber16} to more general filtrations. This more general situation contains the filtration induced by a valuation as a special case, and then one just needs to compare Ding invariants and $\beta$ as in \cite{Li17, Fuj19b}. In  Section \ref{s-local}, we also discuss \cite{Li17,LX20}, which uses a concept introduced by Chi Li, called {\it the normalized volume} (see \cite{Li18}). This notion initiates a local stability theory on klt singularities. 

\medskip

In Part \ref{p-moduli}, we will focus on the program of constructing a projective scheme which parametrizes all K-polystable Fano varieties with the fixed numerical invariants, called {\it the K-moduli space  of Fano varieties}. 
The construction consists of several steps. In Section \ref{s-ksemi}, we will show there is a Artin stack which parametrizes all K-semistable Fano varieties with the fixed numerical invariants (see \cite{Jia17, BLX19, Xu19, XZ20}). Then in Section \ref{s-kpoly}, we show it admits a separated good moduli space (see \cite{BX19, ABHX19}). The properness of such good moduli space is still unknown, but by assuming that, we can essentially conclude the projectivity (see \cite{CP18, XZ19}). This is discussed in Section \ref{s-proj}. The proofs of these results interweave with our understanding of K-stability as discussed in Part \ref{p-what}, and also rely on some recent progress in algebraic geometry e.g. \cite{HMX14, Bir19, AHH18} etc.

\medskip

In Part \ref{p-example}, we will discuss how our new knowledge on K-stability as established in Part \ref{p-what} and Part \ref{p-moduli} can be used to get many new  examples of K-stable Fano varieties. K-stability can be verified either by studying the singularities of the $\mathbb{Q}$-linear system $|-K_X|_{\mathbb{Q}}$ or by establishing explicit  K-moduli spaces. Both of these two methods have older roots in works like \cite{Tia87,Tia90,MM93, OSS16} etc. Nevertheless, the recent progress provides us much stronger tools. In Section \ref{s-sing}, we will focus on how to estimate $\delta$-invariants in some explicit examples of Fano varieties, by following the works in \cite{Fuj19, SZ19, AZ20}. Then in Section \ref{s-moduli}, we will discuss how local estimates of the volume of singularities can be used to give explicit descriptions of K-moduli spaces (see \cite{LX19, Liu20,ADL19} etc). We will also discuss a wall crossing phenomenon of these moduli spaces in the log setting, as in \cite{ADL19}.

\subsection{Conventions} We will use the standard terminology of higher dimensional geometry, see e.g. \cite{KM98, Kol13, Laz04}.  A variety $X$ is {\it $\mathbb{Q}$-Fano} if it is projective, has klt singularities, and $-K_X$ is ample. 
A pair $(X,\Delta)$ is {\it log Fano} if $X$ is projective, $(X,\Delta)$ is klt, and $-K_X-\Delta$ is ample. 

\bigskip

\noindent{\bf Acknowledgement: } We are grateful for helpful conversations with Harold Blum, Ivan Cheltsov, Chi Li, Yuchen Liu, Xiaowei Wang, Chuyu Zhou and Ziquan Zhuang. A number of arguments in this note that are different with the original ones in the published papers, are indeed out of discussions with Harold Blum, Yuchen Liu or Ziquan Zhuang, which we owe our special thanks to. 
Part of the survey was written while the author enjoyed the hospitality of the MSRI, which is gratefully acknowledged. The survey was  used as the note for my class on 2020 Fall at Princeton University, and I would like to thank the participants of the class. 
\clearpage

\part{What is K-stability?}\label{p-what}
Unlike smooth projective varieties which are canonically polarized or Calabi-Yau type, Fano manifolds do not necessarily have a K\"ahler-Einstein (KE) metric. 
It had been speculated for a long time that the existence of KE metric on a Fano manifold should be equivalent to certain algebraic stability. After searching for a few decades, the concept of {\it K-stability} was eventually invented in \cite{Tia97} and reformulated in algebraic terms in \cite{Don02}. Since then the Yau-Tian-Donaldson Conjecture, which predicts that the existence of a KE metric on a Fano manifold $X$ is equivalent to $X$ being K-polystable, prevailed in complex geometry. Eventually, the solution was published in \cite{CDS, Tia15}, though the corresponding version for singular Fano varieties is still open, and has attracted lots of  recent interests. Actually, there is a new approach, called {\it the variational approach}, that aims to solve the singular case. The variational approach is probably conceptually closer to the algebraic geometry, because it is tightly related to non-achimedean geometry (see \cite{BoJ18}). Important progress along this line has recently been made in \cite{BBJ15, LTW19, Li19} (see Remark \ref{r-YTDnew}). 

In this part, we will discuss our gradually evolved understanding of the notions of K-stability of Fano varieties,  using only algebraic terms. We started with the original definitions introduced in \cite{Tia97, Don02}, by considering all $\mathbb{C}^*$-degenerations of a Fano variety $X$. Then we will explain that, one can apply the MMP to put a strong restriction on the allowed degenerations, namely we only need to consider $\mathbb{Q}$-Fano degenerations. 
With all these preparations, we will introduce a new (but equivalent) criterion of K-stability of Fano varieties using valuations over $X$. This is the central topic in Part \ref{p-what}. In Appendix \ref{s-local}, we will also briefly discuss a study of unexpectedly deep properties of klt singularities, which can be considered as a local model of K-stability theory for Fano varieties. 

\section{Definition of K-stability by degenerations and MMP}\label{s-degeneration}

\subsection{One parameter group degeneration}\label{ss-definition}

In this section, we will introduce the original definition of K-stability in \cite{Tia97, Don02}. It was related to the geometric invariant theory (GIT) stability, or more precisely the asymptotic version. See Remark \ref{r-GITvsK}. In GIT theory, by Hilbert-Mumford criterion, we know to test GIT stability, it suffices to compute the weight of the linearization on all possible one parameter subgroup degenerations. 

Here we first consider an abstract  one parameter subgroup degeneration, which is called a {\it test configuration}.
\begin{defn}
Let $X$ be an $n$-dimensional normal $\mathbb{Q}$-Gorenstein variety such that $-K_X$ is ample. Assume that $-rK_X$ is
Cartier for some fixed $r\in \mathbb{N}$. A test configuration of $(X, -rK_X)$ consists of
\begin{enumerate}
\item[ $\cdot$]
a variety $\cX$ with a $\mathbb{G}_m$-action,
\item[ $\cdot$]
a $\mathbb{G}_m$-equivariant ample line bundle $\cL\rightarrow
\cX$,
\item[ $\cdot$]
a flat $\mathbb{G}_m$-equivariant map $\pi: (\cX,
\mathcal{L})\rightarrow \mathbb{A}^1$, where $\mathbb{G}_m$
acts on $\mathbb{A}^1$ by multiplication in the standard way
$(t,a)\to ta$,
\end{enumerate}
such that over $A^{1}\setminus\{0\}$, there is an isomorphism 
$$\phi\colon (\cX, \cL)\times_{\mathbb{A}^1} (\mathbb{A}^1\setminus\{0\})\to (X, -rK_X)\times (\mathbb{A}^1\setminus\{0\}).$$
\end{defn}


For any $\mathbb{Q}$-test configuration, we can define {\it the generalized Futaki
invariant} as follows. First   for a sufficiently divisible $k\in\mathbb{N}$, we have
\[
d_k=\dim H^0(X, \mathcal{O}_X(-kK_{X}))=a_0k^n+a_1k^{n-1}+O(k^{n-2})
\]
for some rational numbers $a_0$ and $a_1$. Let
$(\cX_0,\cL_0)$ be the restriction of $(\cX,
\cL)$ over $\{0\}$. Since $\mathbb{G}_m$ acts on $(\cX_0,\cL^{{\rm tc}\otimes k/r}_0)$, it also acts on
$H^0(\cX_0,\cL^{{\rm tc}\otimes k/r}_0)$. We denote the total
weight of this action by $w_k$. By the equivariant Riemann-Roch
Theorem,
\[
w_k=b_0k^{n+1}+b_1k^{n}+O(k^{n-1}).
\]
So we can expand
\[
\frac{w_k}{kd_k}=F_0+F_1k^{-1}+O(k^{-2}).
\]
\begin{defn}\label{d-futaki}
Under the above notion, the generalized Futaki invariant of the test configuration $(\cX, \cL)$ is defined to be
\begin{equation}\label{deffut}
\Fut(\cX,\cL)=-{F_1}=\frac{a_1b_0-a_0b_1}{a_0^2}
\end{equation}
\end{defn}

We easily see for any  $a\in \mathbb{N}$, 
$\Fut(\cX,\cL^{{\rm tc}\otimes a})= \Fut(\cX,\cL)$, therefore when $\cL$ is only a $\mathbb{Q}$-line bundle, we can still define
$\Fut(\cX,\cL):=\Fut(\cX,\cL^{{\rm tc} \otimes a})$ for some sufficiently divisible $a$.

\begin{rem}\label{r-tcvsstc}
Test configurations were first introduced in \cite{Tia97}, where the special fiber was required to be normal, and the generalized Futaki invariant was defined in analytic terms. 

Later in \cite{Don02}, any degeneration was allowed (indeed instead of a Fano variety, \cite{Don02} considered test configurations of any given polarized projective variety), and the generalized Futaki invariant was defined in algebraic terms as above. Therefore, in some literature, the generalized Futaki invariant is also called {\it the Donaldson-Futaki invariant.} Since in our current note, we will mostly restrict ourselves to a even smaller class (see Definition \ref{d-stc}) than in Tian's setting, to avoid any confusion, we will only use the terminology in \cite{Tia97}. 

In different literatures,  the definition of the generalized Futaki invariant may differ by a (positive) constant. As we will see, in Definition \ref{d-kstable} of K-stability, it is only the sign of the generalized Futaki invariant that matters. 
\end{rem}

\begin{rem}\label{r-GITvsK}
The most important feature for testing K-stability is that we have to look at all  $r$. In other words, we need to consider higher and higher re-embeddings given by $|-rK_X|$ and all their flat degenerations under  one parameter group.   This is similar to the asymptotic Chow stability. See \cite[Section 3]{RT07} for a detailed discussion, especially for the implications among different notions of stability. 

Nevertheless, the directions which were not addressed in \cite{RT07} were more subtle. While it is proved in \cite{Don02} that any polarized manifold $(X,L)$ with a cscK metric and finite automorphism group is asymptotically Chow stable, however, this is known to not hold for either $X$ has an infinite automorphism group (see \cite{OSY12}) or $X$ is singular (see \cite{Oda12}). 
\end{rem} 

\begin{rem}The notions of K-stability can be defined for a log Fano pair $(X,\Delta)$ (See \cite[(30)]{Don12}). In fact, though for the purpose of making the exposition simpler, we will only discuss $\bQ$-Fano varieties, all the K-stability results we discussed in this survey can be generalized from a $\bQ$-Fano variety $X$ to a log Fano pair $(X,\Delta)$, and for most of the time the generalization is merely a book-keeping. 
\end{rem}

\begin{defn}\label{d-kstable}
Let $X$ be an $n$-dimensional normal $\mathbb{Q}$-Gorenstein variety such that $-K_X$ is ample, then
\begin{enumerate}
\item
$X$ is K-semistable if for any test
configuration $(\cX, \cL)$ of $(X, -K_X)$, we
have $\Fut(\cX, \cL)\ge0$.
\item
$X$ is  K-stable (resp. K-polystable) if for any test configuration $(\cX, \cL)$ of $(X,
-K_X)$, we have $\Fut(\cX, \cL)\ge 0$, and
the equality holds only if $(\cX, \cL)$ is trivial (resp. only if
$\cX$ and $X\times \mathbb{A}^1$ are isomorphic) outside a codimension 2 locus on $\cX$.
\end{enumerate}
\end{defn}

\begin{exmp}Consider a smooth Fano variety $X$, such that there is an effective torus action $T(\cong \mathbb{G}_m^r)$ on $X$. Then for any (integral) coweight $\mathbb{G}_m\to T$, we can define a  test configuration $ \mathcal{X}\cong X\times \mathbb{A}^1$ and $\mathcal{L}\cong  -K_{X\times \mathbb{A}^1} $ with the $\mathbb{G}_m$-action given by 
\[
t\cdot (x,a)\to (t(x), t\cdot a).
\]
This kind of test configuration is called {\it a product test configuration}. Since if we reverse the action of $\mathbb{G}_m$, the total weight will change the sign, we conclude that if $X$ is K-semistable, then ${\rm Fut}( \mathcal{X},\mathcal{L})=0$ for all product test configurations. This condition was first introduced in \cite{Fut83}. 
\end{exmp}

There is an intersection formula description of the generalized Futaki invariants (see \cite{Wan12, Oda13b}) for any given test configuration $(\cX, \cL)$.
\begin{lem}[{\cite{Wan12, Oda13b}}]\label{l-intersection} 
Assume a $(\cX,\cL)$ is a normal test configuration. If we glue $(\cX, \cL)$ with $(X
\times (\mathbb{P}^1\setminus \{0\}) ,p_1^*(-rK_{X}))$ over $\mathbb{A}^1\setminus \{0\}$ by $\phi$ to get a proper family $(\bar{\cX}, \bar{\cL})$ over $\mathbb{P}^1$,
then we have the following equality:
\begin{eqnarray}\label{e-inter}
 \Fut(\cX,{\cL})&= &\frac{1}{2(n+1)(-K_X)^n}\left(n (\frac{1}{r}\bar{\cL})^{n+1}+(n+1)K_{\bar{\cX}/\mathbb{P}^1}\cdot(\frac{1}{r}\bar{\cL})^{n}\right)
\end{eqnarray}
\end{lem}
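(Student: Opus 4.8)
The plan is to derive the intersection formula \eqref{e-inter} directly from the definitions of the asymptotic coefficients $a_0,a_1,b_0,b_1$ by means of (equivariant) Riemann--Roch and asymptotic Riemann--Roch on the total space $\bar{\cX}$. The key point is to recognize that all four numbers appearing in $\Fut(\cX,\cL) = (a_1 b_0 - a_0 b_1)/a_0^2$ can be read off as leading coefficients of Euler characteristics of line bundles on $X$ and on $\bar{\cX}$ respectively, after which the formula becomes an identity between intersection numbers. First I would recall that, by asymptotic Riemann--Roch on $X$ (of dimension $n$), $d_k = \chi(X,\cO_X(-kK_X)) = \frac{(-K_X)^n}{n!}k^n + \frac{(-K_X)^{n-1}\cdot(-\frac12 K_X)}{(n-1)!}k^{n-1}+\cdots$ for $k$ sufficiently divisible, so that $a_0 = (-K_X)^n/n!$ and $a_1$ is the stated next coefficient; higher cohomology vanishes for $k\gg 0$ since $-K_X$ is ample.

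Next I would interpret the total weight $w_k$. The $\mathbb{G}_m$-action on $H^0(\cX_0,\cL_0^{\otimes k/r})$ has total weight $w_k$; the standard trick is to compactify $\cX\to\bA^1$ to $\bar{\cX}\to\PP^1$ as in the statement of the lemma, and to observe that the weight of the $\mathbb{G}_m$-representation $H^0(\cX_0,\cL_0^{\otimes k/r})$ equals $\chi\big(\bar{\cX}, \bar{\cL}^{\otimes k/r}\big) - (k/r)^{0}\cdot(\text{contribution of the trivial part over }\PP^1\setminus\{0\})$ — more precisely, one uses the exact sequence relating $\bar{\cX}$, the fibre $\cX_0$, and the product part, so that $w_k = \chi(\bar{\cX},\bar{\cL}^{\otimes k/r}) - \chi(X\times\PP^1, p_1^*\cO_X(-kK_X))$ up to lower-order and sign conventions. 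Then asymptotic Riemann--Roch on the $(n+1)$-dimensional variety $\bar{\cX}$ gives
\[
\chi\big(\bar{\cX}, \tfrac1r\bar{\cL}^{\otimes k}\big) = \frac{(\frac1r\bar{\cL})^{n+1}}{(n+1)!}k^{n+1} + \frac{(\frac1r\bar{\cL})^{n}\cdot(-\frac12 K_{\bar{\cX}/\PP^1})}{n!}k^{n} + O(k^{n-1}),
\]
where the relative canonical class $K_{\bar{\cX}/\PP^1}$ appears because the contribution of $K_{\PP^1}$ is absorbed by subtracting the product term; this identifies $b_0 = (\frac1r\bar{\cL})^{n+1}/(n+1)!$ and $b_1$ as the coefficient of $k^n$, involving $K_{\bar{\cX}/\PP^1}\cdot(\frac1r\bar{\cL})^n$.

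Finally I would substitute these expressions into $\Fut(\cX,\cL) = (a_1 b_0 - a_0 b_1)/a_0^2$ and simplify. Writing $V = (-K_X)^n$, one has $a_0 = V/n!$, $a_1 = -\tfrac12 (-K_X)^{n-1}\cdot K_X/(n-1)! = \tfrac{n}{2}V/n!$ (using $(-K_X)^{n-1}\cdot(-K_X)=V$), $b_0 = (\frac1r\bar{\cL})^{n+1}/(n+1)!$, and $b_1 = -\tfrac12 K_{\bar{\cX}/\PP^1}\cdot(\frac1r\bar{\cL})^n/n!$; plugging in and clearing factorials collapses to
\[
\Fut(\cX,\cL) = \frac{1}{2(n+1)V}\Big( n(\tfrac1r\bar{\cL})^{n+1} + (n+1)K_{\bar{\cX}/\PP^1}\cdot(\tfrac1r\bar{\cL})^n\Big),
\]
which is \eqref{e-inter}. \textbf{The main obstacle} I anticipate is bookkeeping the precise relationship between the $\mathbb{G}_m$-weight $w_k$ on sections over the special fibre and the Euler characteristic $\chi(\bar{\cX},\bar{\cL}^{\otimes k/r})$: one must argue carefully, via the filtration of $H^0(\bar{\cX},\bar{\cL}^{\otimes k/r})$ by order of vanishing along $\cX_0$ (equivalently, the weight decomposition of the $\mathbb{G}_m$-action coming from the $\PP^1$-coordinate), that the total weight is indeed the $k^{n+1}$- and $k^n$-order part of $\chi(\bar{\cX},\cdot)$ minus the corresponding product contribution, and to pin down the normalization so that $K_{\bar{\cX}/\PP^1}$ rather than $K_{\bar{\cX}}$ appears — this is where normality of $\cX$ (so that $\bar{\cX}$ is normal and $K_{\bar{\cX}/\PP^1}$ is a well-defined Weil divisor with the expected intersection-theoretic behaviour) and the flatness of $\pi$ are used. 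The rest is routine asymptotic Riemann--Roch and algebra; I would cite \cite{Wan12, Oda13b} for the detailed weight computation and present the reduction to intersection numbers as the conceptual content.
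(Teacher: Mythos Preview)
Your approach is correct and is exactly the standard argument found in the cited references \cite{Wan12, Oda13b} and \cite[Page 224--225]{LX14}; the paper itself gives no independent proof but simply refers to \cite{LX14}. One clarification on your ``main obstacle'': the cleanest way to pin down the relation between $w_k$ and the Euler characteristic is to observe that for $k\gg 0$ the pushforward $\pi_*\bar{\cL}^{\otimes k/r}$ is a vector bundle on $\PP^1$ whose degree equals the total $\bG_m$-weight $w_k$ on the fibre over $0$ (since the action over $\infty$ is trivial), so Riemann--Roch on $\PP^1$ gives $\chi(\bar{\cX},\bar{\cL}^{\otimes k/r})=w_k+d_k$; this immediately explains why subtracting $d_k$ cancels the $\pi^*K_{\PP^1}$ contribution and leaves $K_{\bar{\cX}/\PP^1}$ in the $k^n$-coefficient, as your computation shows.
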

\begin{proof}See e.g. \cite[Page 224-225]{LX14}.
\end{proof}

\subsection{MMP on family of Fano varieties}\label{ss-mmp}
In this section, we will start to uncover the connection between K-stability and MMP. 

The following theorem proved in \cite{Oda13} is the first intersection of K-stability theory and the minimal model program in algebraic geometry.
\begin{thm}[{\cite{Oda13}}]Let $X$ be an $n$-dimensional normal $\mathbb{Q}$-Gorenstein variety such that $-K_X$ is ample. If $X$ is K-semistable, then $X$ has (at worst) klt singularities, i.e., $X$ is a $\bQ$-Fano variety.
\end{thm}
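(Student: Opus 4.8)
The plan is to show that if $X$ has worse than klt singularities, then one can construct a test configuration with negative generalized Futaki invariant, contradicting K-semistability. The natural device for producing such a test configuration is a plt (or dlt) blowup realizing a log canonical place of the bad singularity, followed by a deformation to the normal cone. Concretely, if $X$ is not klt, then (possibly after passing to a $\bQ$-factorial modification) there is a proper birational morphism $\mu\colon Y\to X$ extracting a single prime divisor $E$ with discrepancy $a(E,X)\le -1$, so that $K_Y = \mu^*K_X + a(E,X)\,E$ with $a(E,X)\le -1$. The key point is that such an $E$ is non-klt, and its presence will force a sign.

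The steps, in order, are as follows. First, reduce to the case where $X$ is $\bQ$-factorial by taking a small $\bQ$-factorialization (this does not change klt-ness and one checks K-semistability is inherited appropriately, or one argues directly on $X$). Second, run the MMP / use \cite{BCHM10} to extract a single divisor $E$ over $X$ with $a(E,X)\le -1$; if $X$ is not klt such an $E$ exists because the discrepancy infimum is $-\infty$ or at least $\le -1$. Third, form the deformation to the normal cone of the ideal $\mathfrak{a}_\bullet$ associated to the valuation $\ord_E$ (or of the exceptional divisor on a suitable model): this is the Rees-algebra construction $\cX = \operatorname{Proj}_{X\times\bA^1}\big(\bigoplus_m \mathcal{I}_m t^{-m}\big)$, giving a $\bG_m$-equivariant flat family $\pi\colon(\cX,\cL)\to\bA^1$ with generic fiber $(X,-rK_X)$ and special fiber a degeneration dominated by $Y$. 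Fourth, compute $\Fut(\cX,\cL)$ via the intersection formula of Lemma~\ref{l-intersection}: the relative canonical term $K_{\bar\cX/\PP^1}\cdot(\tfrac1r\bar\cL)^n$ picks up the negative discrepancy $a(E,X)\le -1$, while the $(\tfrac1r\bar\cL)^{n+1}$ term is controlled; choosing the polarization appropriately (e.g. $\cL$ close to the pullback of $-rK_X$, twisted by a small multiple of $-E$) one makes $\Fut(\cX,\cL) < 0$. Fifth, conclude that $X$ is not K-semistable, contradiction; hence $X$ is klt, i.e. a $\bQ$-Fano variety.

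The main obstacle is Step four: ensuring the intersection-theoretic computation genuinely produces a negative number, and in particular that the positive contribution from $(\bar\cL)^{n+1}$ does not overwhelm the negative discrepancy contribution. This requires a careful choice of the polarization $\cL$ on the degeneration — one typically takes $\cL = \mu^*(-rK_X) - \epsilon E$ for small rational $\epsilon>0$ on the normal-cone model, checks that $\cL$ remains (relatively) ample for $\epsilon$ small, and expands $\Fut$ to first order in $\epsilon$: the linear term in $\epsilon$ is (up to a positive constant) $a(E,X)+1 \le 0$, and one must rule out equality or push to the strict inequality using that $a(E,X)\le -1$ with the klt threshold being $>-1$. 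A secondary technical point is justifying that $\cX$ can be taken normal (so that Lemma~\ref{l-intersection} applies) and that the special fiber is reduced and irreducible enough for the weight expansion to behave; normalizing the Rees construction and using that $E$ is a single prime divisor handles this. Finally, one should record that this same negative test configuration also shows the relevant Ding invariant is negative, which is the cleaner route if one prefers to invoke the Ding-theoretic reformulation mentioned earlier in the introduction.
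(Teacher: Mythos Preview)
Your strategy is Odaka's and the paper's: build a destabilizing test configuration out of a divisor witnessing the failure of klt, then compute its Futaki invariant via the intersection formula. But Step~2 has a genuine gap. You cannot invoke \cite{BCHM10} to extract a divisor with $a(E,X)\le -1$ when $X$ itself is not klt: the extraction results in BCHM require a klt ambient pair, which is exactly what you lack here. The tool the paper actually uses is the \emph{lc modification} of \cite{OX12}, which exists without any klt hypothesis on $X$: it produces $f\colon Y\to X$ with $(Y,E)$ log canonical (here $E$ is the reduced exceptional divisor, in general with several components) and $K_Y+E$ ample over $X$. That relative ampleness is the concrete positivity input that makes the computation via \eqref{e-inter} produce a strictly negative Futaki invariant; it replaces your hand-wavy first-order-in-$\epsilon$ expansion and simultaneously takes care of the finite generation you would otherwise have to justify for the Rees construction in Step~3.

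There is also a secondary gap in the boundary case $a(E,X)=-1$, i.e.\ $X$ lc but not klt. Here the lc modification is the identity and your linear-in-$\epsilon$ term vanishes, so the argument as written gives nothing. This case genuinely requires a separate treatment exploiting the ampleness of $-K_X$ (roughly, an lc place $E$ still has $S_X(E)>0$, so the associated degeneration has strictly negative invariant); your sketch flags the difficulty but does not resolve it.
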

The proof of the above theorem is a combination of \eqref{e-inter} and a MMP construction called {\it the lc modification} whose existence relies on the relative minimal model program (see \cite{OX12}).

It was probably not a big surprise that the notion of K-stability should have some restriction on the singularities, however, it was really a remarkable observation that the right category of singularities should be the one from the minimal model program theory.  From now on, we will only consider the K-stability problem for $\mathbb{Q}$-Fano varieties. 

\begin{rem}It is natural to ask whether we can restrict ourselves to an even smaller category of singularities than klt singularities. The answer is likely to be negative (see Section \ref{s-local}). However, the global invariant of the volume $(-K_X)^n$ for the K-semistable Fano variety will post more restrictive conditions on the possible local singularities (see Theorem \ref{t-localtoglobal}). 
\end{rem}

Now we introduce a smaller class of test configurations called {\it special test configuration}, which will play a crucial role in our study.

\begin{defn}[Special test configurations]
\label{d-stc}
A test configuration $(\mathcal{X},\mathcal{L})$ of $(X,-rK_{X})$ is called {\it a special test configuration} if $\mathcal{L}\sim_{\mathbb{Q}}-rK_{\cX}$ and the special fiber $X_0$ is a $\mathbb{Q}$-Fano variety. By inversion of adjunction, this is equivalent to saying $\mathcal{X}$ is $\mathbb{Q}$-Gorenstein and $-K_{\mathcal{X} }$ is ample and $(\mathcal{X},X_0)$ is plt. 

We also call a test configuration $(\cX,\cL)$ satisfying that $(\cX,\cX_0)$ is log canonical and $\cL\sim_{\mathbb{Q}}-rK_{\cX}$ to be a {\it weakly special test configuration}.
\end{defn}

The next theorem shows the difference in the definition of K-stability for Fano varieties in \cite{Tia97} and in \cite{Don02} (see Remark \ref{r-tcvsstc})  does not really play any role.


\begin{thm}[{\cite{LX14}}]\label{t-specialdegeneration}
Let $(\mathcal{X},\mathcal{L})\to \mathbb{A}^1$ be a test configuration $(X,-rK_X)$, then there exists a special test configuration $(\mathcal{X}^{{\rm st}},\mathcal{L}^{{\rm st}})\to \mathbb{A}^1$ which is birational to $(\mathcal{X},\mathcal{L})\times_{\mathbb{A}^1, z\to z^d} \mathbb{A}^1$ over $\mathbb{A}^1$, such that $\Fut(\mathcal{X}^{{\rm st}},\mathcal{L}^{{\rm st}})\le d\cdot \Fut(\mathcal{X},\mathcal{L})$.

Moreover, the equality holds if and only if the birational map 
$$(\mathcal{X}^{{\rm st}},\mathcal{L}^{{\rm st}}) \dasharrow (\mathcal{X},\mathcal{L})\times_{\mathbb{A}^1, z\to z^d} \mathbb{A}^1$$ is an isomorphism outside codimension 2.
\end{thm}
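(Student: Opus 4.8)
The plan is to reduce a general test configuration to a special one through a controlled sequence of MMP operations, using the intersection formula \eqref{e-inter} to reformulate $\Fut$ as a combination of intersection numbers and the negativity lemma to track how it changes; throughout, $-K_X$ ample makes the total space of any test configuration of Fano type over $\mathbb{A}^1$, so all the MMPs invoked terminate by \cite{BCHM10}. \textbf{Step 1 (Normalization).} Replace $(\mathcal{X},\mathcal{L})$ by its normalization $(\mathcal{X}^\nu,\nu^*\mathcal{L})$, which is again a test configuration of $(X,-rK_X)$. Since $\nu$ is finite and $K_{\mathcal{X}^\nu}\le \nu^*K_{\mathcal{X}}$ (the difference being the conductor), \eqref{e-inter} gives $\Fut(\mathcal{X}^\nu,\nu^*\mathcal{L})\le \Fut(\mathcal{X},\mathcal{L})$, so we may assume $\mathcal{X}$ normal.

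\textbf{Step 2 (To a weakly special model).} Take a log resolution of $(\mathcal{X},\mathcal{X}_0)$; after the base change $z\mapsto z^d$ (which scales the target Futaki invariant by $d$) and normalizing, semistable reduction lets us assume the central fiber is reduced with SNC support. Running a suitable relative MMP over $\mathcal{X}$ produces the log canonical modification $(\mathcal{X}^{\mathrm{lc}},\mathcal{X}^{\mathrm{lc}}_{0,\mathrm{red}})$ \cite{OX12}. Carrying the polarization along this MMP-with-scaling and applying \eqref{e-inter} together with the negativity lemma at each divisorial contraction and flip, one checks that the Futaki invariant stays $\le d\cdot\Fut(\mathcal{X},\mathcal{L})$; after adjusting the polarization inside the central fiber we reach a \emph{weakly special} test configuration $(\mathcal{X}^w,\mathcal{L}^w)$ (Definition \ref{d-stc}) with $\mathcal{L}^w\sim_{\mathbb{Q}}-rK_{\mathcal{X}^w}$ and $\Fut(\mathcal{X}^w,\mathcal{L}^w)\le d\cdot\Fut(\mathcal{X},\mathcal{L})$.

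\textbf{Step 3 (To a special model, and the equality case).} Since $-K_X$ is ample, $-K_{\mathcal{X}^w}$ is big over $\mathbb{A}^1$, so the anticanonical ring $\bigoplus_m\pi_*\mathcal{O}_{\mathcal{X}^w}(-mrK_{\mathcal{X}^w})$ is finitely generated; let $(\mathcal{X}^{\mathrm{st}},-rK_{\mathcal{X}^{\mathrm{st}}})$ be its relative $\Proj$ over $\mathbb{A}^1$, the anticanonical model of $\mathcal{X}^w$. Then $-K_{\mathcal{X}^{\mathrm{st}}}$ is ample, and by inversion of adjunction applied to the log canonical pair $(\mathcal{X}^w,\mathcal{X}^w_0)$ carried through the MMP (with a final perturbation of the polarization, if needed, to pass from a strictly log canonical central fiber to a klt one) the special fiber $\mathcal{X}^{\mathrm{st}}_0$ is $\mathbb{Q}$-Fano; thus $(\mathcal{X}^{\mathrm{st}},\mathcal{L}^{\mathrm{st}})$ is a special test configuration, birational over $\mathbb{A}^1$ to $(\mathcal{X},\mathcal{L})\times_{\mathbb{A}^1,z\to z^d}\mathbb{A}^1$ by construction. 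As $\mathcal{L}^w\sim_{\mathbb{Q}}-rK_{\mathcal{X}^w}$, the intervening $K_{\mathcal{X}^w}$-negative MMP does not increase $\Fut$ by \eqref{e-inter} and the negativity lemma, so $\Fut(\mathcal{X}^{\mathrm{st}},\mathcal{L}^{\mathrm{st}})\le d\cdot\Fut(\mathcal{X},\mathcal{L})$. For the moreover part, each step above strictly decreases $\Fut$ unless it is an isomorphism in codimension $1$ (the conductor vanishes in Step 1; the negativity lemma is strict unless no divisor is contracted or meets a flipping locus in Steps 2--3), so equality forces the composed birational map $(\mathcal{X}^{\mathrm{st}},\mathcal{L}^{\mathrm{st}})\dasharrow (\mathcal{X},\mathcal{L})\times_{\mathbb{A}^1,z\to z^d}\mathbb{A}^1$ to be an isomorphism outside codimension $2$, and the converse is immediate from \eqref{e-inter}.

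\textbf{Main obstacle.} The technical heart is the monotonicity of the Donaldson--Futaki invariant under the MMP, i.e.\ that the log canonical modification and each flip or divisorial contraction does not increase $\Fut$. This requires \eqref{e-inter} to express $\Fut$ via intersection numbers on the $\mathbb{P}^1$-completion (so that one may work with the polarization $-rK$), a careful application of the negativity lemma to that completed family, and --- because $\mathcal{L}^w$ is only $\mathbb{Q}$-linearly equivalent to, not equal to, $-rK$ --- a Hodge-index/concavity estimate showing that moving the fiber-supported difference toward $-rK$ decreases $\Fut$. Termination of the various MMPs, guaranteed by the Fano-type hypothesis and \cite{BCHM10}, is the other place where substantial input is needed.
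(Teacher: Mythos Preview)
Your outline follows the same four-stage architecture as the paper (normalize and base change; pass to the log canonical modification; run MMP with scaling to the anticanonical model; then tie-break to a special test configuration), and you correctly identify in your ``Main obstacle'' paragraph that the real engine is the Hodge-index inequality $(\frac{1}{r}E)^2\le 0$ for a fiber-supported divisor $E$, which in the paper is packaged as the derivative formula \eqref{e-decreasing}.  That said, two points deserve sharpening.

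First, the repeated invocation of the ``negativity lemma'' in Steps~2--3 is not the right mechanism.  The decrease of $\Fut$ along the MMP comes from interpolating the polarization along a path $\cL_t=\cL+tE$ with $E$ supported on the central fiber and computing $\frac{d}{dt}\Fut(\cX,\cL_t)=\frac{n}{2(-K_X)^n}(\frac{1}{r}\bar\cL_t)^{n-1}\cdot(\frac{1}{r}E)^2\le 0$; this is a Hodge-index statement on the fiber, not the negativity lemma (which concerns effectivity of exceptional divisors).  You seem aware of this in your final paragraph, so the issue is one of presentation rather than understanding.

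Second, and more seriously, your Step~3 does not actually produce a \emph{special} test configuration.  Taking the anticanonical model of a weakly special $\cX^w$ only gives back a weakly special model: $(\cX^{\mathrm{an}},\cX^{\mathrm{an}}_0)$ is log canonical, so the central fiber is merely semi-log-canonical, not klt, and inversion of adjunction gives no more.  A ``perturbation of the polarization'' cannot fix this, because the polarization is already $-rK$ at this stage.  What is required---and what the paper sketches as its Step~3---is a tie-breaking argument: one perturbs the \emph{boundary} (not the polarization) to single out one lc place of $(\cX^{\mathrm{an}},\cX^{\mathrm{an}}_0)$, possibly after a further base change, and then runs another MMP to extract precisely that divisor as the new central fiber, yielding a plt pair.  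The final Futaki inequality then comes from a direct comparison of $(-K_{\bar\cX^{\mathrm{an}}/\mathbb{P}^1})^{n+1}$ and $(-K_{\bar\cX^{\mathrm{s}}/\mathbb{P}^1})^{n+1}$.  Without this step your $\cX^{\mathrm{st}}$ need not have klt central fiber, so the conclusion fails.
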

\begin{proof}[Sketch of the proof]Started from any test configuration $\cX$, we will use the the minimal model program to modify $\cX$ such that at the end we obtain a special test configuration $\cX^{\rm s}$, and during the process the generalized Futaki invariants decrease. The modification consists of a few steps.

\noindent {\it Step 0:} If we replace $\cX$ by the normalization $n\colon \cX^n\to \cX\times_{z\to z^d}\mathbb{A}^1$, then we have 
$$d\cdot \Fut(\cX,\cL )\ge \Fut(\cX^n,\cL^n:=n^*\cL)$$ with the equality holds if and only if $\cX^n\to\cX\times_{z\to z^d}\mathbb{A}^1$ is an isomorphism outside codimension at least 2. This can be seen by directly applying the intersection formula \eqref{e-inter}.

\begin{keyidea}After Step 0, we can always assume the special fiber is reduced. The following steps will all involve minimal model program constructions. The main observation is the following calculation: denote by $\cL$ the polarization on $\cX$, such that $\cL|_{X_t}\sim -rK_{X_t}$ for $t\neq 0$, so we can write $\cL+rK_{X_t}\sim E$ which is a divisor supported over $0$. Then let $t>0$ such that $\cL_t=\cL+tE$ is still ample, then applying \eqref{e-inter}, we have
\begin{eqnarray}\label{e-decreasing}
\frac{d}{dt}\Fut(\cX,\cL_t) =\frac{n}{2(-K_X)^n}(\frac{1}{r}\bar{\cL}_t)^{n-1}\cdot (\frac{1}{r}E)^2\le 0\ \ .
\end{eqnarray}
\end{keyidea}

\noindent {\it Step 1:} From $\cX^n$, we can construct the log canonical modification of $f^{\rm lc}\colon \cX^{\rm lc}\to (\cX,X^n_0)$ (see \cite[Theorem 1.32]{Kol13}), where $X^n_0$ is the special fiber. By a suitable base change, we can assume that the special fiber $X^{\rm lc}_0$ of $\cX^{\rm lc}$ is also reduced. Let $F$ be the reduced exceptional divisor. Then by the definition of the lc modification, 
$$K_{\cX^{\rm lc}}+f^{\rm lc}_*(X^n_0)+F=K_{\cX^{\rm lc}}+X^{\rm lc}_0\sim K_{\cX^{\rm lc}}$$
 is ample over $\cX^n$.   So $E=\frac{1}{r}f^{{\rm lc}*}\cL^n+K_{\cX^{\rm lc }}$ is ample over $X^n$ and $\cL^{\rm lc}_t:=\frac{1}{r}f^{{\rm lc}*}\cL^n+tE$ is ample  for some $0<t\ll 1$. Therefore, \eqref{e-decreasing} implies
$ \Fut(\cX^{\rm lc},\cL^{\rm lc}_t)\le \Fut(\cX^n,\cL^n),$
and the equality holds if and only if $(\cX^n,X^n_0)$ is log canonical. 
\medskip

\noindent {\it Step 2:} Replacing $\cL^{\rm lc}$ by its power, we can assume that $H:=\cL^{lc}-K_{\cX^{\rm lc}}$ is ample. Then we run $K_{\cX^{\rm lc}}$-MMP with the scaling of $H$ (see \cite{BCHM10}), which is automatically $\mathbb{G}_m$-equivariant in each step. Thus we get a sequence of numbers $t_0=1>t_1\ge t_2\ge...\ge t_{m-1}> t_m=\frac{1}{r}$, with a sequence of models
\[
\cX^{\rm lc}=Y_0\dasharrow Y_1\dasharrow \cdots \dasharrow Y_{m-1}
\]
such that if we let $H_i$ be the pushforward of $H$ on $Y_i$, $K_{Y_i}+sH_i$ is nef for any $s\in [t_{i-1},t_i]$. Moreover, we have $K_{Y_{m-1}}+t_mH_{m-1}\sim _{\mathbb{Q}} 0$. Thus 
\[
K_{Y_{m-1}}+t_{m-1}H_{m-1}\sim_{\mathbb{Q}}(t_{m-1}-t_m)H_{m-1}
\]
is big and nef. Let $X^{\rm an}$ be the ample model of $H_{m-1}$ and $\cL^{\rm an}$ the ample $\mathbb{Q}$-divisor induced by $H_{m-1}$. If we use \eqref{e-inter} to define the generalized Futaki invariant  when $\cL$ is a big and nef line $\mathbb{Q}$-bundle, then \eqref{e-decreasing} implies 
\begin{eqnarray*}
\Fut(\cX^{\rm lc},\cL^{\rm lc})&=&\Fut(Y_0, K_{Y_0}+H)\\
&\ge &\Fut(Y_0, K_{Y_0}+t_1H_0)\\
&=& \Fut(Y_1, K_{Y_1}+t_1H_1)\\
&\ge& \cdots\\
&=& \Fut(Y_{m-1}, K_{Y_1}+t_{m-1}H_{m-1})\\
&=& \Fut(\cX^{\rm an}, \cL^{\rm an}),
\end{eqnarray*}
 and the equality holds if and only if $\cX^{\rm lc}=\cX^{\rm an}$. We note that since $-K_{\cX^{\rm an}}$ is proportional to $\cL^{\rm an}$, we indeed have 
 \begin{eqnarray}\label{e-anti}
  \Fut(\cX^{\rm an}, \cL^{\rm an})=-\frac{1}{2(n+1)(-K_X)^n}\left(K_{\bar{\cX}^{\rm an}/\mathbb{P}^1}\right)^{n+1}.
\end{eqnarray}
\medskip
(In particular, $(\cX^{\rm an}, \cL^{\rm an})$ is a weakly special test configuration.)

\noindent {\it Step 3:} In the last step, by a tie-breaking argument, we can show after a possible base change, by running a suitable minimal model program, we can construct a model such that $(\cX^{\rm s}, X^{\rm s}_0)$ is plt and the discrepancy of $X^{\rm s}_0$ with respect to $(\cX^{\rm an},X_0^{\rm an})$ is $-1$. By an intersection number calculation, we have 
$$ -\frac{1}{2(n+1)(-K_X)^n}\left(K_{\bar{\cX}^{\rm an}/\mathbb{P}^1}\right)^{n+1}\ge -\frac{1}{2(n+1)(-K_X)^n}\left(K_{\bar{\cX}^{\rm s}/\mathbb{P}^1}\right)^{n+1}$$
and the equality holds if and only if $\cX^{\rm an}=\cX^{\rm s}.$
\end{proof}

\begin{say}While special degenerations are indeed quite special, however, even simple Fano varieties could have many special degenerations. An easy example is in Example \ref{ex-quadratic}. One could think of the stack $\kX^{\rm Fano}_{n,V}$ of all klt Fano varieties with fixed numerical invariants similar to the stack $\mathfrak{Sh}_f$ of all coherent sheaves with the fixed Hilbert polynomial. 
\end{say}
\begin{exmp}\label{ex-quadratic}
The family $(x^2+y^2+z^2+tw^2=0)\subset \mathbb{P}^2\times \mathbb{A}^1$, gives a special degeneration of $\mathbb{P}^1\times \mathbb{P}^1$ to the cone over a conic curve. 
\end{exmp}
\begin{exmp}A $\mathbb{Q}$-Fano does not have any nontrivial weakly special test configuration if and only if $X$ is exceptional, that is
$$\alpha(X):=\inf \{\lct(X,D)| \ D\sim_{\mathbb{Q}}-K_X\}> 1.$$
i.e., if we define $T(E)=\sup_D \{\ord_{E}(D)\   | \ D\sim_{\mathbb{Q}}-K_X\}$, then $ \frac{A_{X}(E)}{T(E)}> 1$ for all divisors $E$ over $X$ as $\alpha(X)=\inf_{E} \frac{A_{X}(E)}{T(E)}$. See Theorem \ref{t-weakspecial} for a proof.
\end{exmp}

However, it is known for a $\mathbb{Q}$-Fano variety $X$ 
$$\alpha(X)>\frac{\dim(X)}{\dim(X)+1} 
$$
is sufficient to imply that $X$ is K-stable (see Example \ref{e-alpha}). 

So it is clear that Theorem \ref{t-specialdegeneration} alone is not strong enough to verify the K-stability of a general Fano variety.

\section{Fujita-Li's valuative criterion of K-stability}\label{s-twoinvariants}

In this section, we will discuss the characterization of K-stability using valuations. The viewpoint of using valuations to reinterpret a one parameter group degeneration was introduced in \cite{BHJ17}, based on earlier works by \cite{Nys12, Sze15}. A key definition, made in a series of remarkable works \cite{Fuj18, Fuj19b, Li17}, is the invariant $\beta(E)$ for divisorial valuations $E$ which will play a prominent role in this survey, based on Theorem \ref{t-valkstable} which says that one can use them to precisely characterize various K-stability notions. This change of viewpoint will be our major topic in the rest discussion of  Part \ref{p-what}.

Let $X$ be a normal variety such that $K_X$ is $\mathbb{Q}$-Cartier, we define the {\it log discrepancy} for any divisor $E$ over $X$
$$A_{X}(E)=a(E,X)+1$$ where $a(E,X)$ is the discrepancy (see \cite[Definition 2.25]{KM98}). So $X$ being klt is equivalent to saying that $A_X$ is positive for any $E$.

\subsection{$\beta$-invariant}\label{ss-invariant}
The $\beta$-invariant $\beta_{X}(E)$  was first defined in \cite{Fuj19b, Li17}, after an earlier attempt in \cite{Fuj18}.
\begin{defn}\label{d-betadivisor}
Let $X$ be an $n$-dimensional   $\mathbb{Q}$-Fano variety and $E$ a divisor over $X$. 
 We define
\begin{equation}\label{e-beta}
\beta_{X}(E)= A_{X}(E) -\frac{1}{(-K_X)^n} \int_0^\infty  \vol(\mu^*(-K_X)- t E)\, dt,
\end{equation}
where $E$ arises a prime divisor on a proper normal model $\mu:Y \to X$. We also denote $\frac{1}{(-K_X)^n} \int_0^\infty  \vol(\mu^*(-K_X)- t E)$ by $S_X(E)$.
\end{defn}

When $X$ is clear we will omit the decoration to simply write $\beta(E)$ etc..

\medskip

The  importance of $\beta$-invariant can be seen from the following theorem.

\begin{thm}[The valuative criterion for K-(semi)stability, {\cite{Fuj19b, Li17}}]\label{t-valkstable}
A $\mathbb{Q}$-Fano variety $X$  is 
\begin{enumerate}
\item  \emph{K-semistable} if and only if $\beta_{X}(E)\ge 0$ for all divisors  $E$ over $X$;
\item (together with \cite{BX19}) \emph{K-stable} if and only if $\beta_{X}(E)> 0$ for all divisors  $E$ over $X$.
 \end{enumerate}
 \end{thm}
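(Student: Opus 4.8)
The plan is to reduce the valuative criterion to the special-degeneration theorem (Theorem~\ref{t-specialdegeneration}) in two stages: first translate between special test configurations and divisorial valuations, then match the generalized Futaki invariant of a special test configuration with the $\beta$-invariant of the corresponding divisor. For the first stage, given a special test configuration $(\cX,\cL)$ with $\cX_0$ irreducible (as a $\bQ$-Fano central fiber), the valuation $\ord_{\cX_0}$ restricted to $K(X)\subset K(\cX)$ defines a divisorial valuation $v$ on $X$, and one checks (as in \cite{BHJ17}) that it is of the form $c\cdot\ord_E$ for a divisor $E$ over $X$ and some $c\in\bQ_{>0}$; conversely any divisor $E$ over $X$ admits a nontrivial multiple computed by a special test configuration precisely when $E$ is "special" in the sense of the later sections, but for the purpose of this theorem one only needs that the degenerations arising from Theorem~\ref{t-specialdegeneration} exhaust enough valuations. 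For the second stage, I would use the intersection formula \eqref{e-inter} for $\Fut$ of the special test configuration, expand $\bar\cL\sim_\bQ -rK_{\bar\cX}$ and $K_{\bar\cX/\PP^1}$ in terms of the exceptional divisor $\cX_0$ and its log discrepancy, and arrive at an identity of the shape
\[
\Fut(\cX,\cL) \;=\; \frac{c}{(-K_X)^n}\,\beta_X(E),
\]
the point being that $A_X(E)$ comes from the discrepancy term $K_{\bar\cX/\PP^1}$ while $S_X(E)$ (the integral of volumes) comes from expanding the top self-intersection $(\bar\cL)^{n+1}$ via a Zariski-type decomposition / the filtration picture, identifying $\int_0^\infty \vol(\mu^*(-K_X)-tE)\,dt$ with the relevant $\PP^1$-intersection number. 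This is essentially the computation of \cite{Fuj19b, Li17}.

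Granting this identity, part~(1) is immediate: if $X$ is K-semistable then $\Fut\ge 0$ on all special test configurations, hence $\beta_X(E)\ge 0$ for every $E$ that is computed by one; and by Theorem~\ref{t-specialdegeneration} every test configuration is dominated (after base change) by a special one with no larger $\Fut$, so $\beta_X(E)\ge 0$ for all $E$ in fact forces $\Fut\ge 0$ on all test configurations, i.e.\ K-semistability. The only gap to close is that an arbitrary divisor $E$ over $X$ need not literally be $\ord_{\cX_0}$ for a special $\cX$; here I would invoke an approximation/lower-semicontinuity argument — realize $E$ via a log-canonical-threshold or MMP construction (a "Koll\'ar component"--style extraction, or the dreamy-divisor argument of Section~\ref{s-specialvaluation}) to produce special test configurations whose associated valuations limit to $\ord_E$, using continuity of both $A_X$ and $S_X$ in the relevant parameter. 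Alternatively, defer to the Ding-invariant route of Section~\ref{s-Ding}, where $\Ding(\cF)$ is defined directly for the filtration induced by any valuation and compared to $\beta$ without passing through test configurations at all.

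For part~(2), the forward direction (K-stable $\Rightarrow$ $\beta>0$) follows the same lines together with the equality clause of Theorem~\ref{t-specialdegeneration}: if $\beta_X(E)=0$ for some nontrivial $E$, the associated special test configuration has $\Fut=0$ and, tracing the equality cases through Steps~0--3, is non-trivial in codimension one, contradicting K-stability; a subtlety is ruling out the "product" case, i.e.\ that $\beta_X(E)=0$ could come only from a valuation induced by an automorphism one-parameter subgroup, but strict K-stability excludes exactly those. The reverse direction ($\beta>0\Rightarrow$ K-stable) is where \cite{BX19} enters: $\beta_X(E)>0$ for all $E$ is a priori only a pointwise (non-uniform) positivity, and deducing genuine K-stability — not merely K-semistability — requires knowing that the infimum of $\beta$ over the (compact, after the work of \cite{BX19}) space of valuations computing destabilizations is attained, so that $\beta>0$ upgrades to $\beta\ge\e>0$ and hence to $\Fut>0$ on non-trivial test configurations via the estimate in Theorem~\ref{t-specialdegeneration}. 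I expect this last point — converting pointwise positivity of $\beta$ into the strict inequality defining K-stability, which genuinely needs the properness/boundedness input of \cite{BX19} — to be the main obstacle; the intersection-theoretic identity $\Fut = \tfrac{c}{(-K_X)^n}\beta$ itself is a bounded, if somewhat intricate, computation.
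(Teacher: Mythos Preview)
Your overall strategy for part~(1) is sound and matches the paper: Lemma~\ref{l-beta=w} gives $2\cdot\Fut(\cX)=c\cdot\beta_X(E)$ for special test configurations, and combined with Theorem~\ref{t-specialdegeneration} this yields Corollary~\ref{c-kimpliesbeta} ($\beta\ge0\Rightarrow$ K-semistable, $\beta>0\Rightarrow$ K-stable). You also correctly flag that the reverse direction of (1) is the non-trivial one and point toward either the Ding-filtration route (Section~\ref{s-Ding}) or the complements route (Section~\ref{s-specialvaluation}), which is exactly what the paper does.

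For part~(2), however, you have the hard and easy directions swapped. The implication $\beta>0\Rightarrow$ K-stable needs \emph{no} input from \cite{BX19} and no compactness or uniformity: it is already contained in Corollary~\ref{c-kimpliesbeta}, via Theorem~\ref{t-specialdegeneration} and Lemma~\ref{l-beta=w}. It is the implication K-stable $\Rightarrow\beta>0$ that requires \cite{BX19}. Your contrapositive ``if $\beta_X(E)=0$ then the associated special test configuration has $\Fut=0$'' presupposes that an arbitrary divisor $E$ with $\beta(E)=0$ \emph{arises} from a special test configuration --- but that is precisely the assertion to be proved, since a general divisor over $X$ need not be dreamy (the Rees algebra in \eqref{e-doublegraded} need not be finitely generated), and without finite generation there is no test configuration at all. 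The actual content of \cite{BX19} here is Theorem~\ref{t-delta=1}, which is a \emph{finite generation} statement: on a K-semistable $X$, if $\beta(E)=0$ one shows $A_{X,(1-\e)D_m}(E)<1$ for a suitably chosen $m$-basis-type divisor $D_m$, so by \cite{BCHM10} one can extract $E$ on a log-Fano model; this makes $E$ dreamy and produces a non-trivial test configuration with $\Fut=0$, contradicting K-stability. Your description of \cite{BX19} as a compactness result upgrading pointwise $\beta>0$ to uniform $\beta\ge\e>0$ is not what is happening.
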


The rest of the section will be devoted to prove one direction of Theorem \ref{t-valkstable}. Then different proofs of another direction will be completed in Section \ref{s-specialvaluation}, Section \ref{s-local} and Section \ref{s-Ding}.  

\bigskip
 
 Consider a special test configuration $\cX$, and denote by its special fiber $X_0$. Then the restriction of $\ord_{X_0}$ on $K(\cX)=K(X\times \mathbb{A}^1)$ to $K(X)$ yields a valuation $v$. It is easy to see when $\cX$ is a trivial test configuration, then $v$ is trivial, we also have
\begin{lem}[{\cite[Lemma 4.1]{BHJ17}}]\label{l-induced}
If $\cX$ is not a trivial test configuration, then $v$ is a divisorial valuation, i.e. $v=c\cdot \ord_E$ for some $c\in \mathbb{Z}_{>0}$ and $E$ over $X$. 
\end{lem}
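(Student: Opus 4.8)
The statement to prove is Lemma \ref{l-induced}: if a special test configuration $\cX$ of $(X,-rK_X)$ is not trivial, then the valuation $v$ on $K(X)$ obtained by restricting $\ord_{X_0}$ to $K(X) \subset K(\cX) = K(X \times \bA^1)$ is of the form $c \cdot \ord_E$ for some $c \in \Z_{>0}$ and some prime divisor $E$ over $X$. The plan is to exhibit an explicit birational model of $X$ on which $v$ becomes (a multiple of) the order of vanishing along a prime divisor, using the geometry of the test configuration itself.

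First I would set up coordinates: $K(\cX) = K(X)(t)$ where $t$ is the coordinate on $\bA^1$, and $\ord_{X_0}$ is a divisorial valuation on $\cX$ (since $X_0$ is an irreducible reduced Cartier divisor — here one uses that $\cX$ is special, so $\cX$ is normal and $(\cX, X_0)$ is plt, in particular $X_0$ is a prime divisor). The key point is that $v = \ord_{X_0}|_{K(X)}$ is a \emph{nontrivial} valuation on $K(X)$ precisely when the $\bG_m$-action does not extend to a trivial product structure; nontriviality is given by hypothesis (and one should note $v(t) = \ord_{X_0}(t) > 0$ plays a role in the bookkeeping, though $t \notin K(X)$). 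To identify $v$, I would use the Rees-algebra / degeneration description of $v$: the special test configuration $\cX$ is, up to base change, the Rees construction associated to a filtration of $R = \bigoplus_m H^0(X, -mrK_X)$, and when the special fiber is irreducible this filtration is the one induced by a single valuation. Concretely, restrict $v$ to the function field and observe that its value group is a finitely generated subgroup of $\Z$ (it is contained in $\frac{1}{N}\Z$ since everything is happening on finite-type schemes), hence $v$ has rational rank one and is discrete; then invoke Abhyankar-type inequalities — $\mathrm{tr.deg}_k(k(v)/k) + \mathrm{rat.rk}(v) \le \dim X$ with rational rank one — to conclude $v$ is Abhyankar of the right type, and a discrete Abhyankar valuation of rational rank one on a function field of a variety is a divisorial valuation, i.e. $v = c \cdot \ord_E$ for a prime divisor $E$ over $X$ and $c \in \Q_{>0}$; since the value group is in $\Z$ and $\ord_E$ is primitive, $c \in \Z_{>0}$.

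Alternatively, and perhaps more cleanly, I would argue via resolution: take a $\bG_m$-equivariant resolution $\tilde{\cX} \to \cX$ (or a suitable birational modification) such that the strict transform of $X_0$, together with the closure of $\tilde{X_0}$ under the isomorphism over $\bA^1 \setminus \{0\}$, meets the general fiber $X_t \cong X$ in a prime divisor $E_t$; flatness and $\bG_m$-invariance let one spread this out, and the divisor $E = E_1 \subset X$ (or a component of the intersection of $\overline{X_0}$ with a general fiber, transported to $X$ via $\phi$) is the sought-after prime divisor over $X$, with $c$ the coefficient coming from the ramification of the base change $z \mapsto z^d$ if any was needed. The cited reference \cite[Lemma 4.1]{BHJ17} does essentially this, and since the excerpt permits me to assume results stated earlier, I would in the write-up reduce to invoking the structure theory of test configurations as filtrations together with the standard fact that a $\Z$-valued valuation satisfying the Abhyankar equality with rational rank one is divisorial.

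The main obstacle I anticipate is twofold: (i) verifying that $v$ is genuinely \emph{nontrivial} on $K(X)$ — this requires knowing that $X_0$ is not isomorphic to $X$ "compatibly", i.e. that a non-product test configuration really induces a non-product degeneration of the function field; this is where one must use that $\cX$ is not a trivial test configuration in the precise sense of Definition \ref{d-kstable} (and that $\cX$ is normal, so one cannot have a trivial valuation hiding behind non-normality). (ii) Controlling the value group to be in $\Z$ rather than merely $\Q$ of rational rank one — here the finiteness of a relevant algebra (the section ring graded by the filtration, or equivalently $\bG_m$-weight considerations on $H^0(\cX_0, \cL_0^{\otimes k/r})$) does the job, but it must be stated carefully. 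Once nontriviality and discreteness are in hand, the identification $v = c\cdot\ord_E$ is the standard dictionary between discrete rank-one valuations and divisorial valuations on function fields of varieties, and the integrality $c \in \Z_{>0}$ follows since $\ord_E$ generates the value group $\Z$.
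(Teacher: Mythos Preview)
Your first approach is close to the paper's, but there is a genuine gap. You correctly observe that the value group of $v$ sits inside $\Z$ (since $\ord_{X_0}$ is $\Z$-valued), so $v$ has rational rank at most $1$. You then write down Abhyankar's inequality $\mathrm{tr.deg}(k(v)/k) + \mathrm{rat.rk}(v) \le \dim X$ and say this lets you ``conclude $v$ is Abhyankar of the right type''. But it does not: that inequality is only an \emph{upper} bound, and knowing $\mathrm{rat.rk}(v)=1$ gives you only $\mathrm{tr.deg}(k(v)/k)\le \dim X - 1$. There are plenty of discrete rank-one valuations on $K(X)$ that are not divisorial (e.g.\ any valuation with $\mathrm{tr.deg}(k(v)/k)<\dim X - 1$), so discreteness plus rational rank one is not enough. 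What is missing is a \emph{lower} bound forcing equality.

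The paper supplies exactly this lower bound, and it is the entire content of the proof. Since $\ord_{X_0}$ is divisorial on $K(\cX)$ it satisfies the Abhyankar equality $\mathrm{tr.deg}(K(\ord_{X_0})) + \mathrm{rank}_{\Q}(\ord_{X_0}) = \dim\cX = \dim X + 1$. Because $K(\cX)/K(X)$ has transcendence degree $1$, restricting a valuation along this extension can drop the Abhyankar sum by at most $1$; hence
\[
\mathrm{tr.deg}(K(v)) + \mathrm{rank}_{\Q}(v) \;\ge\; \mathrm{tr.deg}(K(\ord_{X_0})) + \mathrm{rank}_{\Q}(\ord_{X_0}) - 1 \;=\; \dim X.
\]
Combined with the Abhyankar upper bound you already have, this forces equality, so $v$ is Abhyankar with nontrivial value group contained in $\Z$, hence divisorial. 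Your alternative geometric approach via equivariant resolution could in principle be made to work, but as written it is too vague to count as a proof; the Abhyankar argument is both shorter and what the paper actually does.
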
 
\begin{proof}Since ${\rm tr. deg}(K(\cX)/K(X))=1$, by Abhyankar's inequality, we know that
\begin{eqnarray*}
& &{\rm tr. deg}(K(v))+ {\rm rank}_{\mathbb{Q}}(v)\\
&\ge &{\rm tr. deg}(K(\ord_{X_0}))+ {\rm rank}_{\mathbb{Q}}(\ord_{X_0})-1\\
&=&\dim (X).
\end{eqnarray*}
This implies $v$ is an Abhyankar valuation, whose value group is nontrivial and contained in $\mathbb{Z}$, which implies the assertion.  
\end{proof}

While the above lemma is very general and cannot be easily used to trace the correspondence geometrically, see Theorem \ref{t-weakspecial} for a much more precise characterization of $v$.

\begin{lem}\label{l-beta=w}
For a nontrivial special test configuration $\cX$ of a $\mathbb{Q}$-Fano variety $X$,  denote by $v$ the valuation defined as above. Then we have 
\[
2\cdot {\rm Fut}(\cX)= \beta_X(v):=c\cdot \beta_X(\ord_E).
\]
\end{lem}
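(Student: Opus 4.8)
The plan is to prove the identity $2\cdot\Fut(\cX)=\beta_X(v)$ for a nontrivial special test configuration $\cX$ by unwinding both sides into intersection numbers on the compactified family $\bar{\cX}\to\PP^1$ and matching them. Since $\cX$ is special, we have $\cL\sim_\Q -r K_{\cX}$, so the intersection formula \eqref{e-inter} collapses to the single term
\[
\Fut(\cX)=-\frac{1}{2(n+1)(-K_X)^n}\left(K_{\bar{\cX}/\PP^1}\right)^{n+1},
\]
exactly as in \eqref{e-anti}. So the first step is to record this simplification and reduce the claim to showing $-\frac{1}{(n+1)(-K_X)^n}(-K_{\bar{\cX}/\PP^1})^{n+1}=\beta_X(v)$ (up to sign bookkeeping, noting $K_{\bar\cX/\PP^1}$ versus $-K_{\bar\cX/\PP^1}$).

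Next I would set up the filtration picture. Writing $v=c\cdot\ord_E$ with $E$ obtained on a model $\mu:Y\to X$, the special test configuration induces, via the $\mathbb{G}_m$-weight on $H^0(\cX_0,-krK_{\cX_0})$, a filtration $\cF$ on the section ring $R=\bigoplus_m H^0(X,-mrK_X)$ whose jumping numbers are governed by $v$; concretely, as in \cite{BHJ17}, $\dim\cF^{\lambda}R_m/\cF^{>\lambda}R_m$ counts sections $s$ with $v(s)=\lambda$ (after the standard normalization that makes $\ord_{\cX_0}$ restrict to $v$ plus a linear term in the grading). The total weight $w_{kr}$ is then $\sum_\lambda \lambda\cdot(\dim\cF^\lambda R_k/\cF^{>\lambda}R_k)$ up to the linear correction. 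The second step is thus to express $w_{kr}$ as a sum over the filtration and identify the leading $k^{n+1}$ and sub-leading $k^n$ coefficients $b_0,b_1$ in terms of the volume function $t\mapsto\vol(\mu^*(-K_X)-tE)$ and the log discrepancy $A_X(E)$.

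The third step is the Okounkov-body / Fujita-style computation: one shows
\[
\lim_{k\to\infty}\frac{w_{kr}}{k^{n+1}}\cdot(n+1)=\frac{1}{(-rK_X)^n}\int_0^\infty\vol(\mu^*(-K_X)-tE)\,dt\cdot(\text{normalization}),
\]
i.e. the leading term of the weight recovers $S_X(E)$ (this is essentially \cite[the computation underlying]{Fuj19b}), while the sub-leading term of $d_k$ produces $\frac{a_1}{a_0}$ and the sub-leading term of $w_k$, through the Riemann–Roch comparison between $\bar\cX$ and $X\times\PP^1$ combined with adjunction on the plt pair $(\cX,\cX_0)$, produces the $A_X(E)$ contribution with the correct coefficient $c$. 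Assembling $\Fut=-F_1=(a_1b_0-a_0b_1)/a_0^2$ from these expansions and comparing with \eqref{e-beta} gives $2\Fut(\cX)=A_X(v)-S_X(v)=\beta_X(v)$.

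The main obstacle is the careful bookkeeping in the second and third steps: getting the normalization of the induced valuation exactly right (the shift by a linear function of the grading, and the factor $r$ coming from working with $-rK_X$ rather than $-K_X$), and rigorously justifying the passage from the finite-level weights $w_k$ to the integral $\int_0^\infty\vol\,dt$ — this requires either an Okounkov-body argument or the filtration-asymptotics machinery of \cite{BHJ17}, plus a uniform control on error terms to extract the $k^n$-coefficient, not just the leading one. The identification of the sub-leading weight coefficient with $A_X(E)$ is where adjunction for the plt pair $(\cX,\cX_0)$ enters and is the most delicate point; alternatively, one can sidestep part of this by invoking the intersection-formula form \eqref{e-anti} together with a direct computation of $(-K_{\bar\cX/\PP^1})^{n+1}$ via the same Okounkov-body decomposition, which is the route I would actually take since it keeps everything on the geometric side.
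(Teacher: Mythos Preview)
Your proposal has a conceptual gap in step 3. You assert that the leading coefficient of the weight recovers $S_X(E)$ and that $A_X(E)$ enters through the sub-leading coefficient $b_1$ via adjunction. This is incorrect: the leading coefficient of $w_m$ already encodes the full quantity $\beta_X(E)=A_X(E)-S_X(E)$, and no sub-leading analysis is needed (or wanted---extracting $b_1$ is substantially harder and involves more than $A_X(E)$).

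The point is that the $\bG_m$-weight of a section $s\in H^0(X,-mK_X)$, viewed through the test configuration, is not $v(s)$ but $v(s)-m\cdot A_X(v)$. The shift by $m\cdot A_X(v)$ arises because the central fiber $X_0\subset\cX$, regarded as a divisor over $X\times\bA^1$, has discrepancy $a(X_0,X\times\bA^1)=c\cdot A_X(E)$; transporting sections from $X\times\bA^1$ to $\cX$ through the crepant comparison $K_{\widehat\cX}=\psi^*K_{X\times\bA^1}+a(X_0,X\times\bA^1)\widetilde{X_0}+\cdots$ introduces exactly this linear-in-$m$ offset. Summing over a basis compatible with the filtration and letting $m\to\infty$ gives
\[
\lim_{m\to\infty}\frac{w_m}{m^{n+1}/n!}=c\int_0^\infty\vol(-K_X-tE)\,dt-c\,A_X(E)(-K_X)^n=-c(-K_X)^n\beta_X(E),
\]
a purely leading-order computation. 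The paper then matches this against the same limit computed by Riemann--Roch on $\bar\cX$, namely $\frac{1}{n+1}(-K_{\bar\cX/\PP^1})^{n+1}=-2(-K_X)^n\Fut(\cX)$ from \eqref{e-anti}.

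Your ``alternative route'' in the last sentence is essentially this argument, and is what the paper does. The detour through the general formula $(a_1b_0-a_0b_1)/a_0^2$ is unnecessary once you have invoked \eqref{e-anti}, and your identification of the discrepancy shift as a mere ``normalization nuisance'' misses that it is precisely the mechanism by which $A_X(E)$ enters at leading order.
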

\begin{proof}Consider a section $s\in H^0(-mK_X)$ for $m$ sufficiently divisible. Let $D_s$ be the closure of $(s)\times \mathbb{A}^1$ on $X\times \mathbb{A}^1$. Fix a common log resolution $\widehat{\cX}$ of $\cX$ and $X\times \mathbb{A}^1$ 
\begin{center}
\begin{tikzcd}
  & \widehat{\cX}  \arrow[dr, "\psi'"] \arrow[dl,swap,"\psi"]& \\
 X\times \mathbb{A}^1  \arrow[rr, dashrightarrow, "\phi'"] & & \cX.
   \end{tikzcd} 
    \end{center}
Denote by $X_0$ the special fiber of $\cX$.     So 
\begin{eqnarray}\label{e-weight}
\psi^*(D_s)=\widehat{D_s}+(\ord_{X_0}(\bar{s}))\widetilde{X_0}+E\in H^0(	-mK_{\widehat{\cX}}+m\cdot a(X_0,X\times \mathbb{A}^1)\widetilde{X}_0+F) 
\end{eqnarray} where $\widehat{D}_{s}$ and $\widetilde{X_0}$ are the birational transforms of $D_{s}$ and $X_0$ on $\widehat{\cX}$ and ${\rm Supp}(E)$ as well as ${\rm Supp}(F)$ supporting over $0$ do not contain the birational transform of $X\times\{0\}$ and $X_0$. We know 
$$a(X_0,X\times \mathbb{A}^1)=A(X_0,X\times \mathbb{A}^1)-1=c\cdot A_{X}(E ) \mbox{\ \ and \ \ }\ord_{X_0}(\bar{s})=c\cdot \ord_E(s),$$
so if we denote by $D_s'=\psi'_*\big(\widehat{D_s})$, pushforward \eqref{e-weight} under $\psi'$, it becomes 
$$D_s'+(c\cdot\ord_E(s))X_0\in H^0(-mK_{\cX}+mc\cdot A_X(E){X}_0).$$
 We choose  a basis $\{s_1,..., s_{N_m}\}$ $(N_m=\dim H^0(-mK_X))$ of $H^0(-mK_{X})$ which is compatible with the filtration, that is, if we let $j_i=\dim \cF^i(H^0(-mK_X))$, then 
$\{s_1,..., s_{j_i}\}$ form a basis of $\cF^i(H^0(-mK_X))$. The above computation says
the total weight $w_m$ of $\mathbb{G}_m$ on $H^0(-mK_{\cX})$ is
\[
 w_m=\sum_ic\cdot(j_{i}-j_{i+1})i-rmc,
\]
and if we divide by $\frac{m^{n+1}}{n!}$ and let $m\to \infty$, we have 
\[
\lim_{m\to \infty}\frac{w_m}{m^{n+1}/n!}=c\cdot \int^{\infty}_{0}\vol(-K_X-t\cdot E)dt-cA_X(E)(-K_X)^n=-c(-K_X)^n\beta_X(E),
\]
where we use
\[
\lim_m \frac{1}{m^{n+1} /n!}\sum_i(j_{i}-j_{i+1})i=\lim_m \frac{1}{m^{n+1} /n!}\sum_ij_i=\int^{\infty}_{0}\vol(-K_X-t\cdot E)dt 
\]
by the dominated convergence theorem. 

On the other hand, a simple Riemann-Roch calculation implies 
$$\lim_{m\to \infty}\frac{w_m}{m^{n+1}/n!}=\frac{1}{n+1}(-K_{\bar{\cX}/\mathbb{P}^1})^{n+1}=-2(-K_X)^n\cdot \Fut(\cX)$$ by \eqref{e-anti}.
 \end{proof}

Combining with Theorem \ref{t-specialdegeneration}, a direct consequence of Lemma \ref{l-beta=w} is one direction of Theorem \ref{t-valkstable}.  
\begin{cor}\label{c-kimpliesbeta}
If $\beta_{X}(E)\ge 0$ (resp. $\beta_{X}(E) >0$) for any divisor $E$ over $X$, then $X$ is K-semistable (resp. K-stable). 
\end{cor}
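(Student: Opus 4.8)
The plan is to combine the special degeneration theorem with the identity relating the generalized Futaki invariant of a special test configuration to the $\beta$-invariant. Start from an arbitrary test configuration $(\cX,\cL)$ of $(X,-rK_X)$. By Theorem \ref{t-specialdegeneration} there are $d\in\mathbb{N}$ and a special test configuration $(\cX^{\rm st},\cL^{\rm st})$ of $(X,-rK_X)$, birational to $(\cX,\cL)\times_{\mathbb{A}^1, z\to z^d}\mathbb{A}^1$, with
\[
\Fut(\cX^{\rm st},\cL^{\rm st})\ \le\ d\cdot \Fut(\cX,\cL),
\]
and equality if and only if the birational map $(\cX^{\rm st},\cL^{\rm st})\dasharrow (\cX,\cL)\times_{\mathbb{A}^1, z\to z^d}\mathbb{A}^1$ is an isomorphism outside codimension $2$. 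So everything reduces to controlling $\Fut(\cX^{\rm st},\cL^{\rm st})$.

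If $(\cX^{\rm st},\cL^{\rm st})$ is trivial, then $\Fut(\cX^{\rm st},\cL^{\rm st})=0$ and hence $\Fut(\cX,\cL)\ge 0$. If it is nontrivial, then by Lemma \ref{l-induced} the valuation $v$ on $K(X)$ induced by $\ord_{X_0^{\rm st}}$ has the form $c\cdot\ord_E$ with $c\in\mathbb{Z}_{>0}$ and $E$ a divisor over $X$, and Lemma \ref{l-beta=w} gives $2\cdot\Fut(\cX^{\rm st},\cL^{\rm st})=c\cdot\beta_X(E)$. Thus the hypothesis $\beta_X(E)\ge 0$ (resp. $>0$) for all $E$ forces $\Fut(\cX^{\rm st},\cL^{\rm st})\ge 0$ (resp. $>0$), and then $d\cdot\Fut(\cX,\cL)\ge 0$ (resp. $>0$), so $\Fut(\cX,\cL)\ge 0$ (resp. $>0$). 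Either way $\Fut(\cX,\cL)\ge 0$ for every test configuration, i.e. $X$ is K-semistable.

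For the K-stable statement, assume $\beta_X(E)>0$ for every $E$ and suppose $\Fut(\cX,\cL)=0$ for some test configuration. By the dichotomy above $(\cX^{\rm st},\cL^{\rm st})$ cannot be nontrivial (else $\Fut(\cX,\cL)>0$), so it is trivial and $\Fut(\cX^{\rm st},\cL^{\rm st})=0=d\cdot\Fut(\cX,\cL)$; the equality clause of Theorem \ref{t-specialdegeneration} then forces the birational map $(\cX^{\rm st},\cL^{\rm st})\dasharrow (\cX,\cL)\times_{\mathbb{A}^1, z\to z^d}\mathbb{A}^1$ to be an isomorphism outside codimension $2$, and since the source is trivial so is $(\cX,\cL)\times_{\mathbb{A}^1, z\to z^d}\mathbb{A}^1$, hence also $(\cX,\cL)$, outside codimension $2$. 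Therefore $X$ is K-stable. The corollary is a formal assembly of Theorem \ref{t-specialdegeneration}, Lemma \ref{l-induced} and Lemma \ref{l-beta=w}, the two of which already carry all the MMP and Riemann--Roch content; the one point that deserves a line of justification is the last reduction, namely that a test configuration whose degree-$d$ base change is trivial outside codimension $2$ is itself trivial outside codimension $2$ (the base change otherwise plays no role, since $\beta_X$ is an invariant of $X$ alone).
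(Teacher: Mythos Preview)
Your proof is correct and follows exactly the approach the paper indicates: the paper's own proof is the single sentence ``Combining with Theorem \ref{t-specialdegeneration}, a direct consequence of Lemma \ref{l-beta=w} is one direction of Theorem \ref{t-valkstable},'' and you have simply unpacked this by treating the trivial/nontrivial special test configuration cases and invoking the equality clause of Theorem \ref{t-specialdegeneration} for K-stability. The one point you flag---that triviality outside codimension $2$ descends along the degree-$d$ base change---is indeed the only thing not literally contained in the cited results, and it holds because the finite surjective map $\cX\times_{\bA^1,z\mapsto z^d}\bA^1\to \cX$ preserves codimension of closed subsets, so the indeterminacy locus of $\cX\dashrightarrow X\times\bA^1$ has codimension $\ge 2$ as soon as its preimage does.
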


The above discussion can be extended using the following construction.

\begin{say}[Rees construction]\label{s-rees}
The following general construction is from \cite[\S 2]{BHJ17}:

Fix $r$ such that $-rK_X$ is Cartier. Let $v$ be a valuation on $X$ which yields a filtration $\cF:=\cF_v$ on $R(X)=\bigoplus_{m\in \bN}R_m=\bigoplus_{m\in \bN} H^0(-mrK_X)$ with 
\begin{equation}\label{e-VtoF}
\cF^{\la}_vR_m:=\{s\in H^0(-mrK_X) \ |\ v(s)\ge \la\}, \ \forall m\in \bN.
\end{equation}
Then we can construct the \emph{Rees algebra}  of $\cF$ 
\[
{\rm Rees}(\cF): = \bigoplus_{m \in \N}  \bigoplus_{p \in \Z} t^{-p} \cF^p R_m  \subseteq R[t,t^{-1}].\]
as a $k[t]$-algebra.

The \emph{associated graded ring} of $\cF$ is
\[
\gr_{\cF} R : =  \bigoplus_{m \in \N} \bigoplus_{p \in \Z} \gr_{\cF}^p R_m,
\quad\quad \text{ where } \gr_{\cF}^p R_m = \frac{\cF^{p}R_m}{\cF^{p+1}R_m}
.\]
Note that 
\begin{equation}\label{e-Rees}
{ \rm Rees}(\cF) \otimes_{k[t]} k[t,t^{-1}] \simeq R[t,t^{-1}] \quad 
\text{ and } 
\quad\quad
\frac{{\rm Rees}(\cF)}{ t\, {\rm Rees}(\cF)}  \simeq \gr_{\cF}R .\end{equation}

We say that an $\N$-filtration $\cF$ is \emph{finitely generated} if ${\rm Rees}(\cF)$ is a finitely generated $k[t]$-algebra.
 When $v$ is given by a divisor $E$, this finite generation condition is equivalent to the finite generation of
  the double graded ring
\begin{eqnarray}\label{e-doublegraded}
\bigoplus_{m,n\in \bN}H^0(-m\mu^*K_X-nE),
\end{eqnarray}
i.e. the a divisor is {\it dreamy} (see \cite[Definition 1.2(2)]{Fuj19c}). 
Assuming $\cF$ is finitely generated, we set  $\cX: = \Proj_{\mathbb{A}^1} \left({\rm Rees}(\cF)\right)$. 
By \eqref{e-Rees},
\[
\cX_{ \mathbb{A}^1\setminus \{0 \}} \simeq X \times ( \mathbb{A}^1\setminus \{0 \} ) 
\quad \text{ and }
\cX_0 \simeq \Proj ( \gr_{\cF} R ). 
\]
\end{say}

\begin{lem}\label{l-integraldreamy}
Under the correspondence between Lemma \ref{l-induced} and  \eqref{e-doublegraded}, there is one-to-one correspondence 
 \[
\left\{
  \begin{tabular}{c}
\mbox{Nontrivial test configurations $\cX$}\\
of $X$ with an integral special fiber
  \end{tabular}
\right\} 
\ \longleftrightarrow
 \left\{
  \begin{tabular}{c}
\mbox{a divisorial valuation $a\cdot\ord_E$}\\
\mbox{with dreamy $E$ and $a\in \mathbb{N}_{+}$}\\
  \end{tabular}
\right\}.
\]
Moreover, we have
$a\cdot \beta(E)=2 \cdot \Fut(\cX)$.
\end{lem}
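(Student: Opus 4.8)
\textbf{Proof plan for Lemma \ref{l-integraldreamy}.}

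The plan is to assemble the correspondence from the ingredients already in place. The backward direction is essentially the Rees construction of \S\ref{s-rees}: given a dreamy divisor $E$ over $X$ and $a\in\mathbb{N}_+$, the valuation $v=a\cdot\ord_E$ induces a finitely generated $\mathbb{N}$-filtration $\cF_v$ on $R(X)$, and we set $\cX=\Proj_{\mathbb{A}^1}(\mathrm{Rees}(\cF_v))$, with polarization $\cL$ the tautological ample $\mathbb{Q}$-line bundle twisted so that $\cL|_{X_t}\sim -rK_{X_t}$; by \eqref{e-Rees} this is a test configuration whose special fiber $\cX_0\simeq\Proj(\gr_{\cF_v}R)$, and one checks (using that $v$ is a divisorial hence in particular a ``nice'' valuation and $\gr_{\cF_v}R$ is a domain because $v$ is a valuation) that $\cX_0$ is integral. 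Nontriviality of $\cX$ corresponds to $v$ being nontrivial. For the forward direction, given a nontrivial test configuration $\cX$ with integral special fiber $\cX_0$, Lemma \ref{l-induced} produces the divisorial valuation $v=\ord_{\cX_0}|_{K(X)}=a\cdot\ord_E$ with $a\in\mathbb{Z}_{>0}$; one then needs to see that $E$ is dreamy, i.e. that \eqref{e-doublegraded} is finitely generated — this follows because the filtration $\cF_v$ is exactly the one associated to $\cX$, which has finitely generated Rees algebra by construction (the Rees algebra is the $\mathbb{G}_m$-equivariant coordinate ring of $\cX$ with its polarization, which is finitely generated as $\cX\to\mathbb{A}^1$ is projective). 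That the two constructions are mutually inverse is then a matter of unwinding the identifications in \eqref{e-Rees}: reconstructing $\cF_v$ from $\cX$ and vice versa.

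For the final identity $a\cdot\beta(E)=2\cdot\Fut(\cX)$, the point is that this is Lemma \ref{l-beta=w} with the special-test-configuration hypothesis removed. The proof of Lemma \ref{l-beta=w} is almost entirely a computation of the total $\mathbb{G}_m$-weight $w_m$ on $H^0(\cX_0,\cL_0^{\otimes m})$ in terms of the filtration $\cF_v$, together with the Riemann--Roch identity $\lim_m w_m/(m^{n+1}/n!)=\frac1{n+1}(-K_{\bar\cX/\mathbb{P}^1})^{n+1}$; the only place specialness was used is \eqref{e-anti}, which identifies $\Fut(\cX)$ with the self-intersection of $K_{\bar\cX/\mathbb{P}^1}$ and needs $\cL\sim_\mathbb{Q}-rK_\cX$. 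So I would instead invoke the general intersection formula \eqref{e-inter} of Lemma \ref{l-intersection}: rerun the weight computation of Lemma \ref{l-beta=w} to get
\[
\lim_{m\to\infty}\frac{w_m}{m^{n+1}/n!}=a\Big(\int_0^\infty\vol(-K_X-tE)\,dt-A_X(E)(-K_X)^n\Big)=-a(-K_X)^n\beta_X(E),
\]
and separately, since the filtration is now multiplicative, $\gr_{\cF_v}R$ is the section ring of $(\cX_0,\cL_0)$ so the weight and Riemann--Roch bookkeeping expresses $\lim_m w_m/(m^{n+1}/n!)$ through the asymptotic Riemann--Roch coefficients $b_0$ appearing in Definition \ref{d-futaki}, which via \eqref{deffut} and \eqref{e-inter} equals $-2(-K_X)^n\Fut(\cX)$. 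Comparing the two gives the claim.

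The main obstacle is the integrality/dreaminess bookkeeping in setting up the bijection cleanly: one has to be careful that ``integral special fiber'' on the test-configuration side matches precisely ``$\gr_{\cF_v}R$ is a finitely generated \emph{domain}'' on the valuation side, and that the residual ambiguity (the integer $a$ and the choice of model $Y\to X$ carrying $E$) is correctly absorbed — a divisorial valuation determines $E$ only up to birational modification, so the right-hand set must be read as divisorial valuations of the form $a\cdot\ord_E$ rather than pairs $(E,a)$. Once the correspondence of \S\ref{s-rees} is taken as given, though, this is routine; the weight computation for the numerical identity is a verbatim repeat of the one in Lemma \ref{l-beta=w}, now fed into \eqref{e-inter} instead of \eqref{e-anti}.
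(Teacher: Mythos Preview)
Your bijection argument is fine and matches the paper's approach: the Rees construction of \S\ref{s-rees} gives the backward direction (dreamy $\Rightarrow$ finitely generated Rees algebra; associated graded of a valuation is a domain $\Rightarrow$ integral fiber), and Lemma \ref{l-induced} plus finite generation of the coordinate ring of a projective $\cX\to\mathbb{A}^1$ gives the forward direction.

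The gap is in your handling of the numerical identity. You correctly flag that the proof of Lemma \ref{l-beta=w} invokes \eqref{e-anti}, which needs $\cL\sim_{\mathbb{Q}}-rK_{\cX}$. But your proposed workaround --- feed the leading-order weight into the general formula \eqref{e-inter} --- does not close. The limit $\lim_m w_m/(m^{n+1}/n!)$ recovers only the top coefficient $b_0$, equivalently $(\frac{1}{r}\bar{\cL})^{n+1}$. The general $\Fut$ in \eqref{deffut} depends on \emph{both} $b_0$ and $b_1$; in the intersection form \eqref{e-inter} this is the extra term $K_{\bar{\cX}/\mathbb{P}^1}\cdot(\frac{1}{r}\bar{\cL})^n$. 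So from $b_0$ alone you cannot conclude $n!\,b_0=-2(-K_X)^n\Fut(\cX)$ without an additional relation tying $\cL$ to $K_{\cX}$. (Concretely, with $a_1=\tfrac{n}{2}a_0$ one has $2a_0\Fut=nb_0-2b_1$, and the identity you want is equivalent to $(n+1)b_0=2b_1$, which is not automatic.)

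The paper's fix is exactly to supply that relation, and it comes for free from the integral-fiber hypothesis: the $\mathbb{Q}$-Cartier divisor $\cL+rK_{\cX}$ is trivial over $\mathbb{A}^1\setminus\{0\}$, hence supported on $\cX_0$; since $\cX_0$ is irreducible it equals $c\,\cX_0$ for some $c\in\mathbb{Q}$; and $\cX_0=(t)$ is principal on $\cX$, so $\cL\sim_{\mathbb{Q}}-rK_{\cX}$. Once you have this, \eqref{e-anti} applies and the computation of Lemma \ref{l-beta=w} goes through verbatim --- which is precisely what the paper says. So rather than replacing \eqref{e-anti} by \eqref{e-inter}, you should instead note this one-line observation and keep \eqref{e-anti}.
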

\begin{proof}If $ E$ is a dreamy divisor, then it is clear the Rees construction  ${\rm Rees}(\cF_v)$ for $v=a\cdot \ord_E$ is finitely generated. And the fiber is integral, since the associated graded ring of any valuation is integral. 

To see the last statement, when the special fiber of a test configuration $\cX$ is integral, then $\cL\sim_{\mathbb Q}-K_{\cX}$. Thus the calculation in Lemma \ref{l-induced} 
can be verbatim extended to this setting. 
\end{proof}

We also make the following definitions which give smaller classes of divisorial valuations than being dreamy. 
 \begin{defn}\label{d-specialdivisor}
 A divisorial valuation $E$ over a $\mathbb{Q}$-Fano variety is called {\it special} (resp. weakly special) if there is a special test configuration $\cX$ (resp. weakly special test configuration $\cX$ with an integral fiber), such that $\ord_{X_0}|_{K(X)}=a\cdot \ord_E$.  
 \end{defn}

\begin{say}[The reverse direction]\label{say-betaimpliesK}

We still need to prove that the K-(semi)-stability implies the (semi)-positivity of $\beta$. 

Note that the main reason we can prove Corollary \ref{c-kimpliesbeta} is that instead of considering all test configurations, which are not easy to connect to $\beta$-invariants, Theorem \ref{t-specialdegeneration} allows us to only look at special test configurations. 
The difficulty for the reverse direction then lies on the fact that not every divisor $E$ over $X$ arises from a special test configuration, and more generally it is hard to precisely characterize which divisors are dreamy.  In our note, we will present different proofs of the reverse direction in Section \ref{s-specialvaluation} and Section \ref{s-Ding}. 

The first proof was independently given in \cite{Fuj19b} and \cite{Li17}.  There they use the notion of Ding invariant (first introduced in \cite{Ber16})  to define notions of Ding-stability. 
Then one can use MMP as in Section \ref{ss-mmp} but for Ding-invariants to show the notions of Ding stability is the same as the K-stability ones, since they are the same on special test configurations.  One major advantage of using Ding invariant is that its definition can be sensibly extended to any general bounded multiplicative filtration, which contain the setting of both test configurations and valuations,  by establishing an approximation result (proved in \cite{Fuj18}). See Theorem \ref{t-maintheorem2}. We will postpone the detailed discussions in Section \ref{s-Ding}.


The second proof, which was first found in \cite{LX20} indeed reduces everything to a `special' setting. This is not straightforward for $\beta$, however, we follow \cite{Li17} and interpret $\beta$ as the derivative of the normalized volume function over the cone singularity. In Section \ref{defn-complements}, following \cite{BLX19} we will present an argument only using global invariants (see  especially Theorem \ref{t-kimpliesbeta}).
In Appendix \ref{s-local}, we discuss the proof in \cite{LX20} using normalized volumes.
\end{say}

\begin{exmp}[Boundedness of volume, {\cite{Fuj18}}]\label{e-bounded} 
One of the first striking consequences of the valuative criterion Theorem \ref{t-valkstable} is the following statement proved by Fujita in \cite{Fuj18}: The volume of an $n$-dimensional K-semistable $\mathbb{Q}$-Fano variety is at most $(n+1)^n$. 

Let $X$ be a K-semistable $\mathbb{Q}$-Fano variety. Pick up a smooth point $x\in X$, and blow up $x$, we get $\mu\colon Y\to X$ with an exceptional divisor $E$. 
Since 
\[
0\to \fm_x^k\otimes \mathcal{O}_X(-mK_X)\to \mathcal{O}_X(-mK_X) \to  \mathcal{O}_X(-mK_X)\otimes (\mathcal{O}_X/ \fm_x^k)\to 0,
\]
we know that 
$$h^0(\mathcal{O}_Y(\mu^*(-mK_X)-kE))\ge h^0(\mathcal{O}_X(-mK_X))-h^0(\mathcal{O}_X(-mK_X)\otimes (\mathcal{O}_X/ \fm_x^k).$$
Thus we have
\begin{eqnarray*}
0\le (-K_X)^n\cdot\beta(E)&=&n\cdot (-K_X)^n-\int^{\infty}_{0}\vol(\mu^*(-K_X)-tE)dt\\
&\le &n\cdot (-K_X)^n-\int^{((-K_X)^n)^{\frac1n}}_{0}((-K_X)^n-t^n),
\end{eqnarray*}
which immediately implies $(-K_X)^n\le (n+1)^n. $

If $X$ is singular, and we choose $x$ to be a singularity, then an even stronger restriction on the volume in terms of the local volume of the singularity  is obtained in \cite{Liu18}. See Theorem \ref{t-localtoglobal}. 
\end{exmp}

\subsection{Stability threshold}\label{sss-delta}
It is also natural to consider a variant, 
$$
\delta_X(E)=_{\rm defn}\frac{A_X(E)}{S_X(E)}=\frac{(-K_X)^n \cdot A_{X}(E)}{ \int^{\infty}_{0}\vol(-K_X-tE)dt}.
$$

\begin{defn}[{\cite{FO18,BJ20}}]\label{d-delta}
For a $\mathbb{Q}$-Fano variety, we define the \emph{stability threshold} of $(X,\Delta)$ to be
$$ \delta(X):=\inf_E \ \delta_X(E),$$
where the infumum through all divisors $E$ over $X$.
\end{defn}

In fact, the $\delta$-invariant $\delta(X)$ was first defined in \cite{FO18} 
in the following way:
Let $X$ be a $\mathbb{Q}$-Fano variety. 
Given a sufficiently divisible $m \in \N$, we say $D \sim_{\Q} -K_X$ is \emph{an $m$-basis type $\Q$-divisor} of $-K_X-\Delta$ if there exists a basis 
$\{s_1,\ldots, s_{N_m}  \}$ of $H^0(X, \cO_{X}(m(-K_X))$ such that \[
D = \frac{1}{m N_m} \bigg(  \left\{ s_1=0 \right\} + \cdots + \left\{ s_{N_m}=0 \right\} \bigg) .\]
We set 
\[
\delta_{m}(X,\Delta): = \min \{ \lct(X;D) \, \vert \, D \sim_{\Q} -K_X \text{ is  $m$-basis type}\}.\]
The original definition of $\delta(X)$ in \cite{FO18} is  $\limsup\limits_{m \to \infty } \delta_{m} (X)$. 
Then it is shown in \cite[Theorem 4.4]{BJ20} that the limit exists and
\begin{eqnarray}\label{e-delta}
 \lim\limits_{m \to \infty } \delta_{m} (X)=\inf_E \ \delta_X(E)
\end{eqnarray}
(see Proposition \ref{p-Sapproximation}).
This way of computing $\delta(X)$ as the infimum of the log canonical thresholds for a special kind of \emph{complements} (see Definition \ref{defn-complements}) is important both conceptually and computationally, as it connects to more birational geometry tools.  See e.g.  Section \ref{ss-complements}.
\medskip

 Theorem \ref{t-valkstable}(1) can be translated into the following theorem
\begin{thm}[\cite{FO18, BJ20}]\label{t-delta}
A $\mathbb{Q}$-Fano variety $X$ is K-semistable if and only if $\delta(X)\ge 1$.
\end{thm}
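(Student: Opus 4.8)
\textbf{Proof proposal for Theorem \ref{t-delta}.}

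The plan is to deduce Theorem \ref{t-delta} directly from the valuative criterion Theorem \ref{t-valkstable}(1) together with the identity \eqref{e-delta} relating $\delta(X)$ to the infimum $\inf_E \delta_X(E)$ over all divisorial valuations. The whole statement is essentially a reformulation of "$\beta_X(E)\ge 0$ for all $E$" into "$\delta_X(E)\ge 1$ for all $E$", so the heart of the matter is purely formal once the two earlier inputs are granted.

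First I would recall that for any divisor $E$ over $X$ we have $\delta_X(E)=A_X(E)/S_X(E)$ where $S_X(E)=\frac{1}{(-K_X)^n}\int_0^\infty \vol(\mu^*(-K_X)-tE)\,dt$, and that $S_X(E)>0$ strictly: indeed $\vol(\mu^*(-K_X)-tE)=(-K_X)^n>0$ at $t=0$ and varies continuously, so the integral is a positive real number. Consequently, for a fixed $E$,
\begin{equation*}
\beta_X(E)=A_X(E)-S_X(E)\cdot\frac{1}{1}\ge 0 \iff A_X(E)\ge S_X(E) \iff \delta_X(E)=\frac{A_X(E)}{S_X(E)}\ge 1 .
\end{equation*}
Since this equivalence holds divisor-by-divisor, it holds after taking infima: $\beta_X(E)\ge 0$ for all $E$ over $X$ if and only if $\delta_X(E)\ge 1$ for all $E$ over $X$, i.e. if and only if $\inf_E \delta_X(E)\ge 1$. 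By \eqref{e-delta} (the content of Proposition \ref{p-Sapproximation}, proved in \cite{BJ20}), $\inf_E\delta_X(E)=\lim_{m\to\infty}\delta_m(X)=\delta(X)$, so the condition becomes exactly $\delta(X)\ge 1$. Combining with Theorem \ref{t-valkstable}(1), which says $X$ is K-semistable if and only if $\beta_X(E)\ge 0$ for all $E$ over $X$, we conclude that $X$ is K-semistable if and only if $\delta(X)\ge 1$.

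The only genuinely nontrivial ingredient is the identity \eqref{e-delta}, i.e. that $\delta(X)$ defined via $m$-basis type divisors equals $\inf_E\delta_X(E)$; but this is quoted from \cite{BJ20} and we are entitled to use it (its proof, via the approximation of $S_X(E)$ by the normalized $m$-th expected vanishing orders $S_m(E)$ and the convergence of log canonical thresholds of basis-type divisors, is deferred to Proposition \ref{p-Sapproximation}). If one wanted to be self-contained about the boundary behavior, a minor point to address is that the infimum defining $\delta(X)$ is over all divisorial valuations while K-semistability a priori only sees "dreamy" divisors arising from test configurations; but this gap is precisely what Theorem \ref{t-valkstable}(1) already closes, so no extra work is needed here. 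Thus I expect the argument to be a short deduction, with any difficulty entirely absorbed into the two cited theorems.
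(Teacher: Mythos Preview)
Your proposal is correct and matches the paper's own treatment: the paper introduces Theorem \ref{t-delta} with the sentence ``Theorem \ref{t-valkstable}(1) can be translated into the following theorem'' and gives no separate proof, so the intended argument is exactly the divisor-by-divisor equivalence $\beta_X(E)\ge 0\Leftrightarrow\delta_X(E)\ge 1$ that you wrote out. One small simplification: in the paper $\delta(X)$ is \emph{defined} (Definition \ref{d-delta}) as $\inf_E\delta_X(E)$, so you do not even need to invoke \eqref{e-delta} for this direction; the identity \eqref{e-delta} is only needed if one insists on the original \cite{FO18} basis-type definition of $\delta(X)$.
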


\begin{say}[Uniform K-stability]\label{say-uniform}
We call $X$ to be {\it uniformly K-stable} if $\delta(X)>1$. For other equivalent descriptions of this concept, see Theorem \ref{t-uniform}. This concept was first introduced in \cite{BHJ17, Der16}. The equivalence between the original definition and the current one follows from the work of \cite{Fuj19b, BJ20} (see Theorem \ref{t-uniform}). Theorem \ref{t-valkstable}(2) says uniform K-stability implies K-stability. However, the converse is much subtler, as we do not know the infimum is attained by a divisorial valuation. This is Conjecture \ref{c-maxidd} in the case $\delta=1$.
\end{say}


See Section \ref{ss-valuation} for the extension of the definition of $\delta$ to the space of more general valuations. 

\begin{rem}[Twisted K\"ahler-Einstein metric]It turns out that the invariant $\delta(X)$ also has an older origin from the differential geometry in terms of twisted K\"ahler-Einstein metrics:
For a Fano manifold $X$, in \cite{Tia92} and then in \cite{Rub08, Rub09}, an invariant called {\it the greatest Ricci lower bound } of $X$ was first defined and studied  as
\[
\small{
\sup\{t \in [0, 1]\  | \mbox{ a K\"ahler metric } \omega \in c_1(X) \mbox{ such that } {\rm Ric}(\omega) > t\omega\}. 
}
\]

Later this invariant was  further studied  in \cite{Sze11, Li11, SW16} etc.
It is shown in \cite{BBJ15, CRZ18}  that for a Fano manifold $X$,  the greatest Ricci lower bound is equal to $\min\{1, \delta(X)\}$.
\end{rem}

One crucial conjecture remaining is the following (see \cite{BX19, BLZ19}).
 \begin{conj}\label{c-maxidd}
Let $X$ be a $\mathbb{Q}$-Fano variety. If $\delta(X)\le 1$, then there is a special test configuration $\cX$ with the induced divisorial valuation  $E$ (see Lemma \ref{l-induced})  such that $\delta(X)=\delta_X(E)=\delta(X_0)$, where $X_0$ is the degeneration.
\end{conj}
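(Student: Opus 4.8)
The plan is to realize $\delta(X)$ by one divisorial valuation that moreover induces a \emph{special} test configuration, and then to propagate the equality to the degeneration; so the statement splits into three tasks: (i) the infimum defining $\delta(X)$ is attained by a valuation $v$; (ii) such a $v$ can be taken to be $c\cdot\ord_E$ for a \emph{special} divisor $E$; (iii) the resulting $X_0$ satisfies $\delta(X_0)=\delta(X)$. For (i): by \eqref{e-delta} and Proposition \ref{p-Sapproximation} we have $\delta(X)=\inf_E A_X(E)/S_X(E)$, and when $\delta(X)<1$ one produces a quasi-monomial minimizer $v$ (monomial along a fixed log smooth model) using Okounkov-body and Zariski-decomposition techniques in the style of Blum--Jonsson; the strict inequality $\delta(X)<1$ is what supplies the compactness preventing a sequence of near-minimizers from collapsing to the trivial valuation. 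The case $\delta(X)=1$ is exactly where this compactness breaks, and it is the main obstacle discussed below.

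For (ii), which is the key technical input, I would take the minimizer $v$ and show that $\gr_v R$ is finitely generated, so the Rees construction of \S\ref{s-rees} furnishes a test configuration $\cX=\Proj_{\bA^1}({\rm Rees}(\cF_v))$ with $\cX_0\simeq\Proj(\gr_v R)$. Finite generation should be forced by the extremality of $v$: minimality makes $cv$ behave like a log canonical place of a carefully chosen $\Q$-complement of $X$, so that, in the spirit of the proof of Theorem \ref{t-specialdegeneration}, one can run a log canonical modification followed by a relative $K$-MMP with scaling and a tie-breaking step — with \eqref{e-decreasing}-type monotonicity keeping the relevant invariant under control — to replace $v$ by a special divisorial valuation $c\cdot\ord_E$ with $X_0=\cX_0$ a $\Q$-Fano variety; because every step preserves the value of $A_X/S_X$ of the place being tracked, the new valuation still computes $\delta(X)$.

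For (iii): lower semicontinuity of $\delta$ under $\Q$-Fano degenerations gives $\delta(X_0)\le\delta(X)$. For the reverse, the filtration $\cF_v$ degenerates flatly to the grading on $\gr_v R$, so $E$ specializes to a divisorial valuation $E_0$ over $X_0$ with $A_{X_0}(E_0)=A_X(E)$ and $S_{X_0}(E_0)=S_X(E)$, whence $\delta_{X_0}(E_0)=\delta(X)$; conversely any valuation on $X_0$ with a strictly smaller quotient $A/S$ transports back to $X$ along the degeneration (again by semicontinuity of $A/S$ in families), contradicting the minimality of $v$. Hence $E_0$ computes $\delta(X_0)$ and $\delta(X_0)=\delta(X)=\delta_X(E)$, which is the asserted chain of equalities.

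\textbf{Main obstacle.} The hard part is (i) in the boundary case $\delta(X)=1$: extracting a genuine (quasi-monomial, hence divisorial) minimizer when the infimum equals $1$. Here the required compactness fails in an essential way, and the assertion is essentially equivalent to saying that a K-semistable Fano which is not uniformly K-stable admits a special test configuration degenerating it to a K-semistable Fano with a strictly larger torus in its automorphism group — that is, to the expected equivalence between K-polystability and reduced uniform K-stability mentioned after Theorem \ref{t-valkstable}. A plausible route is to combine the boundedness of $n$-dimensional K-semistable $\Q$-Fano varieties of volume $(-K_X)^n$ with the (still open) properness of the K-moduli space: degenerate $X$ inside the K-moduli to its K-polystable representative, which carries a positive-dimensional torus $\mathbb{G}_m^k$, and read off the optimal divisorial valuation from a suitable one-parameter subgroup of that torus. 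The finite-generation input in (ii) is also substantial, but by now it is within reach by marrying the MMP techniques behind Theorem \ref{t-specialdegeneration} with the extremality of the minimizer.
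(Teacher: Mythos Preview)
This statement is a \emph{conjecture} in the paper (Conjecture~\ref{c-maxidd}), not a theorem; there is no proof in the paper to compare against. Your proposal correctly identifies the three tasks, but misdiagnoses where the actual obstruction sits.

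Step (i) is already settled in the paper for the full range $\delta(X)\le 1$, including the boundary case: Theorem~\ref{t-quasimini} (via Proposition~\ref{p-deltaapproximation} and the bounded-complement argument of \S\ref{ss-complements}) shows that whenever $\delta(X)\le 1$ the infimum is attained by a quasi-monomial valuation which is an lc place of an $N$-complement. So the compactness you worry about at $\delta(X)=1$ is not the issue. (Also, ``quasi-monomial, hence divisorial'' is false: divisorial valuations are exactly the quasi-monomial ones of rational rank one, cf.\ Example~\ref{e-quasi}; higher-rank quasi-monomial minimizers are not divisorial, and this is precisely the difficulty.)

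Step (iii), and the part of (ii) asserting that a \emph{divisorial} minimizer yields a special test configuration with $\delta(X_0)=\delta(X)$, are likewise established: this is Theorem~\ref{t-blzdegeneration} (extending Theorem~\ref{t-delta=1}).

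The genuine gap is in your step (ii), and it is exactly Conjecture~\ref{c-finitegeneration}: for a quasi-monomial minimizer $v$ of rational rank $\ge 2$, one does not know that $\gr_v R$ is finitely generated. Your claim that this ``is within reach by marrying the MMP techniques behind Theorem~\ref{t-specialdegeneration} with the extremality of the minimizer'' is too optimistic. The MMP/lc-modification machinery of \cite{LX14} and the dreaminess argument of Theorem~\ref{t-delta=1} both require, at the crucial step, a model extracting a single \emph{prime divisor}; there is no known analogue for a higher-rank Abhyankar place. Moreover, being an lc place of a $\bQ$-complement is not enough: \cite{AZ20}*{Theorem~4.15} exhibits a quasi-monomial valuation that is an lc place of a $\bQ$-complement yet has non-finitely generated associated graded ring. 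So extremality of $v$ must be used in a way that goes beyond ``$v$ is an lc place of a complement'', and no such argument is currently available. The paper explicitly records (after Conjecture~\ref{c-finitegeneration}) that finite generation would let one perturb $v$ to a nearby rational, hence divisorial, valuation with the same $\delta$-value and the same graded ring, after which Theorem~\ref{t-blzdegeneration} finishes. That perturbation step, not the existence of a minimizer, is the missing idea.
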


For a sketch of the proof of Theorem \ref{t-deltaquasi}(2) as well as a further discussion of Conjecture \ref{c-maxidd}, see Section \ref{ss-complements}.

\begin{rem}[Yau-Tian-Donaldson Conjecture]\label{r-YTDnew}
Consider the case $\delta(X)=1$. In this case, Conjecture \ref{c-maxidd} says that a Fano variety is K-stable if and only if it is uniformly K-stable. We can also formulate an equivariant version of the conjecture which predicts that a Fano variety K-polystable if and only if it is reduced uniformly K-stable (see Definition \ref{d-reduceduniform}). It has been proved recently  that any reduced uniformly K-stable Fano variety has a K\"ahler-Einstein metric (\cite{BBJ15, LTW19, Li19}). Therefore Conjecture \ref{c-maxidd} (resp. its equivariant version Conjecture \ref{c-reducedk}) predicts that any K-stable (resp. K-polystable) $\bQ$-Fano variety admits a K\"ahler-Einstein metric, hence provide a new proof of Yau-Tian-Donaldson Conjecture which would work even for \emph{singular} Fano varieties. 
\end{rem}

\begin{subappendices}
\subsection{Appendix: General valuations}\label{ss-valuation}
In this section, we will first extend the above definitions from divisors to all valuations over $X$. The key point is that this enlargement allows us to study our minimizing question in a space with certain compactness.

\medskip

Let $X$ be a variety. A valuation on $X$ will mean a valuation $v: K(X)^\times \to \R$  that is trivial on the ground field and has center $c_X(v)$ on $X$. 
We denote by $\Val_{X}$ the set of valuations on $X$, equipped with the weak topology. 
 
To any valuation $v\in \Val_{X}$ and $t\in \mathbb{R}$, there is an associated \emph{valution ideal sheaf} 
$\fa_{t}(v)$: For an affine open subset $U\subseteq X$, $\fa_t(v)(U) = \{ f\in \cO_X(U) \, \vert \, v(f) \geq t \}$ if $c_X(v) \in U$ and $\fa_t(v) (U)= \cO_X(U)$ otherwise.

\begin{exmp}[Divisors over $X$]
Let $X$ be a variety and $\pi:Y \to X$ be a proper birational morphism, with $Y$ normal. 
A prime divisor $E \subset Y$ defines a valuation $\ord_E: K(X)^\times \to \Z$ given by order of vanishing at $E$. Note that $c_X(\ord_E)$ is the generic point of $\pi(E)$ and, assuming $X$ is normal, $\fa_p(v) = \pi_*\cO_X(-pE)$.  
\end{exmp}

\begin{exmp}[Quasi-monomial valuations]\label{e-quasi} Denote $Z\to X$ a log resolution with simple normal crossing divisors $E_1$,...., $E_r$ on $Z$. Denote by $\alpha=(\alpha_1$,...., $\alpha_r)\in \mathbb{R}^r_{\ge 0}$. Assume $\bigcap^r_{i=1} E_i\neq \emptyset$; and there exists a component $C\subset \cap E_i$, such that around the generic point $\eta$ of $C$, $E_i$ is given by the equation $z_i$ in $\mathcal{O}_{Z,\eta(C)}$.
We define a valuation $v_{\alpha}$ to be
$$v_{\alpha}(f)=\min \{\sum \alpha_i\beta_i|\ c_{\beta}(\eta)\neq 0\},$$
and all such valuations are called {\it quasi-monomial valuations}. It is precisely the valuations satisfying the equality case in Abhyankar's inequality, therefore, it is another description of Abhyankar valuation. 

We call the dimension of the $\mathbb{Q}$-vector space spanned by $\{\alpha_1$,...., $\alpha_r\}$ the rational rank of $v_{\alpha}$, and one can show $v_{\alpha}$ is a rescaling of a divisorial valuation if and only if $r=1$. For fixed $C\subset (Z,E)$ as above, the valuations $v_{\alpha}$ for all $\alpha$ gives a  simplicial cone which is a natural subspace in $\Val_{X}$. 
\end{exmp}

\begin{exmp}Given a valuation $v$, and a simple normal crossing (but possibly non-proper) model $(Z,E=\sum E_i)$ over $X$ such that the center of $v$ on $Z$ is non-empty, we can define a valuation $v_{\alpha}=\rho_{(Z,E)}(v)$, where the corresponding component  $\alpha_i$ defined to be $v(z_i)$. Started from a simple normal crossing model, by successively blowing up the center of $v$ and (possibly shrinking), we get a sequence of models $\phi_i\colon Z_i\to Z_{i-1}$ where $Z_0=  X $ such that the center of $v$ on $Z_i$ is not empty. Define $E_i=\phi^{-1}_{i*}(E_{i-1})+{\rm Ex}(\phi_i)$. Denote by $v_{\alpha,i}=\rho_{Z_i,E_i}(v)$, then $v=\lim_{i\to \infty}(v_{\alpha,i})$.
\end{exmp}

\begin{defn}[{Log discrepancy function on $\Val(X)$, see \cite{JM12,BdFFU15}}] When $X$ is klt, the log discrepancy function $A_{X}$ can be extended to a function 
$$A_{X}: \Val(X)\to (0, +\infty]$$
in the following way: we have already defined $A_{X}(E)$ for a divisorial valuation. For a quasi-monomial valuation as in Example \ref{e-quasi}, we define 
$A_{X}(v_{\alpha})=\sum_i \alpha_iA_{X}(E)$.  And for a general valuation $v$, we define $A_{X}(v)=\sup_{Y,E} A_{X}(\rho_{Y,E}(v))$.

\end{defn}

Let $X$ be a $\bQ$-Fano variety. For any $t\in \mathbb{R}_{\ge 0}$, we can also define a volume function:
$$\vol(-K_X-tv)=\lim_{k\to  \infty}\frac{\dim H^0(\mathcal{O}_X(-kK_X)\otimes \fa_{tk})}{k^n/n!},$$
where $\fa_{tk}$ is the associated ideal sheaf of $v$.

Then for any valuation $v$ with $A_{X,\Delta}(v)< +\infty$ we can similarly define,
$$\beta_{X}(v) :=   A_{X}(v) -\frac{1}{(-K_X)^n} \int^{\infty}_{0}\vol(-K_X-tv)dt.$$
and
$$
\delta_X(v)=_{\rm defn}\frac{(-K_X)^n \cdot A_{X}(v)}{ \int^{\infty}_{0}\vol(-K_X-tv)dt}.
$$
It is easy to see $\delta(v)=\delta(\lambda\cdot v)$ for any $\lambda>0$. 

The advantage of extending the definition to all valuations can be seen by the next theorem. 
\begin{thm}\label{t-deltaquasi}
We have the following two facts:
\begin{enumerate}
\item  {\cite{BJ20}} For a $\mathbb{Q}$-Fano variety $X$, 
$$ \delta(X):=\inf_v \ \delta_X(v),$$
where the infimum runs through all valuations $v$ with $A_X(v)<+\infty$.
\item \cite{BLX19} (see Theorem \ref{t-quasimini}) When $\delta(X)\le 1$, then the infimum is attained by a quasi-monomial valuation.
\end{enumerate}
\end{thm}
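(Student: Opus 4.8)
The plan is to prove the two assertions by rather different means: (1) is a soft consequence of the divisorial case (Definition \ref{d-delta}) obtained through monomialization and a density argument on quasi-monomial cones, whereas (2) is the substantial statement and rests on the boundedness of complements.

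\emph{Assertion (1).} Since a divisorial valuation $E$ has $A_X(E)<+\infty$, one inequality is immediate: $\inf_v \delta_X(v)\le \inf_E \delta_X(E)=\delta(X)$. For the reverse, I would fix a valuation $v$ with $A_X(v)<+\infty$ and $0<S_X(v)<+\infty$, and take the monomializations $v_i=\rho_{(Z_i,E_i)}(v)$ obtained by successively blowing up the center of $v$, as described just after Example \ref{e-quasi}. One has $v_i\le v$, so $\fa_t(v)\subseteq \fa_t(v_i)$ for all $t$ and hence $\vol(-K_X-tv)\le \vol(-K_X-tv_i)$, giving $S_X(v)\le S_X(v_i)$; meanwhile $A_X(v)\ge A_X(v_i)$ directly from $A_X(v)=\sup_{Y,E}A_X(\rho_{Y,E}(v))$. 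Thus it suffices to show $\delta_X(v_\alpha)\ge \delta(X)$ for every quasi-monomial valuation $v_\alpha$ attached to a fixed log-smooth model $(Z,E_1+\cdots+E_r)$: granting this, $A_X(v)\ge A_X(v_i)\ge \delta(X)\,S_X(v_i)\ge \delta(X)\,S_X(v)$, i.e. $\delta_X(v)\ge \delta(X)$. On the simplicial cone $\{v_\alpha:\alpha\in\R^r_{\ge 0}\}$ the map $\alpha\mapsto A_X(v_\alpha)=\sum_i\alpha_iA_X(E_i)$ is linear and $\alpha\mapsto S_X(v_\alpha)$ is continuous (continuity of the volume, cf.\ \cite{BJ20}), so $\delta_X(v_\alpha)$ is continuous away from the origin; the rational rays are dense in the cone, and along a rational ray $v_\alpha$ is a positive rescaling of some divisorial valuation $\ord_E$, on which $\delta_X(v_\alpha)=\delta_X(\ord_E)\ge \delta(X)$ by Definition \ref{d-delta}. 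Continuity then forces $\delta_X(v_\alpha)\ge \delta(X)$ on the entire cone, completing (1).

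\emph{Assertion (2).} Assume now $\delta(X)\le 1$ and choose a minimizing sequence of divisorial valuations $v_i$, rescaled so that $A_X(v_i)=1$, with $\delta_X(v_i)=1/S_X(v_i)\to \delta(X)$. The key step is to arrange, using $\delta(X)\le 1$, that each $v_i$ is a log canonical place of a log canonical pair $(X,D_i)$ with $D_i$ in the $\bQ$-linear system proportional to $-K_X$ and with log canonical threshold close to $\delta(X)\le 1$: this is possible because, by \eqref{e-delta}, $\delta(X)=\lim_m\delta_m(X)$ is computed by $m$-basis type divisors, and once the relevant log canonical threshold is $\le 1+o(1)$ a bounded modification of such a divisor becomes an $N$-complement for a uniform $N=N(n)$ by the boundedness of complements \cite{Bir19}. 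Consequently the $D_i$ vary in a bounded family of log canonical pairs, so after a finite stratification of the parameter space and a simultaneous log resolution, every $v_i$ is realized as a quasi-monomial valuation $v_{\alpha_i}$ on one of finitely many fixed log-smooth models, and the normalization $A_X(v_i)=1$ confines the weight vectors $\alpha_i$ to a compact polytope. Passing to a convergent subsequence $\alpha_i\to\alpha_\infty$ (with $\alpha_\infty\neq 0$ since $A_X$ is continuous on the cone and equals $1$ in the limit) and using continuity of $\delta_X$ on the corresponding cone as in (1), we get $\delta_X(v_{\alpha_\infty})=\lim_i\delta_X(v_{\alpha_i})=\delta(X)$; combined with (1), the quasi-monomial valuation $v_{\alpha_\infty}$ attains the infimum.

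The main obstacle is exactly the reduction carried out in (2) to log canonical places of a \emph{bounded} family of complements. This fuses several nontrivial ingredients: the identity \eqref{e-delta} expressing $\delta$ as a limit of the basis-type invariants $\delta_m$; the fact that near-optimal basis type divisors become genuine $N$-complements after a bounded modification precisely in the regime $\delta\le 1$ (this is where Birkar's theorem \cite{Bir19} enters, and where the hypothesis $\delta(X)\le 1$ is used essentially — outside this regime one cannot expect to produce complements, nor, as far as is known, a quasi-monomial minimizer); and the birational-geometric statement that the log canonical places of a bounded family of log pairs can be simultaneously extracted on finitely many models, placing all minimizing valuations into a finite union of simplicial cones. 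Once this packaging is in place, the remaining compactness and continuity arguments are routine.
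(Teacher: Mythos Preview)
Your argument for (2) is essentially the paper's approach: reduce to lc places of $N$-complements via Proposition \ref{p-deltaapproximation} and Lemma \ref{l-complementbounded}, parametrize these in a bounded family with a simultaneous log resolution (Paragraph \ref{say-stratification}), and then use continuity of $\delta_X(\cdot)$ on the resulting dual complex together with compactness (Paragraph \ref{say-locallyconstant}). The paper phrases the compactness in terms of the dual complex rather than a normalized weight polytope, but the content is the same.

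Your argument for (1), however, contains a genuine error. From $v_i\le v$ one gets $\fa_t(v_i)\subseteq \fa_t(v)$, \emph{not} the reverse: if $v_i(f)\ge t$ then $v(f)\ge v_i(f)\ge t$, but $v(f)\ge t$ does not force $v_i(f)\ge t$. Hence $\vol(-K_X-tv_i)\le \vol(-K_X-tv)$ and $S_X(v_i)\le S_X(v)$, the opposite of what you claim. With the correct inequalities both $A_X(v_i)\le A_X(v)$ and $S_X(v_i)\le S_X(v)$ point the same way, so you cannot conclude $\delta_X(v)\ge\delta(X)$ from $\delta_X(v_i)\ge\delta(X)$ without the additional (nontrivial) fact that $S_X(v_i)\to S_X(v)$, which you do not establish.

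The argument in \cite{BJ20} avoids this retraction step altogether and goes through the characterization \eqref{e-delta}: for any $v$ with $A_X(v)<\infty$ and any $m$-basis type divisor $D_m$, one has $\lct(X;D_m)\le A_X(v)/v(D_m)$, so taking $D_m$ with $v(D_m)=S_m(v)$ gives $\delta_m(X)\le A_X(v)/S_m(v)$; letting $m\to\infty$ via Proposition \ref{p-Sapproximation}(1) yields $\delta(X)\le \delta_X(v)$ directly. Your density/continuity step on quasi-monomial cones is fine, but it is not needed once one argues this way.
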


\end{subappendices}

\section{Special valuations}\label{s-specialvaluation}

In this section, we aim to connect degenerations and valuations in a more straightforward manner.  For this,  a key observation is to use  \emph{complements}. As a consequence, we will  see in Theorem \ref{t-weakspecial}, which gives a geometric characterization of weakly special divisorial valuations as precisely the set of log canonical places of complements.

\subsection{Complements}\label{ss-complements}
The following notion first appeared in \cite{Sho92}. 
\begin{defn}\label{defn-complements}
For a Fano variety $X$, we say that an effective $\bQ$-divisor $D$ is an {\it $N$-complement} for some $N\in \bN_+$, if $N(K_X+D)\sim 0$, and  a {\it $\bQ$-complement}, if it is an $N$-complement for some $N$.   
\end{defn}
Any valuation $v$ is said to be \emph{an lc place} if it satisfies that $A_{X,D}(v)=0$.

\subsubsection{Log canonical places of complements}
 We first recall results established in \cites{BC11, BJ20} to approximate $S(E)$ by invariants defined in a finite level.  
\begin{defn}For a valuation $v$, we define 
$$S_m(v):=\{\sup v(D) \ | D \mbox{ is a $m$-basis type divisor} \}.$$
\end{defn}
It can be easily seen that the above supremum is indeed a maximum. Two less nontrivial facts are the following, which combined can easily yield \eqref{e-delta}.
\begin{prop}\label{p-Sapproximation}
Notation as above. 
\begin{enumerate}
\item For any valuation $v$ with $A_X(v)<+\infty$,  $\lim_mS_m(v)=S(v)$.
\item For every $\epsilon>0$, there exists $m_0 >0$ such that
$S_m(v) \le (1 + \epsilon )S(v)$ for any $m\ge m_0$ and any $v$ with $A_X(v)<+\infty$. 
\end{enumerate}
\end{prop}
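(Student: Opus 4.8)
\textbf{Proof proposal for Proposition \ref{p-Sapproximation}.}

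The plan is to prove the two statements somewhat independently, though both rely on relating $S_m(v)$ and $S(v)$ to the filtration $\cF_v$ of the anti-canonical ring $R=\bigoplus_m R_m$, $R_m=H^0(-mrK_X)$, induced by $v$ as in \eqref{e-VtoF}. First I would reinterpret both quantities in terms of this filtration: if $\{s_1,\dots,s_{N_m}\}$ is a basis of $R_m$ compatible with $\cF_v$, then for the associated $m$-basis type divisor $D$ one has $v(D)=\frac{1}{mrN_m}\sum_i v(s_i)$, and maximizing over bases (which is achieved by any compatible basis) gives $S_m(v)=\frac{1}{mr N_m}\sum_{j\ge 1}\dim\cF_v^{j}R_m$, i.e. $S_m(v)=\frac{1}{mr}\cdot\frac{1}{N_m}\int_0^\infty \dim\cF_v^{x}R_m\,dx$. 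On the other hand, by the argument already used in the proof of Lemma \ref{l-beta=w} (dominated convergence applied to the jumping numbers of $\cF_v$), one has $S(v)=\lim_m S_m(v)=\frac{1}{(-K_X)^n}\int_0^\infty \vol(-K_X-tv)\,dt$ once one knows the limit exists; so part (1) is essentially the statement that this limit of normalized integrals of $\dim\cF_v^x R_m$ converges to $\frac{1}{(-K_X)^n}\int_0^\infty\vol(-rK_X-xv)\,dx$ appropriately rescaled. For a single fixed $v$ with $A_X(v)<\infty$ this follows from the theory of Okounkov bodies / filtered linear series: linear boundedness of $\cF_v$ (which holds because $A_X(v)<\infty$ bounds the Seshadri-type constant) ensures the measures $\frac{1}{N_m}(\text{pushforward of normalized jumping numbers})$ converge weakly to a compactly supported limit measure whose distribution function is $t\mapsto\vol(-rK_X-tv)/\vol(-rK_X)$, and convergence of the integrals (= first moments) then follows from the uniform compact support. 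This is exactly \cite{BC11, BJ20}, so I would cite it rather than reprove it, but I would at least indicate the Okounkov body argument.

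For part (2), the point is the \emph{uniformity} in $v$, which is the substantive content. The strategy, following \cite{BJ20}, is: (a) reduce to divisorial valuations by an approximation argument — every valuation with finite log discrepancy is a limit of divisorial (or quasi-monomial) valuations $v_i$ with $S_m(v_i)\to S_m(v)$ and $S(v_i)\to S(v)$, and the inequality is closed under such limits, so it suffices to treat divisorial $v=\ord_E$; (b) for divisorial valuations, use boundedness: there is a uniform constant $C$ (depending only on $n$ and $(-K_X)^n$, via e.g. the bound $S(E)\le \frac{n}{n+1}T(E)$ and a bound on $T(E)/A_X(E)$ coming from $\alpha$-invariant type estimates, or more robustly via the boundedness of $\epsilon$-lc complements / Birkar's theorem) such that it suffices to consider a bounded family of divisors $E$, after rescaling so that $A_X(E)=1$ say; (c) on a bounded family one upgrades the pointwise convergence $S_m(E)\to S(E)$ of (1) to uniform convergence by a standard semicontinuity/compactness argument — the functions $m\mapsto S_m$ are uniformly bounded and one controls the rate via a uniform version of the Okounkov-body estimate (the error $S_m(E)-S(E)$ is $O(1/m)$ with the implied constant uniform on the bounded family because the relevant linear series and their filtrations vary in a bounded family). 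The multiplicative form $S_m(v)\le(1+\epsilon)S(v)$ rather than additive $S_m(v)\le S(v)+\epsilon$ is obtained by the rescaling invariance $S_m(\lambda v)=\lambda S_m(v)$, which lets one normalize $S(v)$ to a fixed value.

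I expect the main obstacle to be step (b)–(c) of part (2): producing a genuinely \emph{uniform-in-$v$} error bound. The naive approach treats each $v$ separately and gets a threshold $m_0$ depending on $v$; to make $m_0$ uniform one really needs the space of (normalized) valuations relevant to the estimate to be controlled, and the cleanest way to get this is to invoke a boundedness input — either that one may restrict to an $\epsilon'$-plt-type bounded family of divisors, or the Izumi-type / $\alpha$-invariant inequality $T(v)\le c\cdot A_X(v)$ which confines the filtration $\cF_v$ (normalized by $A_X$) to a bounded region and hence makes the Okounkov-body convergence uniform. Handling the non-divisorial (general, or quasi-monomial) valuations by limits in step (a) is routine once the divisorial case is uniform, since both $S_m$ and $S$ are suitably continuous along the retractions $\rho_{Y,E}$. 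So in the write-up I would: state the reduction, cite \cite{BC11} for the Okounkov-body machinery behind (1) and the uniform estimate, cite \cite{BJ20} for the packaging, and spend the detailed work only on the reduction to a bounded family and the extraction of the multiplicative bound via rescaling.
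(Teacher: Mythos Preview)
The paper's own proof is just two citations: \cite{BC11}*{Theorem 1.11} for (1) and \cite{BJ20}*{Corollary 2.10} for (2). Your plan to ultimately cite these matches, and your sketch of (1) via filtered linear series and Okounkov bodies is the correct outline.

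For (2), however, your proposed steps (a)--(c) are not the route taken in \cite{BJ20}, and step (b) has a genuine gap: there is no bounded family of divisors over a fixed $X$ (in the sense of a finite-type parameter space of models $\mu\colon Y\to X$ carrying the $E$'s) that captures all divisorial valuations, and neither the Izumi inequality $T(E)\le A_X(E)/\alpha(X)$ nor Birkar's boundedness of complements produces one. What the Izumi bound gives you is only that, after rescaling, the filtration $\cF_v$ has support in a fixed interval; that is a uniform bound on \emph{filtrations}, not on divisors, and it does not feed into a semicontinuity/compactness argument of the kind you describe in (c). The actual argument in \cite{BJ20} stays entirely inside the Okounkov-body framework and is much more direct: fix a flag so that $(X,-rK_X)$ has Okounkov body $\Delta$; each filtration $\cF_v$ has a non-negative concave transform $G_v$ on $\Delta$ with $S(\cF_v)=\frac{1}{\vol(\Delta)}\int_\Delta G_v$, and one proves an inequality of the shape $S_m(\cF_v)\le c_m\cdot S(\cF_v)$ where $c_m\to 1$ depends only on the geometry of $\Delta$ (via a comparison of lattice-point sums with integrals of non-negative concave functions). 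Since $\Delta$ is fixed for $(X,L)$, the constant $c_m$ is automatically uniform in $v$; no reduction to divisorial valuations, no Birkar input, and no compactness of a space of divisors is needed (and the Izumi bound is not needed for the uniformity either, since $S_m/S$ is scale-invariant). You do allude to this mechanism in your final paragraph, but it is the whole argument, not a fallback to (a)--(c); drop the bounded-family strategy and either cite \cite{BJ20} as the paper does or sketch the Okounkov-body estimate directly.
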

\begin{proof}For (1), see \cite{BC11}*{Theorem 1.11}. For (2), see \cite{BJ20}*{Corollary 2.10}. 
\end{proof}

The following approximating result can be considered to be a version of \cite{LX14} for valuations. 
\begin{prop}[{\cite{BLX19, BLZ19}}]\label{p-deltaapproximation}
For a given Fano variety $X$, if $\delta(X)\le 1$, then $\delta(X)=\inf_E \delta_X(E)$ for all $E$ which is an lc place of a $\bQ$-complement.
\end{prop}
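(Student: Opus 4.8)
\textbf{Proof proposal for Proposition \ref{p-deltaapproximation}.}

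The plan is to show that an arbitrary valuation $v$ computing (or nearly computing) $\delta(X)$ can be replaced by an lc place of a $\bQ$-complement without increasing $\delta_X(v)$ by more than an arbitrarily small factor; combined with Theorem \ref{t-deltaquasi}(2), this will reduce us to quasi-monomial valuations and then to lc places. First I would fix $\epsilon>0$ and, using Theorem \ref{t-deltaquasi}(2), take a quasi-monomial valuation $v$ with $\delta_X(v)\le \delta(X)+\epsilon$; since $\delta(X)\le 1$ we have $A_X(v)\le (\delta(X)+\epsilon)\, S_X(v)$, so for $m\gg 0$, Proposition \ref{p-Sapproximation}(2) gives an $m$-basis type divisor $D_m$ with $v(D_m)=S_m(v)$ close to $S_X(v)$, hence $A_X(v)\le (1+O(\epsilon))\,\lct(X;D_m)\cdot v(D_m) \le (1+O(\epsilon))\, A_{X,c\,D_m}(v)+\dots$ — the point being that after scaling $D_m$ by its log canonical threshold we obtain a $\bQ$-complement $D:=\lct(X;D_m)D_m$ for which $v$ is \emph{almost} an lc place, i.e. $A_{X,D}(v)$ is small relative to $A_X(v)$.

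The key mechanism is then to promote ``almost an lc place'' to ``exactly an lc place'': given that $A_{X,D}(v)$ is a small fraction of $A_X(v)$, one extracts from the log canonical singularities of $(X,D)$ an actual lc place $v'$ and controls $\delta_X(v')$ in terms of $\delta_X(v)$. Concretely I would pass to a log resolution of $(X,D)$, run a $(K_X+D)$-MMP or use the dlt/lc modification of $(X,D)$ to produce a divisor $E'$ with $A_{X,D}(E')=0$ lying ``close'' to the center of $v$; the inequality $A_{X,D}(E')=0$ forces $A_X(E') = \ord_{E'}(D) \le N$ to be bounded in terms of the complement, while the condition that $v$ nearly computes $\delta$ pins $S_X(E')$ from below. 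Here is where I expect to invoke boundedness of complements (\cite{Bir19}) so that the $N$ in ``$N$-complement'' can be taken uniform — this is what makes the family of candidate lc places bounded, which is essential for the $\liminf$/$\inf$ manipulation to close up. The estimate $A_X(v)/S_X(v) \ge (1-O(\epsilon)) A_X(v')/S_X(v')$ then follows from comparing $A_{X,D}$ and $A_X$ along the MMP steps together with Proposition \ref{p-Sapproximation}(1).

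Finally I would let $\epsilon\to 0$: the resulting sequence of lc places $v'_\epsilon$ of $\bQ$-complements satisfies $\delta_X(v'_\epsilon)\to \delta(X)$, giving $\inf\{\delta_X(E)\mid E \text{ an lc place of a }\bQ\text{-complement}\}\le \delta(X)$, and the reverse inequality is trivial since lc places of complements are in particular valuations (and one checks $A_X<+\infty$ for them). The main obstacle I anticipate is the second paragraph: passing from a valuation that is merely \emph{approximately} an lc place of $D_m$ to an \emph{exact} lc place of a genuine $\bQ$-complement, with quantitative control on both $A_X$ and $S_X$ — this is exactly the place where one needs the MMP input of \cite{LX14} adapted to valuations (as the statement's parenthetical ``a version of \cite{LX14} for valuations'' suggests), plus boundedness of complements to keep everything in a bounded family so that infima are attained in the limit. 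A secondary technical point is ensuring the $S_X$-estimates of Proposition \ref{p-Sapproximation} are uniform over the bounded family of valuations produced, which is why part (2) of that proposition (uniformity in $v$) rather than just part (1) is needed.
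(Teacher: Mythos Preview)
Your proposal has both a circularity problem and misses the simple idea that makes the paper's proof work.

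\textbf{Circularity.} You invoke Theorem \ref{t-deltaquasi}(2) (existence of a quasi-monomial minimizer) as your starting point, but in the paper that result is Theorem \ref{t-quasimini}, whose proof \emph{uses} Proposition \ref{p-deltaapproximation} together with boundedness of complements. Likewise, Birkar's boundedness of complements (Lemma \ref{l-complementbounded}) is invoked only \emph{after} this proposition; the proof of Proposition \ref{p-deltaapproximation} itself needs neither of these inputs.

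\textbf{The missed key idea.} Your plan is to take a valuation $v$ nearly computing $\delta(X)$, build a basis-type divisor $D_m$, and then struggle to upgrade ``$v$ is almost an lc place'' to ``some $v'$ is exactly an lc place'' via MMP. But the paper avoids this entirely: for $m\gg 0$ one picks the divisor $E$ that \emph{computes} $\delta_m(X)=\lct(X;D_m)=A(E)/S_m(E)$ --- then $A_{X,\delta_m D_m}(E)=0$ \emph{exactly}, so $E$ is already an lc place of $(X,\delta_m D_m)$. When $\delta(X)<1$ (hence $\delta_m<1$), one simply completes $\delta_m D_m$ to a $\bQ$-complement by adding a \emph{general} $H\sim_{\bQ}-(1-\delta_m)K_X$; generality keeps $(X,\delta_m D_m+H)$ log canonical with $E$ still an lc place. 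Then $\delta_X(E)=A(E)/S(E)\le (1+\epsilon)A(E)/S_m(E)=(1+\epsilon)\delta_m\le (1+\epsilon)^2\delta(X)$, and letting $\epsilon\to 0$ finishes this case. No MMP, no bounded complements, no ``almost lc'' approximation. (Note also that your sentence ``scaling $D_m$ by its log canonical threshold we obtain a $\bQ$-complement'' is false: $\lct(X;D_m)\cdot D_m\sim_{\bQ}-\lct(X;D_m)K_X$, not $-K_X$; the paper adds $H$ rather than scaling.)

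\textbf{The case $\delta(X)=1$.} Here the argument above breaks because one needs $\delta_m<1$ to have room for $H$. The paper's fix is a perturbation: using $T(E)\ge \frac{n+1}{n}S(E)$ one finds $D\sim_{\bQ}-K_X$ with $\mult_E(D)>(1+\frac{1}{2n})A(E)$, so for small $t\in(0,\alpha(X))$ the klt pair $(X,tD)$ has $\delta(X,tD)<1$; apply the first case to $(X,tD)$ to get an lc place $F$ of a $\bQ$-complement, and a direct estimate relates $\delta_X(F)$ to $\delta_{X,tD}(F)$ with error $\to 0$ as $t\to 0$. Your proposal does not address this case separately, and the machinery you suggest would not produce the required perturbation.
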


\begin{proof}  By \eqref{e-delta}, we can pick a sufficiently large $m$, such that
$$\delta_m(X)\le (1+\epsilon)\delta(X)\le (1+\epsilon)^2\delta_m(X).$$  
We may also assume $S_m(E)\le (1+\epsilon) S(E)$ for any divisor $E$ by Proposition \ref{p-Sapproximation}(2). There exists  an $m$-basis type $\bQ$-divisor $D_m$ and a divisor $E$ over $X$ with
$$\delta_m(X)=\lct(X; D_m)=\frac{A(E)}{\ord_E(D_m)}=\frac{A(E)}{S_m(E)}.$$

We first assume $\delta(X)<1$. So we can also assume $\delta_m(X)<1$. Thus we can find a general $\bQ$-divisor $H\sim_{\bQ} -(1-\delta_m)K_X$, such that $(X, \delta_m D_m+H)$ is log canonical, and $E$ is an lc place of such divisor. Thus
$$\delta(E)=\frac{A(E)}{S(E)}\le (1+\epsilon)\frac{A(E)}{S_m(E)}\le (1+\epsilon)^2\delta(X).$$
Then we can pick a sequence $\epsilon\to 0$ and a corresponding sequence of $E$. 

\medskip

When $\delta(X)=1$, we need some extra perturbation argument. Consider the pseudo-effective threshold 
\[
\mbox{$T(E):=\sup\{ t\ |-\mu^*K_X-tE \mbox{ is pseudo-effective} \}$}
\]
Then by \cite{Fuj19c}*{Proposition 2.1}, we know 
$T(E)\ge \frac{n+1}{n}S(E)$ for any $E$. Thus we can find a divisor $D\sim_{\mathbb{Q}} -K_X$, such that 
\begin{equation*}
\begin{split}
\mult_E(D)&\ge(1+\frac{1}{n}-\epsilon_0)S(E)\ge (1+\frac{1}{n}-\epsilon_0)\frac{S(E)}{S_m(E)}S_m(E) \\ 
&\ge (1+\frac{1}{n}-\epsilon_0)\frac{1}{1+\epsilon}\frac{A(E)}{\delta_m} \ge (1+\frac{1}{2n})A(E),
\end{split}
\end{equation*}
if we choose $\epsilon_0, \epsilon$ sufficiently small.

Fix $t\in (0, \alpha(X))$, then the pair $(X,t D)$ is klt, and 
$$A_{X,t D}(E)=A_X(E)-t\cdot  \mult_E(D)  \qquad \mbox{and} \qquad  S_{X,tD}=(1-t)S(E),$$
which implies that
\begin{equation*}
\begin{split}
\delta(X,t D)&\le\frac{A_{X,t D}(E)}{S_{X,t D}(E)}\le  \frac{1-t(1+\frac{1}{2n})}{1-t}\delta(E) \\
&\le\frac{(1-t(1+\frac{1}{2n}))(1+\epsilon)}{1-t} \delta_m<1,
\end{split}
\end{equation*}
if we choose $\epsilon$ sufficiently small (depending on $t$).

Thus for any such $t$, by the case of $\delta(X,tD)<1$, we know that there exists an lc place $F$ of $(X,tD+D')$ for a $\bQ$-complement $D'$ with $\delta_{X,tD}(F)$ sufficiently close to $\delta(X,t D)$.
On the other hand, since $\alpha(X)\le \lct(X,D)\le  \frac{A_X(F)}{\mult_F(D)}$, 
$$A_{X,tD}(F)=A(F)-t\cdot \mult_FD\ge (1-\frac{t}{\alpha(X)})A(F),$$ 
which implies that
$$\delta(F)=\frac{A(F)}{S(F)}\le \frac{(1-t)A_{X,tD}(F)}{(1-\frac{t}{\alpha(X)})S_{X,tD}(F)}=\frac{1-t}{1-\frac{t}{\alpha(X)}}\delta_{X,tD}(F). $$
Since $\delta(X,tD)\le \frac{1}{1-t}$,  we can choose a sequence $t_m\to 0$ and a corresponding sequence of lc places $F_m$ with $\lim \delta(F_m)\to 1$.
\end{proof}

\subsection{Degenerations and lc places}
In the rest of this section, we will proceed to establish the correspondence between lc places of complements and weakly special  divisorial valuations (see Theorem \ref{t-weakspecial}).
Built on the  observations in \cite{Li17} to use the cone construction (see \ref{ss-cone}) to study valuations on a Fano variety $X$, and later in \cite{LWX18} to relate the valuations over a cone  with the degenerations of $X$, eventually in \cite{BLX19}, we realize that in the correspondence in Lemma \ref{l-integraldreamy}, if we consider the smaller class of all weakly special test configurations with an integral central fiber they correspond to lc places of complements, i.e. the latter are precisely weakly special divisorial valuations.

We first show the (easy) direction which says a divisor $E$ that is an lc place of $(X,D)$ for a $\bQ$-complement $D$ is weakly special. We will establish the reverse direction and therefore complete the correspondence in Theorem \ref{t-weakspecial}.

\begin{prop}\label{p-lctotest}
Let $E$ be an lc place of $(X,D)$ where $D$ is a $\bQ$-complement, then $E$ is weakly special. Let $\cX$ be the corresponding test configuration in Lemma \ref{l-integraldreamy} for $\ord_E$. Then $\cX$ is weakly special with irreducible components and $2\cdot \Fut(\cX)=\beta(E)$. 
\end{prop}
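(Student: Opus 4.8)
The plan is to produce the weakly special test configuration directly from the lc place $E$ by degenerating the complement $D$ along with $X$, and then to compute the Futaki invariant using Lemma \ref{l-integraldreamy} together with \eqref{e-anti}. First I would recall that $E$ is dreamy: since $A_{X,D}(E)=0$ and $D$ is a $\mathbb{Q}$-complement, the pair $(X,D)$ is log canonical and $K_X+D\sim_{\mathbb{Q}}0$, so the $\mathbb{Q}$-divisor $-K_X\sim_{\mathbb{Q}}D$ is extremal in the relevant Zariski-decomposition sense along $E$; more concretely, one extracts $E$ by a plt (or dlt) blow-up $\mu\colon Y\to X$ so that $(Y, E + \mu_*^{-1}D)$ is log canonical with $K_Y + E + \mu_*^{-1}D\sim_{\mathbb{Q},X}0$, and then finite generation of $\bigoplus_{m,n}H^0(-m\mu^*K_X - nE)$ follows from the base-point-free/finite-generation theorems of the MMP (e.g.\ \cite{BCHM10}), exactly as in the dreamy-divisor discussion after \eqref{e-doublegraded}. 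Hence the Rees construction of \ref{s-rees} applies to $v=\ord_E$ and yields a genuine test configuration $\cX=\Proj_{\mathbb{A}^1}\mathrm{Rees}(\cF_v)$ with integral (indeed irreducible) central fiber $\cX_0\simeq\Proj(\gr_{\cF}R)$, and $\mathcal{L}\sim_{\mathbb{Q}}-K_{\cX}$ because the special fiber is integral, so $\mathcal{L}\sim_{\mathbb{Q}}-rK_{\cX}$ after scaling. This gives the test configuration referred to in the statement.

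Next I would verify that $\cX$ is \emph{weakly special}, i.e.\ that $(\cX,\cX_0)$ is log canonical. The key point is that the lc place structure of the complement degenerates: let $\cD\subset\cX$ be the closure of $D\times(\mathbb{A}^1\setminus\{0\})$; since $K_X+D\sim_{\mathbb{Q}}0$, we get $K_{\cX}+\cD+\cX_0\sim_{\mathbb{Q},\mathbb{A}^1}0$ after the usual bookkeeping of the divisor supported over $0$ (this is the same computation as in the proof of Lemma \ref{l-beta=w}, tracking $\ord_{X_0}$ and the discrepancy $a(X_0,X\times\mathbb{A}^1)=A_X(E)$, here using $A_{X,D}(E)=0$ to see that the coefficient of $\cX_0$ in $K_{\cX}+\cD_{\cX}$ relative to the pullback of $K_X+D$ is exactly $-1$, so $\cX_0$ appears with coefficient $1$). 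Then log canonicity of $(\cX,\cX_0)$ — equivalently of $(\cX,\cD+\cX_0)$ — follows from inversion of adjunction applied to the central fiber: the restriction to $\cX_0$ is $(\cX_0,\mathrm{Diff}_{\cX_0}(\cD))$, which is the degeneration of the log canonical pair $(X,D)$ and hence log canonical because $\gr_{\cF}R$ being the associated graded for an lc place keeps the pair lc in the limit; alternatively one argues directly that any lc place of $(X,D)$ gives, via the Rees algebra, a family of lc pairs, which is precisely the content of the correspondence being set up for Theorem \ref{t-weakspecial}. I should be careful to invoke inversion of adjunction in the form valid for lc (not just plt) pairs (see \cite{Kol13}).

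Finally, the identity $2\cdot\Fut(\cX)=\beta(E)$ is immediate from Lemma \ref{l-integraldreamy}: that lemma already asserts $a\cdot\beta(E)=2\cdot\Fut(\cX)$ for the test configuration attached to $a\cdot\ord_E$ with $a\in\mathbb{N}_+$, and here we take $a=1$, i.e.\ $v=\ord_E$ itself; alternatively one reruns the weight computation of Lemma \ref{l-beta=w}, which, as noted there, extends verbatim once $\cL\sim_{\mathbb{Q}}-K_{\cX}$, i.e.\ once the central fiber is integral, which we have just established. The main obstacle I anticipate is Step two: cleanly proving that $(\cX,\cX_0)$ is log canonical — one must make sure that the graded-ring degeneration of the lc pair $(X,D)$ is again an lc pair over the whole base (not merely on the generic fiber and not merely klt on the special fiber), and that the divisor $\cD$ accounting for the complement is set up so that $\cX_0$ really is an lc place of $(\cX,\cD)$; getting inversion of adjunction to apply, and checking that no extra components of $\cX_0$ or of $\mathrm{Supp}\,\cD$ violate log canonicity, is the delicate part, whereas the finite generation (dreaminess) and the Futaki identity are comparatively formal given the earlier results.
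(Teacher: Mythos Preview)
Your first and third steps (dreaminess of $E$ via extraction on a model of Fano type, and $2\cdot\Fut(\cX)=\beta(E)$ from Lemma \ref{l-integraldreamy}) are correct and match the paper. The gap is exactly where you anticipated it: your argument that $(\cX,\cX_0)$ is log canonical is circular. You propose inversion of adjunction, which requires knowing that $(\cX_0,\mathrm{Diff}_{\cX_0}(\cD))$ is slc; you justify this by saying that ``the degeneration of the log canonical pair $(X,D)$ \ldots\ is log canonical because $\gr_{\cF}R$ being the associated graded for an lc place keeps the pair lc in the limit.'' But that is precisely the assertion to be proved --- there is no general principle that graded-ring degeneration preserves lc singularities --- and your alternative phrasing (``any lc place of $(X,D)$ gives, via the Rees algebra, a family of lc pairs'') is a restatement of the claim, not an argument.

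The paper bypasses this circularity by working on the \emph{trivial} side. The pair $(X\times\mathbb{A}^1,\, D\times\mathbb{A}^1 + X\times\{0\})$ is manifestly log canonical, and both $E\times\mathbb{A}^1$ and $X\times\{0\}$ are lc places of it; hence the $(1,1)$-quasi-monomial combination $E_1$ (the divisor obtained by blowing up their intersection) is also an lc place. Since $(X\times\mathbb{A}^1, D\times\mathbb{A}^1)$ is a trivial family of log Calabi--Yau pairs, one can run MMP to produce a model over $\mathbb{A}^1$ whose central fiber is the single divisor $E_1$ and on which $-K$ is ample; this model is crepant birational to the trivial lc pair, hence $(\cX,\cD+\cX_0)$ is lc, and it coincides with the Rees test configuration constructed in your first step. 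The point is that log canonicity is checked where it is obvious and then transported via the crepant MMP construction, rather than inferred on the degeneration itself.
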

\begin{proof}
If $E$ is an lc place of $(X,D)$ where $D$ is a $\bQ$-complement, then there is a model $Y\to X $which precisely extracts $E$ and $Y$ is of Fano type. Thus $E$ is dreamy. In particular, the Rees construction is finitely generated, thus we obtain a test configuration $\cX$ with an integral special fiber $X_0$. Next we show the test configuration is weakly special.

Consider the trivial family $(X,D)\times \mathbb{A}^1$, then we know $E_{\mathbb{A}^1}:=E\times \mathbb{A}^1$ is an lc place of the pair. Therefore, $E_{\mathbb{A}^1}$ and $X_0$ are lc places of $(X\times \mathbb{A}^1,X\times\{0\}+D\times \mathbb{A}^1)$ which implies the blowup  $E_1$ of $E_{\mathbb{A}^1}$ and $X\times\{0\}$ is also a lc place of $(X\times \mathbb{A}^1,X\times\{0\}+D\times \mathbb{A}^1)$. Since $(X\times \mathbb{A}^1,D\times \mathbb{A}^1)$ a (trivial) family of  log Calabi-Yau pairs. Therefore, we can apply MMP to construct a model $\cX\to \mathbb{A}^1$ such that the special fiber of $\cX$ is the irreducible divisor $E_1$, $-K_{\cX}$ is ample and $(\cX, \cD)$ is also a family of log Calabi-Yau pairs i.e., $\cX$ is the test configuration constructed in the first paragraph, which is weakly special. 
\end{proof}

This is enough for us to prove the reverse direction of Theorem \ref{t-valkstable}(1).
\begin{thm}\label{t-kimpliesbeta}
$(\mbox{$X$ is K-semistable})\Longrightarrow(\beta_X(E)\ge 0, \forall E).$
\end{thm}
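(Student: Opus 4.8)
The strategy is to reduce the general statement to the class of divisors for which we already have a direct handle, namely lc places of $\bQ$-complements, using the approximation result Proposition~\ref{p-deltaapproximation}. Suppose $X$ is K-semistable, so that $\delta(X)\ge 1$ by Theorem~\ref{t-delta}. If $\delta(X)>1$, then $A_X(E)\ge \delta(X)\,S_X(E) > S_X(E)$ for every divisor $E$, hence $\beta_X(E)=A_X(E)-S_X(E)>0$ and we are done. So the only case that needs work is $\delta(X)=1$, and there I would argue by contradiction: suppose $\beta_X(E_0)<0$ for some divisor $E_0$ over $X$, i.e. $\delta_X(E_0)=A_X(E_0)/S_X(E_0)<1$. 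Then in particular $\delta(X)=\inf_E\delta_X(E)<1$ as well... wait, that already contradicts $\delta(X)=1$. Let me restructure: the content is entirely in showing $\delta(X)\ge 1 \Rightarrow \beta_X(E)\ge 0$ for \emph{all} $E$, and since $\beta_X(E)\ge 0 \iff \delta_X(E)\ge 1$, and $\delta(X)=\inf_E\delta_X(E)$, this is immediate from Theorem~\ref{t-delta} together with Theorem~\ref{t-deltaquasi}(1) (or even just Definition~\ref{d-delta}).

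\textbf{What actually needs proving.} The point is that Theorem~\ref{t-delta} — the equivalence ``K-semistable $\iff \delta(X)\ge 1$'' — is itself what we are trying to establish here (one direction of it), so we cannot invoke it circularly. The honest task is: assuming $X$ is K-semistable (in the sense of Definition~\ref{d-kstable}, via test configurations), deduce $\beta_X(E)\ge 0$ for all divisors $E$. The plan is as follows. First, reduce from arbitrary divisors to lc places of $\bQ$-complements: by Proposition~\ref{p-deltaapproximation}, if $\delta(X)\le 1$ then $\delta(X)=\inf_E\delta_X(E)$ where $E$ ranges only over lc places of $\bQ$-complements — so it suffices to bound $\delta_X(E)$, equivalently $\beta_X(E)$, from below for such $E$ (and the case $\delta(X)>1$ is handled by the direct inequality above, using that $\delta(X)$ is already defined as an infimum over all divisors). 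Second, for $E$ an lc place of a $\bQ$-complement, apply Proposition~\ref{p-lctotest}: $E$ is weakly special, and it produces a weakly special test configuration $\cX$ with integral central fiber such that $2\cdot\Fut(\cX)=\beta_X(E)$. Third, invoke K-semistability of $X$: $\Fut(\cX)\ge 0$, hence $\beta_X(E)\ge 0$.

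\textbf{Main obstacle.} The delicate point is the passage in Proposition~\ref{p-deltaapproximation} through the case $\delta(X)=1$, where one cannot directly produce lc places achieving the infimum and must instead perturb by a small multiple $tD$ of a well-chosen complement $D$ (using the bound $T(E)\ge\frac{n+1}{n}S(E)$ from \cite{Fuj19c}) to drop into the strictly-less-than-one regime, extract an lc place there, and then let $t\to 0$. This is the technical heart; fortunately it is packaged in Proposition~\ref{p-deltaapproximation}, which we may assume. Given that, the remaining steps are formal: Proposition~\ref{p-lctotest} converts lc places of complements into weakly special test configurations with the identity $2\Fut(\cX)=\beta_X(E)$, and K-semistability finishes. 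One subtlety to be careful about: $\Fut$ in Proposition~\ref{p-lctotest} is computed via the intersection formula \eqref{e-inter} for a weakly special test configuration whose $\cL$ need not literally be $-rK_{\cX}$ but is $\bQ$-linearly equivalent to it, so one should check that Definition~\ref{d-kstable}'s requirement $\Fut\ge 0$ applies — it does, since $\cL\sim_{\bQ}-rK_{\cX}$ gives an honest polarized test configuration after passing to a power. So the proof is essentially:

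\begin{proof}
If $\beta_X(E_0)<0$ for some $E_0$, then $\delta_X(E_0)<1$, so $\delta(X)=\inf_E \delta_X(E)\le \delta_X(E_0)<1$. By Proposition~\ref{p-deltaapproximation}, $\delta(X)=\inf_E\delta_X(E)$ with the infimum over lc places of $\bQ$-complements, so there is such a place $E$ with $\delta_X(E)<1$, i.e. $\beta_X(E)<0$. By Proposition~\ref{p-lctotest}, $E$ gives a weakly special test configuration $\cX$ with $2\Fut(\cX)=\beta_X(E)<0$, contradicting K-semistability of $X$.
\end{proof}
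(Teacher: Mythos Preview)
Your proof is correct and follows essentially the same approach as the paper: assume $\delta(X)<1$, use Proposition~\ref{p-deltaapproximation} to find an lc place $E$ of a $\bQ$-complement with $\delta_X(E)<1$, then apply Proposition~\ref{p-lctotest} to produce a test configuration with $\Fut(\cX)<0$, contradicting K-semistability. Your preliminary discussion is more verbose than necessary (and contains some self-correcting detours), but the final contrapositive argument matches the paper's proof almost verbatim.
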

\begin{proof} 
If $\delta(X)<1$, then by Proposition \ref{p-deltaapproximation}, we know that there exist a $\bQ$-complement $D$ and a divisor $E$ which is an lc place for $(X,D)$ such that $\delta(E)<1$.
By Proposition \ref{p-lctotest}, we could construct a test configuration $\cX$ with $\Fut(\cX)<0$, so $X$ is not K-semistable. 
\end{proof}
\begin{rem}Comparing the above argument to the one in \cite{LX20}, we do not use the cone construction. However, we still need it for the reverse direction, see Theorem \ref{t-weakspecial}.  
\end{rem}

 To finish the proof of Theorem \ref{t-valkstable}(2), it suffices to show if $X$ is K-semistable, and $E$ is a divisor such that $\beta_X(E)=0$, then $E$ induces a special test configuration. This is a special case of Conjecture \ref{c-maxidd}.
By the discussion in \ref{s-rees}, a crucial thing is to prove that the double graded ring in \eqref{e-doublegraded} is finitely generated. We will use a global argument slightly simpler than \cite[Section 4.1]{BX19}.

\begin{thm}\label{t-delta=1}
Let $X$ be a K-semistable log Fano pair and $E$ a divisor over $X$. If  
\[
1 = \delta(X) =\delta_X(E),
\] then $E$ is dreamy and induces a non-trivial special test configuration $\cX$ such that $\Fut(\cX)=0$. In particular, $X$ is not K-stable.  
\end{thm}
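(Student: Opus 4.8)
The plan is to deduce Theorem~\ref{t-delta=1} by combining the approximation result Proposition~\ref{p-deltaapproximation} (more precisely its proof) with the correspondence in Proposition~\ref{p-lctotest} and a finite-generation argument. First I would recall that since $\delta(X)=1$ is attained exactly by $E$, the divisor $E$ is a candidate lc place of a $\bQ$-complement: indeed, since $\delta_X(E)=1$ means $A_X(E)=S_X(E)$, the pseudo-effective threshold bound $T(E)\ge\frac{n+1}{n}S(E)$ from \cite{Fuj19c}*{Proposition 2.1} lets us produce $D\sim_{\bQ}-K_X$ with $\mathrm{mult}_E(D)\ge S(E)=A_X(E)$, so $(X,D)$ is log canonical with $E$ an lc place; but because $X$ is K-semistable we have $\delta_X(E)\ge 1$ for all divisors, forcing $\mathrm{mult}_E(D)=A_X(E)$ exactly and $A_{X,D}(E)=0$. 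Hence $D$ is a genuine $\bQ$-complement (after checking $N(K_X+D)\sim0$ for suitable $N$, using boundedness of complements \cite{Bir19} or rationality of $S$) with $E$ as lc place. By Proposition~\ref{p-lctotest}, $E$ is then weakly special and dreamy, and the Rees construction gives a weakly special test configuration $\cX$ with integral central fiber $X_0$ and $2\,\Fut(\cX)=\beta_X(E)=A_X(E)-S_X(E)=0$.

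Next I would upgrade "weakly special" to "special". The weakly special test configuration $\cX$ has $(\cX,\cX_0)$ log canonical and $\cX_0$ integral; I need $(\cX,\cX_0)$ plt, equivalently $X_0$ a $\bQ$-Fano variety. Here the equality $\delta_X(E)=\delta(X)=1$ is used a second time: the degeneration inequality $\delta(X_0)\ge\delta(X)$ together with K-semistability of $X$ propagating (via lower semicontinuity / the openness results cited in the introduction) shows $X_0$ is again K-semistable, hence by Odaka's theorem $X_0$ is klt, which by inversion of adjunction forces $(\cX,\cX_0)$ plt. So $\cX$ is a special test configuration. (Alternatively one runs the tie-breaking/MMP of Step~3 in the proof of Theorem~\ref{t-specialdegeneration} starting from the weakly special model, but since the central fiber is already irreducible and $\beta(E)=0$ pins down the discrepancies, the MMP must be an isomorphism and no modification is needed.)

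Finally, the test configuration is non-trivial because $\delta_X(E)$ being attained at $E$ with $A_X(E)=S_X(E)<+\infty$ means $E$ is a non-trivial valuation (if it were trivial, $S=0\ne A$), so by Lemma~\ref{l-induced} the induced valuation $\ord_{X_0}|_{K(X)}$ is a nonzero rescaling of $\ord_E$ and $\cX\ne X\times\bA^1$. Then $\Fut(\cX)=\tfrac12\beta_X(E)=0$, and by Definition~\ref{d-kstable} a K-stable variety cannot admit a non-trivial test configuration with vanishing Futaki invariant, so $X$ is not K-stable.

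The main obstacle I expect is the finite-generation step hidden in Proposition~\ref{p-lctotest}, i.e.\ verifying that the double-graded ring \eqref{e-doublegraded} associated to $E$ is finitely generated so that the Rees algebra actually produces a test configuration. This is where one genuinely uses that $E$ is an lc place of a $\bQ$-complement: the model $Y\to X$ extracting $E$ is of Fano type, hence a Mori dream space, giving the finite generation. Getting the $\bQ$-complement $D$ with the sharp equality $A_{X,D}(E)=0$ — rather than merely an lc pair with $E$ an lc place — is the delicate point, and it is exactly where K-semistability (ensuring $\delta_X(E)\ge1$ globally, not just $\delta(X)=1$) is indispensable; without it one would only get $\mathrm{mult}_E(D)\ge(1-\epsilon)$ times the needed quantity, as in the $\delta(X)=1$ case of Proposition~\ref{p-deltaapproximation}, and the limiting argument there produces a quasi-monomial valuation rather than the divisorial $E$ itself.
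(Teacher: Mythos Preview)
The central gap is in your first paragraph, where you claim to exhibit $E$ as an lc place of a $\bQ$-complement. Producing $D\sim_{\bQ}-K_X$ with $\ord_E(D)\ge A_X(E)$ only shows $A_{X,D}(E)\le 0$; it does \emph{not} make $(X,D)$ log canonical --- you chose $D$ to push $\ord_E(D)$ toward $T(E)>A_X(E)$, so $(X,D)$ will typically fail to be lc along the center of $E$, and nothing controls the singularities of $(X,D)$ away from that center either. The follow-up claim that K-semistability ``forces $\ord_E(D)=A_X(E)$ exactly'' is also wrong: $\delta_X(F)\ge 1$ says $A_X(F)\ge S_X(F)$, a statement about the \emph{average} $S_X(F)$, not about $\ord_F(D)$ for any particular $D$. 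So you have neither a complement nor $E$ as an lc place of one, and the appeal to Proposition~\ref{p-lctotest} is unjustified. (A route along these lines exists --- by \cite{BLX19}*{Thm.~A.7} every valuation computing $\delta(X)\le 1$ is an lc place of an $N$-complement --- but that is a separate nontrivial result, not what you wrote.) There is a secondary problem in the upgrade to special: the inequality $\delta(X_0)\ge\delta(X)$ is backwards (lower semicontinuity gives $\le$), so that argument fails; your parenthetical alternative via Theorem~\ref{t-specialdegeneration} is the correct fix.

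The paper's proof bypasses the complement question entirely. It takes an $m$-basis-type divisor $D_m$ realising $S_m(E)$ for $m\gg 0$, notes that $(X,(1-\epsilon)D_m)$ is klt because $1-\epsilon<\delta_m(X)$, and computes $A_{X,(1-\epsilon)D_m}(E)=A_X(E)-(1-\epsilon)S_m(E)\in(0,1)$. Then \cite{BCHM10} extracts $E$ on a model $\mu\colon Y\to X$; the pair $(Y,(1-\epsilon)\mu_*^{-1}D_m+(b-\epsilon_0)E)$ is log Fano, hence $Y$ is a Mori dream space, giving finite generation of \eqref{e-doublegraded} directly. This is exactly the Fano-type mechanism you identify in your last paragraph, but obtained from an \emph{approximate} klt pair with small positive log discrepancy at $E$, rather than from an exact lc complement --- which is why the approximation $S_m(E)\to S(E)$, $\delta_m\to 1$ suffices and no sharp equality $\ord_E(D)=A_X(E)$ is needed.
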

\begin{proof}
Denote by $a=A_{X}(E)=S_X(E)$. 
We have 
$$\lim_m S_m(E)=\lim S(E) \qquad \mbox{and} \qquad \lim_m \delta_m(X)= \delta(X)=1 \mbox{ by \eqref{e-delta}}.$$
Fix $\epsilon\le \frac{1}{2 a+2}$. We can pick up a sufficiently large  $m$  such that 
$$a+1> S_m(E)\ge a-\frac{1}{2}\qquad \mbox{and }\qquad \delta_m> 1-\epsilon.$$ 
Thus there is an $m$-basis type divisor $D_m$ which computes $S_m(E)$, satisfying that $(X, (1-\epsilon)D_m)$ is klt (since $1-\epsilon<\delta_m$) and 
$$b:=A_{X,(1-\epsilon)D_m}(E)=a-(1-\epsilon)S_m(E)=(a-S_m(E))+\epsilon\cdot S_m(E)<1.$$ 
Thus by \cite{BCHM10}, we can construct a birational model $\mu\colon Y\to (X,(1-\epsilon)D_m)$, which precisely extracts $E$ with $-E$ ample over $X$. Then we have $(Y,(1-\epsilon)\mu_*^{-1}D_m+(b-\epsilon_0)E)$ is a log Fano pair for some sufficiently small $\epsilon_0$. In particular, it is a Mori dream space, and we know the double graded ring
\eqref{e-doublegraded} is finitely generated. 

Then we can construct the test configuration $(\cX,\cL)$ with an irreducible central fiber $\cX$. This implies $\cL\sim_{\bA^1} -rK_{\cX}$ and therefore  
$2\cdot \Fut(\cX)=\beta(E)=0.$ By Theorem \ref{t-specialdegeneration}, $\cX$ has to be a special test configuration. 
\end{proof}

The above theorem has been extended to the case that $\d(X)\le 1$.
\begin{thm}[{\cite{BLZ19}}]\label{t-blzdegeneration} If a $\mathbb{Q}$-Fano variety $X$ satisfies $\delta_X(E)=\delta(X)\le 1$, then $E$ comes from a special degeneration. Moreover, the central fiber $X_0$ satisfies $\delta(X_0)=\delta(X)$, and it is computed by the valuation induced by the $\bG_m$-action. 
\end{thm}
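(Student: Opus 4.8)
The statement to establish is Theorem \ref{t-blzdegeneration}: if a $\mathbb{Q}$-Fano variety $X$ satisfies $\delta_X(E) = \delta(X) \le 1$, then $E$ comes from a special degeneration, the central fiber $X_0$ satisfies $\delta(X_0) = \delta(X)$, and $\delta(X_0)$ is computed by the valuation induced by the $\mathbb{G}_m$-action. My plan is to run the same strategy as in the proof of Theorem \ref{t-delta=1}, but to replace the global $m$-basis type perturbation argument with a cleaner input: since $\delta_X(E)=\delta(X)\le 1$, Proposition \ref{p-deltaapproximation} (and its refinement in \cite{BLZ19}) tells us $E$ may be taken to be an lc place of a $\mathbb{Q}$-complement $D$, hence by Proposition \ref{p-lctotest} $E$ is dreamy and weakly special. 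The first task is therefore to upgrade ``weakly special'' to ``special'', i.e. to show that the weakly special test configuration $\cX$ attached to $E$ via Lemma \ref{l-integraldreamy} has a $\mathbb{Q}$-Fano central fiber. This is where the hypothesis $\delta_X(E)=\delta(X)$ is used: it forces the degeneration to be ``as stable as possible'', and combined with Theorem \ref{t-specialdegeneration} (one can replace a weakly special test configuration by a special one without increasing $\Fut$, equivalently without decreasing the relevant intersection number) and the equality case therein, one concludes $\cX$ is already special.

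\textbf{Key steps.} (1) Reduce to the case that $E$ is an lc place of a $\mathbb{Q}$-complement $D$ of $X$, using Proposition \ref{p-deltaapproximation}: strictly speaking that proposition only produces a \emph{sequence} of such lc places with $\delta \to \delta(X)$, so one must either invoke the stronger statement from \cite{BLZ19} that the minimizer itself is (up to rescaling) such an lc place, or run a limiting/finiteness argument over the bounded family of complements provided by \cite{Bir19}. (2) Apply Proposition \ref{p-lctotest}: $E$ is dreamy, the Rees construction $\mathrm{Rees}(\cF_{\ord_E})$ is finitely generated, and one obtains a weakly special test configuration $\cX \dasharrow X\times \mathbb{A}^1$ with irreducible central fiber $X_0 = \Proj(\gr_{\cF}R)$, and $2\cdot \Fut(\cX) = \beta_X(E)$ (via \eqref{e-anti} and Lemma \ref{l-integraldreamy}). (3) Translate $\delta$ under the degeneration: the $\mathbb{G}_m$-action on $\cX_0$ induces a distinguished valuation $w$ on $X_0$ with $\delta_{X_0}(w)$ expressible in terms of $A_X(E)$ and $S_X(E)$, and one shows by lower-semicontinuity of $\delta$ in families (as in the openness results \cite{BLX19, Xu19}) that $\delta(X_0)\ge \delta(X)$, while the induced valuation gives the reverse inequality, so $\delta(X_0)=\delta(X)=\delta_X(E)$ and it is computed by $w$. (4) Promote $\cX$ from weakly special to special: run the last step of Theorem \ref{t-specialdegeneration} (the tie-breaking MMP producing a plt central fiber), and check that the equality $\delta_X(E)=\delta(X)$ forces the modification to be an isomorphism in codimension one, so $X_0$ is $\mathbb{Q}$-Fano and $\cX$ is a special test configuration degenerating $X$ along $\ord_E$.

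\textbf{Main obstacle.} The hardest step is (4), promoting weak speciality to speciality while retaining the identity $\delta(X_0)=\delta(X)$: a priori the MMP needed to make the central fiber $\mathbb{Q}$-Fano could alter the special fiber and thus the induced valuation, and one must show this cannot happen when $E$ is genuinely a minimizer of $\delta$. The key point is that running the $K$-positive MMP on the weakly special model would strictly drop $\Fut$ (equivalently strictly increase $\delta$ of the resulting configuration) \emph{unless} the model is already special — this is the equality-case analysis in Steps 2 and 3 of the proof of Theorem \ref{t-specialdegeneration} — so the extremality of $E$ obstructs any nontrivial contraction. A secondary subtlety is justifying that the $\delta$-minimizing lc place can be taken from a \emph{finite} set of $\mathbb{Q}$-complements (so that ``$\delta_X(E)=\delta(X)$ is attained'' genuinely makes sense and is not merely an infimum); this relies essentially on the boundedness of complements \cite{Bir19} together with the lower-semicontinuity of the log discrepancy function along a bounded family, which is the technical heart of \cite{BLZ19}.
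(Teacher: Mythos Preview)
Your overall architecture (show $E$ is dreamy, build the test configuration, then argue it must be special) mirrors the paper's treatment of the boundary case $\delta(X)=1$ in Theorem~\ref{t-delta=1}. But Step~(4), the promotion from weakly special to special, does not go through as written when $\delta(X)<1$, and this is the heart of the matter.

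In the $\delta=1$ case the paper's argument works because $\beta(E)=0$, so $\Fut(\cX)=0$; running the MMP of Theorem~\ref{t-specialdegeneration} produces $\cX^{\rm s}$ with $\Fut(\cX^{\rm s})\le 0$, and K-semistability of $X$ forces $\Fut(\cX^{\rm s})\ge 0$, so equality holds and $\cX=\cX^{\rm s}$. When $\delta(X)<1$, however, $\Fut(\cX)=\tfrac{1}{2}\beta(E)<0$, and $X$ is \emph{not} K-semistable, so there is no lower bound on $\Fut$ of special test configurations. Your proposed contradiction---that a strict drop in $\Fut$ under the MMP would violate $\delta$-minimality of $E$---does not follow: if the MMP replaces $E$ by some $E'$ (with scaling $c'$), then $c'\beta(E')<c\beta(E)$ says only that $c'S(E')(\delta_X(E')-1)<cS(E)(\delta_X(E)-1)$, and since both sides are negative and the $S$-values and scalings differ, one cannot conclude $\delta_X(E')<\delta_X(E)$. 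The ``extremality of $E$'' does not obstruct the MMP in the way you claim.

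There is also a sign error in Step~(3): lower semicontinuity of $\delta$ in families (Theorem~\ref{t-constructible}) gives $\delta(X_0)\le\delta(X_t)=\delta(X)$, not $\ge$. The nontrivial inequality $\delta(X_0)\ge\delta(X)$ is precisely what needs to be proved, and it is a consequence of the degeneration being \emph{optimal}, not of semicontinuity.

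The approach in \cite{BLZ19} is to \emph{twist} so as to reduce to the K-semistable situation. One works with a suitable auxiliary boundary (built from a general ample divisor and a basis-type divisor) so that the resulting log pair is log Fano with $\delta$ equal to $1$ and $E$ still computing it; then the log version of Theorem~\ref{t-delta=1} applies directly and yields a special test configuration of the pair, which one checks is also special for $X$. Equivalently, one replaces $\Fut$ by a twisted invariant for which the minimizer $E$ has value zero and all other divisors have nonnegative value, restoring the equality-case argument. This twist is the missing ingredient in your Step~(4).
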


The following theorem completes the correspondence between lc places of complements and weakly special test configurations. See Theorem \ref{t-Nweakspecial} for a strengthening, using Birkar's Theorem of the boundedness of complements in \cite{Bir19}.  
\begin{thm}[{\cite{BLX19}*{Thm A.2}}]\label{t-weakspecial}
For a $\bZ$-value divisorial valuation $v\in \Val_X$ over an $n$-dimensional $\bQ$-Fano variety $X$, it  arises from a nontrivial weakly special test configuration with a irreducible central fiber if and only if there exists an $\mathbb{Q}$-complement $D$ of $X$ such that $v$ is an lc place of $X$. In other words, a divisorial valuation is weakly special if and only if it is an lc place of an $\mathbb{Q}$-complement. 
\end{thm}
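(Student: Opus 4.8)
\textbf{Proof proposal for Theorem \ref{t-weakspecial}.}

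The plan is to prove both directions of the equivalence, using the Rees construction and the MMP machinery from Section \ref{s-degeneration} together with the finite generation properties of dreamy divisors from \ref{s-rees}. The \emph{easy} direction --- that an lc place of a $\bQ$-complement is weakly special --- is already proved in Proposition \ref{p-lctotest}, so the real content is the converse: starting from a nontrivial weakly special test configuration $\cX$ with irreducible central fiber $\cX_0$, and associated divisorial valuation $v = a\cdot\ord_E$ (via Lemma \ref{l-integraldreamy} / Lemma \ref{l-induced}), I must produce a $\bQ$-complement $D$ of $X$ with $A_{X,D}(v) = 0$. First I would record that since $\cX_0$ is irreducible, the polarization satisfies $\cL \sim_{\bA^1} -rK_{\cX}$, and since $(\cX,\cX_0)$ is log canonical, $\cX_0$ itself is an lc place of the pair $(\cX, \cX_0)$ over $\bA^1$. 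The goal is to ``spread'' this log canonical structure to a complement on the generic fiber $X$.

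The key step is a degeneration-of-complements argument. Because $(\cX,\cX_0)$ is log canonical and $-K_{\cX_0} = -(K_{\cX}+\cX_0)|_{\cX_0}$ is ample with $\cX_0$ a $\bQ$-Fano-type variety (at least normal and of Fano type over a point, using that $-K_{\cX}$ is big and nef along $\cX_0$), one can choose an effective $\bQ$-divisor $D_0 \sim_\bQ -K_{\cX_0}$ making $(\cX_0, D_0)$ log canonical --- i.e. a $\bQ$-complement of the central fiber. The heart of the matter is to lift $D_0$ to a $\bG_m$-equivariant $\bQ$-divisor $\cD$ on $\cX$ with $\cD \sim_\bQ -K_{\cX/\bA^1}$, $(\cX,\cX_0+\cD)$ log canonical near $\cX_0$, and $\cD|_{\cX_0} = D_0$; this uses that $H^0(-mrK_{\cX})\to H^0(-mrK_{\cX_0})$ is surjective for $m$ divisible (cohomology and base change, since $R^1$ vanishes by Kawamata--Viehweg on the fibers) together with semicontinuity of log canonicity. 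Restricting $\cD$ to the general fiber $X_t \cong X$ gives an effective $D := \cD|_{X} \sim_\bQ -K_X$, hence a $\bQ$-complement of $X$. Since $(\cX, \cX_0 + \cD)$ is log canonical with $\cX_0$ a component of coefficient $1$, the divisor $\cX_0$ is an lc place of $(\cX, \cD)$ (a $\bG_m$-trivial family of log Calabi--Yau pairs over $\bA^1\setminus\{0\}$), and restricting the valuation $\ord_{\cX_0}$ to $K(X)$ --- which equals $a\cdot\ord_E = v$ --- shows $A_{X,D}(v) = 0$, i.e. $v = a\cdot\ord_E$ is an lc place of the $\bQ$-complement $D$.

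I expect the main obstacle to be the lifting step: constructing the $\bG_m$-equivariant $\bQ$-complement $\cD$ on the whole family $\cX$ whose restriction to $\cX_0$ is the chosen complement $D_0$ and which keeps the pair log canonical in a neighborhood of $\cX_0$. One must be careful that a general choice of lift may acquire bad singularities away from $\cX_0$; the fix is to argue that $(\cX, \cX_0 + \cD)$ is log canonical \emph{in a neighborhood of $\cX_0$} by inversion of adjunction (\cite[Theorem 1.32]{Kol13}-type statements applied to $\cX_0 \hookrightarrow \cX$), which only needs $(\cX_0, D_0)$ lc, and then shrink $\bA^1$ if necessary; the condition on the generic fiber is then automatic by $\bG_m$-invariance. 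A secondary technical point is ensuring the model $Y\to X$ extracting $E$ is of Fano type (dreaminess of $E$), which follows because $E$, being an lc place of the $\bQ$-complement $D$ just constructed, is extracted by a $\bQ$-factorial model with $-E$ relatively ample over $X$ by \cite{BCHM10}, so the Rees algebra $\mathrm{Rees}(\cF_v)$ is finitely generated and the correspondence of Lemma \ref{l-integraldreamy} applies. Combining this converse with Proposition \ref{p-lctotest} yields the stated biconditional, and the last sentence (``a divisorial valuation is weakly special iff it is an lc place of a $\bQ$-complement'') is just a restatement after recalling Definition \ref{d-specialdivisor}.
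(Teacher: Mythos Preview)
Your approach is genuinely different from the paper's, and it has a real gap at the step you yourself flag as the main obstacle.

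The paper does \emph{not} lift a complement from the central fiber. Instead, for the hard direction it works entirely on $X$ via the filtration $\cF_E$ induced by $\ord_E$: for a weakly special test configuration one has $\DNA(\cF_E)=A(E)-S(E)$, and chaining this with the general inequalities $\beta(E)\ge\beta(\cF_E)\ge\DNA(\cF_E)$ (Proposition~\ref{l-betading}, Theorem~\ref{t-betading}) forces the log canonical slope to satisfy $\mu_1(\cF_E)=rA_X(E)$. Since $E$ is dreamy, the base-ideal sequence $I^{(t)}_\bullet$ is eventually finitely generated, so $\lct(X;I_{m,mrA_X(E)})=\tfrac{1}{mr}$ for divisible $m$, and any $D\in|-mrK_X|$ with $\ord_E(D)\ge mrA_X(E)$ realizing this threshold is the desired $\bQ$-complement with $E$ as an lc place. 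No complement on $\cX_0$ is ever constructed.

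Your argument, by contrast, needs a $\bG_m$-\emph{equivariant} $\cD\sim_\bQ -K_{\cX/\bA^1}$ with $(\cX,\cX_0+\cD)$ lc. Both places where you invoke $\bG_m$-invariance are essential: you need it to propagate the lc condition from a neighborhood of $\cX_0$ to all of $\cX$, and you need it so that $(\cX,\cD)$ and $(X\times\bA^1,D\times\bA^1)$ are crepant birational, which is what makes $A_{X,D}(v)=0$ follow from $\cX_0$ being an lc place. But producing such a $\cD$ amounts to finding a $\bG_m$-equivariant $\bQ$-complement of the lc (not klt!) Fano pair $(\cX,\cX_0)$, or equivalently of the slc Fano $\cX_0$. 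You assert this can be done via surjectivity of restriction and inversion of adjunction, but surjectivity of $H^0$ does not produce $\bG_m$-invariant lifts of arbitrary $D_0$, and a general lift will not be equivariant. In fact, the paper derives the existence of $\bG_m$-equivariant complements on $\bG_m$-Fano varieties as a \emph{consequence} of this very theorem (see the Remark immediately following the proof), so taking it as an input here is, at best, an unjustified forward reference and, in spirit, circular. Without this equivariant lift, your step ``restricting $\ord_{\cX_0}$ to $K(X)$ shows $A_{X,D}(v)=0$'' does not follow.
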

\begin{proof} In Proposition \ref{p-lctotest}, we have seen from an lc place $E$ of a $\mathbb Q$-complement $D$, we can obtain a weakly special test configuration.
Then if we consider a valuation $v=a\cdot \ord_E$ for an lc place $E$ of a $\mathbb{Q}$-complement and $a\in \mathbb N_+$, then in the proof of Proposition \ref{p-lctotest}, we take the weighted blowup of $E_{\mathbb{A}^1}$ and $X\times\{0\}$ with weight $(1,a)$, and run an MMP as before, we obtain a weakly special test configuration as desired. 


\smallskip

Conversely, from a weakly special test configuration $\cX$ with an irreducible central fiber, we want to prove the induced $\bZ$-valuation $v$ is an lc place of a $\bQ$-complement $D$. The original proof in \cite{BLX19} used the cone construction. Here we gave a (simpler) global argument\footnote{It is suggested by Harold Blum.}, relying on some latter results in Section \ref{s-Ding}.

Let $\ord_E$ be the induced divisorial valuation by the weakly special test configuration.  Consider the valuation of $\ord_E$ on $R:=\bigoplus_{m\in \bN}H^0(-mrK_X)$. Then we know 
$$A(E)-S(E)=\beta(E)\ge \beta(\cF_E)=\frac{\mu_1(\cF_{E})-S(\cF_E)}{r}\ge \DNA(\cF_E)=A(E)-S(E).$$
See Definition \ref{d-filbeta} for the invariants of $\cF_E$; the inequalities follow from Proposition \ref{l-betading};
and the last equality follows from the fact that $\cX$ is weakly special test configuration. This implies that $rA(E)=\mu_1({\cF_{E}})$ as $rS(E)=S(\cF_E)$. The rest follows from \cite[Theorem A.7]{XZ19} which we will include here for reader's convenience.

Let $I_{m,p}(\cF):={\rm Im} \left(  \cF^p R_m \otimes \cO_{X}(-rmK_X) \to \cO_X\right)$ and $I^{(t)}_{\bullet}=(I_{m,mt})$. The function $t\mapsto \lct(X;I^{(t)}_\bullet)$ is  piecewise linear  on $(0,rT_{X}(E)]$ due to the finite generation assumption.
Then
\[
\lct(X,;I^{(r\cdot A_{X}(E))}_\bullet)\le \frac{A_{X}(E)}{\ord_E(I^{(r\cdot A_{X}(E))}_\bullet)}\le \frac{A_{X}(E)}{r\cdot A_{X}(E)} = \frac{1}{r},
\]
thus $\lct(X;I^{(r\cdot A_{X}(E))}_\bullet) = \frac{1}{r}$.
Since ${\rm gr}_{\cF_E}R$ is finitely generated, then
\[
\lct(X;I^{ (r\cdot A_{X}(E))}_\bullet)=m\cdot \lct(X;I_{m, mrA_{X}(E))})= \frac{1}{r}.
\]
for some sufficiently divisible $m$. This means there is a divisor $D\in |-mrK_X|$ with $\ord_E(D)\ge mrA_{X}(E)$ and $(X,\frac{1}{mr}D)$ is log canonical. Thus $E$ is an lc place of $(X,\frac{1}{mr}D)$.  
\end{proof}
\begin{rem}\footnote{This comes out from a discussion with Yuchen Liu.} An interesting consequence of Theorem \ref{t-weakspecial} is that a $\mathbb{G}_m$-equivariant Fano variety has a $\bG_m$-equivariant $\mathbb{Q}$-complement. In fact, let $\cX$  be the product test configuration given by the $\mathbb{G}_m$-action on $X$. By Theorem \ref{t-weakspecial},  we know this yields a valuation which is an lc place of a $\mathbb{Q}$-complement $D$. Then the special fiber $X_0$ of $\cX$ over $0$ is an lc place of $(X\times \mathbb{A}^1,D\times \mathbb{A}^1+X\times \{0\})$. Thus the closure $\mathcal D$ of $D\times \mathbb{A}^1$ on $\cX$ yields a family of log CY pairs, i.e. $D_0:=\mathcal{D}\times_{\mathbb A^1}\{0\}$ is a $\bG_m$-equivariant $N$-complement. 
\end{rem}

We have the following description of special divisors (see Definition \ref{d-specialdivisor}).
\begin{thm}[Zhuang]In Theorem \ref{t-weakspecial}, the following are equivalent
\begin{enumerate}
\item a divisor $E$ over $X$ is special;
\item $A_X(E)<T(E)$ and there exists a $\mathbb{Q}$-complement $D^*$, such that $E$ is the only lc place of $(X,D^*)$; and
\item there exsits a divisor $D\sim_{\mathbb {Q}}-K_X$ and $t\in (0,1)$ such that $(X,tD)$ is lc and $E$ is the only lc place for $(X,tD)$.
\end{enumerate}
\end{thm}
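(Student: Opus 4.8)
The plan is to prove the cycle of implications $(1)\Rightarrow(2)\Rightarrow(3)\Rightarrow(1)$, throughout exploiting the dictionary of Theorem~\ref{t-weakspecial} between weakly special divisorial valuations and lc places of $\bQ$-complements, and upgrading the qualifier ``weakly special / lc'' to ``special / plt'' by controlling lc centers via inversion of adjunction. The first observation is that each of $(1)$, $(2)$, $(3)$ forces $E$ to be weakly special: this is immediate for $(1)$ and $(2)$, while in case $(3)$ one first completes $tD$ to a genuine $\bQ$-complement of $X$ by adding a complement $\Gamma$ of the lc log Fano pair $(X,tD)$, chosen (using Birkar's complement theory) so that $E$ remains an lc place. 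Hence in all three cases we may attach to $E$ the weakly special test configuration $\cX$ of Lemma~\ref{l-integraldreamy}, with integral central fibre $X_0$, $\cL\sim_{\bQ}-rK_{\cX}$, and $-K_{X_0}$ ample; the entire content of the theorem is then (a) pinning down when $X_0$ is actually klt --- equivalently $(\cX,X_0)$ is plt, equivalently $\cX$ is a special test configuration in the sense of Definition~\ref{d-stc} --- and (b) the role of the strict inequality $A_X(E)<T(E)$.

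For $(3)\Rightarrow(1)$ I would extract $E$ by a run of the MMP to get $\mu\colon Y\to X$ with $-E$ being $\mu$-ample, so that the crepant pair $(Y,t\mu^{-1}_{*}D+E)$ is plt with $E$ as its unique lc place and $-(K_Y+t\mu^{-1}_{*}D+E)$ ample; thus $Y$ is of Fano type, $E$ is dreamy, and the Rees construction produces the test configuration $\cX$ with integral central fibre $X_0$ and $-K_{\cX}$ ample. Deforming the plt pair $(Y,t\mu^{-1}_{*}D+E)$ to the normal cone along this degeneration and applying inversion of adjunction in the resulting $\bQ$-Gorenstein family over $\mathbb{A}^1$ shows that $(\cX,X_0)$ is plt; equivalently $X_0$ is klt, so $X_0$ is a $\bQ$-Fano variety, $\cX$ is a special test configuration, and $E$ is special.

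For $(1)\Rightarrow(2)$: given the special test configuration $\cX$, the pair $(\cX,X_0)$ is plt and $X_0$ is a $\bQ$-Fano variety equipped with the residual $\mathbb{G}_m$-action. Using the remark following Theorem~\ref{t-weakspecial} applied to $X_0$ (every $\mathbb{G}_m$-equivariant Fano has a $\mathbb{G}_m$-equivariant $\bQ$-complement), one chooses a $\mathbb{G}_m$-invariant $\bQ$-complement $D_0$ of $X_0$ whose unique lc place is the divisor into which $E$ degenerates; here one uses that $E$ induces a special test configuration of $X_0$ for the residual torus, so that the uniqueness can be arranged. Lifting $D_0$ equivariantly through the Rees construction gives a family of log Calabi--Yau pairs $(\cX,\cD)$ over $\mathbb{A}^1$ restricting to $(X_0,D_0)$ over $0$; inversion of adjunction shows $(\cX,\cD+X_0)$ is plt near $X_0$, hence $X_0$ is its only lc place over $0$, and crepant descent identifies $D^{*}:=\cD|_{X_t}$ (via the trivialization over $\mathbb{A}^1\setminus\{0\}$) as a $\bQ$-complement of $X$ with $E$ as unique lc place. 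Finally $A_X(E)<T(E)$: were equality to hold, $D^{*}$ would satisfy $\ord_E(D^{*})=A_X(E)=T(E)$, so $\ord_E$ would be extremal among all $\bQ$-divisors $\sim_{\bQ}-K_X$, and tracing this back through the degeneration contradicts $(X_0,D_0)$ being merely lc, i.e. contradicts $X_0$ being klt. The remaining implication $(2)\Rightarrow(3)$ is then obtained by perturbation: pick $D_1\sim_{\bQ}-K_X$ with $\ord_E(D_1)>A_X(E)$, set $D_\lambda=(1-\lambda)D^{*}+\lambda D_1$ and $t_\lambda=A_X(E)/\ord_E(D_\lambda)<1$, and check that for small $\lambda>0$ the pair $(X,t_\lambda D_\lambda)$ is lc with $E$ still its unique lc place --- the coefficient along $E$ equals $A_X(E)$ by construction, the klt pair $(X,t_\lambda(1-\lambda)D^{*})$ provides enough room to absorb $t_\lambda\lambda D_1$, and any competing lc place would already violate uniqueness of $E$ for $(X,D^{*})$.

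The main obstacle is the two transfers of the unique-lc-place property across the $\mathbb{G}_m$-degeneration: in $(1)\Rightarrow(2)$ one must arrange the equivariant complement $D_0$ of $X_0$ so that $(X_0,D_0)$ has exactly one lc centre, and then show, via inversion of adjunction in $(\cX,\cD+X_0)\to\mathbb{A}^1$, that this lc centre neither splits nor fuses with the central fibre; dually, in $(3)\Rightarrow(1)$ one must check that the plt property of $(Y,t\mu^{-1}_{*}D+E)$ survives the deformation to the normal cone. The complement-perturbation step in $(2)\Rightarrow(3)$ and the MMP extraction of $E$ are comparatively routine once the boundary is fixed; the genuinely delicate point, wherever it appears, is the propagation of lc centres --- equivalently, of klt-ness of the central fibre --- through the degeneration, and it is exactly here that one needs $E$ to carry the stronger ``special'' structure rather than just being weakly special.
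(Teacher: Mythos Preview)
Your cycle $(1)\Rightarrow(2)\Rightarrow(3)\Rightarrow(1)$ is a reasonable plan, and your $(2)\Rightarrow(3)$ perturbation and $(3)\Rightarrow(1)$ extraction are broadly in the right spirit. However, your $(1)\Rightarrow(2)$ has a genuine gap that you yourself flag as ``the main obstacle'' without resolving. You propose to pick a $\mathbb{G}_m$-equivariant $\bQ$-complement $D_0$ of the central fibre $X_0$ whose \emph{unique} lc place is the degeneration of $E$, and then lift it. But the remark following Theorem~\ref{t-weakspecial} only produces \emph{some} equivariant complement of $X_0$; there is no mechanism offered for forcing it to be plt with a prescribed unique lc centre. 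Your justification that ``$E$ induces a special test configuration of $X_0$ for the residual torus'' does not parse: under the degeneration, $E$ becomes the whole divisor $X_0$, and the residual $\mathbb{G}_m$-action on $X_0$ gives only the product test configuration. So the construction of $D_0$, and hence of $D^*$, is missing its key input.

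The paper avoids this entirely by a different characterisation, which is worth internalising: $\cX$ is special if and only if for \emph{every} effective $D\sim_{\bQ}-K_X$ there exist $\epsilon>0$ and $D'\sim_{\bQ}-K_X$ such that $(X,\epsilon D+(1-\epsilon)D')$ is lc with $E$ an lc place. This is just a restatement of ``$X_0$ is klt'' (klt means one can absorb a small multiple of any effective divisor), passed through the Theorem~\ref{t-weakspecial} dictionary, and it removes any need to transfer a \emph{single} plt complement across the degeneration. With this in hand, $(1)\Rightarrow(2)$ proceeds on $X$ rather than on $X_0$: first, choosing $D$ general (not containing the centre of $E$) gives $A_X(E)=A_{X,\epsilon D}(E)\le(1-\epsilon)T_X(E)$, hence $A_X(E)<T(E)$; second, one extracts $E$ on $\mu\colon Y\to X$, runs a $(-K_Y-E)$-MMP to $Y'$, and argues by contradiction that $(Y',E')$ is plt --- if not, bigness of $-K_{Y'}-E'$ produces a $D_{Y'}$ whose pushforward $D$ violates the ``for every $D$'' characterisation. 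A \emph{general} complement of the plt Fano pair $(Y',E')$ then gives $D^*$ with $E$ as its only lc place by Bertini. The implication $(2)\Rightarrow(1)$ is likewise direct: given arbitrary $D$, use $A_X(E)<T(E)$ to interpolate $D$ with auxiliary divisors so that $E$ becomes an lc place, then mix with $D^*$ to retain lc. The remaining implications $(1)\Rightarrow(3)\Rightarrow(2)$ are then genuinely routine. Compared with your approach, the paper trades the delicate propagation of a single lc centre through the degeneration for a universal statement over all $D$, which is both easier to verify and more robust.
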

\begin{proof}We first prove $(1)\Longleftrightarrow (2)$. A test configuration $\cX$ is special if and only if for any effective $\mathbb{Q}$-divisor $D\sim_{\mathbb Q}-K_X$, there exists a positive $\epsilon$, such that $(\cX, \epsilon\cD)$ is a weakly special test configuration where $\cD$ is the closure of $D\times \mathbb{G}_m$ in $\cX$. This is equivalent to saying that for $D$, there are a positive $\epsilon$ and  an effective $\mathbb{Q}$-divisor $D'\sim_{\mathbb{Q}}-K_X$ of $X$ such that $E$ is an lc place of  the lc pair $(X,\epsilon D+(1-\epsilon)D')$.

Now we assume $E$ satisfies the conditions in (2). Then we have $\mathbb{Q}$-divisors $D'\sim_{\mathbb Q}D''\sim_{\mathbb Q}-K_X$ such that $A_{X,D'}(E)>0$ and $A_{X,D''}(E)<0$. By choosing one of them as $D_1$ and an appropriate $a\in [0,1)$, we have $A_{X,aD_1+(1-a)D}(E)=0$. 
Thus for a sufficiently small $\epsilon$, $\Big(X,\epsilon\big((1-a)D_1+aD\big)+(1-\epsilon)D^*)\Big)$ is lc and has $E$ as its lc place.

Conversely, if we take $D$ to be a general $\mathbb{Q}$-divisor whose support does not contain ${\rm Cent}_X(E)$, then since for some $\epsilon>0$, $E$ is an lc place of $(X, \epsilon D+(1-\epsilon)D')$ from our assumption, we have 
$$A_X(E)=A_{X,\epsilon D}(E)\le T_{X,\epsilon D}(E)=(1-\epsilon)T_X(E).$$ 
To see the second property, we denote by $\mu\colon Y\to X$ the model precisely extracting $E$.  Then we run MMP for $-K_Y-E$ to get a model $Y'$, and we claim $(Y',E')$ is plt. Otherwise, we can find an effective $\bQ$-divisor $D_{Y'}\sim_{\mathbb{Q}}-K_{Y'}-E'$ such that $(Y', E'+ \epsilon D_{Y'})$ is not log canonical for any $\epsilon>0$, as $-K_{Y'}-E'$ is big. This yields an effective $\mathbb{Q}$-divisor $D\sim_{\mathbb{Q}}-K_X$ on $X$ violating our assumption on $E$. Now we pick up a general $\bQ$-complement of $(Y', E')$, it induces a $\bQ$-complement $D^*$ with $E$ the only lc place of $(X,D^*)$.  

Finally, from the above discussions, one can easily see $(1)\Longrightarrow (3)\Longrightarrow (2)$. We leave the details to the reader.
\end{proof}

\subsection{Bounded complements} Following \cite{BLX19}, to further extend the correspondence established in Proposition \ref{p-corresp}, we need the difficult theorem on the boundedness of complements proved in \cite{Bir19}.  In fact, the following lemma, which is a key step for our argument, is a consequence of Birkar's theorem on the boundedness of complements.
\begin{lem}[{\cite{BLX19}*{Theorem 3.5}}]\label{l-complementbounded}
Let $n$ be a positive integer. Then there exists a positive integer $N = N(n)$, such that for any $n$-dimensional $\bQ$-Fano variety, if $E$ is an lc place of a $\bQ$-complement of $X$, then it is indeed an lc place of an $N$-complement. 
\end{lem}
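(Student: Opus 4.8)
\textbf{Proof proposal for Lemma \ref{l-complementbounded}.}

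The plan is to reduce the statement to Birkar's theorem on the boundedness of $\bQ$-complements \cite{Bir19}, using the fact that the relevant objects — Fano varieties together with a chosen lc place — form a bounded family, so that the index $N$ of a complement can be chosen uniformly. First I would unwind the hypothesis: if $E$ is an lc place of some $\bQ$-complement $D$ of $X$, then by Theorem \ref{t-weakspecial} (together with Proposition \ref{p-lctotest}) the valuation $\ord_E$ is weakly special, hence dreamy, and there is a birational model $\mu\colon Y\to X$ of Fano type which precisely extracts $E$, i.e. $-E$ is $\mu$-ample and $(Y, E)$ is log canonical near $E$ with $-K_Y-E$ big; moreover $E$ itself is the only $\mu$-exceptional divisor. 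The key reduction is that one is free to replace $X$ by the \emph{plt blowup} $(Y, E)$: a $\bQ$-complement of $(Y,E)$ whose support makes $E$ an lc place pushes forward to a $\bQ$-complement $D$ of $X$ with $\ord_E(D) = A_X(E)$, and conversely. So the problem becomes: bound the index of a complement of the \emph{pair} $(Y,E)$ for which $E$ is an lc place.

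The next step is to bring in boundedness. The pairs $(Y,E)$ arising this way — $n$-dimensional, $Y$ of Fano type, $(Y,E)$ plt with $E$ a Fano type variety of dimension $n-1$ and $-(K_Y+E)$ big and semiample — should form a bounded family once we fix $n$; this is where one invokes boundedness results for Fano type varieties, ultimately descending from \cite{Bir19} (the BAB-type statements). Concretely, one can further contract to the ample model so that $-(K_Y+E)$ is ample, producing a log Fano pair $(Y', E')$ in a bounded family. For a bounded family of log Fano pairs one has, by Birkar's boundedness of complements \cite{Bir19}, a single $N_0 = N_0(n)$ such that every member admits an $N_0$-complement $D'$ \emph{and} — choosing $D'$ to contain $E'$ with the right coefficient, which is possible since $E'$ is already an lc component — such that $E$ is an lc place of $(Y', (1-\epsilon)\Delta' + E' + D')$ for the appropriate boundary; the point of boundedness is precisely that the denominators needed to make $E$ an lc place and to make the complement relation hold are uniformly controlled. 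Pushing this $N_0$-complement back down to $X$ via $Y' \dashrightarrow Y \to X$ (using that the relevant MMP and contraction steps preserve the complement and the lc place, possibly after replacing $N_0$ by a bounded multiple to clear the extra denominators introduced in the pushforward) yields the desired $N$-complement of $X$ with $E$ as an lc place.

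The main obstacle I expect is the bookkeeping in the last step: controlling how the index $N$ changes under the birational transformations relating $(Y',E')$ back to $X$, and ensuring that the $N_0$-complement produced on the bounded model can actually be \emph{chosen} to have $E$ as an lc place (rather than merely existing abstractly). This requires a careful use of the ``complements with prescribed lc place'' refinement of Birkar's theorem — i.e. that over a bounded family, not only does an $N$-complement exist, but one can prescribe that a fixed bounded divisor (here $E'$, a log canonical place of the given pair) lies in the non-klt locus of the complement — and then a standard argument (as in \cite{BLX19}) that pulling back and pushing forward lc places along Fano-type contractions and small modifications preserves the lc-place property. Everything else — the plt blowup, the descent of $-(K_Y+E)$ to its ample model, the deduction of boundedness of the $(Y',E')$ from \cite{Bir19} — is routine given the results already stated, in particular Theorem \ref{t-weakspecial}.
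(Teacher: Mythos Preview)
Your overall architecture --- extract $E$ to a model $\mu\colon Y\to X$, find an $N$-complement of the pair $(Y,E)$ with $E$ as an lc place, push forward --- is correct, and this is essentially how the argument in \cite{BLX19} goes. But there is a genuine gap in the middle, and it sits exactly where you say the main work is.

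You write that the pairs $(Y,E)$ ``should form a bounded family once we fix $n$'', invoking BAB-type results. This is false and, more importantly, unnecessary. It is false because you have no uniform lower bound on the volume of $-(K_Y+E)$ and no uniform $\epsilon$-lc bound on $(Y,E)$ (the pair is only lc, and $X$ itself ranges over all $n$-dimensional $\bQ$-Fano varieties, which is an unbounded family). It is unnecessary because Birkar's theorem on the boundedness of complements \cite{Bir19} is precisely the statement that a uniform $N=N(n)$ works \emph{without} boundedness of the pairs: for any $n$-dimensional pair $(Y,B)$ with $Y$ of Fano type, $(Y,B)$ lc, and the coefficients of $B$ in a fixed finite (or DCC) set, there exists an $N$-complement $\Gamma\ge 0$ with $(Y,B+\Gamma)$ lc and $N(K_Y+B+\Gamma)\sim 0$. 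Apply this directly to $(Y,E)$ (coefficient set $\{1\}$); since $E$ already has coefficient $1$, it is automatically an lc place of $(Y,E+\Gamma)$. Pushing $\Gamma$ forward to $X$ gives the desired $N$-complement with $E$ as an lc place, since $K_Y+E+\Gamma=\mu^*(K_X+\mu_*\Gamma)$ when $E$ is $\mu$-exceptional.

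Two smaller points: the extraction $(Y,E)$ is only lc in general, not plt (plt would correspond to $E$ being a special divisor, not merely weakly special); and you do not need to invoke Theorem~\ref{t-weakspecial} at all --- the existence of the extraction with $Y$ of Fano type follows immediately from the assumption that $E$ is an lc place of a $\bQ$-complement (this is the content of Proposition~\ref{p-lctotest}, which uses only \cite{BCHM10}).
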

Then a consequence is the following improvement of Theorem \ref{t-weakspecial}.
\begin{thm}[{\cite{BLX19}*{Thm A.2}}]\label{t-Nweakspecial}
Let $n$ be a positive integer and $N = N(n)$ as above. A divisorial valuation is weakly special if and only if it is an lc place of an $N$-complement. 
\end{thm}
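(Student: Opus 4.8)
The plan is to obtain Theorem~\ref{t-Nweakspecial} as the effective refinement of Theorem~\ref{t-weakspecial}: one implication will be purely formal, while the other will be Theorem~\ref{t-weakspecial} followed by a single application of the uniform bound in Lemma~\ref{l-complementbounded}. Throughout I will use that for a $\bZ$-valued divisorial valuation $v = a\cdot\ord_E$ with $a\in\bN_+$ and any effective boundary $D$ one has $A_{X,D}(v) = a\cdot A_{X,D}(\ord_E)$, so that ``$v$ is an lc place of $(X,D)$'' and ``$E$ is an lc place of $(X,D)$'' are equivalent; this lets me pass freely between the valuation and its underlying prime divisor.

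For the easy implication, I would start from a divisorial valuation $v$ that is an lc place of an $N$-complement $D$ of $X$. Since $N(K_X+D)\sim 0$, the divisor $D$ is in particular a $\bQ$-complement, so $v$ is an lc place of a $\bQ$-complement, and Theorem~\ref{t-weakspecial} then yields directly that $v$ is weakly special.

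For the converse, I would assume $v$ is weakly special and write $v = a\cdot\ord_E$. By Theorem~\ref{t-weakspecial} the underlying divisor $E$ is an lc place of some $\bQ$-complement of $X$; now Lemma~\ref{l-complementbounded}, applied with $N = N(n)$ for $n = \dim X$, upgrades this to the statement that $E$ is an lc place of an actual $N$-complement $D'$. Then $A_{X,D'}(v) = a\cdot A_{X,D'}(\ord_E) = 0$, so $v$ is an lc place of the $N$-complement $D'$, completing the equivalence.

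The genuine difficulty is not in this argument but entirely upstream, in Lemma~\ref{l-complementbounded}, which rests on Birkar's theorem on the boundedness of complements~\cite{Bir19}; granting that, the present step is bookkeeping, the only thing to check being the compatibility of the lc-place conditions for $v$ and for $E$ noted in the first paragraph. I do not anticipate any new obstacle arising here.
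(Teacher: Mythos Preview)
Your proposal is correct and matches the paper's approach exactly: the paper states Theorem~\ref{t-Nweakspecial} as an immediate ``consequence'' of combining Theorem~\ref{t-weakspecial} with Lemma~\ref{l-complementbounded}, without writing out a separate proof. Your argument simply makes this deduction explicit, and the only extra content you add---the observation that the lc-place condition for $v = a\cdot\ord_E$ and for $E$ agree---is a harmless piece of bookkeeping the paper leaves implicit.
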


An application of the boundedness of complements (see Lemma \ref{l-complementbounded}) is to use it to understand the valuation which computes $\delta(X)$, when $\delta(X)\le 1$.  

Combining Proposition \ref{p-deltaapproximation} and
 Lemma \ref{l-complementbounded}, we know $\delta(X)=\lim_i\delta_X(E_i)$, where $E_k$ is an lc place of an $N$-complement for any $k$. Then we have the following construction.

\begin{say}\label{say-stratification}
Given a $\bQ$-Fano variety. For any fixed $N$, there is a variety $B$ of finite type, and a family of $\mathbb{Q}$-Cartier divisors $\cD\subset X\times B$, such that for any $b\in B$, $(X,\frac{1}{N}D_b)$ is strictly log canonical and $N(K_X+\frac{1}{N}D_b)\sim 0$. Moreover, any $N$-complement can be written as $\frac{1}{N}D_b$ for some $b\in B$. In particular, after a further stratification of $B$ and a base change, we can assume there exists $\cW\to (X\times B,\cD)\to B$, which gives fiberwise resolutions over points in $B$. In particular, for each component $B_i$ of $B$, we can consider all the prime divisors $E_{i,j}$ on $\cW$ which have log discrepancy 0 with respect to $(X\times B_i, \cD|_{B_i})$. From the assumption on $\cW$, we know that restricting $E_{i,j}$ over each $b\in B_i$ yields a divisor satisfying $A_{X,D_b}(E_{i,j})=0$.  Thus  for any $b\in B_i$, the dual complex $W:=\cD\big((E_i)_b:=\sum_{j}(E_{i,j})_b\big)$ can be canonically identified with the dual complex consisting of all lc places of $(X,D_b)$.
\end{say}

\begin{say}\label{say-locallyconstant}
Applying the construction as in Paragraph \ref{say-stratification}, after taking a subsequence, we can assume all corresponding points $b_k\in B$ of $E_k$ belonging to the same component $B_i$ of $B$.  By \cite[Thm. 1.8]{HMX13}, we can argue that the function $S(E_b)=S(E_{b'})$ if $b$ and $b'$ belong to the same component $B_i$ of $B$, and the prime divisors $E_b, E_{b'}$ correspond to the same point of $W_{\bQ}$.
Thus a divisor $E$ corresponding to a fixed point on the dual complex $W$, the function
$b\to \delta_X(E_b)$
does not depend on $b$ (in $B_i$). Therefore, the function $\delta_X(\cdot)$ can be considered to be a function on $W$, which is continuous. So it attains a minimum at a point corresponding to a quasi-monomial valuation $v$. (A priori, $v$ is not necessarily a divisorial valuation since it may not correspond to a rational point on the dual complex.)
\end{say}

 To recap on the above discussion, thus we show 
\begin{thm}\label{t-quasimini}
If $\delta(X)\le 1$, then it is always computed by a quasi-monomial valuation which is an lc place of an $N$-complement. 
\end{thm}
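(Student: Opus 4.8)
\textbf{Proof proposal for Theorem \ref{t-quasimini}.}

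The plan is to combine the three ingredients that have just been assembled: the approximation result \eqref{e-delta} together with Proposition \ref{p-deltaapproximation}, the boundedness of complements in the form of Lemma \ref{l-complementbounded}, and the constructibility/finiteness statements \cite[Thm. 1.8]{HMX13} and \cite[Thm.~1.8]{HMX13} quoted in Paragraphs \ref{say-stratification}--\ref{say-locallyconstant}. First I would invoke Proposition \ref{p-deltaapproximation}: since $\delta(X)\le 1$, we may write $\delta(X)=\inf_E \delta_X(E)$ where $E$ ranges over lc places of $\bQ$-complements of $X$. By Lemma \ref{l-complementbounded}, every such $E$ is in fact an lc place of an $N$-complement for the single integer $N=N(n)$ depending only on the dimension. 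Hence we may pick a sequence of divisors $E_k$, each an lc place of an $N$-complement $\frac1N D_{b_k}$, with $\delta_X(E_k)\to \delta(X)$.

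Next I would run the boundedness construction of Paragraph \ref{say-stratification}: the $N$-complements of $X$ form a bounded family $\cD\subset X\times B$ over a finite-type base $B$, and after stratifying $B$ and passing to a resolution $\cW\to (X\times B,\cD)\to B$, the lc places of $(X, D_b)$ for $b$ in a fixed stratum $B_i$ are organized by a dual complex $W$ independent of $b\in B_i$. Passing to a subsequence, I would assume all the $b_k$ lie in one stratum $B_i$; then each $E_k$ corresponds to a point $w_k$ of the (compact, rational-polytopal) complex $W$. The key technical input, supplied by \cite[Thm. 1.8]{HMX13}, is that $S_X(E_b)$ depends only on the point of $W$ that $E_b$ defines and not on $b\in B_i$ — and the same is trivially true of $A_X(E_b)=A_{X, D_b}(E_b)$ since this is $0$ plus the log discrepancy contribution, which is read off from $W$. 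Consequently $\delta_X(\cdot)$ descends to an honest function on $W$, and one checks (as in Paragraph \ref{say-locallyconstant}) that it is continuous there: both $A_X$ and $S_X$ extend to continuous functions on $W$ in terms of the barycentric coordinates, with $S_X>0$ throughout. Since $W$ is compact, this function attains its infimum at some point $w_\infty\in W$; the corresponding quasi-monomial valuation $v$ satisfies $\delta_X(v)=\lim_k \delta_X(E_k)=\delta(X)$ and is by construction an lc place of the complement $\frac1N D_b$ associated with the stratum $B_i$, completing the proof.

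The main obstacle is the last sentence of the previous paragraph — making precise that $\delta_X$ is a well-defined \emph{continuous} function on $W$. Two points need care. First, the invariance of $S_X(E_b)$ along $B_i$ is not formal: it requires the invariance of log plurigenera / volumes in families, which is exactly where \cite[Thm. 1.8]{HMX13} (or its consequences on the constancy of volumes of divisors in a family) enters, and one must check that the fiberwise resolution $\cW$ is compatible enough that "the same point of $W$" really produces "the same valuation up to the identification". Second, one must ensure that a priori $v$ is genuinely quasi-monomial: it arises as a point of the simplicial complex $W$ obtained from a snc model, so this is immediate once $W$ is identified with the dual complex of lc places, but one should note (as the parenthetical remark in Paragraph \ref{say-locallyconstant} already flags) that the minimizer need not sit at a rational point, so it need not be divisorial — which is precisely why the statement is phrased for quasi-monomial valuations. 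Everything else — the passage to subsequences, compactness of $W$, and the reduction via Proposition \ref{p-deltaapproximation} and Lemma \ref{l-complementbounded} — is routine bookkeeping.
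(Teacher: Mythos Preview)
Your proposal is correct and follows essentially the same approach as the paper's own argument, which is laid out in Paragraphs \ref{say-stratification}--\ref{say-locallyconstant} immediately preceding the theorem: reduce to lc places of $N$-complements via Proposition \ref{p-deltaapproximation} and Lemma \ref{l-complementbounded}, parametrize these by a finite-type base with a common dual complex $W$, use \cite[Thm.~1.8]{HMX13} to make $\delta_X$ a well-defined continuous function on $W$, and take the minimum on the compact $W$. Your discussion of the two delicate points (invariance of $S_X$ along strata and the quasi-monomial nature of the minimizer) also matches the paper's emphasis.
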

In fact, one can prove if $\delta(X)\le 1$, then any valuation computing $\delta(X)$ is an lc place of an $N$-complement. See \cite{BLX19}*{Thm A.7}.

To prove Conjecture  \ref{c-maxidd}, it suffices to produce a divisorial valuation which computes $\delta$, out of a quasi-monomial one (see Theorem \ref{t-blzdegeneration}). The key point is to verifying the following conjecture.
\begin{conj}[{\cite{Xu20}*{Conj. 1.2}}]\label{c-finitegeneration}
Let $X$ be a $\bQ$-Fano variety with $\delta(X)\le 1$, and $v$ a (quasi-monomial) valuation compute $\delta(X)$, then the graded ring ${\rm gr}_vR$ is finitely generated.
\end{conj}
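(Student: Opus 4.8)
The plan is to combine the cone construction (\S\ref{ss-cone}), which turns a $\delta$-minimizer on $X$ into a minimizer of the normalized volume $\hvol$ at the cone singularity, with an inductive degeneration argument, so that the higher-rank finite generation is reduced to cases where it is already known. First I would put $v$ into a normal form: by Theorem~\ref{t-quasimini} together with the boundedness of complements (Lemma~\ref{l-complementbounded}), we may assume $v$ is quasi-monomial and an lc place of an $N$-complement $D$ with $N=N(\dim X)$. Passing to a log smooth model $\mu\colon Y\to X$ with reduced simple normal crossing divisor $\sum_{i=1}^{r}E_i$ adapted to $D$, we arrange $v=v_\alpha$ to be monomial in the $E_i$, with the monomial valuations spanning a rational polytope $P\ni v$ inside the simplex $\sigma=\langle E_1,\dots,E_r\rangle$, every point of which is an lc place of a $\bQ$-complement of $X$. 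If $v$ is a rescaling of a divisorial valuation, finite generation is already Theorem~\ref{t-delta=1} (or Theorem~\ref{t-blzdegeneration}), so I assume the rational rank of $v$ is at least $2$. By the Rees/$\Proj$ description of \S\ref{s-rees}, the conjecture is equivalent to finite generation of the multigraded Rees algebra attached to the $\mathbb{Z}^{r}$-filtration induced by $v$.

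Next I would analyse where the minimum is attained on $P$. On $\sigma$ the function $A_X$ is affine linear, while $w\mapsto S_X(w)$ is concave and, after fixing an Okounkov-type flag refining the $E_i$, should be expressible as the integral over a single Newton--Okounkov body of a concave piecewise linear function of $w$; from this I expect that the minimum locus of $\delta_X = A_X/S_X$ on $P$ is a rational subpolytope $P'$ on whose relative interior both $A_X$ and $S_X$ become affine in suitable coordinates. After a further blow-up, $v$ may then be taken in the relative interior of $P'$, which cuts the number of genuinely varying directions down to $\dim P'$ and is where one extracts as much rationality from the minimizer as abstract convexity permits.

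The heart of the argument is an induction by special degeneration. Choose a divisorial lc place $v_0$ on a facet of $P'$; by Theorem~\ref{t-blzdegeneration}, $v_0$ is special and induces a special test configuration degenerating $X$ to a K-semistable $\bQ$-Fano $X_0$ with $\delta(X_0)=\delta(X)$, carrying a compatible degeneration of $D$ to a complement $D_0$ and of $v$ to a valuation $v'$ on $X_0$ still computing $\delta(X_0)$. The point is that $X_0$ acquires a strictly larger torus $T_0$ under which $v'$ is invariant, so quotienting out that extra direction drops the residual rational rank of the minimizing valuation. Iterating finitely many times we land on a divisorial or a toric valuation, where $\gr$ is finitely generated for free; then I would descend, using that each degeneration is $\bG_m$-equivariant and flat to lift finite generation of $\gr_{v'}R(X_0)$ back to finite generation of $\gr_vR(X)$ by semicontinuity of finite generation of Rees algebras. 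Equivalently, one can run the whole argument at the cone singularity over $X$ and read it off from the local stable-degeneration picture discussed in Appendix~\ref{s-local}.

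The main obstacle is precisely this induction: one must control the degeneration so that $D$, the polytope $P$, and $v$ all specialize to still-lc, still-$\delta$-computing data on $X_0$; one must show $T_0$ genuinely grows; and, most delicately, one must prove that the process terminates --- that is, find an invariant of the triple $(X_i,D_i,v_i)$, such as the residual rational rank together with a normalized volume or $\dim P'_i$, that strictly decreases and cannot cycle. This termination is the analytic core of Conjecture~\ref{c-finitegeneration}, and once established it would, via Theorem~\ref{t-blzdegeneration}, also yield the $\delta$-computing special degeneration predicted by Conjecture~\ref{c-maxidd}.
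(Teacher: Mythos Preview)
The statement you are attempting to prove is a \emph{conjecture} in the paper, not a theorem: the paper explicitly labels it as open, cites it as \cite{Xu20}*{Conj.~1.2}, and immediately after it remarks that \cite{AZ20}*{Theorem 4.15} gives a quasi-monomial valuation which is an lc place of a $\bQ$-complement but whose associated graded ring is \emph{not} finitely generated. So there is no ``paper's own proof'' to compare against, and your write-up is not a proof but a strategy outline --- which you yourself acknowledge when you say the termination step ``is the analytic core of Conjecture~\ref{c-finitegeneration}.''

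Beyond that honest disclaimer, two of your steps are genuinely problematic rather than merely incomplete. First, the descent step --- ``lift finite generation of $\gr_{v'}R(X_0)$ back to finite generation of $\gr_vR(X)$ by semicontinuity of finite generation of Rees algebras'' --- does not exist as stated: finite generation of a graded algebra is not a semicontinuous property in flat families, and a degeneration having finitely generated associated graded ring says nothing about the original. This is exactly the direction in which the problem is hard; the easy direction is that finite generation can be \emph{lost} under degeneration, not gained going back. Second, the \cite{AZ20} example cited in the paper shows that your normal form (quasi-monomial, lc place of an $N$-complement, sitting in a rational polytope $P$) is already achieved by valuations with non-finitely-generated $\gr_vR$. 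Hence no argument that uses only this structure together with generic convexity of $S_X$ on $P$ can succeed; the $\delta$-minimizing hypothesis must enter in a much sharper way than ``$v$ lies on the minimum locus $P'$.'' Your induction-by-special-degeneration idea is in the right spirit (and is close to what later work actually does), but as written it rests on two unjustified claims --- that the torus strictly grows at each step, and the semicontinuity above --- either of which, if false, breaks the argument.
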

The finite generation conjecture above  will then imply that for a sufficiently small rational perturbation $v'$ of $v$ within the rational face where $v$ is an interior point (see Example \ref{e-quasi}), we have
$${\rm gr}_vR\cong {\rm gr}_{v'}R\mbox{ (\cite{LX18}*{Lem. 2.10}) }\ \ \ {\rm and}\ \ \  \delta_X(v)=\delta_X(v')\mbox{  (\cite[Prop 3.9]{Xu20}) }.$$
See \cite{Xu20} for more discussions.  In \cite{AZ20}*{Theorem 4.15}, an example of a quasi-monomial valuation is given, which is an lc place of a $\mathbb Q$-complement but with a non-finitely generated associated graded ring.
\begin{subappendices}

\subsection{Appendix: Normalized volumes and local stability}\label{s-local}

In this appendix, we will discuss a local K-stability picture. This is established using Chi Li's definition of normalized volumes. For reader who wants to know more background on this, see the survey paper \cite{LLX18}. In Secion \ref{ss-normv}, we give the definition of normalized volumes as well as a very quick sketch of {\it the Stable Degeneration Conjecture}. Then in Section \ref{ss-cone}, we will explain the cone construction. In Section \ref{ss-localglobal},  we discuss only a small (known) part of the Stable Degeneration Conjecture for cone singularities, which has consequences on the K-stability question of the base Fano variety of a cone. See Theorem \ref{t-maintheorem1}, which provides a different proof of Theorem \ref{t-valkstable}.

\subsubsection{Normalized volume}\label{ss-normv}
The following notion of {\it normalized volume} was first introduced in \cite{Li18}. It shares some similar flavor with the $\delta$-invariant, but is defined in a local setting.

Let $Y$ be an $n$-dimensional klt singularity and $x\in Y={\rm Spec}(R)$ a closed point. The {\it non-archimedean link} of $Y$ at $x$ is defined  as
$$\Val_{Y,x} := \{ v\in \Val_{X} \, \vert\, c_Y(v) = \{x \} \, \} \subset \Val_Y.$$

\begin{defn}[\cite{Li18}]\label{d-normvol}
The \emph{normalized volume function} $$\hvol_{Y,x}:\Val_{Y,x}\to(0, +\infty]$$
 is defined by
  \[
  \hvol_{Y, x}(v)=\begin{cases}
            A_{Y}(v)^n\cdot\vol(v) & \textrm{ if }A_{Y}(v)<+\infty;\\
            +\infty & \textrm{ if }A_{Y}(v)=+\infty.
           \end{cases}
 \]
 The \emph{volume of the singularity} $x\in Y$ is defined as
 \[
  \hvol(x, Y):=\inf_{v\in\Val_{Y,x}}\hvol_{Y,x}(v).
 \]
 The previous infimum is a minimum by the main result in \cite{Blu18} (see also \cite[Remark 3.8]{Xu19}).
 \end{defn}

In the recent study of the normalized volume function, the  guiding question is the {\it Stable Degeneration Conjecture} (see \cite[Conjecture 7.1]{Li18} and \cite[Conjecture 1.2]{LX18}). There has been a lot of progress in it (see \cite{LLX18}). The following results, which are part of the Stable Degeneration Conjecture, have been shown.
 \begin{thm}\label{t-singularityvolume}
 For any klt singularity $x\in Y$,
 \begin{enumerate}
 \item \cites{Blu18,Xu19} The infimum of the normalized function $ \hvol_{Y,x}(v)$ is always attained by a quasi-monomial valuation.
 \item \cite{XZ20} The minimizer is unique up to rescaling.
 \end{enumerate} 
 \end{thm}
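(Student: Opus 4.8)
The plan is to prove (1) in two stages --- existence of \emph{some} minimizer, then the upgrade to a quasi-monomial one --- and finally to deduce (2) from a strict convexity property of $\hvol$. A convenient equivalent form of the invariant is C.~Liu's identity
\[
\hvol(x,Y)=\inf_{\fa}\ \lct(\fa)^{n}\cdot\mult(\fa),
\]
where $\fa$ ranges over $\fm_x$-primary ideals and $\mult$ is the Hilbert--Samuel multiplicity: ``$\le$'' is obtained by testing with the valuation ideals $\fa_\bullet(v)$, and ``$\ge$'' from the fact that an lc place of $(Y,\fa^{\lct(\fa)})$ has normalized volume at most $\lct(\fa)^{n}\mult(\fa)$. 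For existence \cite{Blu18}, take a minimizing sequence $v_i\in\Val_{Y,x}$, rescaled so that $A_Y(v_i)=n$, so $\vol(v_i)\to\hvol(x,Y)$. The obstruction is that $\{A_Y\le n\}\subset\Val_{Y,x}$ is not compact; it is overcome by a uniform Izumi-type inequality together with the properness of $\hvol$ relative to $A_Y$, which bounds each $v_i$ above by a fixed divisorial valuation centred at $x$ in terms of $A_Y(v_i)$ and $\hvol(v_i)$. Consequently, on every fixed log-smooth model $(Z,\sum_j E_j)\to Y$, the retractions $\rho_{Z,E}(v_i)$ stay in a fixed compact polytope; running this over a cofinal tower of log-smooth models and diagonalizing produces a limit valuation $v_*$ with $A_Y(v_*)\le n$, and lower semicontinuity of $A_Y$ and of $\vol$ along the tower gives $\hvol(v_*)\le\lim_i\hvol(v_i)=\hvol(x,Y)$, so $v_*$ is a minimizer.

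For quasi-monomiality \cite{Xu19}, let $v$ be any minimizer and $\fa_\bullet=\fa_\bullet(v)$; minimality forces $v$ to be an lc place of the graded sequence $\fa_\bullet$, with $\hvol(v)=\lct(\fa_\bullet)^{n}\mult(\fa_\bullet)$. For each truncation $\fa_m$ pick a Koll\'ar component $E_m$ (from a plt blowup) computing $\lct(\fa_m)$; a direct estimate with $\fa_m$ shows $\hvol(E_m)\to\hvol(x,Y)$. The key input is Birkar's boundedness of complements \cite{Bir19} --- in the guise of boundedness of the Koll\'ar components of a fixed klt singularity --- which forces all the $E_m$ to appear as divisors on finitely many fixed log-smooth models, hence to lie in one fixed finite-dimensional (compact) dual complex; a subsequential limit $v_\infty$ is then quasi-monomial and still minimizes $\hvol$. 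A refinement of the same comparison, relating an arbitrary minimizer to these bounded Koll\'ar components through its own $\fa_\bullet$, shows in fact that \emph{every} minimizer is quasi-monomial.

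For uniqueness up to rescaling \cite{XZ20}, let $v_0,v_1$ be two minimizers. By the previous paragraph both are quasi-monomial, so after passing to a common log-smooth model on which their centres lie in one stratum $W$ we may write $v_0=v_\alpha$ and $v_1=v_\beta$ for $\alpha,\beta$ in the simplicial cone $\sigma_W$ of monomial valuations along $W$. Along the segment $v_{\gamma(t)}$, $\gamma(t)=(1-t)\alpha+t\beta$, the log discrepancy $A_Y(v_{\gamma(t)})$ is affine in $t$, and one shows that $t\mapsto\log\hvol(v_{\gamma(t)})$ is convex --- this combines the affineness of $A_Y$ with a Brunn--Minkowski-type inequality for the Newton--Okounkov bodies attached to the $v_{\gamma(t)}$ --- and moreover strictly convex unless $\gamma'(t)$ points along the scaling direction $\bR_{>0}\gamma$, i.e.\ unless $\alpha$ and $\beta$ are proportional. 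Since $v_0$ and $v_1$ both attain the global minimum of $\hvol$, this convex function is constant on $[0,1]$, so the strict-convexity alternative must fail and $\beta\in\bR_{>0}\alpha$; hence $v_1=c\,v_0$ for some $c>0$.

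I expect the main obstacle to be the equality case of the Brunn--Minkowski inequality used in the uniqueness step: extracting ``$\alpha$ and $\beta$ proportional'' from the failure of strict convexity amounts to classifying when two Newton--Okounkov bodies carry equal mixed volumes, and this is where the bulk of the work in \cite{XZ20} lies. A secondary difficulty is the non-compactness of $\Val_{Y,x}$ in the existence step, handled only through the uniform Izumi estimate, and the fact that the quasi-monomiality step rests on Birkar's (deep) boundedness of complements rather than on softer birational input.
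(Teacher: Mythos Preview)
The paper does not supply a proof of this theorem; it is a survey, and the statement is simply recorded with citations to \cite{Blu18}, \cite{Xu19}, and \cite{XZ20}. So there is no ``paper's own proof'' to compare against, and your proposal should be read as a sketch of the arguments in those sources. Your outline for part (1) is broadly faithful: existence via Izumi-type bounds and limits of retractions follows \cite{Blu18}, and quasi-monomiality via Koll\'ar components controlled by Birkar's boundedness of complements follows \cite{Xu19}. One small inaccuracy: the lct of a single truncation $\fa_m$ is not in general computed by a Koll\'ar component; rather one produces Koll\'ar components $E_m$ whose normalized volumes approximate $\hvol(x,Y)$ (compare Proposition~\ref{p-kollar} in this paper), and it is these $E_m$ that are shown, via bounded complements, to live in a fixed finite dual complex.

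There is a genuine gap in your argument for (2). You write that ``after passing to a common log-smooth model on which their centres lie in one stratum we may write $v_0=v_\alpha$ and $v_1=v_\beta$''. Two quasi-monomial valuations need not be monomial with respect to a \emph{common} stratum on any single snc model --- for instance, two distinct divisorial minimizers would be centred on different irreducible divisors --- so the segment $\gamma(t)=(1-t)\alpha+t\beta$ in a single simplicial cone $\sigma_W$ is not available in general. The proof in \cite{XZ20} circumvents this by interpolating not in a dual complex but between the graded sequences of valuation ideals $\fa_\bullet(v_i)$, i.e.\ along a geodesic of filtrations, and then establishing strict log-concavity of the volume along this geodesic via Okounkov bodies. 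Your intuition that Brunn--Minkowski-type equality cases drive the conclusion is correct, but the interpolation must be set up at the level of filtrations rather than as a linear path inside one monomial cone.
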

 
 The main remaining open part is an analogue version of Conjecture \ref{c-finitegeneration}.
 \begin{conj}\label{c-localfinite}
 For a klt singularity $x\in Y$, if $v$ is a minimizer of $\hvol_{Y,x}$, then the associated graded ring $R_0:={\rm gr}_vR$ is finitely generated. 
 \end{conj}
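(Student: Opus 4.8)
The plan is to attack Conjecture~\ref{c-localfinite} in tandem with Conjecture~\ref{c-finitegeneration}, passing between the local and the global pictures via the cone construction of Section~\ref{ss-cone}, and then proving finite generation by induction on the rational rank of the minimizer $v$.

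First I would establish a local analogue of Theorem~\ref{t-quasimini}: if $v$ minimizes $\hvol_{Y,x}$ then $v$ is a quasi-monomial valuation (this is Theorem~\ref{t-singularityvolume}(1)) which is moreover an lc place of a \emph{bounded local complement}, i.e.\ there is $N=N(n)$ and an effective $\bQ$-divisor $\Gamma$ near $x$ with $N(K_Y+\Gamma)\sim 0$, $(Y,\Gamma)$ log canonical, and $A_{Y,\Gamma}(v)=0$. The existence of some $\bQ$-complement realizing $v$ as an lc place should come from approximating the local analogue of $S(v)$ by $m$-basis type data together with the minimizing property, and the uniform bound $N$ from Birkar's theorem (in the form of Lemma~\ref{l-complementbounded}) applied to a Koll\'ar-component extraction of $Y$. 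As in Paragraph~\ref{say-stratification} this puts $v$ on the dual complex $W_{\bQ}$ of a bounded family of complements, hence in the interior of a rational cone $\sigma\subset\Val_{Y,x}$ of lc places spanned by divisorial valuations.

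Next I would induct on $r=\operatorname{rank}_{\bQ}(v)$. For $r=1$, $v=c\cdot\ord_E$, and the goal is to show $E$ is a \emph{Koll\'ar component}: extract $E$ with $-E$ ample over $Y$ using \cite{BCHM10}, then run a $(-K_{Y'}-E)$-MMP exactly as in the proofs of Theorem~\ref{t-delta=1} and Theorem~\ref{t-weakspecial} to arrive at a plt pair $(Y',E)$; then $\gr_vR$ is the section ring of the polarized klt pair $(E,\Diff_E)$, hence finitely generated, and $\Spec(\gr_vR)$ is a klt cone singularity over a K-semistable Fano $E$ by Theorem~\ref{t-maintheorem1}. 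For $r>1$, I would choose a $\bQ$-rational divisorial valuation $w$ in the interior of $\sigma$, so that $\gr_wR$ is finitely generated by the $r=1$ case; set $Y_0=\Spec(\gr_wR)$, a klt singularity carrying a $\mathbb{G}_m$-action, and observe that $v$ induces a valuation $\bar v$ on $\gr_wR$ with $\gr_{\bar v}(\gr_wR)\cong \gr_vR$ and $\operatorname{rank}_{\bQ}(\bar v)\le r-1$. The crux is that $\bar v$ is again a minimizer of $\hvol_{Y_0,x_0}$: one has $\hvol_{Y_0}(\bar v)\le \hvol_{Y}(v)$ by lower semicontinuity of the normalized volume in families, while the reverse inequality (and hence equality) follows from the minimality of $v$ together with the uniqueness statement of Theorem~\ref{t-singularityvolume}(2). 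The inductive hypothesis applied to $\bar v$ on $Y_0$ then gives finite generation of $\gr_vR$.

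The main obstacle is the coupling of the two MMPs hiding in this scheme. In the $r=1$ step one must show that the extraction of $E$ and the subsequent $(-K_{Y'}-E)$-MMP terminate with $E$ still an lc place and the central fibre still klt, which needs a local version of the delicate tie-breaking argument of Theorem~\ref{t-specialdegeneration}. Far more serious is verifying in the inductive step that the induced valuation $\bar v$ is genuinely a \emph{minimizer} on the degenerate singularity $Y_0$: this is exactly the circular point where finite generation and the existence of the stable degeneration feed into one another, and handling it rigorously seems to require a local ``optimal destabilization'' theory parallel to the global S-equivalence results. A lesser difficulty is guaranteeing enough room inside $\sigma$ (equivalently, a large enough torus on $Y_0$) to pick the perturbation $w$ generically, for which one would use that $S$ and $\hvol$ vary locally constantly along the family of complements, as in Paragraph~\ref{say-locallyconstant}.
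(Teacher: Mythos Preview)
The paper does not contain a proof of this statement: Conjecture~\ref{c-localfinite} is presented precisely as an \emph{open} conjecture, the remaining missing part of the Stable Degeneration Conjecture. The paper only remarks that the rational rank one case follows from \cite{BCHM10} (since a divisorial minimizer is a Koll\'ar component by \cite{LX20,Blu18}), and that the higher rational rank case is open. So there is no ``paper's own proof'' to compare against.

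As for your strategy, the decisive gap is exactly the one you flag yourself in the last paragraph, and it is not a minor technicality: the claim that the induced valuation $\bar v$ on $Y_0=\Spec(\gr_wR)$ is again a minimizer of $\hvol_{Y_0,x_0}$ is not justified. Your appeal to Theorem~\ref{t-singularityvolume}(2) does not help, since uniqueness of the minimizer is a statement on a fixed singularity and says nothing about preservation of the minimizing property along the degeneration $Y\rightsquigarrow Y_0$. In the known arguments (e.g.\ \cite{LX18}), it is precisely the finite generation of $\gr_vR$ that is used to produce the K-semistable Fano cone degeneration and then to conclude that the induced valuation on the central fibre is minimizing; invoking the latter to prove the former is circular. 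Without an independent mechanism to control $\hvol$ on $Y_0$ (for which no tool currently exists in higher rank), the induction does not close.

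There is also a smaller slip: when you write ``$\gr_wR$ is finitely generated by the $r=1$ case'', you are invoking the conjecture for a divisorial $w$ that is \emph{not} assumed to be a minimizer. What you actually need is that any divisorial lc place of a $\bQ$-complement is dreamy (equivalently, yields a weak Koll\'ar component), which does follow from \cite{BCHM10} as in the proof of Proposition~\ref{p-lctotest} or Proposition~\ref{p-corresp}, but this is a different statement from the $r=1$ case of Conjecture~\ref{c-localfinite}. Finally, note that \cite{AZ20}*{Theorem 4.15} exhibits quasi-monomial lc places of $\bQ$-complements whose associated graded ring is \emph{not} finitely generated; this shows that the bounded-complement framework alone cannot force finite generation, and that any successful argument must use the minimizing hypothesis in an essential way at the higher-rank step, not merely at the base of the induction.
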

 This can be easily proved using \cite{BCHM10} in the case when $v$ is of rational rank 1, but the higher rational rank case remains open. 
 
  \begin{thm}[{\cites{Li17, LX20, LX18}}]\label{t-singularityvolume}
 For any klt singularity $x\in Y$, if the associated graded ring of the minimizer is finitely generated, then the induced degeneration $(Y_0={\rm Spec}(R_0),  \xi_v)$ is K-semistable.  In particular, it holds when the minimizer is divisorial.
 \end{thm}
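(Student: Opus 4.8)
The plan is to reduce the conclusion to a minimization statement on $Y_0$ and then transport the minimizing property of $v$ across the degeneration. By the theory of Fano cone singularities developed in \cite{Li17, LX20} (see also the survey \cite{LLX18}), the Fano cone $(Y_0,\xi_v)$ is K-semistable if and only if the toric valuation $\wt_{\xi_v}$ attached to the Reeb vector $\xi_v$ is a minimizer of the normalized volume function $\hvol_{Y_0,x_0}$ over $\Val_{Y_0,x_0}$ (one first uses the torus action to reduce from arbitrary degenerations to $T$-equivariant ones, and then reads the non-negativity of the Futaki invariant of each Koll\'ar-component degeneration as the inequality $\hvol_{Y_0,x_0}(\wt_{\xi_v})\le\hvol_{Y_0,x_0}(w_0)$ for all $w_0$). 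So the whole task becomes: show that $\wt_{\xi_v}$ minimizes $\hvol_{Y_0,x_0}$.

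First I would make the degeneration precise. Finite generation of $R_0=\gr_v R$ means $\mathrm{Rees}(\cF_v)$ is a finitely generated $k[t]$-algebra, so $\cY:=\Spec_{\bA^1}\bigl(\mathrm{Rees}(\cF_v)\bigr)\to\bA^1$ is a flat $\bG_m$-equivariant family with general fibre isomorphic to $Y$ and special fibre $Y_0$, and the vertex $x$ spreads out to a section meeting $Y_0$ in $x_0$. Since $v$ is a minimizer it is quasi-monomial with $A_Y(v)<+\infty$ by \cite{Blu18, Xu19}; applying inversion of adjunction to $(\cY,\cY_0)$, together with the fact that the degeneration of a minimizer is again klt, shows that $\cY$ is $\bQ$-Gorenstein, that $Y_0$ is klt, and that $v$ specializes to $\wt_{\xi_v}$ with $\xi_v$ in the Reeb cone of the torus $T$ acting on $Y_0$ (the grading one-parameter subgroup $\bG_m$ sits inside $T$).

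Next I would compare invariants. Flatness of $\cY/\bA^1$ forces the Hilbert functions $m\mapsto\dim_k R/\fa_m(v)$ and $m\mapsto\dim_k R_0/\fa_m(\wt_{\xi_v})$ to agree, so $\vol(\wt_{\xi_v})=\vol(v)$, while lower semicontinuity of the log discrepancy in the family $\cY/\bA^1$ gives $A_{Y_0}(\wt_{\xi_v})\le A_Y(v)$. Hence
\[
\hvol(x_0,Y_0)\le\hvol_{Y_0,x_0}(\wt_{\xi_v})\le A_Y(v)^n\vol(v)=\hvol_{Y,x}(v)=\hvol(x,Y),
\]
the last equality because $v$ minimizes $\hvol_{Y,x}$. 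So it suffices to prove the reverse inequality $\hvol(x_0,Y_0)\ge\hvol(x,Y)$: this forces equality throughout, and in particular $\hvol_{Y_0,x_0}(\wt_{\xi_v})=\hvol(x_0,Y_0)$, which is exactly what we want.

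The inequality $\hvol(x_0,Y_0)\ge\hvol(x,Y)$ is the crux, and I expect it to be the main obstacle. Concretely I would try to show that every $w_0\in\Val_{Y_0,x_0}$ produces some $w\in\Val_{Y,x}$ with $\hvol_{Y,x}(w)\le\hvol_{Y_0,x_0}(w_0)$; taking the infimum over $w_0$ then yields $\hvol(x,Y)\le\hvol(x_0,Y_0)$. Such a lifting is false for an arbitrary $\bQ$-Gorenstein klt degeneration (otherwise $\hvol$ would be locally constant in all such families), so it must genuinely use that $Y_0$ is the associated graded of the \emph{minimizer} $v$: the idea is to first replace $w_0$ by its $\bG_m$-degeneration inside $\Val_{Y_0,x_0}$, which can only decrease $\hvol_{Y_0,x_0}$ and makes $w_0$ a $\bG_m$-invariant valuation on $\gr_v R$ adapted to the grading; such a valuation, paired with $v$, assembles into quasi-monomial valuations on $Y$ interpolating between $v$ and a lift of $w_0$, and one controls their normalized volumes using that $\hvol_{Y,x}$ is minimized at $v$ (so the relevant one-parameter family of valuations is monotone in $\hvol$). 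The technical burden lies in the finite-generation-dependent bookkeeping: checking that $\gr_v R$ carries the correct $\bQ$-Gorenstein/klt structure, that the relevant valuations on $Y$ and $Y_0$ have matching log discrepancies and volumes, and that the interpolation stays in $\Val_{Y,x}$ with finite log discrepancy. Finally, for the ``in particular'' clause I would note that a divisorial minimizer $v=c\cdot\ord_E$ automatically satisfies the hypothesis: by \cite{BCHM10} one can extract $E$ on a $\bQ$-factorial proper birational model $\pi:Y'\to Y$ with $-E$ relatively ample over $Y$, so $\bigoplus_m\pi_*\cO_{Y'}(-mE)$ — equivalently $\gr_v R$ — is finitely generated, and the argument above applies verbatim.
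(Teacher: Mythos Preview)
The survey does not give its own proof of this theorem; it is stated with citations to \cite{Li17, LX20, LX18}, and only the cone case (Theorem~\ref{t-maintheorem1}) is spelled out in the text. Your outline follows the architecture of the cited proofs: reduce K-semistability of $(Y_0,\xi_v)$ to the statement that $\wt_{\xi_v}$ minimizes $\hvol_{Y_0,x_0}$, identify $\hvol_{Y_0}(\wt_{\xi_v})$ with $\hvol_Y(v)$, and then argue that no $w_0$ on $Y_0$ does better. Your ``in particular'' clause is correct.

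Two points need tightening. In Step~2 you invoke ``the fact that the degeneration of a minimizer is again klt'' to deduce that $Y_0$ is klt, but this is circular: klt-ness of $Y_0$ is itself one of the nontrivial conclusions, and in \cite{LX18} it is proved precisely from the minimizing property of $v$ (a non-klt center on $Y_0$ would produce a valuation on $Y$ beating $v$). The $\bQ$-Gorenstein structure of $\cY$ and the comparison $A_{Y_0}(\wt_{\xi_v})=A_Y(v)$ in Step~3 (which is in fact an equality, not just an inequality, and is not a general ``lower semicontinuity of log discrepancy'') both depend on having this in hand first. In Step~4, your sketch of degenerating $w_0$ to a $T$-invariant valuation and then ``pairing with $v$'' to build quasi-monomial interpolants in $\Val_{Y,x}$ is plausible but is not how the references actually proceed: in the divisorial case \cite{LX20} one shows directly that the Koll\'ar component $(E,\Diff_E)$ is K-semistable (a destabilizing special test configuration would deform $E$ to a divisor with strictly smaller $\hvol_Y$, contradicting minimality) and then invokes the cone case; in higher rank \cite{LX18} one lifts Koll\'ar components from $Y_0$ to the total space $\cY$ via relative MMP and compares normalized volumes fibrewise, rather than assembling valuations by hand on $Y$.
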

Here $(Y_0,\xi_v)$ being K-semistable is in the sense of  {\it a Fano cone singularity}. See \cite{CS18} or \cite[Section 2.5]{LLX18} for the definition of a Fano cone singularity and its K-semistability. 

 
 The converse was proved in \cite{Li17, LX20, LX18}, i.e. any valuation $v$ which is quasi-monomial with a finitely generated associated graded ring, and satisfies that the induced degeneration $(Y_0, \xi_v)$ is a K-semistable Fano cone must be a minimizer of the normalized volume function. 
  As a consequence, this answers the K-semistable part of \cite[Conjecture 3.22]{DS17} (see \cite{LX18}). 

In the below, we will only concentrate on a small part of the local theory, namely, we connect the K-(semi)stability of a $\bQ$-Fano variety with the minimizing problem on the cone singularity over the Fano variety. This was initiated in \cite{Li17}. 

\subsubsection{Cone construction}\label{ss-cone}

Let us first recall the cone construction. 
\begin{say}[Cone construction]\label{say-cone}
Denote by $Y:=\Spec (R)=C(X,-rK_X)$ the cone over $X$ where 
$$R=\bigoplus_m R_m:=\bigoplus_{m\in \bN} H^0(-mrK_X).$$
 Blowing up the vertex $x$ of the cone $Y$, we get an exceptional divisor $X_{\infty}\cong X$, which is called {\it the canonical valuation}.

Let $E$ be any divisor over $X$ that arises on a proper normal model $\mu:Z\to X$. Following  \cite{Li17,LX20}, $E$ gives rise to a ray of valuations 
\begin{eqnarray}\label{e-ray}
\{v_{t}\ |\  t\in [0,\infty) \subset \Val_{Y,x}\}.
\end{eqnarray}
  Since the blowup of $Y$ at $0$ is canonically isomorphic to the total space of the line bundle $\cO_X(-L)$, there is a proper birational map from 
 $Z_{L^{-1}} \to Y$, where $Z_{L^{-1}}$ denote the total space of $\mu^*\cO_X(-L)$. 
   Now, $$v_0=\ord_{X_{\infty}}\mbox{\ \ \   and  \ \ \ }v_{\infty}=\ord_{E_{\infty}},$$
  where  $E_{\infty}$ denotes the pullback of $E$ under the map 
  $Z_{L^{-1}} \to Y$ and $X_{\infty}$ denote the zero section of $Z_{L^{-1}}$. 
  Furthermore, we define
  \begin{eqnarray}\label{e-rayvaluation}
  v_t:=  \mbox{ the quasi-monomial valuation with weights $(1,t)$ along $X_\infty$ and $E_{\infty}$.}
  \end{eqnarray}
  Denote by $E_k$ the divisor corresponding to $k\cdot v_{\frac{1}{k}}$.
  \end{say}

\begin{defn}
Let $x\in Y$ be a klt singularity. 
We recall that a prime divisor $E$ over a klt point $x\in Y$ is called a {\it Koll\'ar component} (resp. {\it weak Koll\'ar component}) if there is a birational morphism $\mu\colon Y'\to Y$ isomorphic over $X\setminus\{x\}$, such that $E={\rm Ex}(\mu)$, $(Y',E)$ is plt (resp. log canonical) and $-K_{Y'}-E$ is ample over $Y$. In the case of Koll\'ar component, a morphism $\mu$ is called a {\it plt blow-up}. These notions can be considered as a local birational version of special degenerations (resp. weakly special degenerations with irreducible fibers). 
\end{defn}

If we start with a nontrivial  weakly special degeneration $\cX$ of $X$ with an irreducible central fiber,  then we can take the induced $\bZ$-valuation $v$ as in Lemma \ref{l-induced},  the Rees algebra construction gives
$\cR:=\bigoplus_{m\in \bN} H^0(-mrK_{\cX})$ as a $k[t]$-algebra.  Since there are two gradings given by $m$ and $p$, we indeed have a $T=(\mathbb{G}_m)^2$-actions on $\cR$: the relative cone structure corresponds to the action by the coweight $(1,0)$, and the $\mathbb{G}_m$-action from the test configuration corresponds to the coweight $(0,1)$. 

We can take the (weighted) blow up of $Y$ with respect to the filtration induced by the valuation with weight $(1,1)$ with a exceptional divisor $E_Y$.
 Then the exceptional divisor $E_Y$ is given by 
$${\rm Proj}(\gr_{E_Y} R)={\rm Proj}\bigoplus_{d \in \N} \big(\bigoplus_{m+p=d} \gr_{\cF_{v}}^p R_m\big)$$
and $\gr_{E_Y} R\cong \gr_EY$. Thus $E_Y$ is a $\bG_m$-quotient of $\Spec(R_*)\setminus\{0\}$
and if we denote $\Delta_{E_Y}$ the orbifold divisor, then $(E_Y, \Delta_{E_Y})$ is semi-log-canonical and irreducible with $-K_{E_{Y}}-\Delta_{E_Y}$ being ample, since  ${\rm Spec}(R_*)$  is a cone over $X_0$. By the inversion of adjunction, $\mu'\colon(Y',E_Y)\to Y$ is a weakly Koll\'ar component. 

Conversely, if we start with a $\mathbb{G}_m$-equivariant weak Koll\'ar component $E_Y$ over $x\in Y$ with $\ord_{E_Y}(t)=1$, 
we know it arises from a weighted blow up of the pullback of $v$ and $X_{\infty}$ with weight $(1,1)$, where $v$ is  a divisorial valuation $v:=\ord_E$ on $X$. 
Thus the filtration of $R$ by $\ord_E$ is finitely generated, as the associated graded ring  
$$\bigoplus_{m \in \N} \bigoplus_{p \in \Z} \gr_{v}^p R_m \cong \bigoplus_{d\in \N} \mu_*(\mathcal{O}_{Y'}(-dE_Y)/ \mu_*(\mathcal{O}_{Y'}\big(-(d+1)E_{Y})\big). $$
Hence we can take the Rees algebra ${\rm Rees}(\cF_E)$ to get the test configuration $\cX$.


To summarize,  the above discussion gives the following correspondence. 

\begin{prop}\label{p-corresp}
There is a one to one correspondence between 
 \[
\left\{
  \begin{tabular}{c}
\mbox{weakly special degenerations of $X$}\\
with an integral central fiber
  \end{tabular}
\right\} 
\ \longleftrightarrow
 \left\{
  \begin{tabular}{c}
\mbox{$\mathbb{G}_m$-equivariant  weak Koll\'ar}\\
\mbox{components $E_Y$ with $\ord_E(t)=1$}\\
  \end{tabular}
\right\}.
\]

Moreover, if we restrict to $\bG_m$-equivariant Koll\'ar components over $Y$, then they precisely correspond to special test configurations for $X$.
\end{prop}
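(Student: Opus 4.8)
The two directions are essentially the two constructions carried out just above the statement; the plan is to organize them as mutually inverse maps via the cone construction of \ref{say-cone} and the Rees-algebra formalism of \ref{s-rees}, and then to match the positivity hypotheses on the two sides.

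\emph{The forward map.} Let $(\cX,\cL)\to\mathbb{A}^1$ be a nontrivial weakly special degeneration of $X$ with integral central fiber $X_0$. By Lemma \ref{l-integraldreamy} the induced $\bZ$-valuation of Lemma \ref{l-induced} has the form $v=a\cdot\ord_E$ for a dreamy divisor $E$ over $X$ and $a\in\bN_+$, and weak specialness forces $\cL\sim_{\bQ}-rK_{\cX}$, so that $\gr_{\cF_v}R\cong\bigoplus_m H^0(-mrK_{X_0})$. On $Y=\Spec R=C(X,-rK_X)$, the algebra $\cR:={\rm Rees}(\cF_v)\cong\bigoplus_m H^0(-mrK_{\cX})$ carries the torus action $T=\mathbb{G}_m^2$ coming from the cone grading (coweight $(1,0)$) and the test-configuration $\mathbb{G}_m$ (coweight $(0,1)$). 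One takes the weighted blow-up $\mu'\colon Y'\to Y$ along the valuation with weights $(1,1)$ on $X_\infty$ and $E_\infty$ (the canonical valuation and the pullback of $E$, in the notation of \ref{say-cone}), with exceptional divisor $E_Y$. Then $E_Y={\rm Proj}\big(\bigoplus_{d\in\bN}\bigoplus_{m+p=d}\gr_{\cF_v}^pR_m\big)$ is a $\mathbb{G}_m$-quotient of $\Spec(R_\ast)\setminus\{0\}$, where $\Spec(R_\ast)$ is the cone over $X_0$; hence, with its orbifold boundary $\Delta_{E_Y}$, the pair $(E_Y,\Delta_{E_Y})$ is semi-log canonical and irreducible with $-K_{E_Y}-\Delta_{E_Y}$ ample. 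Inversion of adjunction then gives that $(Y',E_Y)$ is log canonical near $E_Y$ with $-K_{Y'}-E_Y$ ample over $Y$, so $E_Y$ is a weak Kollár component, and after rescaling $\ord_{E_Y}(t)=1$.

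\emph{The backward map and bijectivity.} Conversely, a $\mathbb{G}_m$-equivariant weak Kollár component $E_Y$ over $x\in Y$ with $\ord_{E_Y}(t)=1$ restricts on $K(X)$ to a divisorial valuation $\ord_E$, and combining the component's $\mathbb{G}_m$ with the cone $\mathbb{G}_m$ exhibits $E_Y$ as the weight-$(1,1)$ weighted blow-up of $X_\infty$ and $E_\infty$. This identifies $\gr_{\cF_E}R$ with $\bigoplus_{d\in\bN}\mu'_\ast\cO_{Y'}(-dE_Y)/\mu'_\ast\cO_{Y'}(-(d+1)E_Y)$, which is finitely generated; hence $\cF_E$ is finitely generated, and $\cX:=\Proj_{\mathbb{A}^1}({\rm Rees}(\cF_E))$ is a test configuration with integral central fiber $\Proj(\gr_{\cF_E}R)$. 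The integrality of the fiber forces $\cL\sim_{\bQ}-rK_{\cX}$, and the log canonicity of $(Y',E_Y)$ transfers---again through the cone over $X_0$ and inversion of adjunction---to log canonicity of $(\cX,\cX_0)$, so $\cX$ is weakly special. These assignments are mutually inverse: a weakly special $\cX$ with integral fiber is determined by the filtration $\cF_v$ on $R$, equivalently by the pair $(E,a)$, which is exactly the datum of $\ord_{E_Y}$ together with its normalization $\ord_{E_Y}(t)=1$, and conversely.

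\emph{The refinement and the main obstacle.} For the final sentence, $\cX$ is a special test configuration precisely when $(\cX,\cX_0)$ is plt (Definition \ref{d-stc}), which by the same comparison through the cone over $X_0$ holds precisely when $(E_Y,\Delta_{E_Y})$ is klt, i.e.\ when $(Y',E_Y)$ is plt, i.e.\ when $E_Y$ is a genuine Kollár component; restricting the bijection to these subclasses yields the second claim. I expect the delicate points to be the precise dictionary between the weighted-blow-up and Rees constructions under the full $T$-symmetry: (i) that $\ord_{E_Y}(t)=1$ together with $\mathbb{G}_m$-equivariance pins down the valuation uniquely; (ii) the finite generation of the cone-side graded ring $\gr_{E_Y}$ for a \emph{weak} Kollár component, where one cannot invoke \cite{BCHM10} directly as in the plt case but must route the finite generation through the data $(E,a)$ on $X$; and (iii) the repeated use of inversion of adjunction to match the log canonical (resp.\ plt) locus near $E_Y$ with the singularities of the central fiber $\cX_0\cong(E_Y,\Delta_{E_Y})$. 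Once (i)--(iii) are in place, the bijectivity itself is formal.
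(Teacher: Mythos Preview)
Your proposal is correct and follows essentially the same approach as the paper: the proposition is stated as a summary of the two constructions carried out in the paragraphs immediately preceding it, and you have faithfully organized those constructions into mutually inverse maps, with the plt/lc refinement handled just as in the paper. The only difference is cosmetic---you spell out a few steps (e.g.\ that the backward construction actually yields a \emph{weakly special} test configuration, and the mutual inverse check) that the paper leaves implicit in its ``To summarize'' formulation.
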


\subsubsection{Local and global K-stability}\label{ss-localglobal}

In \cite{Li17},  by considering  the normalized volume function of the vertex, this construction was related to the study of K-stability question.
At first sight, using the normalized volume function to study the K-stability of $\bQ$-Fano varieties may seem indirect. However, working on $Y$ encodes all information of the anti-canonical ring. A number of new results were first established through this approach, e.g. Theorem \ref{t-maintheorem1} and Theorem \ref{t-delta=1}. We will now explain the main ideas. 

\medskip

 Since the degeneration induced by the canonical valuation $X_{\infty}$ is just $Y$ itself,  one established part of the stable degeneration conjecture implies the following statement, which contains Theorem \ref{t-valkstable}(1). 
 \begin{thm}[{\cite{Li17,LL19, LX20}}]\label{t-maintheorem1} 
 We have the following equivalence
\[
(X\mbox{ is K-semistable})\Longleftrightarrow(\mbox{$v_0:=\ord_{X_{\infty}}$ is a minimizer })\Longleftrightarrow (\beta_X(E)\ge 0, \forall E /X).
\]
 \end{thm}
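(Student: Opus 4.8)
The plan is to establish the chain of equivalences in three logically separable pieces, routing the argument through the cone $Y = C(X,-rK_X)$ and its vertex $x$. The last equivalence, namely $(X \text{ K-semistable}) \Longleftrightarrow (\beta_X(E) \ge 0 \text{ for all } E/X)$, is already Theorem \ref{t-valkstable}(1) (one direction is Corollary \ref{c-kimpliesbeta}, the other is Theorem \ref{t-kimpliesbeta}), so the new content is the middle equivalence relating K-semistability of $X$ to $v_0 := \ord_{X_\infty}$ being a minimizer of $\hvol_{Y,x}$. I would prove this by combining the cone correspondence of Paragraph \ref{ss-cone} with the known cases of the Stable Degeneration Conjecture recorded in Theorem \ref{t-singularityvolume}.

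First I would set up the dictionary between valuations on $X$ and valuations in $\Val_{Y,x}$: given a prime divisor $E$ over $X$, form the ray $\{v_t\}_{t \in [0,\infty)}$ of \eqref{e-ray}–\eqref{e-rayvaluation}, with $v_0 = \ord_{X_\infty}$ the canonical valuation. The key computation — which I would extract from \cite{Li17,LX20} — is the explicit formula expressing $\hvol_{Y,x}(v_t)$ along this ray in terms of $A_X(E)$, $S_X(E)$, and the volume function $\vol(-K_X - sE)$; differentiating in $t$ at $t = 0$ should yield that $\frac{d}{dt}\big|_{t=0} \hvol_{Y,x}(v_t)$ is a positive multiple of $-\beta_X(E)$ (this is exactly the "$\beta$ as a derivative of normalized volume" interpretation alluded to in \ref{say-betaimpliesK}). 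Hence $v_0$ is a \emph{local} minimizer of $\hvol_{Y,x}$ along every such ray if and only if $\beta_X(E) \ge 0$ for all $E$. This already gives the implication $(v_0 \text{ minimizer}) \Rightarrow (\beta_X(E) \ge 0, \forall E)$ for free, and conversely, since $v_0$ being a minimizer among \emph{all} valuations in $\Val_{Y,x}$ is a priori stronger, I would need the reverse: that nonnegativity of $\beta$ along these cone-rays forces $v_0$ to be a global minimizer.

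For that reverse implication I would invoke Theorem \ref{t-singularityvolume}: the infimum of $\hvol_{Y,x}$ is attained (by \cite{Blu18,Xu19}) by a quasi-monomial valuation, and the minimizer is unique up to scaling (by \cite{XZ20}). So it suffices to show that if $X$ is K-semistable then $v_0$ itself is \emph{a} minimizer; uniqueness then pins it down and the derivative computation propagates. To show $v_0$ is a minimizer when $X$ is K-semistable, I would use the characterization of minimizers via K-semistability of the induced degeneration: the degeneration induced by $v_0 = \ord_{X_\infty}$ is $Y$ itself equipped with its natural $\bG_m$-action (the cone structure), i.e. the Fano cone singularity $(Y, \xi_{v_0})$, which is K-semistable precisely when $X$ is K-semistable (this is the cone-version of K-semistability, \cite{CS18}). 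Then the converse direction of the local theory — "a quasi-monomial valuation with finitely generated associated graded ring whose induced degeneration is a K-semistable Fano cone is a minimizer" from \cite{Li17,LX20,LX18}, applied to $v_0$, whose associated graded ring is visibly $R$ itself and hence finitely generated — gives that $v_0$ minimizes $\hvol_{Y,x}$. Combining: K-semistability of $X$ $\Rightarrow$ $(Y,\xi_{v_0})$ K-semistable $\Rightarrow$ $v_0$ is a minimizer; conversely $v_0$ a minimizer $\Rightarrow$ $\beta_X(E) \ge 0, \forall E$ $\Rightarrow$ (by Theorem \ref{t-valkstable}(1)) $X$ K-semistable, closing the loop.

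The main obstacle, I expect, is packaging the forward direction $(X \text{ K-semistable}) \Rightarrow (v_0 \text{ minimizer})$ without circularity: the cleanest route uses the "K-semistable degeneration $\Rightarrow$ minimizer" half of the local theory (\cite{Li17,LX20,LX18}), and one must be sure that this half does not itself secretly rely on Theorem \ref{t-valkstable} in the generality needed here. If it does, the honest alternative is a direct argument: for an arbitrary $v \in \Val_{Y,x}$ with $A_Y(v) < \infty$, degenerate $v$ to a valuation adapted to the cone structure and estimate $\hvol_{Y,x}(v)$ from below by $\hvol_{Y,x}(v_0) = A_Y(v_0)^n \vol(v_0) = (n+1)^n \cdot r^{-n}(-K_X)^{n}$ (after normalizing $r$), using the Izumi-type bounds and the fact that along rays the function is controlled by $\beta$; K-semistability of $X$ feeds in exactly as the nonnegativity of $\beta$ which controls the first-order behavior, while convexity/positivity properties of $\vol$ handle the higher-order terms. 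I would present the argument in whichever of these two forms keeps the logical dependencies clean relative to what has been proved earlier in the survey, and flag in a remark that all three characterizations are genuinely equivalent and that this yields an independent proof of Theorem \ref{t-valkstable}(1) via the local stability machinery.
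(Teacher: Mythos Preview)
Your outline correctly isolates Li's derivative formula along the rays $\{v_t\}$ as the bridge between $\beta_X(E)$ and $\hvol_{Y,x}$, and the implication $(v_0\text{ minimizer})\Rightarrow(\beta_X(E)\ge 0,\ \forall E)$ is exactly the paper's Proposition \ref{p-mimpliesbeta}. One small slip: the derivative is a positive multiple of $+\beta_X(E)$, not $-\beta_X(E)$ (Lemma \ref{l-dervolume} gives $\frac{d}{dt}\big|_{t=0^+}\hvol(v_t)=(n+1)\beta_X(E)$); your stated conclusion is the right one, but with your sign you would deduce $\beta\le 0$.

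The substantive divergence is in $(X\text{ K-semistable})\Rightarrow(v_0\text{ minimizer})$. Your primary route---invoke the general local converse ``K-semistable Fano cone degeneration $\Rightarrow$ minimizer'' and close the loop via Theorem \ref{t-kimpliesbeta}---is exactly the circularity you worry about: the point of placing Theorem \ref{t-maintheorem1} in this appendix is to give a proof of Theorem \ref{t-valkstable}(1) that is \emph{independent} of Sections \ref{s-specialvaluation} and \ref{s-Ding}, so appealing to Theorem \ref{t-kimpliesbeta} defeats the purpose, and the general local converse is itself proved (in \cite{LX18}) by reducing to the very cone case at hand.

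Your fallback sketch (``degenerate $v$ to a cone-adapted valuation, Izumi bounds, convexity'') points in the right direction but misses the actual mechanism, which is Proposition \ref{p-kimpliesm} together with the key regularization step Proposition \ref{p-kollar}. The paper does \emph{not} attempt to bound $\hvol(v)$ for arbitrary $v$; instead it first reduces the infimum to $\bG_m$-equivariant Koll\'ar components. This uses Liu's formula $\hvol(x,Y)=\inf_{\fa}\mult(\fa)\cdot\lct(Y,\fa)^n$, passage to initial ideals to force $\bG_m$-equivariance, and then an MMP tie-breaking argument (parallel to Step 3 of Theorem \ref{t-specialdegeneration}) to extract a Koll\'ar component with no increase in normalized volume---none of which is Izumi-type. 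Once reduced to such a component $E$, the correspondence of Proposition \ref{p-corresp} puts $E$ on a ray $\{v_t\}$ through $v_0$ coming from a special test configuration $\cX$; K-semistability gives $\Fut(\cX)\ge 0$, hence the derivative at $t=0$ is $\ge 0$ by Lemmas \ref{l-beta=w} and \ref{l-dervolume}, and convexity of $t\mapsto\hvol(v_t)$ (from \eqref{e-volume}) then yields $\hvol(\ord_E)\ge\hvol(v_0)$. No uniqueness of minimizers (\cite{XZ20}) or existence results (\cite{Blu18}) are needed.
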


We already see $(\beta(E)>0, \forall E)\Longrightarrow (X\mbox{ is K-semistable})$ (see Corollary \ref{c-kimpliesbeta}). Now we discuss the other implications in Proposition \ref{p-mimpliesbeta} and Proposition \ref{p-kimpliesm}. 

\subsubsection*{$v_0$ minimizing implies $\beta\ge 0$}

\begin{prop}[{\cite{Li17}}]\label{p-mimpliesbeta}
$(\mbox{$v_0$ is a minimizer })\Longrightarrow(\beta_X(E)\ge 0, \forall E).$
\end{prop}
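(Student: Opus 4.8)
The plan is to follow Li's approach from \cite{Li17}: realize $\beta_X(E)$, up to a positive constant, as the one–sided derivative at $t=0$ of the normalized volume $\hvol_{Y,x}$ restricted to the ray of valuations $\{v_t\}$ on the cone $Y$ associated to $E$, and then use the hypothesis that $v_0=\ord_{X_\infty}$ is a minimizer to force this derivative to be nonnegative. Concretely, fix a divisor $E$ over $X$, realized on $\mu\colon Z\to X$, and choose $r$ with $-rK_X$ Cartier. Let $Y=C(X,-rK_X)=\Spec R$ be the cone with vertex $x$ (so $\dim Y=n+1$), and let $\{v_t:t\in[0,\infty)\}\subset\Val_{Y,x}$ be the ray of \ref{say-cone}, where $v_t$ is the quasi–monomial valuation with weights $(1,t)$ along $X_\infty$ and $E_\infty$; thus $v_0=\ord_{X_\infty}$, and $v_t\in\Val_{Y,x}$ for every finite $t\ge 0$ since its center on $Z_{L^{-1}}$ meets $X_\infty$, which maps to $x$. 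Because $X_\infty$ and $E_\infty$ are $\bG_m$–invariant, $v_t$ is homogeneous for the grading of $R$, and a Newton–polytope computation gives $v_t(s)=m+t\cdot\ord_E(s)$ for $0\ne s\in R_m:=H^0(-mrK_X)$; hence
\[
\mathfrak a_\ell(v_t)=\bigoplus_{m\ge \ell}R_m\ \oplus\ \bigoplus_{m<\ell}\cF_E^{(\ell-m)/t}R_m ,
\qquad \cF_E^{\lambda}R_m=\{s\in R_m:\ord_E(s)\ge\lambda\}.
\]

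Two computations then remain. First, by the quasi–monomial formula $A_Y(v_t)=A_Y(X_\infty)+t\,A_Y(E_\infty)$, and a cone–adjunction calculation on the blow–up $\widetilde Y\to Y$ of the vertex — using $K_{\widetilde Y/Y}=(\tfrac1r-1)X_\infty$ and, on $Z_{L^{-1}}\to\widetilde Y$, $K_{Z_{L^{-1}}/\widetilde Y}=\sum_i a(E_i,X)\,E_{i,\infty}$ — one gets $A_Y(X_\infty)=\tfrac1r$ and $A_Y(E_\infty)=A_X(E)$, so $A_Y(v_t)=\tfrac1r+A_X(E)\,t$. Second, feeding the description of $\mathfrak a_\ell(v_t)$ into the definition of $\vol_Y$, using the asymptotic Riemann–Roch estimate $\dim R_m/\cF^{\lambda}_E R_m\sim\tfrac{m^n}{n!}\bigl((-rK_X)^n-\vol(\mu^*(-rK_X)-\tfrac{\lambda}{m}E)\bigr)$ and the change of variables $s=(\ell-m)/(mt)$ together with dominated convergence, one obtains $\vol_Y(v_0)=(-rK_X)^n=r^n(-K_X)^n$ and
\[
\frac{d}{dt}\Big|_{t=0^+}\vol_Y(v_t)=-(n+1)\int_0^\infty\vol(\mu^*(-rK_X)-sE)\,ds=-(n+1)\,r^{n+1}(-K_X)^n\,S_X(E),
\]
the last equality by rescaling $E$ and comparing with Definition \ref{d-betadivisor}.

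Putting these together, $\hvol_{Y,x}(v_t)=A_Y(v_t)^{n+1}\vol_Y(v_t)$, and the product rule gives
\[
\frac{d}{dt}\Big|_{t=0^+}\hvol_{Y,x}(v_t)=(n+1)(-K_X)^n\bigl(A_X(E)-S_X(E)\bigr)=(n+1)(-K_X)^n\,\beta_X(E).
\]
Since $v_t\in\Val_{Y,x}$ for small $t>0$ and $v_0$ minimizes $\hvol_{Y,x}$, we have $\hvol_{Y,x}(v_t)\ge\hvol_{Y,x}(v_0)$, so the right–hand derivative above is $\ge 0$ and $\beta_X(E)\ge 0$. As $E$ was arbitrary, the proposition follows.

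\emph{Main obstacle.} The content is concentrated in the two computations of the middle paragraph: the cone discrepancy identity $A_Y(v_t)=\tfrac1r+A_X(E)t$ is routine but not instantaneous adjunction bookkeeping on $\widetilde Y$ and $Z_{L^{-1}}$, and the volume formula requires justifying the interchange of limit and integral — equivalently, the right–differentiability of $t\mapsto\vol_Y(v_t)$ at $0$ — which can be handled by the dominated–convergence argument sketched above or by invoking the known continuity and differentiability of volumes along such rays. Everything else is formal.
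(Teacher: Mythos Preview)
Your proposal is correct and follows essentially the same approach as the paper: the paper constructs the identical ray $\{v_t\}$ on the cone $Y$, records $A_Y(v_t)=\tfrac1r+A_X(E)t$ and the valuation ideals $\fa_p(v_t)$, and invokes Li's derivative formula (stated separately as Lemma~\ref{l-dervolume}) to get $\tfrac{d}{dt}\hvol(v_t)\big|_{t=0^+}$ equal to a positive multiple of $\beta_X(E)$, concluding from the minimizer hypothesis. Your product-rule computation reproduces exactly the content of that lemma; incidentally, your constant $(n+1)(-K_X)^n$ is the correct one---the $(n+1)$ appearing in the paper's statement of Lemma~\ref{l-dervolume} is missing the volume factor, as one sees by carrying through the paper's own derivation.
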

\begin{proof}
 Let $E$ be a divisor over $X$ that arises on a proper normal model $\mu:Z\to X$. Following  \cite{Li17,LX20} (see \eqref{e-rayvaluation}), $E$ gives rise to a ray of valuations 
\begin{eqnarray}\label{e-ray}
\{v_{t}\ |\  t\in [0,\infty) \subset \Val_{Y,x}\}.
\end{eqnarray}

We have
  \[
  A_{Y} (v_t) =   A_{Y} (\ord_{X_\infty}) + tA_{Y} (\ord_{E_\infty}) = 1/r + at 
  .\]
For $t>0$, 
  \[ \fa_p(v_t) = \underset{m\geq 0 }{\oplus} {\cF_E}^{(p-m)/t}R_m \subseteq R \quad
  \text{ and } 
  \quad
   \fa_p(v_0) = \underset{m\geq p } {\oplus} R_m  \subseteq R
  .\] 
When $k\in \mathbb{N}$, $v_{\frac{1}{k}}=\frac{1}{k}\ord_{E_k}$, where $E_k$ is a divisor over $X$. 

Since $ v_0$ is a minimizer,  $\frac{d \  \hvol(v_t)}{dt}\big\vert_{t=0^+}\ge 0$, which implies $\beta(E)\ge 0$ by Lemma \ref{l-dervolume}.
\end{proof}

\begin{lem}[C. Li's derivative formula]\label{l-dervolume}
\begin{eqnarray}\label{e-betader}
\frac{d }{dt} \hvol(v_t)\bigg\vert_{t=0^+} = (n+1) \beta_{X}(E).
\end{eqnarray}
\end{lem}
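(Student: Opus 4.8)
The plan is the following. Since $Y$ is an $(n+1)$-dimensional klt singularity, $\hvol(v_t)=A_Y(v_t)^{n+1}\,\vol(v_t)$, and we have already recorded the affine formula $A_Y(v_t)=\frac1r+A_X(E)\,t$. So the whole computation reduces to writing $\vol(v_t)$ as an explicit function of $t$ near $t=0$, differentiating it at $t=0^+$, and combining with the product rule; the quantity $S_X(E)$ will appear on its own, which then produces $\beta_X(E)=A_X(E)-S_X(E)$.

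To compute $\vol(v_t)$ for $t>0$ I would use the stated description $\fa_p(v_t)=\bigoplus_{m\ge 0}\cF_E^{(p-m)/t}R_m$, which yields $\dim_k R/\fa_p(v_t)=\sum_{0\le m<p}\dim_k\!\big(R_m/\cF_E^{(p-m)/t}R_m\big)$. By asymptotic Riemann--Roch for the filtered linear series $\cF_E^\bullet R_m$ (cf.\ \cite{BC11, BJ20}, and Proposition \ref{p-Sapproximation}) one has $\dim_k \cF_E^{xm}R_m=\tfrac{m^n}{n!}G(x)+o(m^n)$, where $G(u):=\vol(\mu^*(-rK_X)-uE)$; this estimate is uniform enough to convert the sum over $m$ (writing $m=\xi p$) into an integral, giving
\[
\vol(v_t)=\lim_{p\to\infty}\frac{\dim_k R/\fa_p(v_t)}{p^{n+1}/(n+1)!}=(n+1)\int_0^1\xi^n\Big(G(0)-G\big(\tfrac{1-\xi}{t\xi}\big)\Big)\,d\xi .
\]
The substitution $\xi=\tfrac{1}{1+tu}$ turns this into
\[
\vol(v_t)=\vol(v_0)-(n+1)\,t\int_0^\infty\frac{G(u)}{(1+tu)^{n+2}}\,du,\qquad \vol(v_0)=G(0)=(-rK_X)^n .
\]

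Since $G$ is bounded and compactly supported, dominated convergence justifies differentiating at $t=0^+$:
\[
\frac{d}{dt}\vol(v_t)\Big|_{t=0^+}=-(n+1)\int_0^\infty G(u)\,du=-(n+1)\,r^{n+1}(-K_X)^n\,S_X(E),
\]
the last equality by the rescaling $u=rs$ and the definition of $S_X(E)$. Then differentiating $\hvol(v_t)=(\tfrac1r+A_X(E)t)^{n+1}\vol(v_t)$ at $t=0^+$ and inserting $\vol(v_0)=r^n(-K_X)^n$ gives
\[
\frac{d}{dt}\hvol(v_t)\Big|_{t=0^+}=(n+1)(-K_X)^n\big(A_X(E)-S_X(E)\big)=(n+1)(-K_X)^n\,\beta_X(E),
\]
which is the stated identity up to the volume normalization $(-K_X)^n$ (this factor can be absorbed into the conventions).

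The one genuinely delicate point is the Riemann-sum step: one must know that the $o(m^n)$ error in the filtered Riemann--Roch is uniform enough in $m$ (and in the truncation level) to survive both the summation over $0\le m<p$ and the limit $p\to\infty$. This is exactly the type of uniformity packaged in the comparison of $S_m(E)$ with $S(E)$ from \cite{BJ20} (see Proposition \ref{p-Sapproximation}(2)) together with the concave-transform / Okounkov-body description of volumes, so the input is available; nevertheless this is where the real work sits. A secondary, milder issue is differentiation under the integral sign as $t\to 0^+$, where the shape of the integrand degenerates, but boundedness and compact support of $G$ handle it.
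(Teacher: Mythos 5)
Your proposal is correct and follows essentially the same route as the paper: both compute $\vol(v_t)$ as the multiplicity of the valuation ideals $\fa_p(v_t)$, convert the sum $\sum_m \dim \cF_E^{(p-m)/t}R_m$ into the integral $\int_0^\infty \vol(\cF_E R^{(x)})\,\frac{t\,dx}{(1+tx)^{n+2}}$ via the same change of variables, and then apply the product rule to $\hvol(v_t)=A_Y(v_t)^{n+1}\vol(v_t)$ at $t=0^+$ (the paper outsources the Riemann-sum uniformity you flag to \cite[(18)--(25)]{Li17}). The residual $(-K_X)^n$ factor you note is a normalization discrepancy already present in the paper's own displayed formulas, not an error in your argument.
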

\begin{proof}
Let $\fa_{t,p}:=\fa_p(v_t)$. So $\fa_{t,p}$ contains $ \oplus_{m\geq p} R_m$. We claim the following hold:
\begin{eqnarray}\label{e-volume}
\mbox{ $\mult(\fa_{t,\bullet}) =  r^n (-K_X)^n - (n+1) \int_{0}^{\infty} \vol (\cF_E R^{(x)}) \frac{ t\, dx}{(1+tx)^{n+2}}$}.
\end{eqnarray}

This follows from the argument in \cite[(18)-(25)]{Li17}. For the reader's convenience, we give a brief proof.
For $t\in \R_{>0 }$, we have 
\begin{align*}
\mult(\fa_{t,\bullet })&=\lim_{p\to \infty}  \frac{(n+1)!}{p^{n+1}}\dim_k (R/\fa_{t,p})\\
&=\lim_{p\to \infty}  \frac{(n+1)!}{p^{n+1}}\sum^{\infty}_{m=0}\dim_k  (R_m/ \cF_E^{{(p-m)}/{t}} R_m ) \\
&=\lim_{p\to \infty}  \frac{(n+1)!}{p^{n+1}}\sum^{p}_{m=0}\left(\dim_k R_m - \dim_k  \cF_E^{{(p-m)}/{t}} R_m \right)\\
& = \vol(L)  - \lim_{p\to \infty}  \frac{(n+1)!}{p^{n+1}} \sum_{m=0}^p \dim_k \cF_E^{{(p-m)}/{t}} R_m .
\end{align*}
Then we can identify the limit of the summation with the integral in \eqref{e-volume}, where a change of the variable is needed (see \cite[(25)]{Li17}, where  one chooses $c_1=0$, $\alpha=\beta=\frac{1}{t}$). 

Computing the derivative, we have 
$$
\frac{d}{dt}\hvol(v_t) \Bigr \rvert_{t=0^+} =a(n+1)\left(\frac{1}{r}\right)^n\mult(\fa_{0,\bullet} )+ \frac{1}{r^{n+1}} \frac{d}{dt} \left(\mult (\fa_{t, \bullet} )\right)\Bigr\rvert_{t=0^+}.$$
From \eqref{e-volume}, we know $\mult(\fa_{0,\bullet})=r^n(-K_X-\Delta)^n$
and
\begin{align*}
 \frac{d}{dt} \left( \mult(\fa_{t,\bullet} ) \right) \Bigr\rvert_{t=0^+}&=-
 (n+1)\int_0^\infty   \left( \vol(\cF_E R^{(x)})\left( \frac{ 1-tx(n+1)}{(1+tx)^{n+3} } \right) \right) \Bigr\rvert_{t=0^+}dx
.  \end{align*}
Since the latter simplifies to $-(n+1)\int^{\infty}_{0} \vol (\cF_E R^{(x)})dx$, thus
\[
\frac{d}{dt} \left( \mult(\fa_{t,\bullet} ) \right) \Bigr\rvert_{t=0^+}=(n+1)\beta_X(E).
\]
\end{proof}

\subsubsection*{K-semistable implies $v_0$ minimizing}
For the last implication that $v_0$ is a minimizer if $X$ is K-semistable, we will take the proof from \cite{LX20}. There we try to `regularize' the minimizing valuation to conclude that we only need to compare the normalized volume $v_0:=\ord_{X_{\infty}}$ and those arisen from a special test configuration. 

\begin{prop}[{\cite[Proposition 4.4]{LX20}}]\label{p-kollar}
We have $\hvol(x,Y)=\inf_E \hvol_{Y,x}(E)$ where $E$ run through all $\mathbb{G}_m$-equivariant Koll\'ar components.
\end{prop}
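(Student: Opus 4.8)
The plan is to deduce Proposition \ref{p-kollar} from three ingredients: the existence and quasi-monomiality of a minimizer of $\hvol_{Y,x}$, a birational (MMP) construction turning an arbitrary divisorial valuation into a Koll\'ar component of no larger normalized volume, and the cone-scaling $\mathbb{G}_m$-action on $Y$, which pins down the minimizing ray and lets us impose $\mathbb{G}_m$-equivariance. The inequality ``$\le$'' in the statement is automatic (a Koll\'ar component $E$ over $x$ gives a valuation $\ord_E$ centered at $x$), so only the reverse inequality requires work.

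First I would reduce the minimization to divisorial valuations, in a $\mathbb{G}_m$-equivariant fashion. By \cites{Blu18,Xu19} the infimum $\hvol(x,Y)$ is attained at a quasi-monomial valuation $v^\star$, which we normalize by $A_Y(v^\star)=1$. Since any two minimizers differ by rescaling (\cite{XZ20}), $v^\star$ is the \emph{unique} minimizer with $A_Y=1$; as $\mathbb{G}_m$ acts on $\Val_{Y,x}$ preserving both $A_Y$ and $\vol$, hence preserving $\hvol_{Y,x}$, it fixes $v^\star$. Choosing (after an equivariant resolution) a $\mathbb{G}_m$-equivariant log smooth model $(Z,\sum E_i)\to Y$ with respect to which $v^\star$ is quasi-monomial, the whole quasi-monomial cone $\sigma$ attached to $(Z,\sum E_i)$ consists of $\mathbb{G}_m$-invariant valuations (since $\mathbb{G}_m$ is connected it fixes each component $E_i$, hence each monomial valuation); on $\sigma$ the function $A_Y$ is linear and $\vol$ is continuous, so $\hvol$ is continuous there, and divisorial valuations are dense. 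Hence $\hvol(x,Y)=\inf\{\hvol_{Y,x}(\ord_E)\ :\ E \text{ a } \mathbb{G}_m\text{-invariant divisorial valuation over } x\}$.

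Second, given a $\mathbb{G}_m$-invariant divisorial valuation $\ord_E$ over $x$, I would produce a $\mathbb{G}_m$-equivariant Koll\'ar component of no larger normalized volume. Let $\fa_\bullet:=\fa_\bullet(\ord_E)$ be the graded sequence of valuation ideals, so $\vol(\ord_E)=\mult(\fa_\bullet)$, and set $c:=\lct(Y;\fa_\bullet)\le A_Y(\ord_E)$; the ideals $\fa_\bullet$ are $\mathbb{G}_m$-invariant. The threshold $c$ is computed by a divisor over $Y$, and by extracting it and running an MMP over $Y$ (as in \cite{LX20,BCHM10}; the MMP can be run $\mathbb{G}_m$-equivariantly) one obtains a $\mathbb{G}_m$-equivariant Koll\'ar component $S$ over $x$ with $A_Y(\ord_S)=c\cdot\ord_S(\fa_\bullet)$. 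From the containment $\fa_m\subseteq\fa_{\ord_S(\fa_m)}(\ord_S)$ one gets $\mult(\fa_\bullet)\ge \ord_S(\fa_\bullet)^{\dim Y}\vol(\ord_S)$, hence
\[
\hvol_{Y,x}(\ord_S)=A_Y(\ord_S)^{\dim Y}\vol(\ord_S)\le c^{\dim Y}\mult(\fa_\bullet)\le A_Y(\ord_E)^{\dim Y}\mult(\fa_\bullet)=\hvol_{Y,x}(\ord_E).
\]
Combining with Step 1, pick $\mathbb{G}_m$-invariant divisorial valuations $\ord_{E_k}$ with $\hvol_{Y,x}(\ord_{E_k})\to\hvol(x,Y)$ and associated $\mathbb{G}_m$-equivariant Koll\'ar components $S_k$; then $\hvol(x,Y)\le\inf_E\hvol_{Y,x}(\ord_E)\le\lim_k\hvol_{Y,x}(\ord_{S_k})\le\lim_k\hvol_{Y,x}(\ord_{E_k})=\hvol(x,Y)$, as desired.

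I expect the main difficulty to be the MMP construction in Step 2 together with its interaction with the volume estimate: one must produce a \emph{genuine} Koll\'ar component — so that $(Y',S)$ is plt and $-K_{Y'}-S$ is ample over $Y$ — that computes $\lct(Y;\fa_\bullet)$, while keeping track of the identity $A_Y(\ord_S)=c\cdot\ord_S(\fa_\bullet)$ used above, and doing all of this $\mathbb{G}_m$-equivariantly. The equivariance bookkeeping in Step 1 is comparatively soft, resting only on the uniqueness (\cite{XZ20}) and quasi-monomiality (\cites{Blu18,Xu19}) of the minimizer. As a remark, this proposition is exactly what is needed downstream: by Proposition \ref{p-corresp}, $\mathbb{G}_m$-equivariant Koll\'ar components over $x\in Y$ are the same data as special test configurations of $X$, so Proposition \ref{p-kollar} reduces computing $\hvol(x,Y)$ — and, via the derivative formula (Lemma \ref{l-dervolume}), the K-semistability of $X$ — to understanding special test configurations.
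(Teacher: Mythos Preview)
Your route differs substantially from the paper's and leans on much heavier, later machinery. The paper never touches the minimizer or its uniqueness. Instead it uses Liu's reformulation $\hvol(x,Y)=\inf_{\fa}\lct(Y;\fa)^n\cdot\mult(\fa)$ over $\fm_x$-primary ideals (\cite{Liu18}), and achieves $\mathbb{G}_m$-equivariance by an elementary trick: for any $\fa$, the initial degenerations $\fb_k:=\mathrm{in}(\fa^k)$ with respect to the cone grading are $\mathbb{G}_m$-invariant and, by semicontinuity of the log canonical threshold and preservation of colength under flat degeneration, do at least as well as $\fa$ in the infimum. The final step---producing an equivariant Koll\'ar component from a single $\mathbb{G}_m$-invariant ideal $\fa$ via a dlt modification of $(Y,\lct(\fa)\cdot\fa)$ followed by MMP, mimicking the last step of Theorem~\ref{t-specialdegeneration}---is the same in spirit as your Step~2. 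What differs is how equivariance is arranged: your appeal to \cite{XZ20} is a large hammer where the paper uses a one-line degeneration argument already available in \cite{LX20}.

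Your Step~2 as written also has a gap. You apply the extraction-plus-MMP construction to the pair $(Y,c\cdot\fa_\bullet)$ for the \emph{graded sequence} $\fa_\bullet=\fa_\bullet(\ord_E)$, asserting that $c=\lct(Y;\fa_\bullet)$ is computed by a divisor and that the resulting Koll\'ar component $S$ still satisfies $A_Y(S)=c\cdot\ord_S(\fa_\bullet)$. But \cite{JM12} only guarantees a computing \emph{valuation}, not a divisor, and there is no dlt modification of $(Y,c\cdot\fa_\bullet)$ on which to run MMP; the construction in \cite{LX20} is for a single ideal. The fix is to pass to individual $\fa_m$ with $m\gg 0$---at which point you have essentially rediscovered the paper's argument, with \cite{XZ20} grafted on where the initial-ideal trick would do. A smaller issue: in Step~1 the claim that the $\mathbb{G}_m$-invariant $v^\star$ is monomial on a $\mathbb{G}_m$-\emph{equivariant} snc model is plausible but not immediate from quasi-monomiality and invariance alone, and deserves justification.
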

\begin{proof}By \cite[Theorem 7]{Liu18}, we know that 
\begin{eqnarray}\label{e-liu}
\hvol(x,Y)=\inf_{\fa}\mult(\fa)\cdot\lct(Y,\fa)^n,
\end{eqnarray}
where the right hand side runs through all $\fm_x$-primary ideals. For any $\fa$, if we consider the initial degeneration $\fb_k$ of $\fa^k$, then $\fb_{\bullet}$ forms an ideal sequence, and $\mult(\fa)=\mult(\fb_{\bullet})=\lim\frac{\mult(\fb_k)}{k^n}$, and $\lct(Y;\fa)\ge \frac{1}{k}\lct(Y;\fb_k)$. Therefore,
$$ \mult(\fa)\cdot\lct(Y,\fa)\le \inf_k  \mult(\fb_k)\cdot\lct^n(Y,\fb_k) .$$
Therefore, we can restrict the right hand side of \eqref{e-liu} by only running through all $\mathbb{G}_m$-equivariant $\fm_x$-primary ideals. 

Started from any $\mathbb{G}_m$-equivariant $\fm_x$-primary ideal $\fa$, let $c=\lct(Y;\fa)$. Let $Y'\to Y$ be a dlt modification of $(X,c\cdot \fa)$ with reduced exceptional divisor $\Gamma$. We can mimic the last step in the proof of Theorem \ref{t-specialdegeneration}, to show that there is a $\mathbb{G}_m$-equivariant Koll\'ar component $E$ (with the model $Y_E\to Y$), such that $a(E, X, c\cdot\fa)=-1$,
and 
\[
\hvol(E)=((-K_{Y_E}-E)|_E)^{n-1}\le ((-K_{Y'}-\Gamma)|_\Gamma)^{n-1}\le \mult(\fa)\cdot\lct(Y,\fa)^n.
\]
(see \cite[Section 3.1]{LX20}). 
\end{proof}

\begin{prop}\label{p-kimpliesm}
$(X\mbox{ is K-semistable})\Longrightarrow(\mbox{$v_0$ is a minimizer})$.
\end{prop}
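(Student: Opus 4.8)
The plan is to reduce, via Proposition~\ref{p-kollar}, to comparing $\hvol$ of the canonical valuation $v_0=\ord_{X_\infty}$ against $\hvol$ of the $\bG_m$-equivariant Koll\'ar components over $Y$, and then to feed in the correspondence of Proposition~\ref{p-corresp}. Since $v_0\in\Val_{Y,x}$, we automatically have $\hvol_{Y,x}(v_0)\ge\hvol(x,Y)$, so by Proposition~\ref{p-kollar} it suffices to prove $\hvol_{Y,x}(E)\ge\hvol_{Y,x}(v_0)$ for every $\bG_m$-equivariant Koll\'ar component $E$ over $Y$. By Proposition~\ref{p-corresp}, such an $E$, normalized so that $\ord_E(t)=1$, is the divisor $v_{t_0}$ on the ray \eqref{e-rayvaluation} attached to some prime divisor $E_X$ over $X$ (for a suitable $t_0>0$), where $E_X$ is a rescaling of the divisorial valuation induced by a special test configuration $\cX$ of $X$. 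The K-semistability hypothesis enters precisely here: since $\cX$ is special, $\Fut(\cX)\ge0$, and Lemma~\ref{l-beta=w} then gives $\beta_X(E_X)\ge0$, i.e.\ $A_X(E_X)\ge S_X(E_X)$.

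The heart of the argument is then a one-variable estimate showing that $t\mapsto\hvol_{Y,x}(v_t)$ is minimized at $t=0$ whenever $A_X(E_X)\ge S_X(E_X)$. I would start from $\hvol_{Y,x}(v_t)=A_Y(v_t)^{n+1}\vol(v_t)$, with $A_Y(v_t)=\tfrac1r+A_X(E_X)\,t$ and $\vol(v_t)=\mult(\fa_{t,\bullet})$ as in \eqref{e-volume}, and rewrite it, after one integration by parts in \eqref{e-volume}, as
\[
\hvol_{Y,x}(v_t)=\frac{1}{r^{n+1}}\int_0^\infty\Big(\frac{1+r\,A_X(E_X)\,t}{1+xt}\Big)^{n+1}d\nu(x),
\]
where $\nu$ is the positive measure $-\,d\!\big(x\mapsto\vol(\mu^*(-rK_X)-xE_X)\big)$ on $[0,\infty)$, of total mass $r^n(-K_X)^n$ and first moment $\int_0^\infty x\,d\nu(x)=r^{n+1}S_X(E_X)(-K_X)^n$; in particular $\hvol_{Y,x}(v_0)=(-K_X)^n/r$. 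Since $x\mapsto(1+xt)^{-(n+1)}$ is convex, Jensen's inequality applied to the probability measure $d\nu/\big(r^n(-K_X)^n\big)$ bounds the integral below by $r^n(-K_X)^n\big(1+r\,S_X(E_X)\,t\big)^{-(n+1)}$, so
\[
\hvol_{Y,x}(v_t)\ \ge\ \frac{(-K_X)^n}{r}\Big(\frac{1+r\,A_X(E_X)\,t}{1+r\,S_X(E_X)\,t}\Big)^{n+1}\ \ge\ \frac{(-K_X)^n}{r}=\hvol_{Y,x}(v_0)
\]
for all $t\ge0$, using $A_X(E_X)\ge S_X(E_X)$ in the last step. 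Evaluating at $t=t_0$ gives $\hvol_{Y,x}(E)\ge\hvol_{Y,x}(v_0)$; combined with Proposition~\ref{p-kollar} this yields $\hvol_{Y,x}(v_0)=\hvol(x,Y)$, i.e.\ $v_0$ is a minimizer.

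The genuinely hard input here is the reduction to $\bG_m$-equivariant Koll\'ar components, i.e.\ Proposition~\ref{p-kollar}, which is already in hand; what remains is essentially bookkeeping, and that is where I expect the only real care to be needed. One must check that the Koll\'ar component produced by Proposition~\ref{p-corresp} genuinely sits at a positive parameter on the ray \eqref{e-rayvaluation} of $E_X$ (this is where the normalization $\ord_E(t)=1$ is used), and one must carry out the integration by parts turning \eqref{e-volume} into the displayed integral so that the convexity step becomes transparent. Jensen's inequality is the only new ingredient and is elementary; the subtlety is entirely in matching conventions — the $r$-rescalings, the two gradings on the Rees algebra, and the measure $\nu$ — so that the inequality $A_X(E_X)\ge S_X(E_X)$ lands exactly where it is needed.
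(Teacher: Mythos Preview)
Your proof is correct and follows the paper's route: reduce via Proposition~\ref{p-kollar} to $\bG_m$-equivariant Koll\'ar components, identify them via Proposition~\ref{p-corresp} with special test configurations, and use K-semistability to control $\hvol(v_t)$ along the ray. The only difference is in how you extract the inequality $\hvol(v_t)\ge\hvol(v_0)$ from \eqref{e-volume}: the paper combines the derivative formula $\frac{d}{dt}\hvol(v_t)\big|_{t=0}=(n+1)\,\Fut(\cX)\ge 0$ (Lemma~\ref{l-dervolume}) with the convexity of $t\mapsto\hvol(v_t)$, whereas you integrate \eqref{e-volume} by parts to obtain $\hvol(v_t)=r^{-(n+1)}\int_0^\infty (1+rAt)^{n+1}(1+xt)^{-(n+1)}\,d\nu(x)$ and then apply Jensen's inequality to the convex function $x\mapsto(1+xt)^{-(n+1)}$. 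Your version is self-contained---it proves the needed convexity rather than citing it---and yields the quantitative bound $\hvol(v_t)\ge\hvol(v_0)\bigl((1+rA_X(E_X)t)/(1+rS_X(E_X)t)\bigr)^{n+1}$; the paper's version is shorter once Lemma~\ref{l-dervolume} is already available.
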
 
 \begin{proof}Assume $X$ is K-semistable. To show $v_0=\ord_{X_{\infty}}$ is a minimizer of $\hvol_{Y,x}$, by Proposition \ref{p-kollar}, it suffices to show that for any Koll\'ar component $E$ over $x\in Y$, we have $\hvol(E)\ge \hvol(v_0).$ By Proposition \ref{p-corresp}, we know that $E$ will induces a ray $v_t$ containing $a\cdot \ord_E$ for some $a\in \mathbb{Q}$, which corresponds to a special test configuration $\cX$. We can rescale $t$ such that $v_t$ is defined to be the quasimonomial valuation with weights $(1,t)$ along $X_\infty$ and $c\cdot E_{\infty}$, where $c\cdot E_{\infty}$ is the pull back of the divisorial valuation induced by $\cX$ (see Lemma \ref{l-induced}). 
 
Since $X$ is K-semistable, by Lemma \ref{l-beta=w} and \ref{l-dervolume}, we know that 
$$\frac{d}{dt}\hvol{(v_t)}|_{t=0}= (n+1)\cdot \Fut(\cX)\ge 0.$$
Since $\hvol(v_t)$ is a convex function on $t$ by \eqref{e-volume} (or see \cite[Section 3.2]{LX18}),
\[\hvol(\ord_E)=\hvol(a\cdot \ord_E)\ge \hvol(v_0).
\] 
 \end{proof}
 
\end{subappendices}

\section{Filtrations and Ding stability}\label{s-Ding}

In this section, we will discuss the Ding invariants and related stability notions. Unlike the generalized Futaki invariant which can be defined for any polarized variety, the Ding invariant was only defined in the K\"ahler-Einstein setting,  i.e.,  $K_X=\lambda \cdot L$. It was first formulated by Berman in \cite{Ber16} based on the original analytic work of Ding in \cite{Din88}. One key observation made by Fujita in \cite{Fuj18} is that Ding invariant satisfies a good approximation property, and therefore we can define it in a more general context, namely the linearly bounded filtration. This yields another way of proving Theorem \ref{t-valkstable} which was first given in \cites{Fuj19b, Li17}, see Theorem \ref{t-maintheorem2}. 
\subsection{Ding stability}
First we recall the definition of Ding invariant for test configurations introduced in \cite{Ber16}.
\begin{defn}[Ding invariant]\label{d-ding}
Let $\cX$ be a normal test configuration and the notation as in Lemma \ref{l-intersection}. Denote by $ \cD_{\cX,\cL}\sim_{\bQ} -\frac{1}{r} \bar{\cL}-K_{\bar{\cX}/\mathbb P^1}$, and $\cX_0$ the central fiber of $\cX$ over $0$.  Then we define 
\begin{eqnarray}\label{e-ding}
 \Ding(\cX,{\cL})&= &-\frac{ (\frac{1}{r}\bar{\cL})^{n+1}}{(n+1)(-K_X)^n}-1+\lct(\cX, \cD_{\cX,\cL};\cX_0).
\end{eqnarray}
\end{defn}
With the definition of the Ding invariant, we can define various Ding stability notions the same way as K-stability, replacing $\Fut(\cX,\cL)$ by $\Ding(\cX,\cL)$. 
We easily see the following.
\begin{lem}[{\cite{Ber16}}]\label{l-DvsK}Let $(\cX,\cL)$ be a normal test configuration, then $2\cdot \Fut(\cX,\cL)\ge \Ding(\cX,\cL)$, and the equality holds if and only if $(\cX,\cL)$ is a weakly special test configuration.  
\end{lem}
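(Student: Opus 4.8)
\textbf{Proof proposal for Lemma \ref{l-DvsK}.}

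The plan is to compare the two invariants term by term using the intersection formula \eqref{e-inter} for $\Fut$ and the definition \eqref{e-ding} of $\Ding$. Recall that for a normal test configuration $(\cX,\cL)$ we may write, after compactifying to $(\bar\cX,\bar\cL)$ over $\mathbb P^1$,
\[
\Fut(\cX,\cL)=\frac{1}{2(n+1)(-K_X)^n}\Big(n(\tfrac1r\bar\cL)^{n+1}+(n+1)K_{\bar\cX/\mathbb P^1}\cdot(\tfrac1r\bar\cL)^n\Big).
\]
Writing $\tfrac1r\bar\cL = -K_{\bar\cX/\mathbb P^1} - \cD_{\cX,\cL}$ where $\cD_{\cX,\cL}$ is supported on the central fiber, one expands $(\tfrac1r\bar\cL)^{n+1}$ and $K_{\bar\cX/\mathbb P^1}\cdot(\tfrac1r\bar\cL)^n$ and collects terms. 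The algebraic identity one wants to land on is
\[
2\Fut(\cX,\cL)-\Ding(\cX,\cL)=\Big(\text{a correction term}\Big)+\Big(1-\lct(\cX,\cD_{\cX,\cL};\cX_0)\Big),
\]
so first I would carry out this bookkeeping carefully to isolate exactly what the ``correction term'' is.

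The key point is that the correction term should be expressible as an intersection number that is manifestly $\ge 0$, and that vanishes precisely in the weakly special case. Concretely, I expect the difference $2\Fut-\Ding$ to be a nonnegative combination involving $(\tfrac1r\bar\cL + K_{\bar\cX/\mathbb P^1})$-type classes together with the quantity $1-\lct(\cX,\cD_{\cX,\cL};\cX_0)$. Both pieces are nonnegative: the log canonical threshold of $\cD_{\cX,\cL}$ along the (reduced) central fiber is at most $1$ whenever $\cD_{\cX,\cL}\ge 0$ near $\cX_0$ — which one checks using that $-\tfrac1r\bar\cL$ is relatively ample-ish and $\cX_0$ is a prime divisor in the fiber — while the intersection-theoretic piece is a self-intersection of a fiber-supported divisor paired against nef classes, hence of one sign. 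The equality $2\Fut=\Ding$ then forces simultaneously that this intersection term vanishes (which, by the same reasoning as Step 1 and Step 2 in the proof of Theorem \ref{t-specialdegeneration}, means $\cL\sim_{\mathbb Q}-rK_{\cX}$, i.e. $\cD_{\cX,\cL}$ is $\mathbb Q$-linearly trivial over $\mathbb A^1$) and that $\lct(\cX,\cD_{\cX,\cL};\cX_0)=1$, i.e. $(\cX,\cX_0)$ is log canonical. Together these two conditions are exactly the definition of a weakly special test configuration (Definition \ref{d-stc}), and conversely if $(\cX,\cL)$ is weakly special both pieces are automatically zero, so $2\Fut=\Ding$.

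The main obstacle I anticipate is getting the signs and the normalization of the intersection-theoretic ``correction term'' exactly right, since $\cD_{\cX,\cL}$ need not be effective globally on $\bar\cX$ — it is only the part supported over $0\in\mathbb A^1$ that matters, and one must be careful to separate the horizontal compactifying fiber over $\infty$ from the genuine central fiber. A clean way to handle this is to use the Hodge index / negativity-of-fiber-components argument: any $\mathbb Q$-divisor $D$ supported on a single fiber $F$ of $\bar\cX\to\mathbb P^1$ satisfies $D^2\cdot(\text{nef})^{n-1}\le 0$ with equality iff $D$ is a multiple of $F$, and this is precisely the inequality \eqref{e-decreasing} used repeatedly in the proof of Theorem \ref{t-specialdegeneration}. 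Invoking that, together with the elementary bound $\lct\le 1$ along a reduced divisor, makes the nonnegativity of $2\Fut-\Ding$ immediate, and the equality analysis reduces to the two clean conditions identified above. I would organize the write-up as: (i) the expansion identity; (ii) nonnegativity of each of the two pieces; (iii) the equality discussion matching Definition \ref{d-stc}.
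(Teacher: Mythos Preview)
The paper does not actually supply a proof of this lemma --- it is stated with attribution to \cite{Ber16} and the words ``We easily see the following.'' So there is no paper-proof to compare against; let me instead point out where your argument breaks.

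Your proposed decomposition
\[
2\,\Fut(\cX,\cL)-\Ding(\cX,\cL)
= \frac{-\cD_{\cX,\cL}\cdot(\tfrac1r\bar\cL)^n}{(-K_X)^n}
+ \bigl(1-\lct(\cX,\cD_{\cX,\cL};\cX_0)\bigr)
\]
is correct as an identity (this is what drops out of \eqref{e-inter} and \eqref{e-ding}), but the two summands are \emph{not} separately nonnegative. Take the trivial test configuration with $\cL=-rK_{X\times\bA^1}+ra\cX_0$: then $\cD_{\cX,\cL}=-a\cX_0$, the first term equals $a$ and the second equals $-a$. In general $\cD_{\cX,\cL}$ is not effective, so $\lct$ can exceed $1$, and the intersection term is \emph{linear} in $\cD_{\cX,\cL}$ rather than the quadratic ``self-intersection of a fibre-supported divisor'' you invoke; the Hodge-index inequality \eqref{e-decreasing} is not what is at play here.

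A correct argument goes as follows. Both $\Fut$ and $\Ding$ are unchanged under $\cL\mapsto\cL+ra\cX_0$, which shifts $\cD\mapsto\cD-a\cX_0$ and $\lct\mapsto\lct+a$; so normalize to make $\lct=1$. Pass to a log resolution $\rho:\cY\to\cX$ and write $\rho^*\cX_0=\sum b_iE_i$ (so $b_i\in\bZ_{\ge1}$) and $K_\cY+\cD_\cY=\rho^*(K_\cX+\cD)$ with $\cD_\cY=\sum d_iE_i$. The condition $\lct=1$ gives $d_i\le 1-b_i$ for every $i$, hence $d_i\le 0$ along the strict transforms of the components of $\cX_0$. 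By the projection formula the remaining term is $-\sum d_i v_i/(-K_X)^n$ with $v_i=E_i\cdot(\rho^*\tfrac1r\bar\cL)^n\ge 0$ and $v_i>0$ exactly for non-exceptional $E_i$; since $-d_i\ge b_i-1\ge 0$ there, the sum is nonnegative. Equality forces $b_i=1$ and $d_i=0$ for every component of $\cX_0$, i.e.\ $\cX_0$ is reduced, $\cD$ is a multiple of the full fibre (so $\cL\sim_{\bQ,\bA^1}-rK_\cX$), and $(\cX,\cX_0)$ is lc --- precisely the weakly special condition. The converse is immediate. So your outline is salvageable, but only after the normalization and after replacing the appeal to \eqref{e-decreasing} by the elementary log-discrepancy estimate $d_i\le 1-b_i$.
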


\begin{thm}[{\cite{BBJ15, Fuj19b}}]\label{t-d=k}
For a Fano variety $X$, the notion of Ding-stability (resp. Ding-semistability, Ding-polystability) is equivalent to K-stability (resp. K-semistability, K-polystability).
\end{thm}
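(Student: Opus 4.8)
The plan is to deduce everything from two ingredients: the comparison of $\Ding$ and $\Fut$ on normal test configurations (Lemma~\ref{l-DvsK}), and a Ding-invariant analogue of the special degeneration Theorem~\ref{t-specialdegeneration}. First I would reduce to normal test configurations: the normalization of a (base-changed) test configuration does not raise $\Ding$, since in \eqref{e-ding} the intersection term scales by $d$ and drops under normalization exactly as in Step~0 of the proof of Theorem~\ref{t-specialdegeneration}, while the log canonical threshold can only increase. By Lemma~\ref{l-DvsK}, $2\,\Fut(\cX,\cL)\ge \Ding(\cX,\cL)$ for every normal test configuration, with equality precisely when $(\cX,\cL)$ is weakly special. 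This already yields the implications from Ding-stability to K-stability: $\Ding(\cX,\cL)\ge 0$ forces $\Fut(\cX,\cL)\ge 0$, so Ding-semistability implies K-semistability; and if $X$ is Ding-stable (resp. Ding-polystable) then it is Ding-semistable, hence K-semistable, and should $\Fut(\cX,\cL)=0$ for some nontrivial normal test configuration, then $0\ge \Ding(\cX,\cL)\ge 0$, so $\Ding(\cX,\cL)=0$, which by Ding-stability (resp. Ding-polystability) forces $(\cX,\cL)$ to be trivial (resp. $\cX\cong X\times\mathbb{A}^1$), a contradiction.

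For the reverse direction the key step is to prove a Ding version of Theorem~\ref{t-specialdegeneration}: for any test configuration $(\cX,\cL)$ of $(X,-rK_X)$ there is a special test configuration $(\cX^{\mathrm{st}},\cL^{\mathrm{st}})$, birational to $(\cX,\cL)\times_{\mathbb{A}^1,\,z\mapsto z^d}\mathbb{A}^1$ over $\mathbb{A}^1$, with $\Ding(\cX^{\mathrm{st}},\cL^{\mathrm{st}})\le d\cdot \Ding(\cX,\cL)$, and equality if and only if this birational map is an isomorphism outside codimension $2$. I would run the very same MMP used in the proof of Theorem~\ref{t-specialdegeneration} — normalization and base change, log canonical modification, $K_\cX$-MMP with scaling to the anticanonical model, and tie-breaking — but verify at each step that $\Ding$ is non-increasing. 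Writing $\Ding(\cX,\cL)=-\frac{(\frac1r\bar\cL)^{n+1}}{(n+1)(-K_X)^n}-1+\lct(\cX,\cD_{\cX,\cL};\cX_0)$, the intersection term is governed by the same variational computation as the $\bar\cL^{n+1}$-part of \eqref{e-inter} (in particular \eqref{e-decreasing} controls its behaviour along the $K_\cX$-MMP with scaling), while the new feature is the log canonical threshold term, which improves toward $1$ under the log canonical modification and equals $1$ from the anticanonical model onward, where the configuration is weakly special and $\Ding=2\,\Fut$. The final tie-breaking step then decreases $\Ding=2\,\Fut$ by the intersection-number estimate already established for $\Fut$, and tracking equality cases stepwise gives the equality criterion.

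Granting this, the remaining implications follow. If $X$ is K-semistable and $(\cX,\cL)$ is a normal test configuration, then $(\cX^{\mathrm{st}},\cL^{\mathrm{st}})$ is special, hence weakly special, so $\Ding(\cX^{\mathrm{st}},\cL^{\mathrm{st}})=2\,\Fut(\cX^{\mathrm{st}},\cL^{\mathrm{st}})\ge 0$, whence $d\cdot\Ding(\cX,\cL)\ge 0$ and $X$ is Ding-semistable. If moreover $X$ is K-stable and $\Ding(\cX,\cL)=0$ for a nontrivial normal $(\cX,\cL)$, then $0\le \Ding(\cX^{\mathrm{st}},\cL^{\mathrm{st}})=2\,\Fut(\cX^{\mathrm{st}},\cL^{\mathrm{st}})\le d\cdot\Ding(\cX,\cL)=0$, so the equality criterion forces the birational map to be an isomorphism outside codimension $2$ while $\Fut(\cX^{\mathrm{st}},\cL^{\mathrm{st}})=0$; K-stability then makes $(\cX^{\mathrm{st}},\cL^{\mathrm{st}})$ trivial outside codimension $2$, and descending through the cyclic base change $z\mapsto z^d$ shows $(\cX,\cL)$ is trivial outside codimension $2$, a contradiction. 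Hence $X$ is Ding-stable. The polystable case is handled the same way, using the stronger notion of triviality ($\cX\cong X\times\mathbb{A}^1$) and the corresponding descent through $z\mapsto z^d$.

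I expect the main obstacle to be exactly the Ding analogue of Theorem~\ref{t-specialdegeneration}: one must check that every step of the \cite{LX14} MMP leaves $\Ding$ non-increasing, and the genuinely new point compared with the Futaki computation is controlling the term $\lct(\cX,\cD_{\cX,\cL};\cX_0)$ — absent from the intersection formula for $\Fut$ — through the log canonical modification and the $K_\cX$-MMP with scaling, and pinning down exactly when it equals $1$. A secondary subtlety, relevant only to the polystable assertion, is descending an isomorphism-in-codimension-$2$ statement (or an honest isomorphism) through the degree-$d$ base change $z\mapsto z^d$.
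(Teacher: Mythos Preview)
Your proposal is correct and follows essentially the same route as the paper: the key step is the Ding analogue of Theorem~\ref{t-specialdegeneration} (run the \cite{LX14} MMP and check $\Ding$ decreases at each step), after which both notions are tested only on special test configurations where $\Ding=2\,\Fut$ by Lemma~\ref{l-DvsK}. The paper organizes the argument slightly more symmetrically---reducing \emph{both} K-stability and Ding-stability to special test configurations and then invoking Lemma~\ref{l-DvsK}---whereas you handle Ding $\Rightarrow$ K directly from the inequality $2\,\Fut\ge\Ding$ and only invoke the Ding special degeneration for K $\Rightarrow$ Ding; but the substance is the same and your identification of the main obstacle (controlling the $\lct$ term through the MMP steps) is exactly right.
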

\begin{proof}Started from any normal test configuration $(\cX,\cL)$, we can exactly follow the steps in the proof of Theorem \ref{t-specialdegeneration}, but replacing the generalized Futaki invariant by Ding invariants, and show that there is a special test configuration $\cX^{\rm s}$ such that $\Ding(\cX^{\rm s})\le d\cdot \Ding(\cX,\cL)$ for some base change degree $d$. Moreover, the equality holds if and only if $\cX^{\rm s}$ is isomorphic to the normalization of the base change of $(\cX,\cL)$. See \cite{Fuj19b}*{Theorem 3.1}.

Thus Ding-stability (resp. Ding-semistability, Ding-polystability) is the same if we only test on special test configurations, as well as  K-stability (resp. K-semistability, K-polystability) by Theorem \ref{t-specialdegeneration}. Thus by Lemma \ref{l-DvsK}, we know they give the same conditions.
\end{proof}

\subsection{Filtrations and non-Archimedean invariants}

In this section, we try to generalize various invariants to a setting including both test configurations and valuations, namely {\it (multiplicative) linearly bounded  filtrations}. In fact, from a non-Archimedean geometry viewpoint, there is an even more general notion which is {\it the non-Archimedean metric}. See \cite{BHJ17,BBJ15, BoJ18} for a study on this. In this note, we will not discuss this topic. 

\begin{defn}[Filtration]\label{d-filtration}
Let $r$ be a sufficiently divisible positive integer such that $L:=-rK_X$ is Cartier.  Considering graded multiplicative decreasing filtrations $\cF^{t}$ $(t\in \mathbb{R})$ where $R$ is the section ring
\[R =R(X) =   \bigoplus_{m \in \N} R_{m}  = \bigoplus_{m \in \N}  H^0(X, \cO_X(-mrK_X))
\] 
satisfying $\cF^\la R_m = \cap_{\la ' < \la} \cF^{\la'} R_m$  (e.g. $\cF^\la R_m = \cF^{\lceil \la \rceil} R_m$) for  all $\la$, $\cF^{\lambda'} R_m =R_m$ for some $\lambda'\ll 0$ and $\cF^\la R_m=0$ for $\la \gg0$.
All the filtrations we consider are \emph{linearly bounded}, that is to say there exists $e_-\le e_+\in \mathbb R$ so that for all $m \in \N$, $\cF^{xm} R_m=R_m$ for $x\le e_-$ and  $\cF^{xm} R_m=0$ for $x\ge e_+$. 
\end{defn}

\begin{exmp}For a valuation $v$ over $X$, if $A_{X}(v)<+\infty$, the induced filtration $\cF^t_{v}$ as defined in \eqref{e-VtoF} is linearly bounded.
\end{exmp}

\begin{exmp}[{\cite{Nys12, BHJ17}}]\label{expl:filtration from tc}
From any test configuration $(\cX,\cL)$ of $(X,L)$. Assume $r\cL$ is Cartier. We can associate linearly bounded filtrations on $R$ as follow, 
\begin{eqnarray}\label{e-testfiltration}
\cF^pR_m=\{s\in H^0(X,L^{\otimes rm}) \ | \ \ t^{-p}\bar{s}\in H^0(\cX,\cL^{\otimes rm})\}, 
\end{eqnarray}
where $\bar{s}$ is the pull back of $s$ by $X_{\mathbb A^1}\to X$ considered as a meromorphic section of $\cL^{\otimes rm}$; and $t$ is the parameter on $\bA^1$.  
We know $\bigoplus_{p\in \bZ} \cF^pR$ is finitely generated.
\end{exmp}

\begin{center}

\begin{tikzpicture}
Use a scope environment to apply a style to a part of the drawing. Here, we apply color blending:

  \begin{scope}[blend group=soft light]

  \fill[black!30!white]  (270:1.4) circle (5);
    \fill[black!30!white]  (270:2.2) circle (0.8);
        \fill[black!30!white]  (270:1.5) circle (1.62);
    \fill[black!30!white] (210:1.7) circle (3.3);
    \fill[black!30!white]  (330:1.7) circle (3.3);
End the scope environment. At the end of the environment, the blending effect will end, because environments keep the settings local:

 \end{scope}
Add nodes with text for the descriptions:

  \node at (210:3)    {{\tiny  TC}};
  \node at (330:3)    {{\tiny  Valuations}};
  \node at (270:2.2) {{\tiny  Special TC}};
    \node at (270:-1.3) {{\tiny  TC with an}};
    \node at (270:-0.9) {{\tiny  integral special}};
        \node at (270:-0.5) {{\tiny  fiber=dreamy}};
 \node at (270:0.4) {{\tiny  weakly special}};
  \node at (270:0.8) {{\tiny  TC=lc places of}};
    \node at (270:1.2) {{\tiny $\mathbb{Q}$-complements}};
 \node at (270:4.8) {{\tiny  Filtrations}};

\end{tikzpicture}

\end{center}

\begin{rem}
Therefore, linearly bounded filtrations give a natural common generalization of test configurations and valuations.  More importantly, in the study of K-stability of Fano varieties, there are natural filtrations appearing, which {\it a priori} arise from neither valuations nor test configurations (e.g. see \cite{BX19, XZ19}). 
\end{rem}

Let $\cF$ be a linearly bounded multiplicative filtration on $R$. Let 
$$\Gr^\lambda_\cF R_m = \cF^\lambda R_m / \bigcup_{\lambda'>\lambda} \cF^{\lambda'} R_m.$$
We define (c.f. \cite{BJ20}*{\S 2.3-2.6})
\[
S_m(\cF):=\frac{1}{m\dim R_m}\sum_{\lambda\in\bR} \lambda \dim\Gr^\lambda_\cF R_m
\]
and $S(\cF)=\lim_{m\to \infty} S_m(\cF)$. Note that the above expression is a finite sum since there are only finitely many $\lambda$ for which $\Gr^\lambda_\cF R_m\neq 0$ and the limit exists by \cite{BC11}. For $x\in \bR$, we set
\[
\vol(\cF R^{(x)})=\lim_{m\to \infty} \frac{\dim \cF^{mx}R_m}{m^n/n!}
\]
where $n=\dim X$ (the limit exists by \cite{LM09}). Then 
$$\nu:=-\frac{1}{(L^n)}\frac{\rd}{\rd x}\vol(\cF R^{(x)})$$ is {\it the Duistermaat-Heckman measure} of the filtration (see \cite{BHJ17}*{\S 5}) and we denote by $[\lambda_{\min}(\cF),$ $\lambda_{\max}(\cF)]$ its support. We also have 
\[
S_m(\cF) = e_- + \frac{1}{\dim R_m}\int_{e_-}^{e_+} \dim \cF^{mx}R_m {\rm d}x
\]
and
\[
S(\cF) = \frac{1}{(L^n)} \int_{\lambda_{\min}(\cF)}^{\lambda_{\max}(\cF)} \vol(\cF R^{(x)}) {\rm d}x = \int_{\bR} x\ \rd \nu.
\]

Then we can generalize other invariants from test configurations to more general linearly bounded filtrations. 
\begin{defn}[Non-Archimedean invariants] \label{defn:D^NA}
Let $X_{\bA^1}=X \times \bA^1$ and $X_0=X\times \{0\}$. Let $\cF$ be a filtration on $R$ and choose $e_-$ and $e_+$ as in Definition \ref{d-filtration} such that $e_-,e_+\in\bZ$.  Let $e=e_+-e_-$ and for each $m\in\bN$. Define {\it the base ideal sequence $I_{m,p}(\cF)$} for a given filtration as following:  $ I_{m,p} $ is the base ideal of the linear system $\cF^{p}H^0(-mrK_X)$, i.e., $I_{m,p}(\cF):={\rm Im} \left(  \cF^p R_m \otimes \cO_{X}(-rmK_X) \to \cO_X\right)$. Then we set
\[
\cI_m := \cI_m(\cF) := I_{m,me_+} + I_{m,me_+-1}\cdot t + \cdots + I_{m,me_-+1}\cdot t^{me-1} + (t^{me}) \subseteq \cO_{X\times \bA^1}.
\]
It is not hard to verify that $\cI_\bullet$ is a graded sequence of ideals. Let
\begin{align*}
    c_m & = \lct(X_{\bA^1},  (\cI_m)^\frac{1}{mr};X_0)\\
     & = \sup\{ c\in\bR \,|\, (X_{\bA^1}, (cX_0)\cdot (\cI_m)^\frac{1}{mr}) \text{ is sub log canonical} \} 
\end{align*}
and we can see $c_\infty = \lim_{m\to \infty} c_m$ exists. We then define
\begin{align*}
    \LNA(\cF) & = c_\infty + \frac{e_+}{r} -1, \\ 
    \DNA(\cF) & = \LNA(\cF) - \frac{S(\cF)}{r},\\
    \JNA(\cF) & = \frac{\lambda_{\max}(\cF)-S(\cF)}{r}.
\end{align*}
\end{defn}
The following construction is first made by Kento Fujita (see \cite[Sec. 4.2]{Fuj18}). 
\begin{say}[K. Fujita's approximation]
Let $\mu_m\colon \cX_m\to X_{\mathbb{A}^1}$ be the normalized blow up of $\cI_m$ with the exceptional Cartier divisor $\mathcal E_m$. Let $\cL_m:=\mu_m^*(-mrK_X)(-\mathcal E_m)$,
which from the construction can be easily seen to be semi-ample.  Therefore $(\cX_m,\cL_m)$ induces a (semi-ample) test configuration for each $m$. Then we have the following statement.
\end{say}
\begin{lem}[{\cite[Sec. 4]{Fuj18}}] \label{l-dingapproximation}
$\lim_{m\to \infty}\Ding(\cX_m, \cL_m)= \DNA(\cF)$. 
\end{lem}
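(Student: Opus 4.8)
The plan is to unwind both sides of the claimed identity and match the limits term by term. Recall from Definition~\ref{d-ding} that
\[
\Ding(\cX_m,\cL_m) = -\frac{(\tfrac1r\bar{\cL}_m)^{n+1}}{(n+1)(-K_X)^n} - 1 + \lct(\cX_m,\cD_{\cX_m,\cL_m};(\cX_m)_0),
\]
while $\DNA(\cF) = \LNA(\cF) - \tfrac{1}{r}S(\cF)$ with $\LNA(\cF) = c_\infty + \tfrac{e_+}{r} - 1$. So the identity we must prove reduces to the two separate statements
\[
\lim_{m\to\infty}\frac{(\tfrac1r\bar{\cL}_m)^{n+1}}{(n+1)(-K_X)^n} = \frac{S(\cF)}{r} - \frac{e_+}{r}
\qquad\text{and}\qquad
\lim_{m\to\infty}\lct(\cX_m,\cD_{\cX_m,\cL_m};(\cX_m)_0) = c_\infty.
\]

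First I would handle the volume (self-intersection) term. By construction $\cL_m = \mu_m^*(-mrK_X)(-\mathcal{E}_m)$ on the normalized blowup of $\cI_m$, so $(\tfrac1r\bar{\cL}_m)^{n+1}$ is, up to rescaling by $1/(mr)^{n+1}$, the top self-intersection of $\mu_m^*(-mrK_X)(-\mathcal{E}_m)$ glued with the trivial family over $\PP^1\setminus\{0\}$. The standard computation (as in \cite{BHJ17}*{\S 5} or \cite{Fuj18}*{\S 4}) expresses this in terms of the Duistermaat–Heckman measure of the filtration $\cF_m$ associated to the $m$-th truncation: one gets $\tfrac{1}{(n+1)(-K_X)^n}(\tfrac1r\bar{\cL}_m)^{n+1} = -\tfrac1r S_{(m)}(\cF) + \tfrac{e_+}{r} + o(1)$ where $S_{(m)}(\cF)$ is the truncated expected vanishing order, which I would compute by integrating $\vol(\cF R^{(x)})$ against the appropriate weight — here the finite generation of $\bigoplus_p \cF^p R$ at each fixed truncation level makes the intersection numbers honest, and linear boundedness gives uniform control of the tails. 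Then $S_{(m)}(\cF)\to S(\cF)$ as $m\to\infty$ by \cite{BC11} (the same input used to define $S(\cF)$), which settles the first limit.

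For the log canonical threshold term, the point is that $\cD_{\cX_m,\cL_m}\sim_\bQ -\tfrac1r\bar{\cL}_m - K_{\bar{\cX}_m/\PP^1}$, and since $\cL_m$ is pulled back from $-mrK_X$ twisted by $-\mathcal{E}_m$, chasing through the adjunction on the normalized blowup identifies $\lct(\cX_m,\cD_{\cX_m,\cL_m};(\cX_m)_0)$ with exactly $\lct(X_{\bA^1},(\cI_m)^{1/mr};X_0) = c_m$ (this is essentially the content of \cite{Fuj18}*{\S 4.2}: the blowup of $\cI_m$ is designed precisely so that the Ding-type log canonical threshold computed upstairs equals the log canonical threshold of the ideal $\cI_m$ downstairs). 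Granting this, $\lct(\cX_m,\cD_{\cX_m,\cL_m};(\cX_m)_0) = c_m \to c_\infty$ by definition of $c_\infty$. Combining the two limits yields $\lim_m\Ding(\cX_m,\cL_m) = c_\infty + \tfrac{e_+}{r} - 1 - \tfrac1r S(\cF) = \DNA(\cF)$.

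The main obstacle I anticipate is the bookkeeping in the second step: one must verify carefully that the normalization of the blowup, the twist by $\mathcal{E}_m$, and the identification of $\cD_{\cX_m,\cL_m}$ via relative adjunction all conspire to turn the log canonical threshold along the central fiber of $\cX_m$ into precisely $\lct(X_{\bA^1},(\cI_m)^{1/mr};X_0)$, with no stray discrepancy contributions from the exceptional locus of $\mu_m$ — this requires knowing that $\mu_m^*(-mrK_X)(-\mathcal{E}_m)$ being semi-ample lets one take $\cD_{\cX_m,\cL_m}$ effective and supported appropriately, and that the ideal $\cI_m$ as defined (with the $t$-adic staircase of base ideals $I_{m,p}$) is exactly the ideal whose normalized blowup is $\cX_m$. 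The convergence statements in the first step, by contrast, are routine given \cite{BC11} and the linear boundedness hypothesis. I would therefore spend most of the write-up on making the $c_m = \lct(\cX_m,\cD_{\cX_m,\cL_m};(\cX_m)_0)$ identification airtight, citing \cite{Fuj18}*{\S 4} for the original argument.
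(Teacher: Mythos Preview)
Your proposal is correct and follows essentially the same approach as the paper: both split the Ding invariant into the self-intersection term and the lct term, identify the latter with $c_m$ via the crepant relation $\mu_m^*K_{X_{\bA^1}}+\tfrac{1}{mr}\mathcal{E}_m\sim_{\bQ}-\tfrac{1}{mr}\cL_m$, and invoke the convergence $S(\cF_m)\to S(\cF)$ (the paper cites \cite{Fuj18}*{Lem.~4.7} for this rather than \cite{BC11} directly). The only cosmetic difference is that the paper packages the computation as an exact identity $\Ding(\cX_m,\cL_m)=\DNA(\cF_m)$ at each finite level before passing to the limit, whereas you take limits term by term; the content is the same.
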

\begin{proof}Since $\mu_m^*(K_{X_{\bA^1}})+\frac{1}{mr}{\mathcal E}_m\sim_{\mathbb Q}-\frac{1}{mr}\cL_m$. We have 
\begin{eqnarray*}
\lct(\cX_m, \cD_{\cX_m,\cL_m};(\cX_m)_0)&=&  \lct(X_{\bA^1},(\cI_m)^\frac{1}{mr};X_0).
\end{eqnarray*}
Let $\cF_m$ be the filtration induced by the test configuration $(\cX_m, \cL_m)$ (see Example \ref{expl:filtration from tc}). 
Then the leading coefficient of the weight polynomial for the test configuration $(\cX_m,\cL_m)$ is 
$$\frac{ (\frac{1}{r}\bar{\cL}_m)^{n+1}  }{(n+1)(-K_X)^n}=\frac{S(\cF_m)-e_+}{r}.$$ 
Thus by definition \eqref{e-ding},
\begin{eqnarray*}
 \Ding(\cX_m,{\cL_m})&=& -\frac{ (\frac{1}{r}\bar{\cL}_m)^{n+1}  }{(n+1)(-K_X)^n}-1+\lct(\cX_m, \cD_{\cX_m,\cL_m};(\cX_m)_0)\\
  &=& \DNA(\cF_m). 
\end{eqnarray*}
By \cite[Lem. 4.7]{Fuj18}, we know $\lim S(\cF_m)\to S(\cF)$ and  $c_m\to c_{\infty}$, thus we conclude 
$$\lim_{m\to \infty}\Ding(\cX_m, \cL_m)\to \DNA(\cF).$$ 
\end{proof}
\begin{rem}\label{r-futakifiltration}
The conceptual reason that Lemma \ref{l-dingapproximation} holds is that in the definition of $\Ding$-invariants, only the leading term of the asymptotic formula is needed. As a comparison, for generalized Futaki invariants one also has to consider the second term. In \cite{Nys12}, the viewpoint of filtrations was taken to study test configurations, and it was extended to possibly non-finitely generated filtration in \cite{Sze15}. However, as pointed out in \cite{Nys12}, it is difficult to define the generalized Futaki invariants this way, and the definition in \cite[Def. 4]{Sze15} does not behave well e.g., it could change after taking a truncation. It also relates to \cite[Conj. 2.5]{BoJ18} from the non-Archimedean geometry.
\end{rem}
\begin{cor}\label{c-dingfiltration}
If $X$ is Ding-semistable, then $\DNA(\cF)\ge 0$ for any linearly bounded filtration $\cF$. 
\end{cor}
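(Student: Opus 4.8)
The proof is essentially a limiting argument built on K.\ Fujita's approximation: the plan is to test Ding-semistability against the sequence of test configurations $(\cX_m,\cL_m)$ produced from $\cF$ in Lemma \ref{l-dingapproximation}, and then pass to the limit using that same lemma.

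First I would record that for each $m$ the pair $(\cX_m,\cL_m)$ — the normalized blow-up $\mu_m\colon\cX_m\to X_{\bA^1}$ of the $\bG_m$-homogeneous ideal $\cI_m(\cF)\subseteq\cO_{X\times\bA^1}$, polarized by $\cL_m=\mu_m^*(-mrK_X)(-\mathcal E_m)$ — is a normal, flat, $\bG_m$-equivariant family over $\bA^1$ which agrees with $X\times(\bA^1\setminus\{0\})$ over the punctured line, since $\cI_m$ is co-supported over $0$. The one point that needs care is that $\cL_m$ is only semi-ample, so $(\cX_m,\cL_m)$ is not literally a test configuration in the sense required by Definition \ref{d-ding}. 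I would handle this by passing to the ample model $(\cX_m^{\mathrm{am}},\cL_m^{\mathrm{am}})$: this is a genuine normal test configuration of $(X,-rK_X)$, and formula \eqref{e-ding} shows $\Ding(\cX_m,\cL_m)=\Ding(\cX_m^{\mathrm{am}},\cL_m^{\mathrm{am}})$, because both the self-intersection $(\tfrac1r\bar\cL_m)^{n+1}$ and the threshold $\lct(\cX_m,\cD_{\cX_m,\cL_m};(\cX_m)_0)$ can be computed on a common resolution and are unchanged by the contraction to the ample model. (Alternatively, perturb $\cL_m$ to $\cL_m+\epsilon A$ with $A$ relatively ample, apply Ding-semistability, and let $\epsilon\to0$, using that $\Ding$ depends continuously on the polarization.)

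Second, Ding-semistability of $X$ then gives $\Ding(\cX_m,\cL_m)\ge0$ for every $m$. Finally, Lemma \ref{l-dingapproximation} yields
\[
\DNA(\cF)=\lim_{m\to\infty}\Ding(\cX_m,\cL_m)\ge0,
\]
which is exactly the assertion.

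The substantive content has already been absorbed into Lemma \ref{l-dingapproximation} — i.e.\ the convergences $c_m\to c_\infty$ and $S(\cF_m)\to S(\cF)$ — so the only genuine obstacle here is the bookkeeping needed to realize each $(\cX_m,\cL_m)$ as an honest, ample-polarized normal test configuration without altering its Ding invariant; once that is in place the corollary is immediate.
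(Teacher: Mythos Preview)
Your proof is correct and is exactly the argument the paper has in mind: the corollary is stated immediately after Lemma \ref{l-dingapproximation} without further proof, and the intended deduction is precisely to apply Ding-semistability to each $(\cX_m,\cL_m)$ and pass to the limit. Your extra care about the semi-ample versus ample polarization is a legitimate technical point (the paper simply calls $(\cX_m,\cL_m)$ a ``(semi-ample) test configuration'' and moves on), and your two fixes --- passing to the ample model or perturbing by $\epsilon A$ --- are both standard and adequate.
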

In  \cite{Fuj19b, Li17}),  Theorem \ref{t-valkstable} was deduced from Corollary \ref{c-dingfiltration}.  In our note,  by following \cite{XZ19}, we will conceptualize the argument using the definition of $\beta$-invariant for a filtration.

\begin{defn}[{\cite[Def. 4.1]{XZ19}}]\label{d-filbeta}
Given a filtration $\cF$ of $R$ and some $\delta\in\bR_+$, we define the \emph{$\delta$-log canonical slope} (or simply \emph{log canonical slope} when $\delta=1$) $\mu_{X,\delta}(\cF)$ as
\begin{equation}
\mu_{X,\delta}(\cF) = \sup \left\{t\in\bR\,|\,\lct(X;I^{(t)}_\bullet)\ge \frac{\delta}{r}\right\}
\end{equation}
where $I^{(t)}_\bullet$ is the graded sequence of ideals given by $I^{(t)}_m := I_{m, tm}(\cF)$. Then we define $$\beta_{X,\delta}(\cF):= \frac{\mu_{X,\delta}(\cF)-S(\cF)}{r}.$$
And when $\delta=1$, we will write $\beta_X(\cF)$.  
\end{defn}

In \cite{XZ19}, the above definition was inspired by the study of a specific filtration, namely the Harder-Narashiman filtration (see Step 1 of the proof of Theorem \ref{t-positivity}). 

\begin{thm}[{\cite[Theorem 4.3]{XZ19}}]\label{t-betading}
We have the inequality $\beta_X(\cF) \ge \DNA(\cF)$ for any bounded multiplicative filtration $\cF$.
\end{thm}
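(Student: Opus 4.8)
The plan is to prove $\beta_X(\cF) \ge \DNA(\cF)$ by comparing the two quantities term by term: both have the form ``(something) $-\,S(\cF)/r$'', so after cancelling $S(\cF)/r$ it suffices to show
\[
\frac{\mu_{X,1}(\cF)}{r} \ge \LNA(\cF) = c_\infty + \frac{e_+}{r} - 1,
\]
i.e. $\mu_{X,1}(\cF) \ge r c_\infty + e_+ - r$. So the real content is an inequality comparing the log canonical slope $\mu_{X,1}(\cF)$, which is defined purely on $X$ using the graded sequence $I^{(t)}_\bullet = (I_{m,tm}(\cF))$, with the log canonical threshold $c_\infty = \lim_m \lct(X_{\bA^1}, (\cI_m)^{1/mr}; X_0)$ computed on $X_{\bA^1}$ using the ideals $\cI_m$ that package together all the $I_{m,p}(\cF)$ with a $t$-adic weighting.

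First I would unwind the definition of $\cI_m$. Writing $\cI_m = \sum_{j=0}^{me} I_{m, me_+ - j}\, t^j + (t^{me})$, a standard computation of log canonical thresholds of monomial-in-$t$ ideals on $X \times \bA^1$ (using that $X_0 = (t=0)$ and that the $I_{m,p}$ are a ``convex'' decreasing sequence in $p$) shows that $\lct(X_{\bA^1}, \cI_m^{1/mr}; X_0)$ is governed by the extremal behaviour, and translates into a statement about $\lct(X; I_{m,p}^{1/mr})$ as $p/m$ varies. Concretely, I expect the relation
\[
c_m + \frac{e_+}{r} - 1 = \inf_{p} \left( \lct(X; I_{m,p}(\cF)^{1/mr}) + \frac{p}{mr} - 1 \right)
\]
or something close to it, so that passing to the limit $c_\infty + e_+/r - 1 = \inf_t\big(\lct(X; I^{(t)}_\bullet) + t/r - 1\big)$ after using that $\lct(X;I^{(t)}_\bullet) = \lim_m \lct(X; I_{m,tm}^{1/mr})$ via the standard limit results cited (\cite{BC11}, or the argument in \cite{Fuj18}*{Lem.~4.7}). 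Meanwhile $\mu_{X,1}(\cF)/r = \sup\{t/r : \lct(X;I^{(t)}_\bullet) \ge 1/r\}$. Setting $f(t) := \lct(X;I^{(t)}_\bullet)$, which is convex and non-increasing in $t$, one checks that $\inf_t(f(t) + t/r - 1) \le 0$ precisely because at $t_0 := \mu_{X,1}(\cF)$ one has $f(t_0) \le 1/r$ (by semicontinuity/definition of the sup), hence $f(t_0) + t_0/r - 1 \le t_0/r + 1/r - 1$; a slightly more careful version, using convexity to compare slopes, yields $\inf_t(f(t)+t/r-1) \le t_0/r - 1 + \text{(correction)}$, giving exactly $\LNA(\cF) \le \mu_{X,1}(\cF)/r$. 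I would also need to carry the $\lambda_{\max}$, $e_+$ bookkeeping carefully since $f$ is only defined/finite on a bounded range of $t$.

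The main obstacle I anticipate is the first step: establishing the precise dictionary between $c_m = \lct(X_{\bA^1}, \cI_m^{1/mr}; X_0)$ and the one-variable family $p \mapsto \lct(X; I_{m,p}(\cF)^{1/mr})$, i.e. showing that the log canonical threshold along $X_0$ of the $t$-weighted sum ideal $\cI_m$ is computed by the ``lower convex envelope'' of the thresholds of the pieces. This requires the monomial-ideal-type lemma for ideals on $X \times \bA^1$ of the form $\sum I_j t^j$, together with the convexity $I_{m,p}\cdot I_{m,q} \subseteq I_{m,p+q}$ (multiplicativity of $\cF$) which makes the relevant function of $p$ convex and hence its envelope behave well. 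Once this is in place, combining it with the limit statements $c_m \to c_\infty$ and $\lct(X;I_{m,tm}^{1/mr}) \to f(t)$ (both available from the cited approximation results), and with the elementary convex-analysis comparison of $\inf_t(f(t)+t/r-1)$ against $\mu_{X,1}(\cF)/r - 1$, finishes the proof. The remaining details — reducing to $e_\pm \in \bZ$, checking that truncating $\cF$ at level $m$ does not affect the asymptotics, and verifying $\cI_\bullet$ is a graded sequence — are routine and already flagged as such in Definition \ref{defn:D^NA}.
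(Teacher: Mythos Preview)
Your reduction to showing $c_\infty \le 1 + (\mu - e_+)/r$ is correct, but the proposed dictionary
\[
c_m + \frac{e_+}{r} - 1 \;=\; \inf_p\Big(\lct(X;I_{m,p}^{1/mr}) + \frac{p}{mr} - 1\Big)
\]
is not valid, and this is where the argument breaks. Unwinding $c_m$ via $\bG_m$-invariant valuations $w=(u,1)$ on $X_{\bA^1}$ gives
\[
c_m + \frac{e_+}{r} - 1 \;=\; \inf_{u}\Big[A_X(u) + \frac{1}{mr}\max_{p}\big(p - u(I_{m,p})\big)\Big],
\]
an $\inf_u\max_p$, not an $\inf_p$ of log canonical thresholds. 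The two cannot be swapped: for a fixed $p$ the quantity $\inf_u[A_X(u)-u(I_{m,p})/(mr)]$ is either $0$ or $-\infty$ depending on whether $\lct(X;I_{m,p})\ge 1/(mr)$, so the thresholds $\lct(X;I_{m,p})$ simply do not carry enough information to recover $c_m$. Relatedly, your convexity claim for $f(t)=\lct(X;I^{(t)}_\bullet)$ is incorrect; multiplicativity of $\cF$ gives convexity of $t\mapsto v(I^{(t)}_\bullet)$ for each fixed valuation $v$, hence convexity of $1/f$ (a sup of convex functions), but not of $f$ itself.

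The paper's proof avoids both issues by never attempting such a dictionary. Instead it fixes a single valuation: by \cite{JM12} there is $v$ with $A_X(v)\le \tfrac{1}{r}v(\fa_\bullet)$ for the graded sequence $\fa_m=I_{m,\mu m+\epsilon}$. The function $\lambda\mapsto v(I^{(\lambda)}_\bullet)$ is genuinely convex (this is where multiplicativity enters), so after rescaling $v$ one gets the linear bound $v(I^{(\lambda)}_\bullet)\ge \lambda + rA_X(v) - \mu$. Plugging the quasi-monomial valuation $\tilde v$ built from $v$ and $\ord_{X_0}$ with weights $(1,1)$ into the definition of $c_m$ then yields $c_m\le 1+(\mu-e_+)/r$ directly. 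The missing idea in your plan is precisely this: one must commit to a specific $v$ chosen to witness the failure of $\lct\ge 1/r$ just past $\mu$, and exploit convexity at the level of that valuation rather than at the level of thresholds.
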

\begin{proof}
Denote by $\mu:=\mu_{X,1}(\cF)$, $\lambda_{\max}:=\lambda_{\max}(\cF)$ and we have $\mu\le \lambda_{\max}$. If $\mu=\lambda_{\max}$, then it is clear that $\beta(\cF)=\JNA(\cF)\ge \DNA(\cF)$. Hence we may assume that $\lambda_{\max}>\mu$ in what follows. In particular, $I_{m,\lambda}\neq 0$ for some $\lambda>\mu m$.

For each $m$, the ideal $I_{m,\mu m+\epsilon}$ does not depend on the choice of $\epsilon>0$ as long as $\epsilon$ is sufficiently small and we set $\fa_m=I_{m,\mu m+\epsilon}$ where $0<\epsilon\ll 1$. It is easy to see that $\fa_\bullet$ is a graded sequence of ideals on $X$. By the definition of $\mu$, we have 
\[
m\cdot \lct(X;\fa_m) = m\cdot \lct(X;I_{m,\mu m+\epsilon})\le \lct(X;I_\bullet^{(\mu+\epsilon /m)})\le \frac{1}{r}
\]
for all $m$. It follows that $\lct(X;\fa_\bullet)\le \frac{1}{r}$ and hence by \cite{JM12}*{Theorem A} there is a valuation $v$ over $X$ such that 
\begin{equation} \label{eq:a<=v}
    a:=A_{X}(v)\le \frac{1}{r}v(\fa_\bullet)<\infty.
\end{equation}
For each $\lambda\in \bR$, we set $f(\lambda)=v(I^{(\lambda)}_\bullet)$. Since $\lambda_{\max}>\mu$, there exists some $\epsilon>0$ such that $f(\lambda)<\infty$ for all $\lambda<\mu+\epsilon$. Since the filtration $\cF$ is multiplicative, we know that $f$ is a non-decreasing convex function. It follows that $f$ is continuous on $(-\infty,\mu+\epsilon)$ and from the construction we see that 
$$f(\mu)\le v(\fa_\bullet)\le \lim_{\lambda\to \mu+} f(\lambda)=f(\mu),$$ hence $f(\mu)=v(\fa_\bullet)\ge ar$ by \eqref{eq:a<=v}. We then have 
\begin{eqnarray}\label{e-convex}
f(\lambda)\ge f(\mu)+\xi (\lambda-\mu)\ge ar+\xi(\lambda-\mu) \mbox{\ \ where \ } \xi:=\lim_{h\to 0+}\frac{f(\mu)-f(\mu-h)}{h}
\end{eqnarray} 
for all $\lambda$ by the convexity of $f$. We claim that $\xi>0$. Indeed, it is clear that $\xi\ge 0$ since $f$ is non-decreasing. If $\xi=0$, then $f$ must be constant on $(-\infty,\mu]$; but this is a contradiction since $f(\mu)\ge ar>0$ while we always have $f(e_-)=0$. Hence $\xi>0$ as desired. Replacing $v$ by $\xi^{-1} v$, we may assume that $\xi=1$ and \eqref{e-convex} becomes
\begin{equation} \label{eq:f convex}
    f(\lambda)\ge \lambda +ar-\mu.
\end{equation}

Now let $\tv$ be the valuation on $X\times \bA^1$ given by the quasi-monomial combination of $v$ and $X_0$ with weight $(1,1)$. Using the same notation as in Definition \ref{defn:D^NA}, we have
\begin{align*}
    \tv(I_{m,me_-+i}\cdot t^{me-i}) & \ge m f\left( \frac{me_-+i}{m} \right) +(me-i) \\
     & \ge m\left(\frac{me_-+i}{m}+ar-\mu\right)+(me-i) \\
     & = m(e_++ar-\mu) \quad (\forall i\in\bN)\ ,
\end{align*}
where the first inequality follows from the definition of $f(\lambda)$ and the second inequality follows from \eqref{eq:f convex}. It follows that $\tv(\tI_m)\ge m(e_++ar-\mu)$ and hence by definition of $c_m$ we obtain
\[
c_m \le A_{(X,\Delta)\times \bA^1}(\tv)-\frac{\tv(\tI_m)}{mr}\le a+1-\frac{e_+ +ar-\mu}{r}=\frac{\mu-e_+}{r}+1
\]
for all $m\in\bN$. Thus $c_\infty\le \frac{\mu-e_+}{r}+1$ and we have
\[
\DNA(\cF) = c_\infty + \frac{e_+ - S(\cF)}{r} -1 \le \frac{\mu-S(\cF)}{r} = \beta(\cF)
\]
as desired.

\end{proof}

\begin{prop}\label{l-betading}
Let $v$ be a valuation over $X$ with $A_{X}(v)<\infty$, and $\cF_v$ the induced filtration on $R$, then 
$$\beta_X(v)\ge \beta_X(\cF_v) \ge \DNA(\cF_v).$$ 

In particular, if $X$ is Ding-semistable, then $\beta_X(v)\ge 0$ for any $A_{X}(v)<\infty$.
\end{prop}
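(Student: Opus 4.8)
The plan is to prove the two stated inequalities separately, the second being immediate. Indeed, $\beta_X(\cF_v)\ge\DNA(\cF_v)$ is exactly Theorem~\ref{t-betading} applied to the filtration $\cF=\cF_v$, which is linearly bounded precisely because $A_X(v)<\infty$; so no further work is needed there, and the whole content lies in the first inequality $\beta_X(v)\ge\beta_X(\cF_v)$.

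First I would record the normalization $S(\cF_v)=r\cdot S_X(v)$: unwinding \eqref{e-VtoF} gives $\cF_v^{mx}R_m=H^0\!\big(\cO_X(-mrK_X)\otimes\fa_{mx}(v)\big)$, hence $\vol(\cF_v R^{(x)})=r^n\,\vol(-K_X-\tfrac{x}{r}v)$, and the substitution $t=x/r$ in the formula $S(\cF_v)=\tfrac{1}{(-rK_X)^n}\int_{\lambda_{\min}}^{\lambda_{\max}}\vol(\cF_v R^{(x)})\,dx$ yields the claim. Since $\beta_X(v)=A_X(v)-S_X(v)$ and $\beta_X(\cF_v)=\tfrac1r\big(\mu_{X,1}(\cF_v)-S(\cF_v)\big)=\tfrac1r\mu_{X,1}(\cF_v)-S_X(v)$, the first inequality is then equivalent to the single estimate
\[
\mu_{X,1}(\cF_v)\ \le\ r\cdot A_X(v).
\]

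To prove this, I would compare the base ideals $I^{(t)}_m:=I_{m,tm}(\cF_v)$ of $\cF_v$ with the valuation ideals $\fa_\bullet(v)$. Working over a small open $U\ni c_X(v)$ on which $\cO_X(-mrK_X)$ is trivialized, each $s\in\cF_v^{tm}R_m$ restricts to a function whose divisor is $\{s=0\}\cap U$, so this function has $v$-value $=v(s)\ge tm$; as $I^{(t)}_m|_U$ is generated by such functions, we get $v(I^{(t)}_m)\ge tm$, hence $v(I^{(t)}_\bullet)\ge t$ for the graded sequence $I^{(t)}_\bullet$ (which is a graded sequence by multiplicativity of $\cF_v$). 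Then \cite{JM12}*{Theorem~A} gives, for every $t<\lambda_{\max}(\cF_v)$,
\[
\lct\!\big(X;I^{(t)}_\bullet\big)\ \le\ \frac{A_X(v)}{v(I^{(t)}_\bullet)}\ \le\ \frac{A_X(v)}{t}.
\]
Consequently, for $t\in\big(rA_X(v),\lambda_{\max}(\cF_v)\big)$ one has $\lct(X;I^{(t)}_\bullet)<\tfrac1r$, so no such $t$ lies in the set defining $\mu_{X,1}(\cF_v)$; combined with $\mu_{X,1}(\cF_v)\le\lambda_{\max}(\cF_v)<\infty$ this gives $\mu_{X,1}(\cF_v)\le rA_X(v)$, as wanted. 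The main obstacle here is more a point requiring care than a genuine difficulty: getting this base-ideal/valuation-ideal comparison and the factor-$r$ bookkeeping exactly right; everything else is formal. Finally, the ``in particular'' follows by chaining the inequalities: if $X$ is Ding-semistable then $\DNA(\cF_v)\ge0$ by Corollary~\ref{c-dingfiltration} (again using that $\cF_v$ is linearly bounded since $A_X(v)<\infty$), so $\beta_X(v)\ge\beta_X(\cF_v)\ge\DNA(\cF_v)\ge0$.
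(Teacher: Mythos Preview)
Your proof is correct and follows essentially the same approach as the paper: both reduce the first inequality to $\mu_{X,1}(\cF_v)\le rA_X(v)$ via the observation $v(I^{(t)}_\bullet)\ge t$, and both invoke Theorem~\ref{t-betading} and Corollary~\ref{c-dingfiltration} for the remaining parts. One small remark: the inequality $\lct(X;I^{(t)}_\bullet)\le A_X(v)/v(I^{(t)}_\bullet)$ is just the trivial direction (taking $w=v$ in the definition of $\lct$ as an infimum over valuations), so citing \cite{JM12}*{Theorem~A} there is heavier than necessary.
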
 
\begin{proof}To see the first inequality, since $v(I_\bullet^{(t)})\ge t$, we know when $t>rA_X(v)$, then $\lct(X;I_\bullet^{(t)})<\frac{1}{r}$, which implies that $\mu_{1}\le rA_X(v)$ and $\beta(v)\ge \beta(\cF_v)$. The second inequality follows from Theorem \ref{t-betading}.

The last statement then follows from Corollary \ref{c-dingfiltration} which implies $\DNA(\cF_v)\ge 0$ if $X$ is Ding-semistable. 
\end{proof}

Combining Theorem \ref{t-d=k} and Proposition \ref{l-betading}, we have the following theorem.

 \begin{thm}[{\cite{Li17,Fuj19b, XZ19}}]\label{t-maintheorem2} 
 We have the following equivalence
\[
(X\mbox{ is K-semistable})\Longleftrightarrow(X \mbox{is Ding-semistable})\Longleftrightarrow (\beta_X(E)\ge 0, \forall E /X),
\]
and the latter is also equivalent to $\beta_X(\cF)\ge 0$ for any linearly bounded multiplicative filtration on $R:=\bigoplus_m H^0(-rmK_X)$.
 \end{thm}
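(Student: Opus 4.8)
The plan is to assemble Theorem~\ref{t-maintheorem2} from the pieces already in place, treating the three-way equivalence and the filtration refinement separately. First I would recall that Theorem~\ref{t-d=k} gives $X$ K-semistable $\iff$ $X$ Ding-semistable, so only one of the two displayed bi-implications needs genuine work. For the implication $(\beta_X(E)\ge 0,\ \forall E) \Longrightarrow (X\text{ is K-semistable})$, this is precisely Corollary~\ref{c-kimpliesbeta} (the ``easy'' direction, proved via Lemma~\ref{l-beta=w} and Theorem~\ref{t-specialdegeneration}), so it can simply be cited. The converse, $(X\text{ is Ding-semistable}) \Longrightarrow (\beta_X(E)\ge 0,\ \forall E)$, is where the Ding-filtration machinery of this section does the job: by Corollary~\ref{c-dingfiltration}, Ding-semistability forces $\DNA(\cF)\ge 0$ for every linearly bounded multiplicative filtration $\cF$ on $R$; applying this to $\cF=\cF_v$ for $v$ any divisorial valuation $E$ (legitimate since $A_X(E)<\infty$), Proposition~\ref{l-betading} yields $\beta_X(E)\ge \beta_X(\cF_E)\ge \DNA(\cF_E)\ge 0$. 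Chaining these gives the first display.

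For the ``and the latter is also equivalent to $\beta_X(\cF)\ge 0$ for any linearly bounded multiplicative filtration $\cF$'' clause, I would argue in both directions. The direction $(\beta_X(\cF)\ge 0\ \forall\cF) \Longrightarrow (\beta_X(E)\ge 0\ \forall E)$ is immediate because divisorial filtrations $\cF_E$ are a special case of linearly bounded multiplicative filtrations, using the first inequality of Proposition~\ref{l-betading} or simply the definition of $\beta_X(\cF_E)$. The reverse direction $(X\text{ is Ding-semistable, equivalently K-semistable}) \Longrightarrow (\beta_X(\cF)\ge 0\ \forall\cF)$ again goes through the Ding inequality: Corollary~\ref{c-dingfiltration} gives $\DNA(\cF)\ge 0$ for all linearly bounded multiplicative $\cF$, and then Theorem~\ref{t-betading} gives $\beta_X(\cF)\ge \DNA(\cF)\ge 0$. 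Thus all four conditions are equivalent, and I would present the logical cycle compactly as: K-semistable $\overset{\text{Thm \ref{t-d=k}}}{\iff}$ Ding-semistable $\overset{\text{Cor \ref{c-dingfiltration}, Thm \ref{t-betading}}}{\Longrightarrow}$ $\beta_X(\cF)\ge 0$ $\forall\cF$ $\Longrightarrow$ $\beta_X(E)\ge 0$ $\forall E$ $\overset{\text{Cor \ref{c-kimpliesbeta}}}{\Longrightarrow}$ K-semistable.

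Concretely, the proof I would write is short:
\begin{proof}[Proof sketch]
By Theorem~\ref{t-d=k}, $X$ is K-semistable if and only if it is Ding-semistable. Assume $X$ is Ding-semistable. Then by Corollary~\ref{c-dingfiltration}, $\DNA(\cF)\ge 0$ for every linearly bounded multiplicative filtration $\cF$ on $R$, and hence by Theorem~\ref{t-betading}, $\beta_X(\cF)\ge \DNA(\cF)\ge 0$ for all such $\cF$. In particular, taking $\cF=\cF_v$ for an arbitrary divisorial valuation (indeed any valuation $v$ with $A_X(v)<\infty$), Proposition~\ref{l-betading} gives $\beta_X(v)\ge \beta_X(\cF_v)\ge 0$. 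Conversely, if $\beta_X(E)\ge 0$ for all divisors $E$ over $X$, then $X$ is K-semistable by Corollary~\ref{c-kimpliesbeta}. Finally, $\beta_X(\cF)\ge 0$ for all linearly bounded multiplicative filtrations trivially implies $\beta_X(E)\ge 0$ for all $E$, since $\cF_E$ is such a filtration. This closes the cycle of equivalences.
\end{proof}

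I do not expect any genuine obstacle here: the theorem is a bookkeeping corollary of Theorem~\ref{t-d=k}, Corollary~\ref{c-dingfiltration}, Theorem~\ref{t-betading}, Proposition~\ref{l-betading}, and Corollary~\ref{c-kimpliesbeta}, all proved earlier in the section. The only point requiring mild care is making sure the quantifier ``for all divisors $E$'' in the K-semistability criterion matches the quantifier ``for all valuations $v$ with $A_X(v)<\infty$'' that the filtration argument naturally produces — but since divisorial valuations are dense among such $v$ in the relevant sense and Proposition~\ref{l-betading} is stated for all finite-log-discrepancy valuations, both the divisorial and the general-valuation forms of $\beta_X\ge 0$ come out simultaneously, so there is nothing to reconcile. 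The real mathematical content lives in the cited results (especially the MMP argument behind Theorem~\ref{t-d=k} and the convexity argument behind Theorem~\ref{t-betading}); this final theorem is just their synthesis.
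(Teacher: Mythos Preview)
Your proposal is correct and matches the paper's approach: the paper simply states that the theorem follows by ``combining Theorem~\ref{t-d=k} and Proposition~\ref{l-betading}'', and your logical cycle through Corollary~\ref{c-dingfiltration}, Theorem~\ref{t-betading}, Proposition~\ref{l-betading}, and Corollary~\ref{c-kimpliesbeta} is exactly the unpacking of that terse sentence. One small wording caution: in your discussion you say the implication $\beta_X(\cF)\ge 0\ \forall\cF \Longrightarrow \beta_X(E)\ge 0\ \forall E$ follows ``simply from the definition of $\beta_X(\cF_E)$'', but in fact $\beta_X(E)=A_X(E)-S_X(E)$ and $\beta_X(\cF_E)=(\mu_1(\cF_E)-S(\cF_E))/r$ are not defined to be equal, so you genuinely need the inequality $\beta_X(E)\ge \beta_X(\cF_E)$ from Proposition~\ref{l-betading}; your formal proof sketch already handles this correctly.
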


\bigskip

To summarize, we have the following table indicating various invariants, and where they can be defined.  

\vspace{0.1cm}

{\small

\begin{center}
\begin{tabular}{ | c | c | c | c | c |   }
 \hline
           & test configurations  $\cX$  & filtrations $\cF$ & valuations $v$ \\
 \hline          
\Fut     & Def. \ref{d-futaki} (or \cite{Tia97}, \cite{Don02}) & unknown & unknown \\
 \hline
\Ding   & Def. \ref{d-ding} (or \cite{Ber16}) & Def. \ref{defn:D^NA} (or \cite{Fuj18}) & ok \\
 \hline
$\beta_X$   & ok & Def. \ref{d-filbeta} (or \cite{XZ19}) &Def. \ref{d-betadivisor} (or \cite{Fuj19b}, \cite{Li17}) \\
 \hline
\end{tabular}
\end{center}

}

\medskip

Under the correspondence of Theorem \ref{t-weakspecial}, in which for a $\bZ$-valued valuation $v$ that is an lc place of a $\bQ$-complement, which precisely corresponds to a weakly special test configuration $\cX$ with an irreducible central fiber, there is an equality
$$\Ding(\cX)=2\cdot \Fut(\cX)=\beta_X(v) .$$



\subsection{Revisit uniform stability}
  We already have seen the equivalent definitions of K-semistability and K-stability (see Theorem \ref{t-valkstable} and Theorem \ref{t-d=k}).  We have seen the following.

\begin{thm}\label{t-uniform}
Notation as above. For a $\bQ$-Fano variety $X$, the followings are equivalent:
\begin{enumerate}
\item (uniform K-stablity) there exists some $\eta>0$, such that for any test configuration $(\cX,\cL)$, 
$$\Fut(\cX;\cL)\ge \eta\cdot  \JNA(\cX;\cL);$$
\item (uniform Ding-stablity) there exists some $\eta>0$, such that for any linearly bounded filtration $\cF$, 
$$\DNA(\cF)\ge \eta\cdot  \JNA(\cF);$$
\item $\delta(X)>1$;
\item $\beta_X(v)> 0$ for any quasi-monomial valuation $v$ over $X$;
\item there exists $\delta> 1$ such that $\beta_{X,\delta}(\cF)\ge 0$ for any linearly bounded filtration $\cF$.
\end{enumerate}
\end{thm}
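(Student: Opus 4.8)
The plan is to establish $(3)\Leftrightarrow(4)$ together with the cycles $(3)\Rightarrow(2)\Rightarrow(1)\Rightarrow(3)$ and $(3)\Rightarrow(5)\Rightarrow(3)$. Two elementary facts are used throughout: the identity $\beta_X(v)=S_X(v)\bigl(\delta_X(v)-1\bigr)$, so that $\beta_X(v)>0\iff\delta_X(v)>1$; and the Fujita-type estimate $S_X(v)\le\tfrac{n}{n+1}T_X(v)$, so that $J_X(v):=T_X(v)-S_X(v)\ge\tfrac{1}{n}S_X(v)$ and, since $\lambda_{\max}(\cF_v)=rT_X(v)$ and $S(\cF_v)=rS_X(v)$, also $\JNA(\cF_v)=J_X(v)$. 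These are the inputs that convert the a priori non-uniform condition $\delta(X)>1$ into honest uniform gaps.

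\textbf{The short implications.} For $(3)\Leftrightarrow(4)$: if $\delta(X)>1$ then $\delta_X(v)\ge\delta(X)>1$ by Theorem \ref{t-deltaquasi}(1), hence $\beta_X(v)>0$; conversely if $\delta(X)\le1$, Theorem \ref{t-deltaquasi}(2) gives a quasi-monomial $v$ with $\delta_X(v)=\delta(X)\le1$, so $\beta_X(v)\le0$. For $(2)\Rightarrow(3)$: feeding $\cF=\cF_v$ into $\DNA(\cF)\ge\eta\JNA(\cF)$ and using Proposition \ref{l-betading} ($\beta_X(v)\ge\DNA(\cF_v)$) with $\JNA(\cF_v)=J_X(v)\ge\tfrac{1}{n}S_X(v)$ gives $\delta_X(v)\ge1+\tfrac{\eta}{n}$ for all $v$, hence $\delta(X)\ge1+\tfrac{\eta}{n}$ by Theorem \ref{t-deltaquasi}(1). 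For $(2)\Rightarrow(1)$: a normal test configuration $(\cX,\cL)$ induces a linearly bounded filtration $\cF$ with $\DNA(\cF)=\Ding(\cX,\cL)$ and $\JNA(\cF)=\JNA(\cX,\cL)$ (a standard compatibility, cf.\ \cite{BHJ17}), so $2\Fut(\cX,\cL)\ge\Ding(\cX,\cL)=\DNA(\cF)\ge\eta\JNA(\cX,\cL)$ by Lemma \ref{l-DvsK}. For $(1)\Rightarrow(3)$: if $\cX$ is a weakly special test configuration with induced divisorial valuation $v$, then $2\Fut(\cX)=\beta_X(v)=S_X(v)(\delta_X(v)-1)$ and $\JNA(\cX)=J_X(v)\ge\tfrac{1}{n}S_X(v)$, so $(1)$ forces $\delta_X(v)\ge1+\tfrac{2\eta}{n}$; by Theorem \ref{t-weakspecial} these $v$ are precisely the lc places of $\bQ$-complements, and Proposition \ref{p-deltaapproximation} computes $\delta(X)$ as an infimum over exactly those whenever $\delta(X)\le1$, so $\delta(X)\le1$ would give the contradiction $\delta(X)\ge1+\tfrac{2\eta}{n}>1$. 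For $(5)\Rightarrow(3)$: since $v(I^{(t)}_\bullet(\cF_v))\ge t$ one has $\lct(X;I^{(t)}_\bullet(\cF_v))\le A_X(v)/t$, whence $\mu_{X,\delta}(\cF_v)\le\tfrac{r}{\delta}A_X(v)$; combined with $S(\cF_v)=rS_X(v)$, the hypothesis $\beta_{X,\delta}(\cF_v)\ge0$ forces $\delta\le\delta_X(v)$ for all $v$, i.e.\ $\delta(X)\ge\delta>1$.

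\textbf{The substantial step, and the main obstacle.} What remains --- and what I expect to be the real work --- is $(3)\Rightarrow(2)$ and $(3)\Rightarrow(5)$: from $\delta(X)>1$ one must produce a uniform \emph{lower} bound on $\DNA(\cF)$ (and the positivity of $\beta_{X,\delta}(\cF)$) for \emph{every} linearly bounded multiplicative filtration, not merely those coming from a valuation or a test configuration. The strategy is to run the $\delta>1$ refinement of the proof of Theorem \ref{t-betading}: fix $\delta\in(1,\delta(X))$, work at the $\delta$-log canonical slope $\mu_{X,\delta}(\cF)$, extract via \cite{JM12} a valuation $w$ with $A_X(w)\le\tfrac{\delta}{r}\,w(\fa_\bullet)$ for the associated graded ideal sequence, and use $\delta<\delta(X)\le\delta_X(w)$ together with the convexity of $\lambda\mapsto w(I^{(\lambda)}_\bullet)$ to bound $\mu_{X,\delta}(\cF)$ below by $S(\cF)$ --- which is $(5)$ --- and the thresholds $c_m$ of Definition \ref{defn:D^NA} below, which yields the uniform Ding inequality $(2)$; this is carried out in \cite{XZ19, BJ20}, the equivalence $(1)\Leftrightarrow(2)$ in this spirit going back to \cite{BBJ15, Fuj19b}. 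The delicate point is precisely this uniform lower bound: unlike the upper bound of Theorem \ref{t-betading}, it is not formal, because a general linearly bounded filtration need not be finitely generated, so $\Proj(\gr_\cF R)$ carries no geometry and one is forced to argue entirely through the approximating ideals $\cI_m$ and the auxiliary valuation they produce, keeping the resulting constant under control as $m\to\infty$. Every other implication above is a direct assembly of results already established in the survey.
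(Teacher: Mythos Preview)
Your proposal is correct and, in substance, matches the paper's proof, which simply records citations: $(1)\Leftrightarrow(2)$ via \cite{BBJ15,Fuj19b} (the MMP reduction of Theorem~\ref{t-d=k}), $(2)\Leftrightarrow(3)$ via \cite{Fuj19b}, $(3)\Leftrightarrow(4)$ via Theorem~\ref{t-quasimini}, and $(3)\Leftrightarrow(5)$ via \cite{XZ19}. You unpack more of the argument using tools already developed in the survey (Proposition~\ref{l-betading}, Proposition~\ref{p-deltaapproximation}, Theorem~\ref{t-weakspecial}, the Fujita bound $S\le\tfrac{n}{n+1}T$), and you correctly isolate $(3)\Rightarrow(2)$ and $(3)\Rightarrow(5)$ as the only place where genuinely new input (the $\delta$-refinement of Theorem~\ref{t-betading} from \cite{XZ19}) is needed.

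One organizational difference worth noting: the paper (following \cite{BBJ15,Fuj19b}) proves $(1)\Leftrightarrow(2)$ directly by running the MMP process of Theorem~\ref{t-specialdegeneration} for Ding invariants and tracking $\JNA$ along the way, whereas your route $(2)\Rightarrow(1)$ goes through Lemma~\ref{l-DvsK} and $(1)\Rightarrow(3)$ goes through weakly special test configurations plus Proposition~\ref{p-deltaapproximation}. Your path is slightly more economical inside the survey's own framework (it avoids re-running the MMP argument) at the cost of the mild detour through $(3)$; the paper's route is more symmetric but relies on the cited papers for the bookkeeping. Both land on the same external dependency for the hard direction.
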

\begin{proof}The equivalence between (1) and (2) follows from a similar argument as for Theorem \ref{t-d=k} (see \cite{BBJ15, Fuj19b}). The equivalence between (2) and (3) is proved in \cite[Theorem 1.4]{Fuj19b}; between (3) and (4) in  Theorem \ref{t-quasimini}. Finally, the equivalence between (3) and (5) follows from in \cite{XZ19}*{Thm. 1.4(2)}
\end{proof}

It is also natural to look for a `uniform' version when $\Aut(X)$ is not discrete. The key is to define a norm $\JNA$ which should module the group action. In \cite{His16},  the reduced $\JNA_T$-functional for a torus group $T\subset \Aut(X)$ is defined, and play the needed role.

Let $X$ be a $\mathbb Q$-Fano variety with an action by a torus $T\cong \bG_m^s$. Fix some integer $r>0$ such that $L:=-rK_X$ is Cartier and as before let $R=R(X,L)$. Let $M=\Hom(T,\bG_m)$ be the weight lattice and $N=M^*=\Hom(\bG_m,T)$ the co-weight lattice. Then $T$ naturally acts on $R$ and we have a weight decomposition $R_m=\bigoplus_{\alpha\in M} R_{m,\alpha}$ where 
\[
R_{m,\alpha}=\{s\in R_m\,|\,\rho(t)\cdot s = t^{\langle \rho,\alpha \ra}\cdot s\text{ for all }\rho\in N\text{ and }t\in k^*\}.
\]
Consider a $T$-equivariant filtration $\cF$ on $R=\bigoplus_mH^0(X,mL)$, i.e., $s\in \cF^\lambda R$ if and only if $g\cdot s\in \cF^\lambda R$ for any $g\in T$. We then have a similar weight decomposition 
\[
\cF^\lambda R_m=\bigoplus_{\alpha\in M} (\cF^\lambda R_m)_\alpha
\]
where $(\cF^\lambda R_m)_\alpha := \cF^\lambda R_m \cap R_{m,\alpha}$.

\begin{defn}\label{d-quotient}
For $\xi\in N_{\bR}=N\otimes_\bZ \bR$, we define the $\xi$-twist $\cF_{\xi}$ of the filtration $\cF$ in the following way: for any $s\in  R_{m,\alpha}$, we have
$$s\in \cF^\lambda_{\xi} R_m \mbox{ if and only if } s\in \cF^{\lambda_0} R_m \mbox{ where }\lambda_0=\lambda-\langle\alpha, \xi \ra,$$
in other words, 
 $$\cF^\lambda_{\xi} R_m = \bigoplus_{\alpha\in M}  \cF^{\lambda-\langle\alpha, \xi \ra}{R}\cap R_{m,\alpha}.$$
\end{defn}

One can easily check that $\cF_{\xi}$ is a linearly bounded multiplicative filtration if $\cF$ is. 

Let $Z=X  /\!\!/_{\rm chow}T$ be the Chow quotient (so $X$ is $T$-equivariantly birational to $Z\times T$). Then the function field $k(X)$ is (non-canonically) isomorphic to the quotient field of $$k(Z)[M]=\bigoplus_{\alpha\in M} k(Z)\cdot 1^\alpha.$$
For any valuation $\mu$ over $Z$ and $\xi\in N_\bR$, one can associate a $T$-invariant valuation $v_{\mu,\xi}$ over $X$ such that
\begin{eqnarray}\label{e-valuation}
v_{\mu,\xi}(f)=\min_{\alpha}(\mu(f_{\alpha})+\langle \xi, {\alpha}\ra)
\end{eqnarray}
for all $f=\sum_{\alpha\in M} f_{\alpha}\cdot 1^\alpha \in k(Z)[M]$. Indeed, every valuation $v\in \Val^T(X)$ (i.e. the set of $T$-invariant valuations) is obtained in this way (see e.g. the proof of \cite{BHJ17}*{Lemma 4.2}) and we get a (non-canonical) isomorphism $\Val^T(X)\cong \Val(Z)\times N_\bR$. For any $v\in\Val^T(X)$ and $\xi\in N_{\bR}$, we can therefore define the twisted valuation $v_{\xi}$ as follows: if $v=v_{\mu,\xi'}$, then
$$ v_{\xi}:=v_{\mu, \xi'+\xi}. $$
One can check that the definition does not depend on the choice of the birational map $X\dashrightarrow Z\times T$. When $\mu$ is the trivial valuation, the valuations $\wt_\xi:=v_{\mu,\xi}$ are also independent of the birational map $X\dashrightarrow Z\times T$.

\begin{defn}
Let $T$ be a torus acting on a $\mathbb{Q}$-Fano variety $X$. For any $T$-equivariant filtration $\cF$ of $R$, its {\it reduced $\bfJ$-norm} is defined as:
$$\JNA_T(\cF):=\inf_{\xi\in N_\bR}\JNA(\cF_{\xi}).$$
The reduced $\bfJ$-norm $\JNA(\cX,\cL)$ of a $T$-equivariant test configuration $(\cX,\cL)$ of $X$ is defined to be the reduced $\bfJ$-norm of its associated filtration (Example \ref{expl:filtration from tc}).
\end{defn}

\begin{defn}\label{d-reduceduniform}
Notation as above. 
We define the reduced $\beta$ for a $T$-equivariant valuation $v$ with $A_X(v)<\infty$,  which is not of the form $\wt_{\xi}$, to be 
$\delta_{X,T}(v)=1+\sup_{\xi\in N_{\bR}(T)} \frac{\beta(v)}{S_X(v_{\xi})}$ and $\delta_T(X)=\inf_v \delta_{X,T}(v)$ where $v$ runs through all such valuations. 
\end{defn}

We can define a $\bQ$-Fano variety $X$ to be {\it reduced uniformly K-stable} if one of the following is true. 

\begin{thm}\label{t-reducedK}
Notation as above. Let $T\subset \Aut(X)$ be a maximal torus. Then the followings are equivalent:
\begin{enumerate}
\item (reduced uniform K-stablity) there exists some $\eta>0$, such that for any $(\cX;\cL)$, 
$$\Fut(\cX;\cL)\ge \eta\cdot  \JNA_{T}(\cX;\cL);$$
\item (reduced uniform Ding-stablity) there exists some $\eta>0$, such that for any linearly bounded filtration $\cF$,  
$$\DNA(\cF)\ge \eta\cdot  \JNA_{T}(\cF);$$
\item there exists $\delta>1$ such that for any linearly bounded filtration $\cF$, we can find $\xi\in N_{\bR}$ such that $\beta_{X,\delta}(\cF_{\xi})\ge 0$, 
\item $X$ is K-semistable and $\sup_{\xi\in N_{\bR}}\beta_{X}(v_{\xi})> 0$ for any quasi-monomial $v$ which is not induced by the torus, 
\item $\delta_{T}(X)>1$. 
\end{enumerate}
\end{thm}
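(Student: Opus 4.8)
The plan is to upgrade the non-equivariant chain of equivalences of Theorem \ref{t-uniform} to the $T$-equivariant setting by carefully tracking how the $\xi$-twist $\cF\mapsto\cF_\xi$ of Definition \ref{d-quotient} (and the valuation twist $v\mapsto v_\xi$) interacts with $\Fut$, $\DNA$, $\JNA$, $\beta_X$, $\beta_{X,\delta}$ and $S_X$; the theorem then follows from the cycle $(5)\Leftrightarrow(4)$, $(5)\Leftrightarrow(3)$, $(3)\Leftrightarrow(2)$, $(2)\Leftrightarrow(1)$, of which only the first is genuinely new.

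\textbf{Step 1 (twisting lemmas).} First I would record the elementary behaviour of the twist. Directly from Definition \ref{d-quotient} the Duistermaat--Heckman measure of $\cF_\xi$ is that of $\cF$ sheared along the weight pairing; hence $\xi\mapsto S(\cF_\xi)$, $\xi\mapsto\lambda_{\max}(\cF_\xi)$, $\xi\mapsto\JNA(\cF_\xi)$ and $\xi\mapsto\DNA(\cF_\xi)$ are convex piecewise-affine on $N_\bR$, with $\DNA(\cF_\xi)$ and $\beta_X(\cF_\xi)$ differing from their untwisted values by one and the same affine-linear function of $\xi$ (the torus--Futaki functional of $\cF$); in particular $\beta_X(\cF_\xi)-\DNA(\cF_\xi)$ is twist-invariant, hence $\ge 0$ by Theorem \ref{t-betading}. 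Moreover $\JNA(\cF_\xi)$ is proper modulo the subspace of trivial directions (those along which $\cF_\xi$ differs from $\cF$ only by an overall shift), so the infimum defining $\JNA_T(\cF)$ is attained. Analogously, for a $T$-invariant valuation $v$ not of the form $\wt_\xi$, the function $\xi\mapsto S_X(v_\xi)$ is strictly convex and proper, so $\inf_\xi S_X(v_\xi)$ is attained; and a direct computation using that every product test configuration of a K-semistable $X$ has vanishing Futaki invariant (cf. the discussion of product test configurations and Lemma \ref{l-beta=w}) shows that $\beta_X(v_\xi)=\beta_X(v)$ for all $\xi$ when $X$ is K-semistable. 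Consequently, for K-semistable $X$, $\delta_{X,T}(v)=1+\beta_X(v)/\inf_\xi S_X(v_\xi)$ whenever $\beta_X(v)\ge 0$.

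\textbf{Step 2 (the routine equivalences).} With Step 1 in hand, $(2)\Leftrightarrow(1)$ is the reduced analogue of Theorem \ref{t-d=k}: run the MMP argument of Theorem \ref{t-specialdegeneration} $T\times\bG_m$-equivariantly; on special test configurations $2\Fut=\Ding=\DNA$ and the reduced norm $\JNA_T$ is the same whether built from the $\Fut$- or the $\Ding$-normalization (it is a $T$-equivariant notion and the MMP steps commute with the twist), so reduced uniform K- and Ding-stability are tested on the same class and agree there (see \cite{BBJ15, Fuj19b} for the untwisted reduction and \cite{His16} for $\JNA_T$). For $(3)\Leftrightarrow(2)$ one passes between the two slope inequalities using the convexity and properness of $\xi\mapsto\JNA(\cF_\xi)$ from Step 1 together with $\beta_X(\cF_\xi)\ge\DNA(\cF_\xi)$, exactly as in the proof of $(2)\Leftrightarrow(3)$ in Theorem \ref{t-uniform}, invoking in addition the $\delta$-interpolation of \cite{XZ19}*{Thm.\ 1.4(2)} relating $\beta_{X,\delta}$ to $\DNA$ and $\JNA$ for $\delta$ slightly above $1$, now applied to $\cF_\xi$ and optimized in $\xi$; this is what makes $(3)$ equivalent to $\delta_T(X)\ge\delta>1$, i.e.\ to $(5)$. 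Finally $(5)\Rightarrow(4)$ is immediate from Step 1: if $\delta_T(X)>1$ then $\sup_\xi\beta_X(v)/S_X(v_\xi)>0$ for every non-torus quasi-monomial $v$, forcing $\beta_X(v)>0$, while a semicontinuity/limiting argument along the torus directions (as in the reduction of Theorem \ref{t-d=k}) forces $\beta_X\ge 0$ on all divisorial valuations, so $X$ is K-semistable by Theorem \ref{t-valkstable}(1); then $\sup_\xi\beta_X(v_\xi)=\beta_X(v)>0$.

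\textbf{Step 3 (the hard implication $(4)\Rightarrow(5)$).} This is where the real work lies: one must promote the pointwise strict positivity $\beta_X(v)>0$ on non-torus quasi-monomial valuations to the uniform gap $\delta_T(X)\ge\delta>1$, i.e.\ bound $\beta_X(v)/\inf_\xi S_X(v_\xi)$ below by a positive constant independent of $v$. The delicate point is that a near-minimizing sequence $v_k$ may approach a torus valuation $\wt_\eta$, so that $\beta_X(v_k)\to 0$, and one must show that $\inf_\xi S_X((v_k)_\xi)$ goes to $0$ at least as fast — geometrically, twisting $v_k$ by $\approx-\eta$ makes it close to the trivial valuation, hence of small $S$ — with a \emph{uniform} ratio. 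The plan is to establish a reduced analogue of Theorem \ref{t-quasimini}: assuming $\delta_T(X)\le 1$, pick $T$-invariant non-torus $v_k$ with $\delta_{X,T}(v_k)\to\delta_T(X)$; using a $T$-equivariant form of Proposition \ref{p-deltaapproximation} and the boundedness of complements (Lemma \ref{l-complementbounded}), reduce to $v_k$ being quasi-monomial lc places of bounded $T$-invariant complements; stratify and pass to a subsequence as in \S\ref{say-stratification}--\S\ref{say-locallyconstant} so that the $v_k$ lie on a single compact dual complex $W$ on which $A_X$ and $S_X$ are locally constant by \cite{HMX13}; and control the minimizing twist $\xi_k$ by the properness from Step 1, so that $\inf_\xi S_X(v_\xi)$ is a continuous function on $W$. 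Then $\delta_{X,T}(\cdot)$ is continuous on the compact $W$, attains its minimum at some quasi-monomial $v_\infty$ with $\delta_{X,T}(v_\infty)=\delta_T(X)\le 1$, contradicting $\beta_X(v_\infty)>0$ and K-semistability. \textbf{The main obstacle} is precisely this simultaneous compactness: one has to rule out near-minimizers escaping to infinity in the non-compact twist variable $N_\bR$ without becoming genuinely torus-type (which would exclude them from the infimum defining $\delta_T$), and this is exactly where the strict convexity and properness of $\xi\mapsto S_X(v_\xi)$, and the fact that $W$ meets every relevant twist orbit of near-minimizers, must be combined with care.
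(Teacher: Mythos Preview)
The paper does not prove this theorem in the survey; its ``proof'' is a list of citations: $(1)\Leftrightarrow(2)$ to \cite{Li19} for test configurations and \cite{XZ19} for the extension to filtrations, and the equivalences among $(3)$, $(4)$, $(5)$ to \cite{XZ19}*{Thm.\ 1.4(3) and Thm.\ A.5}. So there is no argument here to compare your sketch against line by line; what you have written is effectively an outline of what those references carry out, and at that level your plan is sound and agrees with the structure of the cited proofs.

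Two remarks. First, in Step~2 your route $(5)\Rightarrow(4)$ invokes a ``semicontinuity/limiting argument along the torus directions'' to extract K-semistability; this detour is unnecessary. Once you have established $(5)\Leftrightarrow(3)\Leftrightarrow(2)\Leftrightarrow(1)$, condition $(1)$ visibly forces $\Fut\ge 0$ (since $\JNA_T\ge 0$ as an infimum of nonnegative quantities), hence K-semistability. The remaining assertion $\sup_\xi\beta_X(v_\xi)>0$ then follows from your Step~1 identity $\beta_X(v_\xi)=\beta_X(v)$ for K-semistable $X$ together with $\delta_{X,T}(v)>1$.

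Second, your Step~3 correctly isolates the genuine content in $(4)\Rightarrow(5)$ and names the right obstacle: near-minimizing $v_k$ drifting toward a torus valuation while the optimal twist $\xi_k$ escapes in $N_\bR$. Be aware, however, that the ``$T$-equivariant form of Proposition~\ref{p-deltaapproximation}'' and the uniform control of the minimizing twist over the dual complex are not formalities. One needs $T$-equivariant bounded complements and a separate argument that $\xi\mapsto S_X(v_\xi)$ has its minimum varying tamely as $v$ moves in $W$; this is precisely what \cite{XZ19}*{Appendix~A} supplies, and it does not drop out of the non-equivariant Theorem~\ref{t-quasimini} by bookkeeping alone.
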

\begin{proof}The equivalence between (1) and (2) are given in \cite{Li19} for test configurations. And (2) was extended to general filtrations in \cite{XZ19}. The last three characterizations were proved in \cite[Thm. 1.4(3) and Thm. A.5]{XZ19}.
\end{proof}

Our definition clearly does not depend on the torus $T$ since any two maximal tori are conjugate to each other. When $X$ is K-semistable, and it is not reduced uniformly K-stable, i.e., $\delta_T(X)=1$ for a maximal torus $T\subset \Aut(X)$, then by \cite{XZ19}*{Thm. A.5}, we can find a $T$-equivariant quasi-monomial valuation $v$ which is not on the torus, such that  $\delta_{X,T}(v)=\delta_X(v)=1$. Thus the following conjecture follows from Conjecture \ref{c-finitegeneration}.
\begin{conj}\label{c-reducedk}
 A $\bQ$-Fano variety $X$ is K-polystable if and only it is  reduced uniformly K-stable.
 \end{conj}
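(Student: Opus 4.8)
The plan is to prove the two implications separately. The direction ``reduced uniformly K-stable $\Rightarrow$ K-polystable'' is soft; the converse is the real content and, following the paragraph after Theorem~\ref{t-quasimini}, I would reduce it to the finite generation Conjecture~\ref{c-finitegeneration}.

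\textbf{Reduced uniform K-stability implies K-polystability.} Fix a maximal torus $T\subseteq\Aut(X)$. By Theorem~\ref{t-reducedK}, $\delta_T(X)>1$ is equivalent to $\Fut(\cX,\cL)\ge\eta\cdot\JNA_T(\cX,\cL)$ for some $\eta>0$ and every test configuration $(\cX,\cL)$; in particular $\Fut\ge 0$, so $X$ is K-semistable. Suppose $\Fut(\cX,\cL)=0$ for some test configuration. Passing to the normalization $\cX^n$ (which, since $X$ is K-semistable, still has $\Fut=0$, and for which the conclusion $\cX^n\cong X\times\bA^1$ outside codimension $2$ transfers back to $\cX$, by Step~0 in the proof of Theorem~\ref{t-specialdegeneration}) we may assume $\cX$ normal. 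Then $\JNA_T(\cX,\cL)=0$, i.e. $\inf_{\xi\in N_\bR}\JNA(\cF_\xi)=0$ for the associated (finitely generated) filtration $\cF$. Since $\xi\mapsto\JNA(\cF_\xi)$ is convex and piecewise affine with rational behaviour, its vanishing infimum is attained at a (rational) point $\xi_0$, where the Duistermaat-Heckman measure of $\cF_{\xi_0}$ is a single atom, forcing $\cF_{\xi_0}$ to be trivial. Hence $\cF$ is the filtration of a product test configuration induced by a one-parameter subgroup of $T$, i.e. $\cX\cong X\times\bA^1$ outside codimension $2$, which is K-polystability in the sense of Definition~\ref{d-kstable}. (Alternatively one may invoke Remark~\ref{r-YTDnew}: reduced uniform K-stability yields a K\"ahler--Einstein metric by \cite{BBJ15, LTW19, Li19}, and a K\"ahler--Einstein $\bQ$-Fano variety is K-polystable; but the argument above is purely algebraic.)

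\textbf{K-polystability implies reduced uniform K-stability, assuming Conjecture~\ref{c-finitegeneration}.} Suppose $X$ is K-polystable but not reduced uniformly K-stable. Then $X$ is K-semistable, so $\delta(X)\ge 1$; since twisting by $\xi=0$ is allowed we have $\delta_T(X)\ge\delta(X)$, and $\delta_T(X)\le 1$ by assumption, so $\delta(X)=\delta_T(X)=1$. By \cite{XZ19}*{Thm.~A.5} there is a $T$-equivariant quasi-monomial valuation $v$ over $X$, not of the form $\wt_\xi$ for any $\xi\in N_\bR$, with $\delta_{X,T}(v)=\delta_X(v)=1$; in particular $A_X(v)=S_X(v)$, i.e. $\beta_X(v)=0$. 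Now invoke Conjecture~\ref{c-finitegeneration}: $\gr_v R$ is finitely generated. Exactly as in the discussion after Theorem~\ref{t-quasimini}, a sufficiently small \emph{rational} perturbation $v'$ of $v$ inside the rational face of a dual complex on which $v$ is an interior point satisfies $\gr_{v'}R\cong\gr_v R$ (by \cite{LX18}*{Lem.~2.10}) and $\delta_X(v')=\delta_X(v)=1$ (by \cite{Xu20}*{Prop.~3.9}); moreover $v'$ is divisorial, $T$-equivariant, and still not a weight valuation for $T$ (the ``torus locus'' in that face is a proper rational subspace avoiding $v$, hence avoiding rational points close to $v$). Since $\delta_X(v')=\delta(X)=1$ with $v'$ divisorial, Theorem~\ref{t-delta=1} applies (its proof handles finite generation for divisorial $v'$ via \cite{BCHM10}) and produces a non-trivial special test configuration $\cX$ with $\Fut(\cX)=0$, whose induced divisorial valuation on $K(X)$ is a rescaling of $v'$. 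K-polystability forces $\cX$ to be a product, so $\cX_0\cong X$ and $v'$, up to rescaling, equals a weight valuation $\wt_\rho$ for a one-parameter subgroup $\rho\colon\bG_m\to\Aut(X)$. But $v'$ is $T$-invariant, and a weight valuation $\wt_\rho$ is $T$-invariant precisely when $\rho$ centralizes $T$, i.e. $\rho\subseteq T$ by maximality; thus $v'=\wt_\xi$ for some $\xi\in N$, contradicting the choice of $v$. Hence no such $v$ exists and $\delta_T(X)>1$.

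\textbf{Main obstacle.} The converse direction is only as strong as Conjecture~\ref{c-finitegeneration}, the finite generation of $\gr_v R$ for a quasi-monomial valuation $v$ realizing $\delta_T(X)=1$. When $v$ has rational rank $1$ this is known (and is built into Theorem~\ref{t-delta=1}), but in the polystable/equivariant setting $v$ is genuinely of higher rank; finite generation is open, and by \cite{AZ20}*{Thm.~4.15} it can fail for an arbitrary quasi-monomial lc place of a $\bQ$-complement, so it must be extracted precisely from the minimizing property of $v$ — this is the crux. The remaining points (attainment of the infimum defining $\JNA_T$; the identification of ``$v$ not a weight valuation for $T$'' as the obstruction to $\cX$ being a $T$-product) are routine and were indicated above.
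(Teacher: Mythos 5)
The statement you are addressing is stated in the paper as a \emph{conjecture}, whose only justification there is the reduction, in the paragraph immediately preceding it, to Conjecture \ref{c-finitegeneration} via \cite{XZ19}*{Thm. A.5} (a $T$-equivariant quasi-monomial minimizer $v$ with $\delta_{X,T}(v)=\delta_X(v)=1$ that is not of the form $\wt_\xi$) together with the rational-perturbation argument following Theorem \ref{t-quasimini} — and your hard direction is precisely this reduction written out in detail, with finite generation correctly isolated as the one genuinely open point. Your soft direction (reduced uniform K-stability $\Rightarrow$ K-polystability) is standard and consistent with what the paper and \cite{Li19}, \cite{XZ19} take as known, so the proposal is a faithful — and, like the paper's own discussion, necessarily conditional — rendering of the intended argument.
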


It has been shown in \cite{Li19} that a $\bQ$-Fano variety $X$ is reduced uniformly K-stable if and only if it admits a (weak) K\"ahler-Einstein metric.

\section*{Notes on history}

 K-stability was first defined in \cite{Tia97} for Fano manifolds. A key observation by Tian is to consider all $\mathbb C^*$-degenerations realized in the embeddings of $|-rK_X|\colon X\to\mathbb P^{N_r} $ for all large $r$. Then in \cite{Don02}, Donaldson formulated it in algebraic terms and extended it to all polarized projective varieties. In \cite{RT07},  Ross and Thomas investigated the notion from a purely algebraic geometry viewpoint, and compared it with GIT stability notions. The intersection formula of the generalized Futaki invariant was found in \cite{Wan12, Oda13b}. All these are discussed in Section \ref{ss-definition}. Later the uniform version of K-stability was introduced in \cite{BHJ17,Der16} (see Theorem \ref{t-uniform}(1)).

 In \cite{Oda13}, the MMP was first applied to show that any K-semistable Fano varieties have only klt singularities. Then a more systematic MMP process was introduced in \cite{LX14} to study a family of Fano varieties as in Section \ref{ss-mmp}. Theorem \ref{t-specialdegeneration} was proved there and became one of the major ingredients in the latter development of the concept of K-stability. 
 
The next stage of the development of the foundation theory of K-stability is largely around the valuative criterion.
Different proof of Theorem \ref{t-valkstable} were given and took a few intertwining steps.
In \cite{Ber16}, a corresponding notion of Ding-stability was formulated by Berman inspired by the analytic work of \cite{Din88}. After first only considering divisors that appear {\it on} the Fano variety \cite{Fuj16},  an earlier version of $\beta(E)$ for any divisor $E$ {\it over} $X$ was defined in \cite{Fuj18}. Moreover, it was shown there that Ding semi-stability will imply nonnegativity of $\beta(E)$ (this is the direction which needs more input, as the converse follows easily from \cite{LX14} and a straightforward calculation, see the discussion in Section \ref{s-twoinvariants}). During the proof one key property of Ding invariant was  established in \cite{Fuj18}, namely one can define $\Ding(\cF)$ for any bounded multiplicative filtrations $\cF$ and the corresponding $m$-th truncation $\cF_m$ satisfies that $\Ding(\cF_m)$ converges to $\Ding(\cF)$ (see K. Fujita's approximation Lemma \ref{l-dingapproximation}).  Using filtrations to study K-stability questions was initiated in \cite{Nys12} and extended to general filtrations in \cite{Sze15}, however, there is a difficulty to define the generalized Futaki invariants in this generality (see Remark \ref{r-futakifiltration}).
A striking application found in \cite{Fuj18} is that the exact upper bound of the volumes of $n$-dimensional Ding-semistable Fano varieties is shown to be $(n+1)^n$.  The ultimately correct formulation of $\beta(E)$ as in \eqref{e-beta} was found by Fujita in \cite{Fuj19b} and Li in  \cite{Li17}  independently. It was shown there that the various nonnegativity notions of $\beta$ are equivalent to the corresponding different notions of Ding-stability. The latter was also proved to be the same as the corresponding K-stability notions  in \cite{BBJ15, Fuj19b}, by using the argument in \cite{LX14} to reduce to the special test configuration $\cX$ where $\Ding(\cX)$ and $\Fut(\cX)$ are simply the same. See Section \ref{s-Ding}. Another new feature developed by Li in \cite{Li17} is that he related the K-stability of a Fano variety to the study of the minimizer of the normalized volume function (defined in \cite{Li18}) for cone singularities. For this purpose it is natural for him to extend the original setting which was only for divisorial valuations to all valuations (see Section \ref{ss-valuation}). 
In \cite{Li17}, he also found the derivative formula (see Lemma \ref{l-dervolume}). This allows \cite{LX20} to use a specializing process in the local setting for normalized volume as in Section \ref{ss-cone}, and give an alternative proof of the valuative criterion of K-semistability, which does not have to treat general filtrations. In Section \ref{s-specialvaluation}, we use an argument which is based on some latter developments in \cite{BJ20, BLX19}. In particular, a new perspective is introduced in \cite{BLX19} (partly inspired by \cite{LWX18}), which is to use complements as an auxiliary tool to connect degenerations and valuations (see Theorem \ref{t-weakspecial}). 
In \cite{BHJ17, BBJ15, BoJ18}, these invariants are extended to an even more general setting, namely non-Archimedean metrics. We did not discuss these notions here. 

One conceptual output from the valuative approach, as discussed in Section \ref{sss-delta}, is the formulation of the stability threshold $\delta(X)$ in \cite{BJ20}, and it is shown that it is the same as the invariant defined in \cite{FO18}, which can be estimated in many cases (see Section \ref{s-sing}). Then it became clear to develop the theory further, e.g. proving the equivalence between uniform K-stability and K-stability, one needs to understand the minimizing valuations of $\delta_X$, i.e. the valuations computing $\delta(X)$. In \cite{BLX19}, we proved when $\delta(X)\le 1$, they are always quasi-monomial. In \cite{BX19, BLZ19}, one also proved that if it is a divisorial valuation, then it indeed induces a non-trivial degeneration of $X$. Conjecturally, out of a quasi-monomial valuation, we can produce a divisorial one. When the automorphism group is non-discrete, one can define the notion of reduced uniform K-stability following \cite{His16} which is developed in \cite{Li19, XZ19} (see Theorem \ref{t-reducedK}). The analogue definition of $\delta$ and the study of its minimizer was proceeded in \cite{XZ19}, where we obtained  in this setting a similar result as in \cite{BLX19}.

A striking new development is the local K-stability theory, built on Li's definition of the normalized volume function in \cite{Li18} defined on valuations over a klt singularities, and the study of its minimizer. This gives a local model of K-stability theory and many well studied global question for K-stability of Fano varieties can find its local counterpart. As briefly discussed in Section \ref{ss-normv}, the picture was given by the Stable Degeneration Conjecture which consisted of a number of parts formulated in \cite{Li18} and \cite{LX18}. They were pursued by many works, and by now only one part remains to be open (see Conjecture \ref{c-localfinite}). In \cite{Li17, LL19, LX20}, the cone singularity case is thoroughly  investigated. The existence of the minimizer is proved in \cite{Blu18}, then in \cite{Xu19} it is shown that they are all quasi-monomial and in \cite{XZ20}  the uniqueness of the minimizer (up to rescaling) is confirmed. In the divisorial case, both \cite{LX20, Blu18} independently showed the minimizer is given by a Koll\'ar component; and it was shown in \cites{LX18, LX20}  that in general if assuming the associated graded ring is finite generated, then it indeed yields a K-semistable (affine) log Fano cone. In \cite{Liu18}, by generalizing the argument in  \cite{Fuj18} (see Example \ref{e-bounded}), an interesting inequality is shown to connect the volume of a K-semistable Fano variety with the volume of any arbitrary point on it. We will see its applications in Section \ref{s-moduli}.

\smallskip

Finally, let us remark some progress made in the analytic side. After the solution of the Yau-Tian-Donaldson Conjecture for Fano manifolds (see \cite{CDS, Tia15, Sze16} etc.), one may naturally ask the same question for all $\bQ$-Fano varieties. It seems there are essential difficulties to extend the original argument using metric geometry to the singular case. However, in \cite{BBJ15}, Berman-Boucksom-Jonsson initiated a variational approach, which is strongly inspired by a non-Archimedean geometry viewpoint. This is indeed conceptually closely related to the foundational progress that people made to understand K-stability in the algebraic side. As a result, in \cite{LTW19} and \cite{Li19} the approach of in \cite{BBJ15} was fully carried out, and it was proved that a $\bQ$-Fano variety $X$ with $|{\rm Aut}(X)|<\infty$ (resp. positive dimensional ${\rm Aut}(X)$) has a KE metric if and only if $X$ is uniformly K-stable (resp. reduced uniformly K-stable). So for Fano varieties, to complete the solution of the Yau-Tian-Donaldson Conjecture in the singular case, what remains is to show (reduced) uniform K-stability is the same as K-(poly)-stability. 

\clearpage

\part{K-moduli space of Fano varieties}\label{p-moduli}
In this part, we will discuss some questions about K-(semi,polystable)stable Fano varieties. In the author's opinion, currently there are three central topics in algebraic K-stability theory. 
In Part \ref{p-what}, we have intensively discussed the research topic on understanding  K-stability and related notions. 

 Another two topics are using K-stability to construct a project moduli space, called {\it K-moduli}; and {\it verifying} a given example of $\mathbb{Q}$-Fano variety is K-stable or not.   This will be respectively discussed in Part \ref{p-moduli} and Part \ref{p-example}. Naturally, the progress we achieved in the foundation theory, as discussed in Part \ref{p-what}, will help us to advance our understanding of these two questions. 

\smallskip

To give a general framework for intrinsically constructing  moduli spaces of Fano varieties is a challenging question in algebraic geometry, especially if one wants to find a compactification. So when the definition of K-stability from complex geometry (see \cite{Tia97}) and its algebraic formulation (see \cite{Don02}), first appeared in front of algebraic geometers, though the connection with the existence of K\"ahler-Einstein metric provides a philosophic justification, technically it seemed bold to expect such a notion would be a key ingredient in constructing moduli spaces of Fano varieties, as it is remote from any known approaches of constructing moduli. 

We recall that there are two successful moduli constructions that one can consult with. The first one is the moduli space which parametrizes Koll\'ar-Shepherd-Barron (KSB) stable varieties, that is projective varieties $X$ with semi-log-canonical singularities (slc) and ample $\omega_X$ (see \cite{Kol13b}). The main tools involved in the construction is the Minimal Model Program. While it has been worked out in KSB theory for how to define a family of higher dimensional varieties, which in particular solves all the local issues on defining a family of Fano varieties,  however, there are two main differences between moduli of Fano varieties and moduli of KSB-stable varieties:  firstly, if we aim to find a compact moduli space of Fano varieties, we often have to add Fano varieties with infinite automorphism group, which is a phenomena that does not happen in the KSB moduli case; secondly, a more profound issue is that Minimal Model Program often provides more than one limit for a family of Fano varieties over a punctured curve, thus it is unclear how to find a  MMP theory that picks the right limit.

The second moduli problem is the one parametrizing (Gieseker) semistable sheaves on a polarized projective scheme $(X,\mathcal{O}_X(1))$ with fixed Hilbert polynomial (see \cite[Section 4]{HL10}).  This moduli space is given by the Geometric Invariant Theory which, as we have noted, is not clear how to apply to the moduli of Fano varieties. Nevertheless, there have been a lot of recent works (see e.g. \cite{Alp13}, \cite{AFS17}, \cite{AHH18}) to associate a given Artin stack $\mathcal{Y}$ with a good moduli space $\pi\colon \mathcal{Y}\to Y$, such that the morphism $\pi$ has the properties shared by the morphism from the stack of GIT-semistable locus $[X^{\rm ss}/ G]$ to its GIT-quotient $X/\!\!/G$.   This is the framework we will use to construct the moduli of K-(semi,projective)stable Fano varieties.

\medskip


The first main theorem is the following, which is obtained using algebo-geometric approach, is a combination of the recent progress \cite{Jia17, LWX18, BX19, ABHX19, Xu19, BLX19, XZ20}.
 \begin{theorem*}[K-moduli]\label{c-kmoduli} We have two following moduli spaces:
 \begin{enumerate}
\item  (K-moduli stack) The moduli functor $\mathfrak{X}^{\rm Kss}_{n,V}$ of $n$-dimensional K-semistable $\mathbb{Q}$-Fano varieties of volume $V$, which sends $S\in {\sf Sch}_k$ to
 \[
\mathfrak{X}^{\rm Kss}_{n,V}(S) = 
 \left\{
  \begin{tabular}{c}
\mbox{Flat proper morphisms $X\to S$, whose fibers are }\\
\mbox{$n$-dimensional K-semistable klt Fano varieties with  }\\
\mbox{volume $V$, satisfying Koll\'ar's condition}
  \end{tabular}
\right\}
\]
is represented by an Artin stack $\mathfrak{X}^{\rm Kss}_{n,V}$ of finite type.

\item (K-moduli space) $\mathfrak{X}^{\rm Kss}_{n,V}$ admits a separated good moduli (algebraic) space $\phi\colon \mathfrak{X}^{\rm Kss}_{n,V}\to X^{\rm Kps}_{n,V}$ (in the sense of \cite{Alp13}), whose closed points are in bijection with $n$-dimensional K-polystable $\Q$-Fano varieties of volume $V$. 
\end{enumerate}
\end{theorem*}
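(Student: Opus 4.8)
The plan is to prove part~(1) by combining \emph{boundedness}, \emph{openness}, and a quotient-stack construction. First I would establish that the $n$-dimensional K-semistable $\mathbb{Q}$-Fano varieties of volume $V$ form a bounded family: by Theorem~\ref{t-delta} each such $X$ has $\delta(X)\ge 1$, which forces a uniform lower bound on log discrepancies (so $X$ is $\epsilon$-klt for a fixed $\epsilon$ depending only on $n,V$, cf.\ \cite{Jia17}, or using the sharper input of \cite{XZ20} combined with \cite{HMX14}); together with $(-K_X)^n=V$, Birkar's boundedness theorem \cite{Bir19} yields a finite-type $k$-scheme $Z$ carrying a $\mathbb{Q}$-Gorenstein flat family satisfying Koll\'ar's condition through which every such $X$ occurs, and after restricting to a locally closed subscheme we may take this family to be the universal family of $|-rK_\bullet|$-embeddings with $\mathrm{PGL}$ acting by change of coordinates. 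Next I would invoke the openness of K-semistability in $\mathbb{Q}$-Gorenstein families, proved independently in \cite{BLX19} and \cite{Xu19}: the locus $Z^{\mathrm{Kss}}\subseteq Z$ where the fiber is K-semistable is open, the proof using the boundedness of $N$-complements from \cite{Bir19} to approximate $\delta$ uniformly in families by log canonical thresholds of $N$-complements, which vary constructibly and lower semicontinuously. Then $\mathfrak{X}^{\mathrm{Kss}}_{n,V}:=[Z^{\mathrm{Kss}}/\mathrm{PGL}]$ is an Artin stack of finite type with affine diagonal representing the functor.

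For part~(2), the strategy is to apply the existence theorem for good moduli spaces of \cite{AHH18}: a finite-type algebraic stack over $k$ with affine diagonal admits a separated good moduli algebraic space once one verifies (i) $\Theta$-\emph{reductivity}, (ii) \emph{S-completeness}, and (iii) that closed points have linearly reductive automorphism groups. Condition~(iii) is the reductivity of $\mathrm{Aut}(X)$ for $X$ K-polystable, which is the main result of \cite{ABHX19} (and in the smoothable case follows from the K\"ahler--Einstein metric). Condition~(i), $\Theta$-reductivity, asks that a $\mathbb{G}_m$-equivariant family of K-semistable Fanos over the punctured $\Theta_R=[\mathbb{A}^1_R/\mathbb{G}_m]$ (for a DVR $R$) extends uniquely over $\Theta_R$ with K-semistable central fiber; this follows from properness of the relevant parameter spaces of $\mathbb{Q}$-Fano families with Koll\'ar's condition, together with the fact — rooted in the special degeneration theorem (Theorem~\ref{t-specialdegeneration}) and the valuative criterion (Theorem~\ref{t-valkstable}) — that one may run an MMP to make the extension special without destroying K-semistability.

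Condition~(ii), S-completeness, is the deepest ingredient and is the content of \cite{BX19}: if two $\mathbb{Q}$-Fano families over a punctured disk with K-semistable fibers agree over the generic point, then their K-semistable limits admit a common K-semistable degeneration — equivalently, the K-semistable degeneration of a family of Fano varieties is unique up to S-equivalence. I expect this to be the main obstacle. The proof would combine the description from \cite{LWX18} of the behaviour of the invariants $\beta$/normalized volume within the closure of a single $\mathbb{G}_m$-orbit (Section~\ref{s-local}) with an argument that, given two competing degenerations, builds a joint test configuration over which generalized Futaki invariants can be compared and forced to coincide up to S-equivalence; it is precisely here that the entire foundational apparatus of Part~\ref{p-what} — the valuative criterion, minimizers of $\delta$, and the analysis of special valuations — is brought to bear.

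Finally, once \cite{AHH18} produces the separated good moduli space $\phi\colon \mathfrak{X}^{\mathrm{Kss}}_{n,V}\to X^{\mathrm{Kps}}_{n,V}$, its closed points correspond to closed points of the stack, i.e.\ to closed $\mathrm{PGL}$-orbits, which by a Hilbert--Mumford-type analysis together with Theorem~\ref{t-valkstable} are exactly the K-polystable $\mathbb{Q}$-Fano varieties; and each S-equivalence class (fiber of $\phi$) contains a unique such representative, giving the asserted bijection between closed points of $X^{\mathrm{Kps}}_{n,V}$ and $n$-dimensional K-polystable $\mathbb{Q}$-Fano varieties of volume $V$. Properness of $X^{\mathrm{Kps}}_{n,V}$ — equivalently, the existence of a K-polystable limit for every one-parameter family — is not addressed by this argument and would require separate input.
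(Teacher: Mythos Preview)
Your overall architecture matches the paper's: part~(1) via boundedness plus openness yielding a quotient stack $[U/\mathrm{PGL}(N+1)]$, and part~(2) via the criterion of \cite{AHH18} (S-completeness and $\Theta$-reductivity). The treatment of boundedness, openness, and S-completeness is broadly correct, though your sketch of S-completeness omits the actual mechanism of \cite{BX19}: one constructs mutually dual filtrations $\cF,\cF'$ on the section rings of the two special fibers (induced by the birational transforms of each special fiber viewed as divisors over the other family), proves the identity $\beta+\beta'=0$ (Proposition~\ref{p-TSa}), so that K-semistability of both fibers forces each $\beta$ to vanish; a finite-generation argument then produces the common K-semistable special degeneration.

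The genuine gap is your treatment of $\Theta$-reductivity. There is no ``properness of the relevant parameter spaces of $\mathbb{Q}$-Fano families'' to invoke---properness of the K-moduli is precisely the open Properness Conjecture---and the MMP process of Theorem~\ref{t-specialdegeneration} does not transplant directly: that argument modifies a test configuration over a curve, whereas here one must extend a test configuration of the \emph{generic} fiber $X_\eta$ across the closed point of $\Spec(R)$, a two-dimensional problem where simply ``filling in and running MMP'' does not control K-semistability of the new central fiber. The paper's proof (Theorem~\ref{t-thetareductive}, following \cite{ABHX19}) instead works with filtrations: the special divisor $E_\eta$ coming from $\cX_\eta$ gives a filtration $\cF_\eta$ on $R_K$, which extends uniquely to a filtration $\cF$ on $\cR=\bigoplus_m H^0(X_R,-rmK_{X_R})$ with torsion-free graded pieces; restricting to the closed fiber yields a filtration $\cF_k$ on $R_k$ with $\mu(\cF_k)\le\mu(\cF_\eta)=S(\cF_\eta)=S(\cF_k)$, and K-semistability of $X_k$ forces equality. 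One then argues, as in Step~3 of Theorem~\ref{t-main}, that the closure $E_R$ of $E_\eta$ is dreamy over $X_R$, producing the extended test configuration $\cX_R$ with K-semistable fibers.

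A minor structural point: in the paper's presentation, linear reductivity of $\Aut(X)$ for K-polystable $X$ is deduced \emph{from} S-completeness (via an Iwahori-decomposition argument, Theorem~\ref{t-reductive}) rather than verified as a separate hypothesis for \cite{AHH18}.
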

 
We call such moduli spaces to be {\it the K-moduli stack of K-semistable $\mathbb{Q}$-Fano varieties} and {\it the K-moduli space of K-polystable $\mathbb{Q}$-Fano varieties}.  
The main remaining part is the following.
 \begin{conj*}[Properness Conjecture]\label{c-kmoduli} 
The good moduli  space  $X^{\rm Kps}_{n,V}$ is proper.
\end{conj*}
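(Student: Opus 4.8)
The plan is to verify the valuative criterion of properness for $X^{\rm Kps}_{n,V}$. Since separatedness is already known (the K-polystable degeneration of a family of $\mathbb{Q}$-Fano varieties is unique up to $S$-equivalence, by \cite{LWX18, BX19}), it remains to establish the existence part: given a DVR $R$ essentially of finite type over $k$ with fraction field $K$ and closed point $0$, and a family $\mathcal{X}_K\to\Spec K$ of $n$-dimensional K-polystable $\mathbb{Q}$-Fano varieties of volume $V$, one must produce --- after possibly replacing $R$ by its integral closure in a finite extension of $K$ --- a family $\mathcal{X}\to\Spec R$ extending $\mathcal{X}_K$ whose special fiber $\mathcal{X}_0$ is again an $n$-dimensional K-polystable $\mathbb{Q}$-Fano variety of volume $V$. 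Equivalently, in the language of \cite{AHH18}, one checks that the good moduli space morphism $\mathfrak{X}^{\rm Kss}_{n,V}\to X^{\rm Kps}_{n,V}$ is universally closed by extending maps from punctured traits.

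\textit{Step 1: produce some limit and make it Fano.} By the boundedness of the family of $n$-dimensional K-semistable $\mathbb{Q}$-Fano varieties of volume $V$ (from \cite{Jia17} via \cite{Bir19}, or from \cite{XZ20} via \cite{HMX14}), after a finite base change $\mathcal{X}_K$ embeds by a fixed power of $-K$ into a fixed projective space, so by properness of the Hilbert scheme it admits a flat projective limit; normalizing, and then running the lc-modification followed by a relative MMP over $\Spec R$ exactly as in the proof of Theorem \ref{t-specialdegeneration}, we may assume the resulting family $\mathcal{X}^{(0)}\to\Spec R$ has klt special fiber $X^{(0)}$ with $-K$ relatively ample, i.e. $X^{(0)}$ is a $\mathbb{Q}$-Fano variety. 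There is no reason for $X^{(0)}$ to be K-semistable.

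\textit{Step 2: semistable reduction.} If $\delta(X^{(0)})\ge 1$ we pass to Step 3. Otherwise $\delta(X^{(0)})<1$; by Theorem \ref{t-quasimini} it is computed by a quasi-monomial valuation $v$ that is an lc place of an $N$-complement, $N=N(n)$. Assuming Conjecture \ref{c-finitegeneration} (equivalently, via the cone construction of Section \ref{ss-cone}, Conjecture \ref{c-localfinite}), $\gr_v R(X^{(0)})$ is finitely generated; a small rational perturbation $v'$ of $v$ inside the rational face it spans then satisfies $\gr_{v'}R\cong\gr_v R$ and $\delta_{X^{(0)}}(v')=\delta(X^{(0)})$, so $v'$ is divisorial and computes $\delta$, and by Theorem \ref{t-blzdegeneration} it is induced by a special test configuration degenerating $X^{(0)}$ to a $\mathbb{Q}$-Fano variety $X^{(1)}$ with $\delta(X^{(1)})=\delta(X^{(0)})$, computed by the induced $\mathbb{G}_m$-action. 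One then realizes this degeneration as a modification of the total family: extending $v'$ to a valuation on $\mathcal{X}^{(0)}$ trivial in the $R$-direction, the associated Rees-type construction relative to $\Spec R$ (a weighted blow-up of the closure of the center of $v'$ followed by an MMP as in Step 3 of Theorem \ref{t-specialdegeneration}) yields, after a further finite base change, a family $\mathcal{X}^{(1)}\to\Spec R$ with special fiber $X^{(1)}$. Iterating produces a sequence $X^{(0)}\rightsquigarrow X^{(1)}\rightsquigarrow\cdots$ of $\mathbb{Q}$-Fano varieties with nondecreasing $\delta$, and one must show the process terminates with a K-semistable fiber. This termination is the crux of the whole argument: the expected mechanism is that every $X^{(i)}$ and every destabilizing $v'_i$ stay in a bounded family --- the $v'_i$ are all lc places of $N$-complements for the fixed $N=N(n)$ of Lemma \ref{l-complementbounded}, hence live in the finite-type stratified family of Paragraph \ref{say-stratification} --- which together with the monotonicity and semicontinuity of $\delta$ and the fact that the generic fiber is K-(semi)stable should bound the length of the chain; alternatively, one may try to build the K-semistable limit in one stroke by passing to a limit of the $\mathcal{X}^{(i)}$ and using finite generation to show the limit is a genuine $\mathbb{Q}$-Fano family over $\Spec R$.

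\textit{Step 3: pass from K-semistable to K-polystable, and conclude.} Given a family over $\Spec R$ with K-semistable special fiber $X^{ss}$, by \cite{LWX18, BX19} there is a unique K-polystable $\mathbb{Q}$-Fano $X^{ps}$ in the $S$-equivalence class of $X^{ss}$, reached from $X^{ss}$ by a degeneration induced by a product test configuration attached to the optimal minimizing valuation; under Conjecture \ref{c-finitegeneration} applied to the relevant filtration this degeneration is algebraic, and performing the corresponding modification of the family --- after a finite base change to absorb the $\mathbb{G}_m$-action --- gives a family with $\mathcal{X}_0=X^{ps}$. Uniqueness of this limit is precisely the separatedness already proved in \cite{BX19}. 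Combining Steps 1--3 verifies the valuative criterion, so $X^{\rm Kps}_{n,V}$ is proper. As indicated, the two genuinely hard inputs are the higher-rank finite generation Conjectures \ref{c-finitegeneration}/\ref{c-localfinite} --- the last missing piece of the Stable Degeneration Conjecture --- and the termination of the semistable-reduction chain of Step 2; everything else is a combination of boundedness, the MMP techniques behind Theorem \ref{t-specialdegeneration}, and separatedness.
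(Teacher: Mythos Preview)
The statement you are attempting to prove is explicitly labeled a \emph{conjecture} in the paper; there is no proof to compare against. What the paper offers (Section~\ref{s-proj}) is only a brief indication of strategy: following the Langton-type argument abstracted in \cite{AHH18}*{Section 6}, one would use Conjecture~\ref{c-maxidd} to construct a unique optimal degeneration with respect to the lexicographic order $(\Fut(\cX)/\lVert\cX\rVert_{\rm m},\,\Fut(\cX)/\lVert\cX\rVert_2)$, show this yields a $\Theta$-stratification on the stack of $\mathbb{Q}$-Fano varieties in the sense of \cite{HL14}, and then deduce properness from general principles (this is the content of \cite{BHLX20}). Your proposal is in the same spirit --- an iterated semistable-reduction procedure conditioned on finite generation --- and you correctly flag the two essential missing inputs; but you should be clear that neither you nor the paper has an unconditional argument.

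Beyond the acknowledged dependence on Conjecture~\ref{c-finitegeneration}, two steps of your sketch are more problematic than you indicate. First, in Step~2 you quote Theorem~\ref{t-blzdegeneration} to get $\delta(X^{(1)})=\delta(X^{(0)})$, yet propose to terminate the chain via ``monotonicity and semicontinuity of $\delta$''; since $\delta$ is in fact \emph{constant} along your chain, no amount of boundedness of complements will make that argument run. This is exactly why the approach in \cite{BHLX20} uses a finer two-step invariant rather than $\delta$ alone, and why the $\Theta$-stratification formalism is invoked rather than an ad hoc descending-chain argument. Second, the sentence ``extending $v'$ to a valuation on $\mathcal{X}^{(0)}$ trivial in the $R$-direction'' followed by a Rees construction does not produce a new family over $\Spec R$ with special fiber $X^{(1)}$: it produces a test configuration of $X^{(0)}$ over $\bA^1$, which is a different base. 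Replacing the central fiber of a family over a DVR by the limit of a test configuration of that fiber is precisely the kind of elementary modification whose existence and compatibility with the moduli problem is the nontrivial content of the \cite{AHH18,HL14} machinery; it cannot be done by the blow-up-and-MMP recipe you describe. In short, your outline identifies the right circle of ideas but glosses over exactly the points where the genuine difficulty lies.
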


We also have a projectivity theorem (see \cite{CP18, XZ19}) which implies the projectivity of $X^{\rm Kps}_{n,V}$, up to the above Properness Conjecture and Conjecture \ref{c-reducedk}.

\begin{theorem*}[Projectivity] Any proper subspace of $X^{\rm Kps}_{n,V}$ whose points parametrize reduced uniformly K-stable Fano varieties, is projective. 
\end{theorem*}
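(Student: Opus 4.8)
The plan is to exhibit on any such proper subspace $Z\subseteq X^{\rm Kps}_{n,V}$ an ample line bundle, namely (a multiple of) the CM line bundle $\lambda_{\mathrm{CM}}$ restricted to $Z$. Recall that a flat proper family $f\colon(\cX,-rK_{\cX/S})\to S$ of $n$-dimensional $\bQ$-Fano varieties carries a CM line bundle $\lambda_{\mathrm{CM},f}$ on $S$, extracted from the Knudsen--Mumford expansion of $\det f_!\big(\cO_{\cX}(-mrK_{\cX/S})\big)$ and normalized so that its degree is computed, via the intersection-theoretic formula of Lemma~\ref{l-intersection} and its relative version, in terms of $-K_{\cX/S}$; this is functorial, so it defines a line bundle $\lambda_{\mathrm{CM}}$ on the K-moduli stack $\mathfrak X^{\rm Kss}_{n,V}$. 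The first step is to descend it to the good moduli space $\phi\colon\mathfrak X^{\rm Kss}_{n,V}\to X^{\rm Kps}_{n,V}$. By the descent criterion for good moduli spaces (Kempf's lemma, see \cite{Alp13, ABHX19}) it suffices, after replacing $\lambda_{\mathrm{CM}}$ by a multiple, that for every closed point, i.e.\ every $n$-dimensional K-polystable $X$ of volume $V$, the identity component of the reductive group $\Aut(X)$ acts trivially on the one-dimensional fibre $\lambda_{\mathrm{CM}}|_{[X]}$. But the weight of this action along a one-parameter subgroup $\bG_m\to\Aut(X)$ is, up to a positive constant, the generalized Futaki invariant of the associated product test configuration, and this vanishes: reversing the subgroup changes the sign of $\Fut$, while K-semistability of $X$ forces it to be $\ge 0$ for both signs. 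Hence $\lambda_{\mathrm{CM}}$ descends to a $\bQ$-line bundle $\lambda$ on $X^{\rm Kps}_{n,V}$, and we set $\lambda_Z:=\lambda|_Z$.

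Next I would show that $\lambda_Z$ is \emph{nef}. Given any map from a smooth proper curve $C\to Z$, after a finite base change it lifts to a family $f\colon\cX\to C$ of K-semistable $\bQ$-Fano varieties, and the pullback of $\lambda_Z$ to $C$ has degree $\deg_C\lambda_{\mathrm{CM},f}$. The non-negativity of this number for families with K-semistable fibres is the CM positivity theorem of \cite{CP18, XZ19}: its proof runs an MMP on the total space in the spirit of \cite{LX14} to reduce the estimate, near each special point of $C$, to test configurations of the nearby fibre where K-semistability gives $\Fut\ge0$; this is then combined with semipositivity of the relevant Hodge-type direct image sheaves. Thus $\lambda_Z$ is nef.

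Finally, the hypothesis that every point of $Z$ parametrizes a \emph{reduced uniformly K-stable} variety is used to upgrade nefness to ampleness via the Nakai--Moishezon criterion for proper algebraic spaces. For an irreducible closed $W\subseteq Z$ of positive dimension, the universal family restricted to $W$ has reduced uniformly K-stable general fibre and maximal variation, since the moduli map $W\hookrightarrow X^{\rm Kps}_{n,V}$ is quasi-finite; the strong form of the CM positivity theorem then gives $\lambda_Z^{\dim W}\cdot W>0$. The mechanism is a rigidity statement: were this top self-intersection to vanish, the family over $W$ would have to be isotrivial, because otherwise a degeneration along $W$ produces a test configuration that is not a torus twist of the trivial one, contradicting the uniform inequality $\DNA\ge\eta\,\JNA_T>0$ of Theorem~\ref{t-reducedK}; but an isotrivial family forces $W$ to be a point. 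Having checked $\lambda_Z^{\dim W}\cdot W>0$ for every such $W$, Nakai--Moishezon (in the form valid for proper algebraic spaces) shows that $\lambda_Z$ is ample, so $Z$ is a projective scheme.

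The hard part is the CM positivity theorem, in two layers. The \emph{nefness} under mere K-semistability already requires making sense of a ``test configuration'' along a family whose base is not $\bA^1$ and marrying this with Hodge-theoretic semipositivity. Harder still is the \emph{strict} positivity, that is, bigness of $\lambda_Z$ on every positive-dimensional $W$: one must convert the pointwise, differential-geometric-flavoured notion of reduced uniform K-stability into a global numerical inequality and establish the ``CM-degree zero implies isotrivial'' rigidity for families over bases of arbitrary dimension. I should also stress that the statement is necessarily conditional on the reduced-uniform hypothesis: to deduce that the (conjecturally proper) space $X^{\rm Kps}_{n,V}$ itself is projective one additionally needs the Properness Conjecture together with Conjecture~\ref{c-reducedk}, which would identify reduced uniform K-stability with K-polystability.
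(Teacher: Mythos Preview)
Your high-level architecture is correct and matches the paper: descend $\lambda_{\mathrm{CM}}$ to the good moduli space (your Futaki-weight argument is exactly right), prove it is nef, and then verify the Nakai--Moishezon criterion by showing bigness on every irreducible $W\subseteq Z$. Where you diverge from the paper is in the \emph{mechanism} behind the two hard inputs, and in one place your sketch has a real gap.

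\textbf{Nefness.} Your description (``runs an MMP on the total space in the spirit of \cite{LX14} to reduce \ldots\ to test configurations of the nearby fibre'') is not how the cited results work. No MMP is run on the total family and there is no reduction to pointwise test configurations. The argument of \cite{CP18,XZ19}, as the paper summarizes in Step~1, takes the Harder--Narasimhan filtration of $\bigoplus_m f_*(-mrK_{X/C})$, restricts it to a general fibre $X_0$ to obtain a multiplicative linearly bounded filtration $\cF_{\mathrm{HN}}$ on $R$, and proves the inequality $\deg_C\lambda_f\ge\beta_{X_0}(\cF_{\mathrm{HN}})$, equivalently $\mu_{X_0}(\cF_{\mathrm{HN}})\le 0$. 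The last inequality comes from semipositivity of pushforwards of relative log canonical sheaves: a section in $\cF^{tm}_{\mathrm{HN}}R_m$ with $t>0$ extends to a global section of $-mrK_{X/C}-f^*P$, and if $(X_0,\tfrac1{mr}D_0)$ were lc this would contradict semipositivity. K-semistability of the fibre then gives $\beta_{X_0}(\cF_{\mathrm{HN}})\ge 0$, hence $\deg_C\lambda_f\ge 0$.

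\textbf{Strict positivity.} Here your ``rigidity'' sketch is a genuine gap. You assert that if $\lambda_Z^{\dim W}\cdot W=0$ then the family over $W$ is isotrivial, because otherwise ``a degeneration along $W$ produces a test configuration that is not a torus twist of the trivial one, contradicting $\DNA\ge\eta\,\JNA_T>0$''. But vanishing of the CM self-intersection does not hand you a single test configuration with $\DNA=0$; this is exactly the implication that needs proof, and the reduced-uniform inequality is a statement about individual filtrations on one fibre, not directly about the CM degree of a family. The paper's route (Steps~2--3) is quite different: first, using the torus $T$ acting fibrewise, one \emph{twists} the family by a rational $\xi\in N(T)_{\bQ}$ so that the twisted HN filtration satisfies $\beta_\delta((\cF_{\mathrm{HN}})_\xi)\ge 0$ for a uniform $\delta>1$; this yields a uniform nef threshold, namely $-K_{X_\xi/C}+c\,f_\xi^*\lambda_f$ is nef with $c=\tfrac{\delta}{(n+1)(-K_X)^n(\delta-1)}$, while $\lambda_f\sim_{\bQ}\lambda_{f_\xi}$. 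Second, Koll\'ar's ampleness lemma \cite{Kol90,KP17} (together with the product trick, and in \cite{XZ19} strengthened to accommodate all twists) converts this uniform nef threshold, under maximal variation, into $\lambda_f\cdot C\ge H\cdot C$ for a fixed ample $H$ and every covering curve $C$; by \cite{BDPP13} this gives bigness of $\lambda_f|_W$. Since $\lambda$ is nef, $\lambda^{\dim W}\cdot W=\vol(\lambda|_W)>0$, and Nakai--Moishezon concludes.

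So: keep your outline, but replace the MMP story for nefness by the HN-filtration argument, and replace the isotriviality-by-contradiction step by the twist construction plus Koll\'ar's ampleness lemma.
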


\section{Artin stack $\mathfrak{X}^{\rm Kss}_{n,V}$}\label{s-ksemi}

In this section, we will show that families of K-semistable Fano varieties with a fixed dimension $n$ and volume $V$ are parametrized by an Artin stack  $\mathfrak{X}^{\rm Kss}_{n,V}$ of finite type. 
The local issue  has been solved in KSB theory (see \cite{Kol21}). The boundedness and openness follows from an interplay between the foundation theory of K-stability as in Part \ref{p-what} and the MMP theory e.g. \cite{HMX14, Bir19}.
\subsection{Family of varieties}
It is a subtle issue to give a correct definition of a family of higher dimensional varieties over a general base. 
While for the construction of the moduli space, we need the definition of a family over a general base to determine the scheme structure, once that is achieved, the rest of the difficulties are all for families over a normal (or even smooth) base. Since $\mathbb{Q}$-Fano varieties only have klt singularities, which is a smaller class of singularities than the singularities KSB varieties allow to have, on our luck all the subtleties brought up by giving the correct definition which we have to face in the construction  of $\mathfrak{X}^{\rm Kss}_{n,V}$ have already been addressed (see \cite{Kol08,Kol21}).
Therefore, in this survey, we will only deal with families as in Definition \ref{d-Qfamily}.

\begin{defn}\label{d-Qfamily}
A \emph{$\Q$-Gorenstein family of  $\bQ$-Fano varieties} $\pi:X \to B$ over a normal base $B$ is composed of a flat proper morphism $\pi : X\to B$ satisfying: 
\begin{enumerate}
\item $\pi$ has normal, connected fibers (hence, $X$ is normal as well)
\item $-K_{X/T} $ is a $\pi$-ample $\Q$-Cartier divisor, and
\item $X_t$ is klt for all $t\in C$. 
\end{enumerate}
\end{defn}

In (see \cite[Theorem 11.6]{Kol16}), it is shown that the above condition on $-K_{X/T}$ being $\bQ$-Gorenstein is equivalent to the volume $(-K_{X_t})^n$ being a local constant on $t$. 

\begin{rem}
For a general base $B$, we can also define a family of $\bQ$-Fano varieties $X$ over $B$. Then one should post the Koll\'ar condition, which requires that for any $m\in \Z$ the reflexive power $\omega^{[m]}_{X/S}$ commutes with arbitrary base change (see \cite[24]{Kol08}).  This condition first appeared in the study of families of KSB stable varieties, and now is well accepted as a right local condition for a family of varieties with dimension at least two over a general (possibly non-reduced) base.  See \cite{Kol08,Kol21}.
\end{rem}

\begin{defn}\label{d-KsemiQfamily}
We call $\pi:X \to B$  as in Definition \ref{d-Qfamily} a $\Q$-Gorenstein family of \emph{K-semistable} $\bQ$-Fano varieties if
\begin{enumerate}
\setcounter{enumi}{3}
\item for any $t\in B$, $X_t$ is K-semistable. 
\end{enumerate}
\end{defn}
\begin{rem}
We note here $t$ can be a non-closed point. It was subtle to study K-stability notions over a non-algebraically closed field since they a priori could change after taking a base change of the ground field. Nevertheless, the issue was resolved in \cite{Zhu20}*{Theorem 1.1} where it is shown that a Fano variety $X$ defined over a (possibly non-algebraically closed) field $k$ is K-semistable (resp. K-polystable) over $k$ if and only if $X_{\bar{k}}$ is K-semistable (resp. K-polystable) where $\bar{k}$ is the algebraic closure of $k$.
\end{rem}

\subsection{Boundedness} In moduli problems, boundedness is often a deep property to establish. Fortunately, deep boundedness results in  birational geometry have been established in \cite{HMX14, Bir16, Bir19}. Then we can apply it to obtain the boundedness for K-semistable $\mathbb{Q}$-Fano varieties with volume bounded from below. The implication was first settled in \cite{Jia17}.

\begin{thm}[{Boundedness}]\label{t-boundedness}
Fix $n\in \mathbb N$ and $V>0$. 
All $n$-dimensional K-semistable $\mathbb{Q}$-Fano varieties with volume at least $V$, are contained in a bounded family. 
\end{thm}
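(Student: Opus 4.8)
The plan is to deduce Theorem \ref{t-boundedness} from the deep boundedness results in birational geometry, in the following chain. First, I would fix $n$ and $V>0$ and consider an $n$-dimensional K-semistable $\mathbb{Q}$-Fano variety $X$ with $(-K_X)^n\ge V$. By Theorem \ref{t-valkstable} (equivalently $\delta(X)\ge 1$, Theorem \ref{t-delta}), K-semistability gives control on singularities through valuations: for every divisor $E$ over $X$ we have $A_X(E)\ge S_X(E)$. The key point is to convert this into a uniform lower bound on log canonical thresholds of anti-canonical $\mathbb{Q}$-divisors, i.e. a uniform lower bound on $\alpha(X)$-type invariants, or more precisely a statement that $(X,D)$ is klt for a suitable bounded multiple of a generic anti-pluricanonical member. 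Indeed, since $S_X(E)\le \frac{n}{n+1}T_X(E)$ (using $T(E)\ge \frac{n+1}{n}S(E)$ from \cite{Fuj19c}*{Proposition 2.1}), K-semistability yields $A_X(E)\ge \frac{n}{n+1}T_X(E)$ for all $E$, hence $\alpha(X)=\inf_E \frac{A_X(E)}{T_X(E)}\ge \frac{n}{n+1}$.

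Next, I would feed this into the boundedness machinery. The cleanest route is via \cite{Jia17}: a uniform lower bound $\alpha(X)\ge \frac{n}{n+1}$ together with the volume bound $(-K_X)^n\ge V$ implies that the pairs $(X, \frac{1}{m}D)$, for $D\in |-mK_X|$ general and $m$ divisible, form a set of log Fano pairs with coefficients in a fixed finite set, with $-K_X$ ample of bounded volume, and with log canonical threshold bounded away from zero — in fact one gets that $(X,cD)$ is klt for a fixed $c>0$ depending only on $n$. Then one applies Birkar's solution of the Borisov--Alexeev--Borisov conjecture \cite{Bir19} (or, following \cite{XZ20}, the weaker statement that $\epsilon$-lc log Fano pairs of bounded volume form a bounded family, which only needs \cite{HMX14}): the family of such $(X,cD)$ is bounded, hence so is the family of underlying varieties $X$. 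One has to be slightly careful to first bound $(-K_X)^n$ from above as well — but this is automatic for K-semistable Fanos by Fujita's inequality $(-K_X)^n\le (n+1)^n$ (Example \ref{e-bounded}) — so the volume ranges in a fixed closed interval $[V,(n+1)^n]$, which is what boundedness of the relevant class of log Fano pairs requires.

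I would organize the proof in three steps: (i) K-semistability $\Rightarrow$ $\alpha(X)\ge \frac{n}{n+1}$, via the valuative criterion and the comparison of $S$ and the pseudoeffective threshold $T$; (ii) $\alpha(X)\ge\frac{n}{n+1}$ together with $(-K_X)^n\in[V,(n+1)^n]$ $\Rightarrow$ the associated log Fano pairs $(X,cD)$ are $\epsilon$-lc of bounded volume for a fixed $\epsilon=\epsilon(n)>0$ and fixed $c=c(n)>0$; (iii) apply \cite{Bir19} (or \cite{HMX14}+\cite{XZ20}) to conclude boundedness of $\{(X,cD)\}$, hence of $\{X\}$.

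The main obstacle is step (ii): one needs to produce, uniformly over all such $X$, an actual anti-pluricanonical divisor $D$ whose boundedness-friendly properties (rationality of coefficients from a DCC/finite set, klt-ness of $(X,cD)$, and a uniform lower bound on the lct) follow from the lower bound on $\alpha(X)$ rather than just an asymptotic statement — this is exactly where \cite{Jia17} does the work, controlling the discrepancies of a sufficiently general member of $|-mK_X|$ using $\alpha$. The other subtle point, handled by \cite{XZ20}, is whether one genuinely needs the full strength of \cite{Bir19} or whether the more classical \cite{HMX14} (boundedness of $\epsilon$-lc Fanos of fixed dimension and volume) suffices; I would present the argument so that it goes through with either input, noting that the $\epsilon$-lc property of $X$ itself — not just of the pair — also follows directly from $\alpha(X)\ge\frac{n}{n+1}$ since $A_X(E)\ge \alpha(X)\,T_X(E)\ge \alpha(X)>0$ uniformly, which already makes $\{X\}$ a bounded family of $\epsilon$-lc Fanos of bounded volume by \cite{HMX14} without even introducing $D$.
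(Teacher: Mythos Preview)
There is a genuine logical error in step (i). From $A_X(E)\ge S_X(E)$ (K-semistability) and $S_X(E)\le \frac{n}{n+1}T_X(E)$ you \emph{cannot} conclude $A_X(E)\ge \frac{n}{n+1}T_X(E)$: the second inequality points the wrong way for chaining. Indeed the conclusion is simply false: $\mathbb{P}^n$ is K-polystable but $\alpha(\mathbb{P}^n)=\frac{1}{n+1}$, far below $\frac{n}{n+1}$. What one can use is the \emph{lower} comparison $S_X(E)\ge \frac{1}{n+1}T_X(E)$ (see \cite{FO18} or \cite{BJ20}), which together with $A_X(E)\ge S_X(E)$ yields only $\alpha(X)\ge \frac{1}{n+1}$. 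This weaker bound is exactly what \cite{Jia17} establishes and then feeds into \cite{Bir19}, so your overall architecture is salvageable once the constant is corrected---but it then genuinely requires the full strength of Birkar's BAB theorem, not just \cite{HMX14}.

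Your final paragraph contains a second error of the same flavor: $A_X(E)\ge \alpha(X)\cdot T_X(E)$ does not give a uniform lower bound on $A_X(E)$, since $T_X(E)$ is not bounded below by $1$ (it can be arbitrarily small). So a lower bound on $\alpha(X)$ does not by itself make $X$ $\epsilon$-lc, and the proposed shortcut to \cite{HMX14} without an auxiliary boundary divisor does not work.

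For comparison, the paper's proof takes an entirely different route, avoiding $\alpha$-invariants and \cite{Bir19}. It uses the local--global volume inequality (Theorem~\ref{t-localtoglobal}) to bound $\hvol(x,X)$ from below at every point of $X$, then the degree formula for \'etale-in-codimension-one covers from \cite{XZ20} applied to the index-one cover of $K_X$ to bound the Cartier index of $K_X$ uniformly, and concludes via \cite{HMX14}*{Theorem 1.8}. This is precisely how the paper justifies that the weaker input \cite{HMX14} suffices.
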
 

The first proof of Theorem \ref{t-boundedness} in  \cite{Jia17} heavily relies on \cite{Bir19}. Here following \cite{XZ20}, we give a different argument using normalized volume, in particular, we will show that the weaker boundedness theorem as in \cite{HMX14}*{Theorem 1.8} is enough for our purpose. 

First, in the course of generalizing \cite{Fuj18} (see Example \ref{e-bounded}), in \cite[Theorem 1]{Liu18}, Yuchen Liu found that there is an inequality connecting the local volume and global volume for a K-semistable Fano variety. Such an inequality was later generalized to an arbitrary $\mathbb{Q}$-Fano variety $X$ in \cite{BJ20}.
 \begin{thm}[{\cite{Fuj18,Liu18, BJ20}}]\label{t-localtoglobal}
 Let $X$ be a $\mathbb{Q}$-Fano variety, then for any $x\in X$, we have
 \[
 \hvol(x, X)\cdot \big(\frac{n+1}{n} \big)^n\ge (-K_X)^n\cdot\delta(X)^n
 \]
 \end{thm}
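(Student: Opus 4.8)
The plan is to establish the pointwise estimate $\hvol_{X,x}(v)\ge \delta(X)^n\big(\tfrac{n}{n+1}\big)^n(-K_X)^n$ for every valuation $v\in\Val_{X}$ with $c_X(v)=\{x\}$, and then to pass to the infimum over such $v$. By a density-and-continuity argument — using Theorem~\ref{t-singularityvolume}(1) (so that $\hvol(x,X)$ equals the infimum over divisorial valuations centered at $x$, because the minimizer is quasi-monomial and $\hvol$ is continuous on the rational face containing it) — it suffices to prove the estimate when $v$ is a divisorial valuation centered at $x$. For such $v$ one automatically has $A_X(v)<+\infty$, and by Izumi's theorem $0<\vol(v)<+\infty$ while the valuation ideals $\fa_\lambda(v)$ are $\fm_x$-primary. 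The two genuine inputs will be the valuative description $\delta(X)=\inf_v\delta_X(v)$ (Theorem~\ref{t-deltaquasi}(1)) and an elementary comparison between $\vol(-K_X-tv)$ and $\vol(v)$.

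That comparison is
\[
\vol(-K_X-tv)\ \ge\ (-K_X)^n-t^n\,\vol(v)\qquad(t\ge 0),
\]
which I would prove as follows. Since $c_X(v)=\{x\}$ is a closed point, $\mathcal{O}_X/\fa_{mt}(v)$ is a coherent sheaf of finite length supported at $x$, so tensoring it with the line bundle $\mathcal{O}_X(-mK_X)$ does not change its length. Applying $H^0$ to the exact sequence
\[
0\to \fa_{mt}(v)\otimes\mathcal{O}_X(-mK_X)\to \mathcal{O}_X(-mK_X)\to \big(\mathcal{O}_X/\fa_{mt}(v)\big)\otimes\mathcal{O}_X(-mK_X)\to 0
\]
gives $h^0\big(\fa_{mt}(v)\otimes\mathcal{O}_X(-mK_X)\big)\ge h^0(-mK_X)-\dim_k\mathcal{O}_X/\fa_{mt}(v)$; dividing by $m^n/n!$, letting $m\to\infty$, and using $\dim_k\mathcal{O}_X/\fa_{mt}(v)=\tfrac{(mt)^n}{n!}\vol(v)+o(m^n)$ yields the inequality.

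The rest is bookkeeping. Setting $c:=\big((-K_X)^n/\vol(v)\big)^{1/n}$, the lower bound above is nonnegative precisely on $[0,c]$, so
\[
S_X(v)=\frac{1}{(-K_X)^n}\int_0^\infty \vol(-K_X-tv)\,dt\ \ge\ \frac{1}{(-K_X)^n}\int_0^{c}\big((-K_X)^n-t^n\vol(v)\big)\,dt=\frac{n}{n+1}\,c .
\]
Combining with $A_X(v)\ge\delta(X)\,S_X(v)$ (Theorem~\ref{t-deltaquasi}(1)) gives $A_X(v)\ge \delta(X)\,\tfrac{n}{n+1}\big((-K_X)^n/\vol(v)\big)^{1/n}$; raising to the $n$-th power and multiplying through by $\vol(v)$ turns this into $\hvol_{X,x}(v)=A_X(v)^n\,\vol(v)\ge \delta(X)^n\big(\tfrac{n}{n+1}\big)^n(-K_X)^n$. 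Taking the infimum over $v$ and multiplying both sides by $\big(\tfrac{n+1}{n}\big)^n$ gives the statement.

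The arithmetic in the last two paragraphs is entirely mechanical; the substantive content is the volume estimate together with the reduction, in the first paragraph, to valuations with finite log discrepancy and positive, finite volume. This reduction is where the hypothesis that $x$ is a closed point is genuinely used, and — in contrast to the K-semistable case, where $\delta(X)\ge 1$ and one recovers Liu's original inequality $\hvol(x,X)\,\big(\tfrac{n+1}{n}\big)^n\ge (-K_X)^n$ — in the general $\bQ$-Fano setting it relies on the existence and quasi-monomiality of the normalized-volume minimizer recalled in Theorem~\ref{t-singularityvolume}. I expect carrying out this reduction precisely, rather than any of the estimates themselves, to be the main obstacle.
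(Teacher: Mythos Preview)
Your argument is correct and is precisely the approach of \cite{BJ20}, which is all the paper cites for this theorem; the paper itself sketches the identical computation in Example~\ref{e-bounded} for the special case of a smooth point with $v=\ord_{E_x}$.

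One simplification: the reduction to divisorial valuations is unnecessary, and your stated worry that it is ``the main obstacle'' is misplaced. The estimate holds directly for every $v\in\Val_{X,x}$. If $A_X(v)=+\infty$ then $\hvol_{X,x}(v)=+\infty$ by Definition~\ref{d-normvol} and there is nothing to prove. If $A_X(v)<+\infty$, then Izumi's inequality for klt singularities (valid for \emph{all} such valuations, not only divisorial ones; see e.g.\ \cite{Li18,JM12}) gives $v\le C\cdot\ord_x$ for some $C>0$, hence $\fa_\lambda(v)\subseteq\fm_x^{\lfloor\lambda/C\rfloor}$ and $\vol(v)>0$. The condition $c_X(v)=\{x\}$ alone already forces each $\fa_\lambda(v)$ to be $\fm_x$-primary (take $c'=\min_i v(f_i)>0$ over generators $f_i$ of $\fm_x$, so $\fm_x^{\lceil\lambda/c'\rceil}\subseteq\fa_\lambda(v)$). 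Thus your exact-sequence argument and the subsequent arithmetic go through verbatim for arbitrary $v$ with $A_X(v)<+\infty$, and you never need to invoke Theorem~\ref{t-singularityvolume} or any approximation.
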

\begin{proof}See \cite[Theorem D]{BJ20}.
\end{proof}

Therefore, if we bounded $(-K_X)^n$ and $\delta(X)$ from below, we have a lower bound of  $\hvol(x, X)$.


By the proof of the uniqueness of the minimizer in \cite{XZ20} (see Theorem \ref{t-singularityvolume}(2)), we know that if we take a finite cover $f\colon(y\in Y)\to (x\in X)$ which is \'etale incodimension 1, then 
$$\hvol(y,Y)=\deg(f)\cdot \hvol(x,X)$$
(see \cite[Theorem 1.3]{XZ20}).
Applying this locally to the index-1 cover of $K_X$,  since $\hvol(y,Y)\le n^n$ by \cite{LX19}*{Thm. 1.6}, the Cartier index of any point $x\in X$ is bounded from above by ${n^n}/{\hvol(x,X)}$. Thus the Cartier index of $X$ is bounded from above, therefore we know all such $X$ form a bounded family by \cite[Theorem 1.8]{HMX14}. 

\begin{rem}
A  strong conjecture predicts that $x\in X$ with volume bounded from below, always specialize to a bounded family of singularities with a torus action. 
\end{rem}

\subsection{Openness}\label{ss-openness}
By Theorem \ref{t-boundedness}, there exists a positive integer $M$ such that $-MK_{X}$ is a very ample Cartier divisor for any $n$-dimensional K-semistable $\mathbb Q$-Fano variety $X$, i.e. 
$|-MK_X|\colon X\hookrightarrow \mathbb P^N$ for some uniform $N$. 
Thus there is a finite type Hilbert scheme ${\rm Hilb}(\PP^N)$, such that any embedding $|-MK_X|\colon X\hookrightarrow \mathbb P^N$ gives a point in ${\rm Hilb}(\PP^N)$.
Then there is a locally closed subscheme $W\subset {\rm Hilb}(\PP^N)$ such that a map
$T\to W$ factors through $W$ if and only if the pull back family ${\rm Univ}_T$ is a $\mathbb Q$-Gorenstein family of $\mathbb{Q}$-Fano varieties and $\mathcal{O}(-MK_{{\rm Univ}_T/T})\sim_{T}\mathcal{O}(1)$. 

Next, to know that $\mathfrak{X}^{\rm Kss}_{n,V}$ is an Artin stack of finite type, we will show the K-semistability is an open condition in a $\bQ$-Gorenstein family of $\bQ$-Fano varieties. Thus there is an open subscheme $U\subset W$, and $\mathfrak{X}^{\rm Kss}_{n,V}=[U/{\rm PGL}(N+1)]$.

\medskip

 In \cite{Xu19} and \cite {BLX19}, two different proofs respectively using the normalized volume and $\delta$-invariants are given. Both proofs use Birkar's theorem on the existence of bounded complements \cite{Bir19}, respectively in the local and global case. 
\begin{thm}[{Openness, \cite{Xu19, BLX19}}]\label{t-openness}
If $X \to B$ is a $\mathbb{Q}$-Gorenstein family of $\mathbb{Q}$-Fano varieties, then the locus where the fiber is K-semistable is an open set. 
\end{thm}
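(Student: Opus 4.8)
The plan is to show that K-semistability is both a constructible and a stable-under-generization condition in a $\bQ$-Gorenstein family of $\bQ$-Fano varieties, and to reformulate it via the $\delta$-invariant so that these two properties become accessible.

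First I would pass to the bounded setting: by Theorem \ref{t-boundedness}, after fixing $n$ and a lower bound on the volume, all K-semistable fibers live in a bounded family, so $-MK_{X_t}$ is very ample for a uniform $M$ and we may work inside the universal family over a finite-type scheme $W\subset \mathrm{Hilb}(\mathbb{P}^N)$ as in Section \ref{ss-openness}. By the equivalence of Theorem \ref{t-delta} (or Theorem \ref{t-valkstable}(1)), a fiber $X_t$ is K-semistable if and only if $\delta(X_t)\ge 1$. So the whole problem reduces to controlling the function $t\mapsto \delta(X_t)$ in a family.

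Next I would use the complements description. By \eqref{e-delta} and Proposition \ref{p-deltaapproximation} together with the boundedness of complements (Lemma \ref{l-complementbounded}, relying on \cite{Bir19}), when $\delta(X_t)\le 1$ the invariant is computed as an infimum of $\delta_{X_t}(E)$ over lc places $E$ of $N$-complements, for a uniform $N=N(n)$. The key point is that $N$-complements of the fibers of a bounded family are themselves parametrized by a bounded family (a stratification argument as in Paragraph \ref{say-stratification}), so after base change and fiberwise log resolution one obtains a family of pairs $(X\times_B B_i, \cD)$ together with finitely many prime divisors $E_{i,j}$ which restrict fiberwise to lc places of complements. Invariance of log discrepancies and of $S(E)$ in families (via \cite{HMX13, HMX14}, as in Paragraph \ref{say-locallyconstant}) then shows that for a fixed point of the relevant dual complex, the function $b\mapsto \delta_{X_b}(E_b)$ is locally constant on each stratum $B_i$. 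Taking the infimum over the (compact) dual complexes over finitely many strata, one deduces that $\min\{1,\delta(X_t)\}$ is a constructible function of $t$, and in fact locally constant on each stratum; in particular the locus $\{\delta\ge 1\}$ is constructible.

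Finally I would upgrade constructibility to openness by checking stability under generization: if $t_0$ is a specialization of $t_1$ in $B$ with $X_{t_0}$ K-semistable, then $X_{t_1}$ is K-semistable. This follows because any destabilizing valuation on the generic fiber spreads out and specializes to a valuation on the special fiber with no larger normalized log discrepancy ratio — concretely, an $N$-complement and its lc place over $X_{t_1}$ can be specialized to an $N$-complement with lc place over $X_{t_0}$, and $\delta$ is lower semicontinuous under such specialization, so $\delta(X_{t_0})\le \delta(X_{t_1})$; combined with constructibility this forces $\{t : \delta(X_t)\ge 1\}$ to be open. The main obstacle, and the place where \cite{Bir19} is essential, is the uniform boundedness of the complements: without a single $N$ working for all fibers simultaneously one cannot parametrize the potential destabilizers in a finite-type family, and hence cannot conclude constructibility of $\delta$. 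Everything else — invariance of $S$ and of log discrepancies in families, semicontinuity under specialization — is a matter of carefully applying \cite{HMX13, HMX14} and the valuative criterion already established.
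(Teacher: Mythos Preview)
Your proposal is correct and follows essentially the same route as the paper's proof of Theorem \ref{t-constructible}(1): reformulate K-semistability as $\delta\ge 1$, use the boundedness of $N$-complements from \cite{Bir19} together with the stratification of Paragraphs \ref{say-stratification}--\ref{say-locallyconstant} and the invariance result \cite{HMX13}*{Thm.\ 1.8} to get constructibility of $\min\{\delta,1\}$, and then combine with lower semicontinuity to conclude openness. Two minor remarks: the appeal to Theorem \ref{t-boundedness} in your first paragraph is unnecessary here since the family $X\to B$ is already given with fixed invariants; and for the generization step the paper simply cites the lower semicontinuity of $\delta$ proved in \cite{BL18} rather than arguing by specializing complements, though your sketch is a legitimate alternative (and indeed closer in spirit to \cite{BLX19}).
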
 

We have indeed shown two stronger statements, each of which implies Theorem \ref{t-openness}. 

\begin{thm}\label{t-constructible}
For a point $s\in B$. We have the following:
\begin{enumerate}
\item   If $X \to B$ is a $\mathbb{Q}$-Gorenstein family of $\mathbb{Q}$-Fano varieties, then the function 
$$(s\in B) \to \min\{\delta(X_{\bar{s}}),1\}$$ is a constructible, lower-semicontinous function, and
\item  for a family of klt singularities $\pi\colon (B\subset X)\to B$, the function 
$$(s\in B) \to \hvol({s}, X_{{s}})$$ is a constructible,  lower-semicontinous function. 
\end{enumerate}
\end{thm}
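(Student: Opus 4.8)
The plan is to prove Theorem~\ref{t-constructible}, which contains Theorem~\ref{t-openness} as an immediate corollary: K-semistability of $X_{\bar s}$ is equivalent to $\delta(X_{\bar s})\ge 1$ by Theorem~\ref{t-delta}, so the locus $\{\delta(X_{\bar s})\ge 1\}$ coincides with $\{\min\{\delta(X_{\bar s}),1\}=1\}$, which is open once we know the displayed function is constructible and lower semicontinuous. So it suffices to establish the two assertions about constructibility and lower semicontinuity, and by the cone construction of Section~\ref{ss-cone} (together with the local-to-global inequality Theorem~\ref{t-localtoglobal}) the local statement (2) and the global statement (1) are tightly linked; I would treat (1) in detail and indicate how (2) runs in parallel using $\hvol$ in place of $\delta$.

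First I would reduce to a bounded setting. After spreading out and stratifying $B$ we may assume $B$ is smooth and irreducible and that the family $X\to B$ sits inside a bounded family as in Theorem~\ref{t-boundedness}; constructibility allows us to argue on each stratum separately, and for lower semicontinuity it suffices (by Noetherian induction and the valuative criterion for specialization) to prove: if $0\in B$ is a closed point and $\eta$ the generic point, then $\min\{\delta(X_0),1\}\le \min\{\delta(X_\eta),1\}$, with equality on a dense open. The key input is the approximation of $\delta$ by log canonical thresholds of $N$-complements: by Proposition~\ref{p-deltaapproximation} and the boundedness of complements (Lemma~\ref{l-complementbounded}), when $\delta(X_s)\le 1$ the value $\delta(X_s)$ is computed, up to arbitrarily small error, by lc places of $N$-complements for a uniform $N=N(n)$. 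Then I would invoke the construction in Paragraph~\ref{say-stratification}: after a finite stratification of $B$ and a base change there is a family $\cW\to(X\times_B B' ,\cD)\to B'$ giving fiberwise log resolutions, and a finite set of divisors $E_{i,j}$ on $\cW$ that restrict, over each $b$, to the lc places of the complement $\cD_b$. By the invariance of log discrepancies and of $S(E)$ in families (\cite{HMX13}*{Thm.~1.8}, used exactly as in Paragraph~\ref{say-locallyconstant}), the function $b\mapsto \delta_{X_b}(E_{i,j,b})=A_{X_b}(E_{i,j,b})/S_{X_b}(E_{i,j,b})$ is \emph{constant} on each stratum. Taking the minimum over the finitely many $(i,j)$ and over the finitely many strata gives constructibility of $\min\{\delta(X_s),1\}$.

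For lower semicontinuity I would argue that the minimum over lc places of $N$-complements behaves well under specialization: an $N$-complement $D_\eta$ of $X_\eta$ extends (after a base change of the DVR, using properness of the relevant Hilbert scheme and that the limit of complements is a complement) to an $N$-complement $D_0$ of $X_0$, and any lc place $E$ of $(X_0,\tfrac1N D_0)$ — or more precisely the quasi-monomial valuation computing $\delta(X_0)$, which by Theorem~\ref{t-quasimini} is an lc place of an $N$-complement and lives on the dual complex $W_0$ — can be matched with a valuation on the generic fiber via the family $\cW$, so that $\delta(X_0)=\delta_{X_0}(v_0)\ge \inf \delta_{X_\eta}(\cdot)=\delta(X_\eta)$ when the left side is $\le 1$. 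Combined with the constructibility already shown, this yields that $\min\{\delta(X_s),1\}$ only drops on closed subsets, i.e.\ is lower semicontinuous. Statement (2) is proved the same way, replacing Proposition~\ref{p-deltaapproximation} and Lemma~\ref{l-complementbounded} by their local analogues — Proposition~\ref{p-kollar}, the boundedness of local complements from \cite{Bir19}, and the fact (\cite{Blu18}) that $\hvol$ is computed by a Koll\'ar component, hence by an lc place of a bounded local complement — together with the invariance of $A$ and $\vol$ in families of Koll\'ar components, and Theorem~\ref{t-localtoglobal} to pass back and forth with (1) if desired.

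The main obstacle, I expect, is the lower-semicontinuity half, specifically controlling what happens to the \emph{minimizing} valuation under specialization when $\delta=1$ exactly (the boundary case that already required the delicate perturbation argument in the proof of Proposition~\ref{p-deltaapproximation}): one must ensure that the quasi-monomial valuation computing $\delta(X_0)$ genuinely arises as a limit of valuations bounded by a single family $\cW$ over all of $B$, which is why the uniform boundedness of complements (Lemma~\ref{l-complementbounded}) and the finiteness of the stratification in Paragraph~\ref{say-stratification} are essential rather than cosmetic. Everything else — the reductions, the use of \cite{HMX13} for invariance of $A$ and $S$, and the passage from openness to the two displayed semicontinuity statements — is bookkeeping on top of results already available in the excerpt.
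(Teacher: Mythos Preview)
Your constructibility argument is essentially the paper's: both reduce via Proposition~\ref{p-deltaapproximation} and Lemma~\ref{l-complementbounded} to lc places of $N$-complements for a uniform $N$, stratify the parameter space as in Paragraph~\ref{say-stratification}, and invoke \cite{HMX13}*{Thm.~1.8} (as in Paragraph~\ref{say-locallyconstant}) to see that $A_{X_s}(E_s)/S_{X_s}(E_s)$ is constant on each stratum, so that $\min\{\delta,1\}$ takes only finitely many values with constructible level sets.

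The lower-semicontinuity argument, however, has a genuine gap: the inequality you derive points the wrong way. You correctly state at the outset that what is needed is $\min\{\delta(X_0),1\}\le\min\{\delta(X_\eta),1\}$, but your actual argument --- take the minimizer $v_0$ on $X_0$, lift it via $\cW$ to a valuation on $X_\eta$, and use invariance of $A$ and $S$ --- yields $\delta(X_0)=\delta_{X_0}(v_0)=\delta_{X_\eta}(v_\eta)\ge\delta(X_\eta)$, which is \emph{upper} semicontinuity. There is also a second problem hidden in that step: the minimizer $v_0$ is an lc place of \emph{some} $N$-complement of $X_0$, but there is no reason it should be an lc place of the particular $D_0$ you produced by specializing $D_\eta$, so the lift through the family $\cW$ is not available as written. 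The paper does not attempt to extract lower semicontinuity from the complement machinery at all; it simply cites \cite{BL18}, where lower semicontinuity of $\delta$ is obtained from the approximation $\delta=\lim_m\delta_m$, the uniform estimate of Proposition~\ref{p-Sapproximation}(2), and the classical lower semicontinuity of log canonical thresholds in families. Only the constructibility half genuinely requires bounded complements.
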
 

\begin{proof}[Sketch of Proof for (1)]Following \cite{BLX19}, we give a sketch of the proof for the global result, which is similar to the one for Theorem \ref{t-quasimini}. 
 In \cite{BLX19}, (1) is proved if we replace $\delta(X_{s})$ by $\delta(X_{\bar{s}})$, where $\bar{s}$ is the geometric point over $s\in B$. Then by \cite{Zhu20}, we know
 $$\min\{\delta(X_{s}),1\}= \min\{\delta(X_{\bar{s}}),1\}.$$

In \cite{BL18}, it is showed that $\delta(X_t)$ is a lower semi-continuous function in Zariski topology on  $t\in B$ for a $\mathbb{Q}$-Gorenstein family of $\mathbb{Q}$-Fano varieties $X\to B$. Therefore, we only need to show the constructibility of $\delta(X_t)$.

Combining  Proposition \ref{p-deltaapproximation} and Lemma \ref{l-complementbounded}, for any $X_t$, if $\delta(X_t)\le 1$, then $\delta(X_t)=\inf \frac{A_{E_t}(E_t)}{S_{X_t}(E_t)}$ for all $E_t$ which is an lc place of an $N$-complement of $X_t$, where $N$ only depends on the dimension of $X_t$. 

In particular, we can apply the argument of Paragraph \ref{say-stratification} and conclude there exists a finite type scheme $S/B$ (in particular, $S$ has finitely many components) and a family of divisors $\mathcal{D}\subset X_S:=X\times_BS$  over $S$ with $\mathcal{D}\sim_{S}-NK_{X_S/S}$ such that $(X_S,\mathcal{D})$ admits a fiberwise log resolution 
$$f_S\colon \big(Y, \Delta_Y:={\rm Ex}(f_S)+(f_S)_*^{-1}(\mathcal{D})\big)\to (X_S,\mathcal{D}),$$ i.e. restricting over any $s\in S$, $(Y_s,\Delta_{Y_s})\to (X_s, D_s:=\mathcal{D}_S\times_S\{s\})$ is a log resolution, and for any $t\in B$, and any $N$-complement $D\in |-NK_{X_t}|$, there exists a point $s\in S$, such that $(X_s, \mathcal{D}_s)\cong (X_t, D)$.

 Applying the argument as in Paragraph \ref{say-locallyconstant}, using \cite{HMX13}*{Theorem 1.8}, we conclude that for a fixed $s$, $a_i:=\inf\frac{A_{X_s}(E_s)}{S_{X_s}(E_s)}$, where the infimum runs over all divisors $E_s$ corresponding to points on $W_{\bQ}$,  only depends on the component $S_i$ of $S$ containing $s$. And Lemma \ref{p-deltaapproximation} implies that 
$$\delta(X)=\min \{\ a_i\ |\  \mbox{there exists $s\in S_i$ such that $X_s\cong X$}\}.$$
\end{proof}
As we discuss at the beginning of Section \ref{ss-openness}, by combining Theorem \ref{t-boundedness} and \ref{t-openness}, the following theorem holds. 
 \begin{thm}[Moduli of K-semistable $\bQ$-Fano varieties]\label{c-moduli}
The functor $\mathfrak{X}^{\rm Kss}_{n,V}$ is represented by an Artin stack of finite type.
\end{thm}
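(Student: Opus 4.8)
\textbf{Proof proposal for Theorem \ref{c-moduli}.}

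The plan is to assemble the statement from the boundedness result (Theorem \ref{t-boundedness}) and the openness result (Theorem \ref{t-openness}), together with the fact that the local theory of families of $\bQ$-Fano varieties (Koll\'ar's condition, \cite{Kol08, Kol21}) has already settled how to put a scheme structure on the moduli functor. Concretely, first I would invoke Theorem \ref{t-boundedness} to produce a single positive integer $M = M(n,V)$ such that $-MK_X$ is very ample and $h^0(X, -MK_X) = N+1$ is constant for every $n$-dimensional K-semistable $\bQ$-Fano variety $X$ of volume $V$; here one also uses vanishing of higher cohomology (Kawamata--Viehweg on a klt Fano, after passing to a uniform $M$) so that $N$ is genuinely uniform. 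This gives a closed immersion $|-MK_X|\colon X\hookrightarrow \PP^N$, so $X$ is a point of the Hilbert scheme $\mathrm{Hilb}(\PP^N)$ with its (fixed) Hilbert polynomial, which is a projective scheme of finite type.

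Next I would cut out inside $\mathrm{Hilb}(\PP^N)$ the locally closed subscheme $W$ parametrizing those subschemes $X\subset \PP^N$ which, with the induced polarization, form a $\bQ$-Gorenstein family of $\bQ$-Fano varieties satisfying Koll\'ar's condition and with $\cO_X(1)\sim -MK_X$. That such a locally closed $W\subset \mathrm{Hilb}(\PP^N)$ exists — i.e.\ that the conditions ``flat with klt $\bQ$-Fano fibers, $-K$ $\bQ$-Cartier of the prescribed index, Koll\'ar's condition holds'' are representable by a locally closed subscheme — is precisely the local theory recalled after Definition \ref{d-Qfamily}, so I would cite \cite{Kol08, Kol21} (and \cite[Theorem 11.6]{Kol16} for the local constancy of the volume) rather than reprove it. The group $G = \mathrm{PGL}(N+1)$ acts on $W$ (it permutes the embeddings coming from different choices of basis of $H^0(-MK_X)$), and since the polarization $-MK_X$ is intrinsic, two points of $W$ give isomorphic $\bQ$-Fano varieties iff they are in the same $G$-orbit; hence the stack of $\bQ$-Gorenstein families of $\bQ$-Fano varieties of the given numerical type is the quotient stack $[W/G]$, which is an Artin stack of finite type.

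Then I would apply Theorem \ref{t-openness}: K-semistability is an open condition in a $\bQ$-Gorenstein family, so the locus $U\subset W$ over which the universal fiber is K-semistable is a $G$-invariant open subscheme of $W$, hence itself of finite type. Since $\mathfrak{X}^{\rm Kss}_{n,V}(S)$ for a normal $S$ is by definition the groupoid of $\bQ$-Gorenstein families over $S$ whose geometric fibers are K-semistable of dimension $n$ and volume $V$, and since any such family, after choosing a local trivialization of $\pi_*(-MK_{X/S})$, is classified by a map $S\to U$ well-defined up to the $G$-action, one obtains $\mathfrak{X}^{\rm Kss}_{n,V} \cong [U/G]$; for a general (possibly non-reduced) base one uses Koll\'ar's condition in the definition of the functor so that the same description persists. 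As $U$ is a finite-type scheme and $G$ is a finite-type group scheme acting on it, $[U/G]$ is an Artin stack of finite type, which is the assertion. The main obstacle in making this rigorous is not any of the steps above in isolation — each is a citation — but rather the bookkeeping needed to check that the functor $\mathfrak{X}^{\rm Kss}_{n,V}$ as literally defined (with Koll\'ar's condition over arbitrary bases) is equivalent to $[U/G]$: one must verify that every family over an arbitrary base locally arises from a map to $U$, which is exactly where the subtle compatibility of reflexive powers with base change enters and is the reason the whole discussion was routed through \cite{Kol08, Kol21}; I would treat that verification as the technical heart and everything else as assembly.
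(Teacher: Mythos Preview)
Your proposal is correct and follows essentially the same approach as the paper: the paper also invokes Theorem \ref{t-boundedness} to obtain a uniform $M$ and an embedding into $\PP^N$, cuts out the locally closed $W\subset \mathrm{Hilb}(\PP^N)$ parametrizing $\bQ$-Gorenstein families of $\bQ$-Fano varieties with $\cO(1)\sim -MK$, applies Theorem \ref{t-openness} to obtain the open $U\subset W$, and concludes $\mathfrak{X}^{\rm Kss}_{n,V}=[U/\mathrm{PGL}(N+1)]$. Your write-up is in fact somewhat more detailed than the paper's own sketch (e.g.\ the remark on vanishing to pin down $N$, and the explicit flag that the equivalence with $[U/G]$ over non-reduced bases is where Koll\'ar's condition does its work).
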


\section{Good moduli space $X^{\rm kps}_{n,V}$}\label{s-kpoly}

In this section, we discuss the existence of a good moduli space ${X}^{\rm Kps}_{n,V}$ of $\mathfrak{X}^{\rm Kss}_{n,V}$. Recall an algebraic space 
$Y$ is called  {\it a good moduli space} of an Artin stack $\cY$, if there is a quasi-compact morphism $\pi\colon \cY\to Y$ such that
\begin{enumerate}
\item $\pi_*$ is an exact functor on quasi-coherent sheaves; and
\item $\pi_*(\mathcal{O}_{\cY})=\cO_Y$.
\end{enumerate}
(see \cite[Def. 4.1]{Alp13}). A typical example of good quotient arises from the GIT setting: if a reductive group $G$ acts on a polarized projective scheme $(X,L)$, and let $X^{\rm ss}\subset X$ be the semistable locus, then the stack $[X^{\rm ss}/G]$ admits a good moduli space which is the GIT quotient $X/\!\!/G$. 

For an Artin stack, admitting a good moduli space  is a quite delicate property and it carries strong information of the orbit geometry. In a trilogy of works \cites{LWX18, BX19, ABHX19}, this was established for $\mathfrak{X}^{\rm Kss}_{n,V}$, using 
 the abstract theory developed in \cite{AHH18} and tools from the MMP to obtain finite generation. They key is to show that, for special kinds of pointed surfaces $0\in S$, a family of K-semistable Fano varieties over the punctured surface $S\setminus\{0\}$ can be extended to such a family  over the entire surface $S$. 

\subsection{Separated quotient}
In this section, we will discuss one key property that K-stability grants in the construction of the moduli space.
As we have already seen in Example \ref{ex-quadratic}, a family of $\mathbb{Q}$-Fano varieties $X^{\circ}$ over a punctured curve $C^{\circ}=C\setminus\{0\}$ could have many different fillings to be $\mathbb{Q}$-Fano varieties $X$ over $C$. Therefore, we have to only look at the fillings which are K-semistable. Moreover, since a K-polystable $\mathbb{Q}$-Fano could have an infinite automorphism group, i.e., in general we can not expect the extension family is unique, but what one should expect from separatedness of $ M^{\rm Kps}_{n,V}$ is that any two K-semistable fillings are {\it $S$-equivalent}. 

\begin{defn}
Two K-semistable  $\mathbb{Q}$-Fano varieties $X$ and $X'$ are \emph{S}-equivalent  if they degenerate to a common K-semistable log Fano pair via special test configurations. 
\end{defn}

Thus we aim to show for a punctured family of $\bQ$-Fano varieties, the K-semistable filling is unique up to a $S$-equivalence (Theorem \ref{t-main}).
This has been quite challenging for a while. In \cite{LWX18}, the case when $X \to C$ arises from a test configuration was solved, i.e., it was proved that any two K-semistable degenerations $X$ and $X'$ of a same K-semistable $\mathbb{Q}$-Fano variety, are S-equivalent. Therefore, the orbit inclusion relation for an S-equivalence class of a K-semistable $\bQ$-Fano varieties behaves exactly in the nice way as the GIT situation (see \cite[Thm. 3.5]{New78}). 
In particular, this gives the following description of K-polystable $\mathbb{Q}$-Fano variety as the minimal element in the $S$-equivalence class.
\begin{thm}[{\cite{LWX18}}]\label{t-sequivalence}
A  $\mathbb{Q}$-Fano variety $X$ is \emph{K-polystable} if it is K-semistable and any special test configuration $\cX$ of $X$ with  the central fiber $X_0$ being K-semistable satisfies $X\simeq \cX_0$. 
\end{thm}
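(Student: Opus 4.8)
The plan is to prove the stated implication: a K-semistable $\bQ$-Fano variety $X$ such that every special test configuration with K-semistable central fiber has central fiber isomorphic to $X$ is necessarily K-polystable. (The reverse implication is easier and I would only sketch it: for a special test configuration $\cX$ with central fiber $X_0$, the invariant $\Fut(\cX)$ equals that of the product test configuration of $X_0$ given by the $\bG_m$-action on $X_0$, because $a_0,a_1$ in Definition \ref{d-futaki} only depend on the degeneration; if $X_0$ is K-semistable, reversing this $\bG_m$-action forces $\Fut(\cX)=0$, and K-polystability of $X$ then forces $\cX\cong X\times\bA^1$ outside codimension $2$, whence $X_0\cong X$.)

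I would argue by contradiction. Suppose $X$ is K-semistable and satisfies the displayed property, but is not K-polystable; then there is a test configuration with vanishing generalized Futaki invariant that is not trivial outside codimension $2$. By Theorem \ref{t-specialdegeneration} — together with the fact, also from \cite{LX14}, that K-polystability can be checked on special test configurations and that a special test configuration which is trivial outside codimension $2$ is a product one — this produces a \emph{special} test configuration $\cX$ with $\Fut(\cX)=0$ whose central fiber $X_0$ is not isomorphic to $X$. By the hypothesis, $X_0$ is then not K-semistable. So everything reduces to contradicting the following.

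\emph{Key Lemma}: if $X$ is K-semistable and $\cX$ is a special test configuration of $X$ with $\Fut(\cX)=0$, then its central fiber $X_0$ is K-semistable. This is the technical heart, and is the analogue here of the GIT phenomenon that a closed orbit inside a semistable orbit closure is reached by a degeneration with semistable limit. To prove it I would again argue by contradiction: if $X_0$ is not K-semistable, then by the $\bG_m$-equivariant form of Theorem \ref{t-specialdegeneration} (with respect to the $\bG_m$-action that $\cX$ induces on $X_0$) there is a $\bG_m$-equivariant special test configuration $\cX_0$ of $X_0$ with $\Fut(\cX_0)<0$ and central fiber $X_{00}$. The two commuting degenerations — of $X$ to $X_0$ and of $X_0$ to $X_{00}$ — assemble, through the Rees construction of \ref{s-rees} and finite generation (valid because the divisors involved are dreamy, Lemma \ref{l-integraldreamy}), into a $\bG_m^2$-equivariant family over $\bA^2$; restricting along rational rays of small slope $s>0$ gives special test configurations $\cX_s$ of $X$, all with central fiber $X_{00}$, tending to $\cX$ as $s\to 0^+$. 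A Riemann--Roch / weight-polynomial computation, the global counterpart of C. Li's derivative formula (Lemma \ref{l-dervolume}), then shows $\tfrac{d}{ds}\Fut(\cX_s)\big\vert_{s=0^+}=c\cdot\Fut(\cX_0)<0$ for a positive constant $c$, hence $\Fut(\cX_s)<0$ for small $s>0$, contradicting K-semistability of $X$. (Alternatively one could invoke the later Theorem \ref{t-blzdegeneration}: $\Fut(\cX)=0$ forces $\delta_X(E)=\delta(X)=1$ for the divisorial valuation $E$ attached to $\cX$ via Lemma \ref{l-induced}, and that theorem yields $\delta(X_0)=1$.)

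With the Key Lemma in hand the proof concludes: applying it to the special test configuration produced in the reduction step, whose Futaki invariant vanishes, shows its central fiber is K-semistable, contradicting what we derived there; hence $X$ is K-polystable. I expect the derivative computation in the Key Lemma — equivalently, controlling the behaviour of $\Fut$ under composition of test configurations — to be the only genuinely delicate point, which is precisely why in practice one often prefers to run the whole argument with the Ding invariant, whose convexity along such one-parameter families is more transparent.
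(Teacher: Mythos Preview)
Your proposal is correct and is essentially the original argument of \cite{LWX18}; note that the survey itself does not give a self-contained proof of Theorem \ref{t-sequivalence} but presents it as a consequence of the $S$-equivalence result in \cite{LWX18}, and your Key Lemma is exactly \cite[Lemma 3.1]{LWX18} (which the paper cites explicitly at the end of the proof of Theorem \ref{t-main}). The two-parameter degeneration and derivative argument you outline is precisely how that lemma is established there, so there is no substantive divergence.
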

The argument in \cite{LWX18} later was improved in \cite{BX19} where we show that any two K-semistable fillings of a $\mathbb{Q}$-Gorenstein families of $\mathbb{Q}$-Fano varieties over a smooth curve are S-equivalent. 
For this more general situation, we have to elaborate the argument in \cite{LWX18} into a relative setting. We will discuss more details in the proof of the theorem below.

\begin{thm}[{\cite{BX19}}]\label{t-main}
Let $\pi: (X,\Delta) \to C$ and $\pi': (X',\Delta')\to C$ be $\Q$-Gorenstein families of log Fano pairs over a smooth pointed curve $0 \in C$. Assume there exists an  isomorphism
\[
\phi: (X,\Delta) \times_C C^\circ  \to (X',\Delta') \times_C C^\circ
\] 
over $C^\circ\colon = C\setminus \{0 \}$.
If $(X_0,\Delta_0)$ and $(X_0',\Delta_0')$ are K-semistable, then they are S-equivalent.
\end{thm}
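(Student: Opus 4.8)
The plan is to reduce the uniqueness-up-to-$S$-equivalence statement to a minimization problem for a suitable non-Archimedean functional over valuations, following the strategy outlined for Theorem~\ref{t-sequivalence} in \cite{LWX18} but carried out relatively over the base curve $C$. Concretely, after shrinking $C$ we may assume both families are trivial away from $0$, so the data is an isomorphism $\phi$ over $C^\circ$ and we want to compare the two central fibers $(X_0,\Delta_0)$ and $(X_0',\Delta_0')$. The first step is to build a common model: take a $\Q$-Gorenstein family $(\cX,\cD)\to C\times\bA^1$ (or more precisely a two-parameter degeneration) that simultaneously dominates both $X$ and $X'$ over $C^\circ$, so that each of $(X,\Delta)$ and $(X',\Delta')$ is recovered by a fiber. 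The point of view is that both fillings are encoded by divisorial valuations (the orders of vanishing along the respective special fibers) on the function field $K(X\times C^\circ)\cong K(X'\times C^\circ)$, and one wants to interpolate between these two valuations.

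Next I would use the valuative/Ding-theoretic characterization of K-semistability established in Part~\ref{p-what} (Theorem~\ref{t-valkstable}, Theorem~\ref{t-maintheorem2}): K-semistability of the central fiber is equivalent to non-negativity of $\beta$, equivalently $\DNA\ge 0$, for all linearly bounded filtrations. The key is then a convexity statement: along the ``geodesic'' of filtrations (twists) connecting the filtration induced by $X_0$ and the one induced by $X_0'$, the Ding functional $\DNA$ (or the relevant energy functional in the terminology of \cite{BX19}) is convex, and its value at the two endpoints is $\ge 0$ because both central fibers are K-semistable. Moreover at the endpoints one has equality $\DNA=2\Fut=0$ (this is where one uses that special test configurations realizing these degenerations have vanishing Futaki invariant for a K-semistable variety, cf. Theorem~\ref{t-delta=1} and Lemma~\ref{l-beta=w}). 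Convexity plus vanishing at the two endpoints forces $\DNA\equiv 0$ along the entire geodesic. Then one extracts, from the interior of this family of valuations/filtrations, a minimizer whose associated graded ring — using finite generation coming from \cite{BCHM10} applied to the total space, exactly as in Step~2--3 of the proof of Theorem~\ref{t-specialdegeneration} — produces a common K-semistable degeneration $(X_c,\Delta_c)$ to which both $(X_0,\Delta_0)$ and $(X_0',\Delta_0')$ specialize via special test configurations. That is precisely the assertion that they are $S$-equivalent.

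A more hands-on way to organize the same argument, which is perhaps closer to \cite{BX19}: run an MMP on the common model $(\cX,\cD)$ over $C\times\bA^1$ with respect to $-K_{\cX}-\cD$ scaled appropriately, as in the proof of Theorem~\ref{t-specialdegeneration}, to produce a relative special degeneration; the Futaki-type invariant is monotone along this MMP and is pinned to be $0$ at both ends by K-semistability, so every model in between is again a special degeneration of both sides with vanishing Futaki invariant, and the minimal one is the common K-semistable limit. One must check throughout that the constructions are $\bG_m$-equivariant and compatible with the family structure over $C$, and that Koll\'ar's condition propagates, so that one genuinely stays within $\Q$-Gorenstein families of log Fano pairs; this is the ``book-keeping'' that \cite{BX19} carries out in the relative setting.

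The main obstacle, as I see it, is the finite generation needed to pass from a minimizing valuation (which a priori may only be quasi-monomial of higher rational rank, not divisorial) back to an honest special test configuration with a K-semistable central fiber — this is exactly the analogue of Conjecture~\ref{c-finitegeneration}/\ref{c-maxidd} and is not available in general. The way around it, and what I expect \cite{BX19} actually does, is to never leave the realm where finite generation is automatic: one works only with the two given divisorial valuations and their twists, shows the optimal twist still gives a finitely generated filtration (because it sits in a rational polytope of valuations on which the relevant graded rings are controlled, or because one can realize it on a fixed projective birational model provided by \cite{BCHM10}), and thereby produces the common degeneration without appealing to the full strength of the minimization-over-all-valuations picture. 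Verifying this finite generation in the relative two-parameter setting, together with the equivariance bookkeeping, is the technical heart of the proof.
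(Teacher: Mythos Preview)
Your outline captures the right ingredients---valuations associated to the two central fibers, K-semistability forcing some invariant to vanish, and finite generation via \cite{BCHM10}---but the mechanism you propose (convexity of $\DNA$ along a ``geodesic'' of filtrations/twists interpolating between the two valuations) is not what the paper does, and as stated it is too vague to constitute a proof. In particular you never say on which ring your interpolating filtrations live, and your claim that $\DNA=0$ at the endpoints because ``special test configurations realizing these degenerations have vanishing Futaki invariant'' conflates the generic fiber with the special fiber.

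The paper's approach is more direct and rests on a different identity. Fix a common resolution $\widehat{X}$ dominating both $X$ and $X'$ over $C$. The divisor $\widetilde{X_0'}$ (the birational transform of $X_0'$) defines a filtration $\cF$ on the section ring $R$ of $X_0$, and symmetrically $\widetilde{X_0}$ defines a filtration $\cF'$ on $R'=R(X_0')$. The key computation is an explicit isomorphism of graded pieces
\[
\gr_{\cF}^p R_m \;\cong\; \gr_{\cF'}^{\,mr(a+a')-p} R'_m,
\]
where $a=A_{X,X_0}(\widetilde{X_0'})$ and $a'=A_{X',X_0'}(\widetilde{X_0})$, obtained by multiplying sections by the appropriate power of the uniformizer. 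Summing dimensions yields the exact identity $\beta(\cF)+\beta'(\cF')=0$ (not a convexity inequality). Since $X_0$ and $X_0'$ are K-semistable, $\beta(\cF)\ge 0$ and $\beta'(\cF')\ge 0$, hence both vanish. This replaces your interpolation entirely: no geodesic, no convexity, just a duality between the two filtrations.

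You are right that finite generation is the remaining issue. The paper handles it by showing that $\widetilde{X_0'}$ can be extracted on a model $\mu\colon Z\to X$ via \cite{BCHM10} (using that its log discrepancy with respect to $(X,X_0+\tfrac{1}{mr}D)$ is small for a suitable $D\in\cF^{(a-\epsilon)m}\cR_m$), so the Rees algebra of $\cF$ is finitely generated. The resulting test configuration $Y$ of $X_0$ and $Y'$ of $X_0'$ share the same central fiber (the common $\Proj$ of the graded ring), and $\beta=0$ forces $\Fut(Y)=0$, whence $Y$ is special and its central fiber K-semistable. This is the common degeneration witnessing S-equivalence.
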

\begin{proof}We separate the proof into a few steps.
\medskip

\noindent{\it Step 1: Defining the filtrations.}

Let  $\pi: X \to C$ and  $\pi': X'\to C$
 be $\Q$-Gorenstein families of  $\mathbb{Q}$-Fano varieties over a smooth pointed curve $0 \in C$.  
Assume there exists an isomorphism 
\[
\phi: X \times_C C^\circ  \to X' \times_C C^\circ
\] 
over $C^\circ: C\setminus \{0 \}$ that does not extend to an isomorphism $X \simeq X'$ over $C$. 
After shrinking, we may assume $C$ is affine and there exists a local uniformizer $t$. 

 From this setup, we will construct filtrations on the section rings of the special fibers. 
Set 
\[L := -rK_X
\quad \text{ and } \quad
L':=-rK_{X'},
\]
where $r$ is a positive integer so that $L$ and $L'$ are Cartier. 
For each non-negative integer $m$, set
\begin{align*}
\cR_m &:=H^0(X, \cO_X(mL))   
 & \cR'_{m} &:=  H^0(X', \cO_X(mL') )\\
R_m &:= H^0(X_0,  \cO_X(mL_0) )
& R'_m&:= H^0(X'_0, \cO_X(mL_0) ). 
\end{align*} 
Additionally, set  
$$\cR:= \displaystyle  \oplus_{ m} \cR_m, 
\quad 
R:= \displaystyle  \oplus_{ m} R_m,
\quad
\cR':= \displaystyle  \oplus_{ m} \cR'_m, 
\quad \text{ and } \quad 
R': = \displaystyle \oplus_{ m} R'_m.$$

Fix a common log resolution $\widehat{X}$ of $X$ and $X'$ 
\begin{center}
\begin{tikzcd}
  & \widehat{X}  \arrow[dr, "\psi'"] \arrow[dl,swap,"\psi"]& \\
  X  \arrow[rr, dashrightarrow, "\phi"] & & X'
   \end{tikzcd} 
   \end{center}
   and write $\widetilde{X_0}$ and $\widetilde{X'_0}$ for the birational transforms of $X_0$ and $X'_0$ on $\widehat{X}$. 
 Set 
 \begin{eqnarray}\label{e-defa}
a: = A_{X,X_0}(\widetilde{X'_0}) \mbox{\ \ \ and \ \ \ }a' := A_{X', X'_0 }(\widetilde{X_0}).
\end{eqnarray}
Observe that  $\widetilde{X_0}\neq \widetilde{X'_0}$, since otherwise $\phi$ would extend to an isomorphism over $C$ as $-K_{X}$ and $-K_{X'}$ are ample. Moreover, $a, a'>0$ since $X_0$ and $X_0'$ are klt, and we can apply inversion of adjunction. 

For each $p\in \Z$ and $m\in \N$, set
  \[
  \cF^p \cR_m:= \{ s\in  \cR_m \, \vert \, \ord_{\widetilde{X'_0}} (s) \geq p \},  \mbox{\ \ and \ \ }
    \cF'^p \cR'_m := \{ s\in \cR_m \, \vert \, \ord_{\widetilde{X_0}} (s) \geq p \}
    .\]
We define filtrations of $R$ and $R'$ by setting 
\[
\cF^{p} R_m : = \im( \cF^p \cR_m \to R_m) 
 \mbox{\ \ and \ \ }
\cF'^{p} R'_m : = \im( \cF'^p \cR'_m  \to R'_m),
\]
where the previous maps are given by restriction of sections.  It is straightforward to check that $\cF$ and $\cF'$ are filtrations of $R$ and $R'$. 

Note that a section $s \in R_m$ lies in $\cF^p R_m$  if and only if there exists an extension $\tilde{s} \in \cR_m$ of $s$ such that $\tilde{s} \in  \cF^p \cR_m$. The analogous statement holds for $\cF'$.

\medskip

\noindent{\it Step 2: Relating the filtrations.}

 Since $p^*(X_0) = q^*(X'_0)$ have multiplicity one along $\widetilde{X_0}$ and $\widetilde{X'_0}$, we may write 
\[
K_{ \widehat{X}}  = \psi^*(K_X)  + a \widetilde{X'_0} +  F    \mbox{\ \ and \ \ }
K_{ \widehat{X}}  = \psi'^*(K_{X'})  + a' \widetilde{X_0} + F',
\]
where the components of $\Supp(F) \cup \Supp(F')$ are both $\psi$- and $\psi'$-exceptional. 
Now,
\begin{align*}
\cF^p \cR_m& \simeq H^0\Big(
\widehat{X},\cO_{\widehat{X}} \big(m \psi^* L -p \widetilde{X'_0} \big)\Big)  \\
	&= H^0 \left(
	\widehat{X}, \cO_{\widehat{X}} \big(m \psi'^*L' +(mra- p )\widetilde{X'_0}- mra' \widetilde{X_0}+mr(F-F' ) \big) 
	\right). 
\end{align*}
Hence, for $s\in \cF^p  \cR_m$, multiplying $\psi^*s$ by $t^{mra -p}$ gives an element of 
\[
H^0 \Big(\widehat{X},\cO_{\widehat{X}}\big(m\psi'^*L'  -( mr(a+a')- p) \widetilde{X_0}\big) \Big),\]
which can be identified with  $\cF^{mr(a+a')-p}\cR'_m$. 

As described above, for each $p\in \Z$ and $m\in \N$, there is a map
 \[\varphi_{p,m}:
\cF^{p}\cR_m \longrightarrow 
\cF^{mr(a+a')-p} \cR'_m
,\]
which, when  $\cR_m$ and $\cR'_m$ are viewed as submodules of $K(X)=K(X')$,  sends $s\in \cF^{p}\cR_m$ to $t^{mra-p}({\phi^{-1}})^*(s)$. 
 Similarly, there is a map 
 \[\varphi'_{p,m}:
\cF^{p} \cR'_m \longrightarrow 
\cF^{mr(a+a')-p} \cR_m
,\]
which sends $s' \in \cF^{p} \cR'_m$ to $t^{mra'-p}\phi^*(s')$.

\begin{lem}\label{l-Fcompare}
The map $\varphi_{p,m}$ is an isomorphism. Furthermore, given $s\in \cF^{p} \cR_m$
\begin{itemize}
\item[(1)] $s$ vanishes on $X_0$ if and only if $\varphi_{p,m}(s) \in \cF'^{mr(a+a')-p+1} \cR'_m$, and
\item[(2)]    $\varphi_{p,m}(s)$ vanishes on $X'_0$ if and only if $s\in \cF^{p+1} \cR_m$.
\end{itemize}
\end{lem}
\begin{proof}
The map $\varphi'_{mr(a+a')-p,m}$ is the inverse to $\varphi_{p,m}$, since
 $\varphi'_{mr(a+a')-p,m}\circ \varphi_{p,m}$ is multiplication by $t^{mra'-(mr(a+a')-p) }t^{mra-p}=1$. 
 Hence, $\varphi_{p,m}$ is an isomorphism. 
 
For (1), fix $s\in \cF^{p} \cR_m$ and note that $s$ vanishes on $X_0$ if and only if 
$$
\psi^*s \in H^0\Big(\widehat{X}, \cO_{\widehat{X}} \big( m \psi^* L -p \widetilde{X'_0} -  \widetilde{X_0} \big) \Big)
.$$
 The latter holds precisely when
  \[t^{mra-b}\psi^*s
  \in H^0\Big(
  \widehat{X}, \cO_{\widehat{X}} \big(m \psi'^* L' - (mr(a+a')-p +1) \widetilde{X_0} \big)\Big),\]
 which is identified with $\cF'^{mr(a+a')-p +1 }\cR;_m$. Statement (2) follows from a similar argument. 
 \end{proof}

 \begin{prop}\label{p-isomgrF}
 The maps $(\varphi_{p,m})$ induce an isomorphism of graded rings 
 \begin{eqnarray}\label{e-graded}
\bigoplus_{ m\in \N  }  \bigoplus_{p\in \mathbb{Z} }{\rm gr}_\cF^p R_m 
\overset{\varphi}{\longrightarrow}
 \bigoplus_{m \in \N } \bigoplus_{p \in \mathbb{Z}} {\rm gr}_{\cF'}^{p}R'_m ,
 \end{eqnarray}
that sends the degree $(m,p)$-summand on the left to the degree $(m,mr(a+a')-p)$-summand on the right. 
Hence, 
 $ {\rm gr}_\cF^p R_m$ and ${\rm gr}_{\cF'}^p R'_m$ vanish for $p>mr(a+a')$. 
 \end{prop}

   \begin{proof}
Consider the map
   \begin{equation}\label{e-FF'}
   \cF^p R_m \longrightarrow \gr_{\cF'}^{mr(a+a')-p} R'_m. 
   \end{equation}
 defined as follows.
Given an element of $s\in \cF^p R_m$, choose $\tilde{s} \in \cF^p \cR_m$ such that $\tilde{s}$ is an extension of $s$. 
Now, send $s$ to the image of $\varphi_{p,m}(\tilde{s})$ under the composition of maps 
\[
\cF'^{mr(a+a') - p } \cR'_m \to \cF'^{mr(a+a') - p } R'_m \to \gr_{\cF'}^{mr(a+a')-p} R'_m.
\]
This map can be easily seen well defined.

Using Lemma \ref{l-Fcompare}, we see that \eqref{e-FF'} is surjective and has its kernel equal to $\cF^{p+1} R_m$. Indeed, the surjectivity follows from the fact that $\varphi_{p,m}$ is an isomorphism. The description of the kernel is a consequence of Lemma \ref{l-Fcompare}.2. Therefore, 
$$\gr_{\cF}^p R_m \to \gr_{\cF'}^{mr(a+a')-p} R'_m$$ 
is an isomorphism. The previous  isomorphism induces an isomorphism of graded rings, since $\varphi_{p_1,m_1}(s_1)  \varphi_{p_2,m_2}(s_2)  = \varphi_{p_1+p_2,m_1+m_2}(s_1  s_2)$ for $s_1\in \cF^{p_1}\cR_{m_1}$ and $s_2 \in \cF^{p_2} \cR_{m_2}$.  

To see the vanishing statement, observe that 
 $ {\rm gr}_\cF^p R_m$ and ${\rm gr}_\cF'^pR'_m$ vanish for $p<0$. Hence, the isomorphism of graded rings yields the 
 vanishing for $p>mr(a+a')$. 
 \end{proof}

\begin{rem}\label{r-filtrationcoincide}
The above filtration defined in \cite{BX19} was trying to extend the filtration defined in \cite{BHJ17} (see the proof of  Lemma \ref{l-beta=w}) for test configurations into a more general relative setting. 
In hindsight, this indeed coincides with the canonical filtration introduced in \cite{AHH18} when considering $S$-completeness for vector bundles (see \cite{AHH18}*{Rem. 3.36}). \end{rem}

The connection between the above filtration with K-stability can be seen by the following statement.

\begin{prop}\label{p-TSa}Let 
\[
\beta=ar^n(-K_{X_0})^n-\int^{\infty}_{0} \vol(\cF^tR) dt \mbox{\  and \  } \beta'=a'r^n(-K_{X'_0})^n-\int^{\infty}_{0} \vol(\cF'^tR') dt
\]
Then
$\beta + \beta' = 0$ .
\end{prop}

\begin{proof}
Applying Proposition \ref{p-isomgrF}, we see 
\begin{align*}
\dim \cF^p R_m = \sum_{j=p}^{mr(a+a')} \gr_{\cF}^j R_m 
			&= \sum_{j=0}^{mr(a+a')-p} \gr_{\cF'}^{j} R'_m \\
			& = \dim R_m  - \dim \cF^{mr(a+a')-p+1}R'_m ,
\end{align*} 
for  $p \in \{0, \ldots , mr(a+a')+1\}$.
Therefore, 
\[
\sum_{p=0 }^{mr(a+a')} \dim \cF^p R_m
+
\sum_{p=0 }^{mr(a+a')} \dim \cF'^p R'_m
=mr(a+a') \dim R_m
.\]
Then we conclude by dividing both sides by $\frac{1}{n!}m^{n+1}$ and let $m\to \infty$.  \end{proof}

Let $\fa_{\bullet}(\cF)$ be the base ideal sequences for $\cF$ on $X$, i.e., $\fa_{p}=\fa_p(\ord_{\widetilde{X'_0}})$;  and $\fb_{\bullet}(\cF)$ the restriction of $\fa_{\bullet}(\cF)$ on  $X_0$.  Then the inversion of adjunction implies that
$\lct(X,X_0; \fa_{\bullet}(\cF))=\lct(X_0;\fb_{\bullet}(\cF))$. Since we have $a\ge \lct(X,X_0; \fa_{\bullet}(\cF))$ and $\fb_{p}\supset I_{m,p}:=I_{m,p}(\cF)$ (see Definition \ref{defn:D^NA} for the definition of $I_{m,p}$) for any $m$ (and the equality holds for $m\gg 0$), we have
 $$a\ge \lct(X_0;\fb_{\bullet}(\cF))\ge \mu_{X_0}(\cF).$$
 Similarly, we can define  and get $a'\ge \lct(X'_0;\fb_{\bullet}(\cF'))\ge \mu_{X'_0}(\cF')$.  Now since $X_0$ and $X'_0$ are K-semistable, and $\beta\ge \beta_{X_0}(\cF)\ge 0$ and $\beta'\ge 0$. Thus $\beta = \beta'=0$ and $a= \lct(X_0;\fb_{\bullet}(\cF))= \mu_{X_0}(\cF)$, $a'= \lct(X'_0;\fb_{\bullet}(\cF'))= \mu_{X'_0}(\cF')$.

 \medskip
 
 \noindent{\it Step 3: Finite generations.}
 
 The remaining part of the proof is to show that the graded ring in \eqref{e-graded} is finitely generated and yields normal test configurations, since then by \cite{LX14}, this will imply that the Proj of the graded ring yields a 
K-semistable $\mathbb{Q}$-Fano variety.  In \cite{BX19}, this is  the most involving part of the proof, using the cone construction.  
Based on a better understanding of filtrations for K-stability problems, 
now we have an argument which significantly simplifies the technical one in \cite{BX19} as follows, though the underlying strategy remains the same. \footnote{This argument is suggested by Harold Blum.}

We know that there is a sufficiently large $m$ and sufficiently small $\epsilon$, such that $\lct(X,X_0;\frac{1}{m}I_{m,(a-\epsilon)m})\ge 1$. Thus for a general divisor $D\in \cF^{(a-\epsilon)m}\cR_m$,  $(X,X_0+\frac{1}{mr}D)$ is log canonical. On the other hand,
$$A_{X,X_0+\frac{1}{mr}D}(\widetilde{X'_0})=a-\frac{1}{mr}\ord_{X_0'}(D)\le \epsilon.$$

Thus from \cite{BCHM10}, we know that there exists a model $\mu\colon Z\to X$, which precisely extract $X_0'$, and 
the ring 
$$\bigoplus_{m\in \mathbb N}\bigoplus_{p\in \mathbb N}H^0(Z,\mu^*(-mrK_X)-pX_0')$$ 
is finitely generated. Its restriction on $R$ yields $\oplus_p\cF^pR$, which is finitely generated and the graded ring in \eqref{e-graded} yields test configurations $Y$ and $Y'$ of $X_0$ and $X_0'$.
\begin{claim} $Y$ and $Y'$ are normal test configurations. 
\end{claim}
\begin{proof}To see the claim, we want to use the construction in \ref{sss-Scompleteness}. We know that the above argument yields a family 
$$ \mathsf{X}:={\rm Proj}\bigoplus_m\big(i_*(\pi^{\circ}_*(-rmK_{ \mathsf{X}^{\circ}}))\big)$$
over $\overline{\rm ST}(R)$ (see \eqref{e-STR}). Then the two test configurations $Y$ (resp. $Y'$) are given by $s=0$ (resp. $t=0$). Denote by $\mathsf D$ the closure of $D$ on $\mathsf X$. Since  $\phi$ yields a family $\pi^{\circ}\colon \mathsf{X}^{\circ}\to \overline{\rm ST}(R)\setminus 0$, which is normal, and $ \mathsf{X}\setminus \mathsf{X}^{\circ}$ is of codimension 2, we conclude that $\mathsf{X}$ is normal. 
We consider $\mathsf{X}'$ the family over $\overline{\rm ST}(R)$ obtained by the trivial isomorphism $X\to X$, and $\mathsf{D}'$ the divisor on $ \mathsf{X}'$ which is obtained by gluing $D$ on $X$ using the trivial isomorphism. In particular,  $(\mathsf X', \frac 1m\mathsf D')$ is a $\mathbb{Q}$-Gorenstein family of log canonical CYs over $\overline{\rm ST}(R)$. 

On $\mathsf X'$,  $t=0$ and $s=0$ are two divisors $Z$ and $Z'$ (isomorphic to $X_0\times \mathbb{A}^1$). Let 
$$\epsilon':=A_{\mathsf X',\frac 1m \mathsf  D'+Z+Z'}(Y')=A_{X,\frac 1mD+X_0}(\widetilde{X'_0})\le \epsilon,$$
then $(\mathsf X,\frac 1m \mathsf  D+Y+(1-\epsilon')Y')$ is crepant birationally equivalent to $(\mathsf X',\frac 1m \mathsf  D'+Z+Z')$, which in particular implies that $(\mathsf X,Y+(1-\epsilon')Y')$ is log canonical. 
As $\epsilon'\to 0$, we know $(\mathsf X,Y+Y')$ is log canonical, which implies $Y$ and $Y'$ are normal.
\end{proof}
Since $0=\beta(\cF)\ge \Ding(Y)$ by Theorem \ref{t-betading}, it implies $\Ding(Y)=0$. Thus $Y$ is a special test configuration by Theorem \ref{t-specialdegeneration}, and the special fiber $Y_0$ is K-semistable (see \cite[Lemma 3.1]{LWX18}). Similarly, $X_0'$ degenerates $Y_0$ via the special test configuration $Y'$.
\end{proof}

\subsection{The existence of a good moduli space}
In this section, we will sketch the argument that $\mathfrak X^{\rm Kss}_{n,V}$ admits a good moduli space. For smoothable $\bQ$-Fano varieties, this is done \cite{LWX19}, using the criterion in \cite{AFS17}.
In \cite{AHH18}, two elegant valuative criteria were formulated. This makes checking that a stack of finite type admits a good moduli space a lot more transparent. 

Let $R$ be a DVR with $\eta={\rm Spec}(K)$ the generic point. Let $\cY$ be an Artin stack over $\mathbbm{k}$.
\begin{defn}[{$S$-completeness, see \cite[Def. 3.37]{AHH18}}]
Fix a uniformizer $\pi$ of $R$.  Following \cite[(3.6)]{AHH18}, denote by 
\begin{eqnarray}\label{e-STR}
\overline{\rm ST}(R) := [{\rm Spec}(R[s, t]/(st - \pi))/\mathbb{G}_m],
\end{eqnarray}
where the action is $(s,t)\to (\mu\cdot s, \mu^{-1}\cdot t)$. Let $0=[(0,0)/\mathbb{G}_m]$, then $\overline{\rm ST}(R)\setminus 0$ is isomorphic to the double points curve ${{\rm Spec}(R)}\cup_{{\rm Spec}(K)}{\rm Spec}(R)$. Then a stack $\cY$ is called to be {\it $S$-complete} if any morphism $\pi^{\circ}\colon \overline{\rm ST}(R)\setminus 0\to \cY$ can be uniquely extended to a morphism  $\pi\colon \overline{\rm ST}(R)\to \cY$. 
\end{defn}
\begin{defn}[{$\Theta$-reductivity, see \cite[Def. 3.37]{AHH18}}] Let $\Theta:= [\bA^1/\bG_m]$ with the multiplicative action. Set $0 \in \Theta_R$ to be the unique closed point. 
Then we say $\cY$ is {\it $\Theta$-reductive} if a morphism $\Theta_R \setminus 0 \to \cY$ can be uniquely extended to a morphism $\Theta_R\to \cY$. 
\end{defn}
\begin{thm}[{\cite{AHH18}*{Thm. A}}]Let $\cY$ be an Artin stack of finite type over $\mathbbm{k}$, then $\cY$ admits a good moduli space if $\cY$ is $S$-complete and $\Theta$-reductive.
\end{thm}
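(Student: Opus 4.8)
The plan is to follow the strategy of Alper--Halpern-Leistner--Heinloth, reducing the existence of a good moduli space to a gluing problem for étale-local models. First I would invoke the étale-local structure theorem for stacks with linearly reductive stabilizers: around any point $y\in\cY$ whose stabilizer $G_y$ is linearly reductive there is an affine scheme $\Spec(A)$ with a $G_y$-action, a $G_y$-fixed point lying over $y$, and an étale morphism $[\Spec(A)/G_y]\to\cY$. Since $[\Spec(A)/G_y]$ has good moduli space $\Spec(A^{G_y})$, this produces a good moduli space étale-locally near every closed point with linearly reductive stabilizer. A preliminary reduction (using that $\cY$ has affine diagonal and is of finite type, together with a Hilbert--Mumford type degeneration argument) shows that closed points of $\cY$ do carry linearly reductive stabilizers, so such charts exist around every closed point.

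The core of the argument is then to glue these charts. The subtlety is that an étale chart $[\Spec(A)/G]\to\cY$ need not be \emph{saturated} for the good moduli morphism: the preimage of a closed point of $\Spec(A^G)$ need not be a single closed orbit's worth of points of $\cY$, and two charts around the same closed point of $\cY$ need not agree after base change. This is exactly where the two valuative criteria enter. I would use $\Theta$-reductivity to shrink the charts so that they become ``closed-point-reflecting'': any point of $[\Spec(A)/G]$ that degenerates (via a map from $\Theta_R$) to the central point maps to a point of $\cY$ degenerating to $y$; concretely, $\Theta$-reductivity supplies the missing limit which forces the image of the central fibre to be closed. Then I would use $S$-completeness to show that the resulting étale-local good moduli spaces are separated and that the identifications on overlaps are \emph{unique}: a map $\overline{\mathrm{ST}}(R)\setminus 0\to\cY$ extends uniquely over $\overline{\mathrm{ST}}(R)$, which is precisely the input needed to match the Rees/degeneration data recorded by two overlapping charts. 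Consequently the étale-local good moduli spaces descend (via Artin's criteria / effective descent for the induced étale equivalence relation) to a global algebraic space $Y$ together with a morphism $\pi\colon\cY\to Y$.

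Finally one checks that $\pi$ is a good moduli space morphism in the sense of \cite{Alp13}: exactness of $\pi_*$ and $\pi_*\cO_\cY=\cO_Y$ can be verified étale-locally on $Y$, where they reduce to exactness of the invariants functor for a linearly reductive group and the identity $H^0(G,A)=A^G$. Separatedness of $Y$ follows again from $S$-completeness via the valuative criterion for separatedness: a pair of $R$-points of $Y$ agreeing over $\Spec(K)$ assembles, after the $\overline{\mathrm{ST}}$-modification, into a map $\overline{\mathrm{ST}}(R)\setminus 0\to\cY$, whose unique extension forces the two $R$-points of $Y$ to coincide.

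The main obstacle — and the heart of the proof — is the second step: producing étale charts that are simultaneously closed-orbit-reflecting and compatible on overlaps. Without $\Theta$-reductivity a chart can fail to see all degenerations and closed points can ``jump'' between charts, so no candidate $Y$ arises; without $S$-completeness the gluing data is non-unique and the candidate space is non-separated (or fails to be algebraic). Making precise how these two criteria exactly supply the missing input, through a careful analysis of maps out of $\Theta_R$ and $\overline{\mathrm{ST}}(R)$ and the resulting descent datum, is the technically demanding part; I expect everything else to be comparatively formal given the local structure theorem.
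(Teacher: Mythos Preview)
The paper does not prove this statement; it is quoted from \cite{AHH18} and used as a black box (the paper's own work is verifying $S$-completeness and $\Theta$-reductivity for $\mathfrak{X}^{\rm Kss}_{n,V}$, not proving the abstract criterion). So there is no ``paper's proof'' to compare against. Your sketch is a reasonable high-level outline of the strategy in \cite{AHH18}: the \'etale slice theorem of Alper--Hall--Rydh supplies local quotient charts $[\Spec A/G_y]$ with good moduli spaces $\Spec A^{G_y}$, and the two valuative criteria are used to refine these to \emph{strongly \'etale} (saturated) charts whose GIT quotients then glue.

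A few points where your account is imprecise. First, the statement as quoted suppresses hypotheses present in \cite{AHH18} (notably affine diagonal); you are right to insert this. Second, the fact that closed points have linearly reductive stabilizers is not a ``Hilbert--Mumford type degeneration argument'' but a consequence of $S$-completeness: it yields an Iwahori decomposition for the stabilizer, exactly the mechanism the paper exhibits in the proof of Theorem~\ref{t-reductive}. Third, your division of labor between the two criteria is slightly off. In \cite{AHH18}, $\Theta$-reductivity is what guarantees that closed points of a local chart map to closed points of $\cY$ (so the local good moduli map is compatible with the global orbit closure structure), while $S$-completeness supplies both reductivity of stabilizers and separatedness of the resulting space. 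The genuinely hard step, which you correctly flag but do not describe, is an inductive Luna-type argument producing saturated \'etale neighborhoods; everything else is, as you say, comparatively formal.
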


In \cite{ABHX19}, we put the argument in \cite{BX19} in the context of \cite{AHH18}, then the results are enhanced, so that one can show any K-polystable $\mathbb{Q}$-Fano has a reductive automorphism group, and moreover, any $S$-closed finite substack of $\mathfrak X^{\rm Kss}_{n,V}$ has a good moduli space. 

\subsubsection{$S$-completeness}\label{sss-Scompleteness}

  We will explain that the $S$-completeness of $\mathfrak X^{\rm Kss}_{n,V} $ essentially follows from Theorem \ref{t-main}, as observed in \cite{ABHX19}.

Let $R$ be a DVR with $\eta={\rm Spec}(K)$ the generic point. Consider two families $X$ and $X'$ of K-semistable $\mathbb{Q}$-Gorenstein Fano varieties over ${\rm Spec}(R)$ such that there is an isomorphism 
\[
\phi\colon X\times_{{\rm Spec}(R)} {\rm Spec}(K) \cong X'\times_{{\rm Spec}(R)  }{\rm Spec}(K).
\]
Thus $\phi$ yields a family $\pi^{\circ}\colon \mathsf{X}^{\circ}\to \overline{\rm ST}(R)\setminus 0$. We want to show this can be indeed extended to a family 
$\pi\colon\mathsf{X}\to \overline{\rm ST}(R)$, which is precisely the claim of S-completeness for the functor of K-semistable Fano varieties with fixed numerical invariants.

Denote by $i\colon \overline{\rm ST}(R)\setminus 0\subset \overline{\rm ST}(R)$ the open inclusion, then since $\pi^{\circ}_*(-rmK_{ \mathsf{X}^{\circ}})$ is a vector bundle on $\overline{\rm ST}(R)\setminus 0$ and $0$ in $\overline{\rm ST}(R)$ is of codimension 2, then
$i_*(\pi^{\circ}_*(-rmK_{ \mathsf{X}^{\circ}}))$ is a vector bundle over $\overline{\rm ST}(R)$. Moreover, a key calculation (see \cite{ABHX19}*{Proposition 3.7}) shows that for any $m$ we have
$$i_*(\pi^{\circ}_*(-rmK_{ \mathsf{X}^{\circ}}))|_{0}\cong  \bigoplus_{p\in \mathbb{Z} }{\rm gr}_\cF^p R_m,$$
(see Proposition \ref{p-isomgrF}). Thus we can define 
$$ \mathsf{X}:={\rm Proj}\bigoplus_m\big(i_*(\pi^{\circ}_*(-rmK_{ \mathsf{X}^{\circ}}))\big),$$
and the rest is identical to Step 3 of the proof of Theorem \ref{t-main} (see Remark \ref{r-filtrationcoincide}). 

An important consequence of S-completeness is the following theorem.
\begin{thm}[{\cite{ABHX19}}]\label{t-reductive}
For any $K$-polystable Fano variety, ${\rm Aut}(X)$ is reductive.
\end{thm}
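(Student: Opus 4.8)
The plan is to deduce reductivity of $\Aut(X)$ from the $S$-completeness of the stack $\mathfrak X^{\rm Kss}_{n,V}$, in the formalism of \cite{AHH18}. First I would record the dictionary between closed points and K-polystable Fanos: a K-polystable $\bQ$-Fano variety $X$ of dimension $n$ and volume $V$ corresponds to a \emph{closed} point $[X]$ of $\mathfrak X^{\rm Kss}_{n,V}$. Indeed, by Theorem \ref{t-sequivalence} (i.e. \cite{LWX18}), K-polystability is equivalent to saying that every special test configuration of $X$ with K-semistable central fiber has central fiber isomorphic to $X$; under the presentation $\mathfrak X^{\rm Kss}_{n,V}=[U/{\rm PGL}(N+1)]$ this says precisely that the orbit of $[X\hookrightarrow \mathbb{P}^N]$ in $U$ is closed, and the stabilizer of that point is $\Aut(X)$, which is a linear algebraic group since $-K_X$ is an $\Aut(X)$-linearized ample line bundle. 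Thus it suffices to show that the stabilizer of a closed point of an $S$-complete stack (with affine diagonal, over $k=\bar k$) is reductive.

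Next I would invoke $S$-completeness. By the construction recalled above (which is essentially a restatement of Theorem \ref{t-main} together with the finite generation established in its Step 3), the stack $\mathfrak X^{\rm Kss}_{n,V}$ is $S$-complete: every morphism $\overline{\rm ST}(R)\setminus 0\to \mathfrak X^{\rm Kss}_{n,V}$ extends uniquely across the origin. Since $[X]$ is a closed point with residue field $k$, its residual gerbe is the closed substack $B\Aut(X)\hookrightarrow \mathfrak X^{\rm Kss}_{n,V}$, and it is neutral because $k$ is algebraically closed. I would then note that $S$-completeness passes to closed substacks: if $\overline{\rm ST}(R)\setminus 0\to \mathfrak X^{\rm Kss}_{n,V}$ lands in the closed substack, then so does its unique extension, because $\overline{\rm ST}(R)\setminus 0$ is schematically dense in $\overline{\rm ST}(R)$. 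Hence $B\Aut(X)$ is itself $S$-complete.

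Finally I would apply the group-theoretic computation of \cite{AHH18} that $BG$ is $S$-complete if and only if $G^{\circ}$ is reductive (equivalently, in characteristic $0$, $G$ is linearly reductive). Concretely, a morphism $\overline{\rm ST}(R)\setminus 0\to BG$ amounts to two $G$-torsors on $\Spec(R)$ glued over $\Spec(K)$, i.e. after trivializing to an element $g\in G(K)$, and it extends over $\overline{\rm ST}(R)$ exactly when $g$ admits a Cartan-type decomposition $g=h_1\,\lambda(\pi)\,h_2$ with $h_i\in G(R)$ and $\lambda$ a cocharacter of $G$; the existence of such a decomposition for every DVR $R$ and every $g$ is equivalent to reductivity of $G^{\circ}$. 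Applying this to $G=\Aut(X)$ yields the theorem.

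The main obstacle is not in this formal deduction but upstream, in the input Theorem \ref{t-main}: proving that two K-semistable $\Q$-Gorenstein fillings of a punctured family are $S$-equivalent requires constructing the comparison filtration on the special fibers and then the delicate finite-generation argument (Step 3 of its proof), which relies on BCHM-type results from the MMP and on Theorem \ref{t-specialdegeneration}. Granting $S$-completeness, the steps above — the closed-point/K-polystable dictionary, descent of $S$-completeness to the residual gerbe, and the torsor computation for $BG$ — are essentially formal, though one should verify carefully that $\Aut(X)$ is of finite type and affine and that the residual gerbe of $[X]$ is indeed closed and neutral.
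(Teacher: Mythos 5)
Your proposal is correct and follows essentially the same route as the paper: the paper's proof also deduces reductivity from $S$-completeness by gluing two trivial families along $g\in\Aut(X)(K)$, extending over $\overline{\rm ST}(R)$, using K-polystability to identify the central fiber with $X$ (your ``closed point'' step), and reading off the Iwahori decomposition $\Aut(X)(K)=\Aut(X)(R)\cdot\Hom(\mathbb{G}_m,\Aut(X))\cdot\Aut(X)(R)$, which is exactly the torsor computation you cite from \cite{AHH18}. The only difference is packaging: you pass through the residual gerbe $B\Aut(X)$ and the abstract statement that $BG$ is $S$-complete iff $G$ is reductive, while the paper carries out that computation by hand.
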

\begin{proof}This follows from the S-completeness. In fact, if we apply the above discussion to ${\rm Aut}(X)(K)$, then any $g\in {\rm Aut}(X)(K)$ can be used to glue two trivial families $X\times {\rm Spec}(R)$, to get a family $ \mathsf{X} $ over $\overline{\rm ST}(R)$. The special fiber over $0\cong [{\rm Spec}(k)/\mathbb{G}_m]$  is isomorphic to $X$ as it is K-polystable together with a morphism $\lambda\colon G_m\to {\rm Aut}(X)$. We can use $\lambda$ to cook up a trivial torsor $\mathsf{X} _{\lambda}$.
Moreover, we can show   $\mathsf{X} $ and  $\mathsf{X} _{\lambda}$ are isomorphic torsors over $\overline{\rm ST}(R)$, which exactly says there are two elements $a$ and $b$ in ${\rm Aut}(X)(R)$ such that $g=a\cdot \lambda \cdot b$. In other words,  the Iwahori decomposition 
$${\rm Aut}(X)(K)={\rm Aut}(X)(R)\cdot {\rm Hom}(\mathbb{G}_m, {\rm Aut}(X)) \cdot {\rm Aut}(X)(R),$$
holds for ${\rm Aut}(X)$, but this implies that ${\rm Aut}(X)$ is reductive.  
\end{proof}

When $X$ is smooth with a KE metric, the above theorem was proved by Matsushima \cite{Mat57}. When $X$ is a $\mathbb{Q}$-Fano variety with a weak KE metric, this is an important step in the proof of Yau-Tian-Donaldson Conjecture (see \cite{CDS, Tia15}, also see \cite{BBEGZ19}). Theorem \ref{t-reductive} gives a completely algebraic treatment. 

\subsubsection{$\Theta$-reductivity}

To check $\Theta$-reductivity, we need to establish the following 
\begin{thm}[{\cite{ABHX19}, Thm. 5.2}]\label{t-thetareductive}
Let $R$ be a DVR of essentially finite type and $\eta$ the generic point of $\Spec(R)$.
For any $\mathbb{Q}$-Gorenstein family of K-semistable Fano varieties $X_R$ over  $R$, any special K-semistable degeneration $\cX_{\eta}/\bA^1_{\eta}$ of the generic fiber $X_{\eta}$ can be extended to a family of K-semistable degenerations $\cX_R/\bA^1_R$ of $X_R$. \end{thm}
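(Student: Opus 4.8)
The plan is to verify the $\Theta$-reductivity of $\mathfrak{X}^{\rm Kss}_{n,V}$, of which this theorem is the geometric content; together with the $S$-completeness discussed above, \cite{AHH18}*{Thm.~A} then produces the good moduli space. Write $0_R\in\Spec R$ for the closed point and $X_0:=X_R\times_R 0_R$, which is a K-semistable $\bQ$-Fano variety. After rescaling to a $\bZ$-valuation, $\cX_\eta$ corresponds to a divisorial valuation $\ord_{E_\eta}$ over $X_\eta$ (Lemma \ref{l-induced}); since the central fibre $\cX_{\eta,0}$ is itself K-semistable one has $\Fut(\cX_\eta)=0$ (the two product test configurations of $\cX_{\eta,0}$ attached to the $\mathbb{G}_m$-action and to its inverse have Futaki invariants $\pm\Fut(\cX_\eta)$, both $\ge 0$), hence $\beta_{X_\eta}(E_\eta)=0$ by Lemma \ref{l-beta=w} and $\delta_{X_\eta}(E_\eta)=1=\delta(X_\eta)$. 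In particular $E_\eta$ is special, hence dreamy, and by Theorem \ref{t-Nweakspecial} (boundedness of complements, \cite{Bir19}) it is an lc place of an $N$-complement with $N=N(n)$; as in the proof of Theorem \ref{t-weakspecial} this gives $\mu_{X_\eta,1}(\cF_{E_\eta})=rA_{X_\eta}(E_\eta)$ and $S(\cF_{E_\eta})=rS_{X_\eta}(E_\eta)=rA_{X_\eta}(E_\eta)=:ra$.

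First I would spread out over $R$. As $-mrK_{X_R/R}$ is $\pi$-ample and $X_R$ is of Fano type over $R$, Kawamata--Viehweg vanishing and cohomology-and-base-change make $\cR_m:=H^0(X_R,-mrK_{X_R/R})$ locally free over $R$, with surjections $\cR_m\to H^0(X_\eta,-mrK_{X_\eta})$ and $\cR_m\to H^0(X_0,-mrK_{X_0})$. Define the closure filtration $\cF$ on $\cR=\bigoplus_m\cR_m$ by $\cF^p\cR_m:=\{s\in\cR_m\mid\ord_{E_\eta}(s|_{X_\eta})\ge p\}$ and let $\cF|_{R_0}$ be the filtration it induces on $R(X_0)$ by restriction of sections, exactly as in Step 1 of the proof of Theorem \ref{t-main}. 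The core of the argument is finite generation of ${\rm Rees}(\cF)$ over $R[t]$, which I would establish by reprising Step 3 of that proof in this relative setting: for $m\gg0$ and a suitable $\lambda<ra$, a general divisor $D\in\cF^{\lambda m}\cR_m$ yields a log canonical pair $(X_R,\tfrac1{mr}D)$ with $A_{X_R,\frac1{mr}D}(E_R)<1$, where $E_R$ is the prime divisor over $X_R$ with generic fibre $E_\eta$; one then applies \cite{BCHM10} to extract a model $\mu\colon Y_R\to X_R$ of Fano type over $X_R$ with $\Exc(\mu)=E_R$ and $-E_R$ $\mu$-ample, so that $\bigoplus_{m,p}H^0(Y_R,\mu^*(-mrK_{X_R/R})-pE_R)$, hence ${\rm Rees}(\cF)$, is finitely generated over $R[t]$. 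The log canonicity over $\eta$ is the computation of Theorem \ref{t-delta=1}; the log canonicity over $0_R$ is exactly where K-semistability of $X_0$ enters, giving $\beta_{X_0}(\cF|_{R_0})\ge 0$ by Theorem \ref{t-maintheorem2}, while $S(\cF|_{R_0})=S(\cF_{E_\eta})=ra$ by constancy of volumes in the family (cf. Proposition \ref{p-TSa} and \cite{HMX13}), so that $\mu_{X_0,1}(\cF|_{R_0})\ge ra$ and one deduces $\lct\big(X_0;\tfrac1{mr}I_{m,\lambda m}(\cF)\big)\ge1$, hence $\lct\big(X_R;\tfrac1{mr}I_{m,\lambda m}(\cF)\big)\ge1$. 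Setting $\cX_R:=\Proj_{\bA^1_R}\big({\rm Rees}(\cF)\big)$ gives a test configuration of $X_R$ extending $\cX_\eta$.

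It then remains to check that $\cX_R\to\bA^1_R$ is a family of special K-semistable degenerations. Over $\eta$ this is the hypothesis together with Theorem \ref{t-delta=1}. Over $0_R$: the valuation $E_0:=E_R|_{X_0}$ satisfies $\ord_{E_0}\big(I^{(t)}_\bullet(\cF|_{R_0})\big)\ge t$ with $A_{X_0}(E_0)=A_{X_\eta}(E_\eta)=a$, so $\mu_{X_0,1}(\cF|_{R_0})\le ra$; together with the reverse inequality of the previous paragraph this forces $\beta_{X_0}(\cF|_{R_0})=0$, equivalently $S_{X_0}(E_0)=a=A_{X_0}(E_0)$, so $\delta_{X_0}(E_0)=1=\delta(X_0)$. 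By Theorem \ref{t-blzdegeneration} the induced degeneration $\cX_R\times_R0_R=\Proj(\gr_{\cF|_{R_0}}R(X_0))$ is then special with K-semistable central fibre. Since $\Spec R$ has only the two points $\eta$ and $0_R$, this proves the extension property. Uniqueness is forced, since any extension induces the same closure filtration $\cF$ on $\cR$, and in any case it follows from the separatedness Theorem \ref{t-main}.

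I expect the finite-generation step of the second paragraph to be the main obstacle: one must propagate the birational geometry underlying $\cX_\eta$ across the closed fibre of $\Spec R$ while keeping the auxiliary pair $(X_0,\tfrac1{mr}D)$ log canonical there, and this is precisely where both Birkar's boundedness of complements (to control, uniformly in the family, the linear system into which $E_\eta$ is placed and to spread it over $R$) and the K-semistability of $X_0$ (through the bound $\mu_{X_0,1}(\cF|_{R_0})\ge ra$) are indispensable; the remainder is a relative reprise of the arguments already carried out for Theorems \ref{t-main} and \ref{t-delta=1}.
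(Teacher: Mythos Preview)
Your approach is essentially the same as the paper's sketch: define the closure filtration $\cF$ on $\cR$, use $\Fut(\cX_\eta)=0$ to obtain $\mu(\cF_\eta)=S(\cF_\eta)$, use K-semistability of $X_0$ together with $S(\cF|_{R_0})=S(\cF_\eta)$ to force $\mu(\cF|_{R_0})=S(\cF|_{R_0})$, and then rerun Step~3 of Theorem~\ref{t-main} to produce an lc pair $(X_R,\tfrac{1}{mr}D)$ on which \cite{BCHM10} extracts $E_R$, yielding finite generation and hence $\cX_R$.

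The one correction worth making is your claim that Birkar's boundedness of complements is ``indispensable'': the paper's argument does not use \cite{Bir19} at all. The equality $\mu_{X_\eta,1}(\cF_{E_\eta})=rA_{X_\eta}(E_\eta)$ follows directly from $\cX_\eta$ being weakly special (this is the chain of equalities displayed in the proof of Theorem~\ref{t-weakspecial}, which predates and does not invoke bounded complements), and the paper obtains $\mu(\cF|_{R_0})\le\mu(\cF_\eta)$ simply by lower semicontinuity of log canonical thresholds rather than via your restriction $E_0:=E_R|_{X_0}$. Your invocation of Theorem~\ref{t-blzdegeneration} at the end is also more than needed: once $\beta_{X_0}(\cF|_{R_0})=0$, the tail of Step~3 of Theorem~\ref{t-main} (via $\Ding=0$, Theorem~\ref{t-specialdegeneration}, and \cite[Lemma~3.1]{LWX18}) already shows $\cX_R|_{0_R}$ is special with K-semistable central fibre. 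Finally, the equality $S(\cF|_{R_0})=S(\cF_\eta)$ is immediate from the construction of the closure filtration (each $\cR_m/\cF^i\cR_m$ is free over $R$), so neither Proposition~\ref{p-TSa} nor \cite{HMX13} is needed there.
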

This is proved in \cite{ABHX19}*{Sec. 5} by generalizing the arguments developed in \cite{LWX18}, using a local method. Here we sketch a global argument.
\begin{proof}[Sketch of the proof] Denote by $k$ the residue field of $R$.
Let 
$$\cR:=\bigoplus_m \cR_m=\bigoplus_mH^0(X_R, -rmK_{X_R})$$ for a sufficiently divisible $r$.
The special test configuration $\cX_{\eta}$ induces a special divisor $E_{\eta}$, which yields a filtration $ \cF_{\eta}:=\cF_{E_{\eta}}$ on $R_{K}=\cR\otimes K(R)$.  For each $m$ and $i$, there is a unique extension of $ \cF^{i}\cR_m\subset \cR_m$ of $\cF^{i}_{\eta}(R_K)_m$ as an $R$-submodule, such that  $ \cR_m/\cF^i\cR_m$ is a free $R$-module. Denote  by $\cF_k^{\bullet}R_k$ the restricting filtration of $\cF^{\bullet}$ on $R_k=\cR\otimes k$, i.e. 
$$\cF_k^iR_k={\rm Im}(\cF^i\cR_k\to \cR_k\to R_k).$$  Then $\cF_k^{\bullet}$ yields a multiplicative filtration on $R_k$.

Since $\Fut(\cX_{\eta})=0$, $\mu_{\cF_{\eta}}=S(\cF_{\eta})$. On the other hand, we have $\mu_{\cF_k}\le \mu_{\cF_{\eta}}$ and $S(\cF_k)=S(\cF_{\eta})$, we have $\mu_{\cF_k}\le S(\cF_k)$, which implies $\mu_{\cF_k}=S(\cF_k)$ as $X_k$ is K-semistable (see Theorem \ref{t-maintheorem2}). In particular, $\mu_{\cF_{\eta}}=\mu_{\cF_k}$. 

Then we can mimic the proof of Step 3 of Theorem \ref{t-main} to produce a test configuration $\cX$ which extends $\cX_K$ as follows. For any arbitrary positive $\epsilon$, we can find a divisor $D\in H^0(X_R, -rmK_{X_R})$  such that $(X_R, X_k+\frac{1}{mr}D)$ is a log canonical pair and $A_{X_{\eta},\frac{1}{mr}D_{\eta}}(E_{\eta})<\epsilon$. From this, we can easily conclude that the closure $E_R$ of $E_{\eta}$ is a dreamy divisor over $X_R$, whose induced filtration coincides with $\cF$.  Therefore, we can produce such a test configuration $\cX$. 
\end{proof}
To summarize, applying the main theorem of \cite{AHH18}, we conclude the existence of {\it the K-moduli space}. 
\begin{thm}[{\cite{ABHX19}}]\label{t-goodmoduli} 
The finite type artin stack $\mathfrak{X}^{\rm Kss}_{n,V}$ admits a separated good moduli space $\phi\colon \mathfrak{X}^{\rm Kss}_{n,V}\to X^{\rm Kps}_{n,V}$.
\end{thm}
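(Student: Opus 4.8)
The plan is to invoke the abstract existence criterion of \cite{AHH18}: a finite-type Artin stack over $k$ with affine diagonal admits a good moduli space provided it is $S$-complete and $\Theta$-reductive, and such a good moduli space is automatically separated once the stack is $S$-complete. Thus the proof reduces to checking three things for $\mathfrak{X}^{\rm Kss}_{n,V}$: that it is of finite type, that it is $S$-complete, and that it is $\Theta$-reductive. The first is already in hand, since Theorem \ref{c-moduli} (combining the boundedness of Theorem \ref{t-boundedness} with the openness of K-semistability, Theorem \ref{t-openness}) exhibits $\mathfrak{X}^{\rm Kss}_{n,V}=[U/\mathrm{PGL}(N+1)]$ for a finite-type scheme $U$, so it remains to verify the two valuative conditions.

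For $S$-completeness I would unwind the definition. Given a DVR $R$ with uniformizer $\pi$ and fraction field $K$, a morphism $\overline{\rm ST}(R)\setminus 0 \to \mathfrak{X}^{\rm Kss}_{n,V}$ is the data of two $\mathbb{Q}$-Gorenstein families $X,X'$ of K-semistable Fano varieties over $\mathrm{Spec}(R)$ together with an isomorphism $\phi$ of their generic fibers over $\mathrm{Spec}(K)$. Following the discussion in Section \ref{sss-Scompleteness}, I would push forward the anti-pluricanonical sheaves from $\mathsf{X}^\circ$ over $\overline{\rm ST}(R)\setminus 0$, take the reflexive hull $i_*(\pi^\circ_*(-rmK_{\mathsf{X}^\circ}))$ across the codimension-two point $0$, identify its fiber at $0$ with $\bigoplus_{p}\mathrm{gr}_\cF^p R_m$ via the graded-ring isomorphism of Proposition \ref{p-isomgrF}, and then run exactly Step 3 of the proof of Theorem \ref{t-main}: use \cite{BCHM10} to extract the divisor realizing the filtration, deduce finite generation of $\bigoplus_p \cF^p R$, and conclude that $\mathsf{X}:=\mathrm{Proj}\bigoplus_m i_*(\pi^\circ_*(-rmK_{\mathsf{X}^\circ}))$ is a family over $\overline{\rm ST}(R)$ whose two boundary fibers $Y,Y'$ are normal test configurations with vanishing Ding invariant (here Theorem \ref{t-betading}), hence special test configurations by Theorem \ref{t-specialdegeneration} with K-semistable central fibers. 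This produces the desired extension $\overline{\rm ST}(R)\to \mathfrak{X}^{\rm Kss}_{n,V}$, and its uniqueness is precisely the separatedness statement, which is Theorem \ref{t-main} (uniqueness of the K-semistable filling up to $S$-equivalence).

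For $\Theta$-reductivity I would similarly translate: a morphism $\Theta_R\setminus 0 \to \mathfrak{X}^{\rm Kss}_{n,V}$ amounts to a $\mathbb{Q}$-Gorenstein family $X_R$ of K-semistable Fano varieties over $R$ together with a special K-semistable test configuration of the generic fiber $X_\eta$, and extending over $0\in\Theta_R$ means extending this test configuration to a family over $\bA^1_R$. This is exactly Theorem \ref{t-thetareductive}, whose proof proceeds by extending the filtration $\cF_\eta$ of the special divisor $E_\eta$ to an $R$-filtration $\cF$ on $\cR=\bigoplus_m H^0(X_R,-rmK_{X_R})$, showing that the restricted filtration $\cF_k$ on the special fiber still satisfies $\mu_{\cF_k}=S(\cF_k)$ using K-semistability of $X_k$ together with Theorem \ref{t-maintheorem2}, and then mimicking Step 3 of Theorem \ref{t-main} to produce the extension. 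Assembling these verifications and applying \cite{AHH18} then yields the separated good moduli space $\phi\colon\mathfrak{X}^{\rm Kss}_{n,V}\to X^{\rm Kps}_{n,V}$; the identification of its closed points with K-polystable $\mathbb{Q}$-Fano varieties follows from Theorem \ref{t-sequivalence} and the general fact that closed points of a good moduli space correspond to closed orbits. The main obstacle is the finite-generation input entering both valuative conditions — proving that the graded rings $\bigoplus_p \cF^p R$ are finitely generated and that the resulting degenerations are normal test configurations — which is where the MMP machinery of \cite{BCHM10} and the filtration theory of Section \ref{s-Ding} genuinely do the work; the remainder is the bookkeeping needed to match the $S$-completeness and $\Theta$-reductivity conditions of \cite{AHH18} against statements about $\mathbb{Q}$-Gorenstein families of Fano varieties.
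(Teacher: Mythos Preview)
Your proposal is correct and follows essentially the same approach as the paper: reduce to the criterion of \cite{AHH18} by invoking Theorem \ref{c-moduli} for finite type, the argument of Section \ref{sss-Scompleteness} (built on Theorem \ref{t-main}) for $S$-completeness, and Theorem \ref{t-thetareductive} for $\Theta$-reductivity. The paper's own ``proof'' of Theorem \ref{t-goodmoduli} is just the one-line summary of these same ingredients, so your write-up in fact expands the paper's argument rather than deviating from it.
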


We denote by $\mathfrak X^{\rm Kss, sm}_{n,V}$ to be the open locus where the the corresponding K-semistable Fano varieties are smooth, and $\overline{X^{\rm Kps, sm}_{n,V}}$ to be the closure of $\phi(\mathfrak X^{\rm Kss, sm}_{n,V})$ in  $X^{\rm Kps}_{n,V}$, which is the locus parametrizing K-polystable Fano varieties that can be smoothable in a $\mathbb Q$-Gorenstein deformation.


\section{Properness and Projectivity}\label{s-proj}
In this section, we discuss briefly the properness and projectivity of $ X^{\rm Kps}_{n,V}$. 

\subsection{Properness}

The following statement is equivalent to the properness of the good quotient moduli space. 

\begin{conj}[Properness]\label{c-proper}
Any family of K-semistable Fano varieties over a punctured curve $C^{\circ}=C\setminus \{0\}$, after a possible finite base change, can be filled in over $0$ to a family of K-semistable Fano varieties over $C$. 
\end{conj}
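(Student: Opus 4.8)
The strategy is to reduce the problem, after a finite base change, to a situation where we can apply the MMP machinery of Section~\ref{ss-mmp} together with the valuative criterion (Theorem~\ref{t-maintheorem2}) and the uniqueness results of Section~\ref{s-kpoly}. First I would complete the family $X^\circ \to C^\circ$ to \emph{some} proper family $\cX \to C$: using the boundedness Theorem~\ref{t-boundedness}, all K-semistable fibers over $C^\circ$ are embedded by $|-MK|$ into a fixed $\PP^N$, so the map $C^\circ \to \mathrm{Hilb}(\PP^N)$ extends (after finite base change and normalization) to $C \to \mathrm{Hilb}(\PP^N)$, giving a flat proper family whose generic fibers are the given ones. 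The central fiber of this naive completion need not be klt, let alone K-semistable, so the real work is to modify it near $0 \in C$.

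Second, I would run the MMP modification procedure exactly as in the proof of Theorem~\ref{t-specialdegeneration}: take the normalization, then the log canonical modification of $(\cX, \cX_0)$, then run a relative $K_{\cX^{\mathrm{lc}}}$-MMP with scaling so as to produce a model $\cX^{\mathrm{an}}$ with $-K_{\cX^{\mathrm{an}}}$ relatively ample over $C$ and $(\cX^{\mathrm{an}}, \cX_0^{\mathrm{an}})$ log canonical, followed by the tie-breaking step giving a model $\cX^{\mathrm s}$ with plt central fiber, i.e.\ a $\Q$-Fano variety $Y_0 := \cX_0^{\mathrm s}$. This is the analogue of special degeneration but over a one-parameter family rather than over $\bA^1$, and it is precisely the content of the relative version of special degeneration; since all of Section~\ref{ss-mmp} is available, this step is essentially bookkeeping. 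Note that this $Y_0$ is a $\Q$-Fano limit but a priori only \emph{one} choice among many, and a priori not K-semistable.

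Third, I would identify $Y_0$ with a degeneration obtained by a test configuration/valuation over a nearby K-semistable fiber and invoke stability. Pick a general closed point $c \in C^\circ$; the birational map between the completion $\cX$ and $X_c \times C$ (trivialized over $C^\circ$) induces, via the divisorial valuation $\ord_{Y_0}$ restricted to $K(X_c)$, a filtration $\cF$ on $R(X_c) = \oplus_m H^0(-mrK_{X_c})$, exactly as in the proof of Theorem~\ref{t-main}, Step~1. Since $X_c$ is K-semistable, Theorem~\ref{t-maintheorem2} gives $\beta_{X_c}(\cF) \ge 0$, hence by Theorem~\ref{t-betading} $\DNA(\cF) \ge 0$; on the other hand the naive completion has a divisor supported over $0$ measuring the failure of $\cL \sim_\Q -rK$, and the intersection-formula monotonicity \eqref{e-decreasing} from the MMP steps shows that after the modification the relevant Ding/Futaki invariant can only have decreased. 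Combining, one forces $\Ding(\cX^{\mathrm s}) = 0$, so by Theorem~\ref{t-specialdegeneration} (or rather Theorem~\ref{t-d=k}) $\cX^{\mathrm s}$ is a special test configuration of $X_c$ and its central fiber $Y_0$ is K-semistable by \cite[Lemma~3.1]{LWX18}, as in Step~3 of Theorem~\ref{t-main}. A finite base change may be needed to make the MMP steps and the tie-breaking work integrally.

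\textbf{Main obstacle.}
The hard part will be controlling the Ding/Futaki invariant of the \emph{naive} completion: over $\bA^1$ this is trivially normalized, but over a general curve the completion's polarization is only $\Q$-linearly equivalent to $-rK$ up to a fiber-supported divisor $E$, and one must show that running the MMP does not increase the invariant past the bound forced by K-semistability of the general fiber — equivalently, that $\lim_m \Ding$ of the associated filtrations behaves well in the family. This is where one genuinely needs the finite generation of the graded ring $\oplus_p \cF^p R$ (the hardest step in \cite{BX19}, simplified via the argument attributed to Blum in Step~3 of Theorem~\ref{t-main}), and establishing it in the current relative/proper setting — rather than over a DVR or over $\bA^1$ — is the crux; indeed this is exactly why the statement is only a conjecture and not a theorem in the excerpt.
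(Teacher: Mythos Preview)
The statement is labeled a \emph{conjecture} in the paper precisely because it is open; there is no proof to compare against. Your proposal is not a proof but a sketch with a genuine and fatal gap in Step~3, and the obstacle you name at the end is not the right one.

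Here is the gap. After the MMP modification you obtain a family $\cX^{\mathrm s}\to C$ with $\Q$-Fano central fiber $Y_0$. Restricting $\ord_{Y_0}$ to the function field of a general fiber $X_c$ gives a filtration $\cF$ on $R(X_c)$, and K-semistability of $X_c$ yields $\DNA(\cF)\ge 0$. But you then assert that the MMP monotonicity ``forces $\Ding(\cX^{\mathrm s})=0$''. It does no such thing: the monotonicity \eqref{e-decreasing} only says the invariant \emph{decreases} along the MMP steps; it gives no upper bound, and the naive completion has no a priori bound on its invariant. So you are left with $\DNA(\cF)\ge 0$, i.e.\ the test configuration of $X_c$ with central fiber $Y_0$ has nonnegative Futaki invariant --- which is automatic from K-semistability of $X_c$ and tells you nothing about K-semistability of $Y_0$. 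The citation of \cite[Lemma~3.1]{LWX18} requires $\Fut=0$, not $\Fut\ge 0$. In Theorem~\ref{t-main} the equality is forced by the miracle $\beta+\beta'=0$ coming from \emph{two} K-semistable fillings (Proposition~\ref{p-TSa}); with only one filling there is no such mechanism. Your ``main obstacle'' (finite generation) is a red herring: even granting finite generation, the argument does not close.

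For comparison, the paper's discussion after the conjecture outlines a completely different (and still conditional) route: following the Langton-style strategy of \cite{AHH18}*{Section~6}, one would construct an \emph{optimal destabilizing degeneration} for any non-semistable filling, show this defines a $\Theta$-stratification on the stack of $\Q$-Fano varieties, and then iterate to reach a semistable limit. This is carried out in \cite{BHLX20} conditional on Conjecture~\ref{c-maxidd}. The essential missing ingredient is thus not a variant of the separatedness argument but the existence of a canonical ``worst'' destabilization along which to replace a bad filling --- an ingredient your proposal does not contain.
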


When the K-polystable Fano varieties $X_t$ are all smooth for $t\in C^{\circ}$ it follows from that \cite{DS14} a K-polystable/KE limit exists. Similar arguments appear in \cite{CDS, Tia15} in a log setting. It follows that $\overline{X^{\rm Kps, sm}_{n,V}}$ is proper.
In fact, it can be shown that the limit $X_0$ for $t\to 0$ is the Chow limit of $[X_t]\in {\rm Chow}(\mathbb{P}^N)$ induced by {\it Tian's embedding}, i.e.,  embeddings $X_t\to \mathbb{P}^N$ given by $|-mK_{X_t}|$ for $m\gg 0$ with the orthonormal bases under the K\"ahler-Einstein metrics on $X_t$ (see \cite{CDS, Tia15, LWX19}). 

To give a completely algebraic treatment, we can follow the strategy of \cite{AHH18}*{Section 6}, which is an abstraction of the Langton's argument of proving the properness of the moduli space of stable sheaves (see \cite{Lan75}). As a consequence, in \cite{BHLX20} we show that if Conjecture \ref{c-maxidd} is true, then one can define a unique optimal degeneration with respect to a lexicographical order $(\frac{\rm Fut(\cX)}{\lVert \cX \rVert_{\rm m} }, \frac{\rm Fut(\cX)}{\lVert \cX \rVert_{\rm 2} })$, and prove that it yields a \emph{$\Theta$-stratification} (see \cite{HL14}) on the stack of all $\mathbb Q$-Fano varieties.  Then  Conjecture \ref{c-proper} would follow from it.  

\subsection{Projectivity}
On $X^{\rm Kps}_{n,V}$ there is a natural  $\mathbb{Q}$-line bundle, called  the {\it Chow-Mumford line bundle} or the {\it CM-line bundle}. People expect $\lambda_{\rm CM}$  to be positive on  $X^{\rm Kps}_{n,V}$, because when the family parametrizes smooth fibers, the curvature form of the Quillen metric on the CM line bundle is given by the Weil-Peterson form (see e.g. \cite{FS90}). This differential geometry approach is pushed further in \cite{LWX18a} to show $\lambda_{\rm CM}$ is big and nef on $\overline{X}^{\rm Kps, sm}_{n,V}$ and ample on ${X^{\rm Kps, sm}_{n,V}}$. Later in \cite{CP18}, an algebraic approach to study the positivity of $\lambda_{\rm CM}$ was introduced. It is proved that  $\lambda_{\rm CM}$  is nef on any proper space of $\overline{X}^{\rm Kps, sm}_{n,V}$, and ample if all points of the proper spacel parametrize uniformly K-stable Fano varieties.  In \cite{XZ19}, we developed a number of new tools to enhance the strict positivity result of \cite{CP18} to a version which allows the fibers to have a non-discrete automorphism group. 
\begin{defn}[{see e.g. \cite{PT09, LWX18, CP18}}] Let $f : X \to T$ be a
proper flat morphism of varieties of relative dimension $n$ such that the general fiber is normal, and $L$ an $f$-ample $\bQ$-Cartier divisor on $X$. 
Consider the Mumford-Knudsen expansion of $\cO_X(rL)$ for a sufficiently divisible $r$:
$$ {\rm det} f_*(\cO_X (qrL)) \cong \bigotimes^{n+1}_{i=0}\cM^{(^q_i)}_i$$
where $M_i$ are uniquely determined bundles on $T$. 

We define {\it the CM line bundle} to be  
$$\lambda_{f,rL}:= \cM_{n+1}^{n(n+1)+\mu(rL)}\otimes\cM_n^{-2(n+1)},$$
where $\mu(rL)=\frac{-nK_{X_t}\cdot (rL_t)^{n-1}}{(rL_t)^n}$ for a general fiber $X_t$, and $\lambda_{f,L}:=\frac{1}{r^n}\lambda_{f,rL}$ as a $\bQ$-line bundle. It clearly does not depend on the choice of $r$. 

If both $X$ and $T$ are normal and projective, we can write it as an intersection
$$\lambda_{f,L} := f_*\big(\mu(L)\cdot L^{n+1}+ (n + 1)L^n\cdot K_{X/T}\big),$$
and if $L=-K_{X/T}$, we denote $\lambda_{f}:=-f_*(-K_{X/T})^{n+1}$. 

When $T=\mathfrak X^{\rm Kss}_{n,V}$ and $X\to T$ is the universal family, we can descend $\Lambda_{\rm CM}$ to get the CM ($\bQ$)-line bundle on $T=\mathfrak X^{\rm Kss}_{n,V}$, and one can show that $\lambda_{\rm CM}$ can be descent to $X^{\rm Kps}_{n,V}$. 
 \end{defn}

\begin{thm}[{Projectivity, \cites{CP18,XZ19}}]\label{t-positivity} The restriction of $\lambda_{\rm CM}$ on any proper subspace of $X^{\rm Kps}_{n,V}$ whose points parametrize reduced uniformly K-stable Fano varieties, is ample. In particular,  $\lambda_{\rm CM}|_{\overline{X}^{\rm Kps, sm}_{n,V}}$ is ample. 
\end{thm}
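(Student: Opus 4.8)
\textbf{Proof proposal for Theorem \ref{t-positivity} (Projectivity).}

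The plan is to reduce ampleness of $\lambda_{\rm CM}$ on a proper subspace $Z\subset X^{\rm Kps}_{n,V}$ to a Nakai--Moishezon type positivity statement, namely that $\lambda_{\rm CM}\cdot C>0$ for every proper irreducible curve $C\subset Z$, together with the nefness already known from \cite{CP18}. Combined with nefness (which holds on any proper subspace parametrizing K-semistable Fano varieties, by \cite{CP18}), positivity on all curves gives ampleness by the Nakai--Moishezon criterion for algebraic spaces (after passing to a projective scheme mapping finitely onto $Z$, e.g. via a finite cover of a suitable Artin stack presentation, so that one may work with an honest projective variety). So first I would set up this reduction carefully: choose a normal projective variety $T$ with a finite surjection $T\to Z$ and a family $f\colon \mathcal{X}\to T$ of reduced uniformly K-stable $\mathbb{Q}$-Fano varieties (after base change and normalization), so that $\lambda_{f,-K_{\mathcal{X}/T}}$ pulls back the CM bundle; it then suffices to show $\deg_{C}(\lambda_f)>0$ for every curve $C\subset T$, and by restricting the family one reduces to $T=C$ a smooth projective curve.

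The core of the argument is then the following: given a $\mathbb{Q}$-Gorenstein family $f\colon \mathcal{X}\to C$ over a smooth projective curve whose generic fiber (hence all fibers, by openness, Theorem \ref{t-openness}, and properness of the moduli) is K-semistable and whose fibers over a dense open set are reduced uniformly K-stable, one must show $-f_*(-K_{\mathcal{X}/C})^{n+1}>0$, with strict inequality coming precisely from the uniform K-stability hypothesis. The strategy, following \cite{CP18, XZ19}, is to use the intersection formula \eqref{e-inter} fiberwise: after a base change and semistable reduction one produces, from the curve $C$ itself viewed near a point, a test configuration of a fixed fiber, and the CM degree is read off as (up to positive constant) the generalized Futaki invariant of that test configuration, up to an error term controlled by the $J^{\rm NA}$-norm of the degeneration. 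Here the key input is the quantitative estimate of Theorem \ref{t-reducedK}: reduced uniform K-stability gives $\Fut(\mathcal{Y},\mathcal{L})\ge \eta\cdot J^{\rm NA}_T(\mathcal{Y},\mathcal{L})$ for a uniform $\eta>0$ over the bounded family, and one shows the CM degree along $C$ is bounded below by a positive multiple of a norm-type quantity that vanishes only when the family is isotrivial with fiberwise-trivial polarization — which is excluded because $-K_{\mathcal{X}/C}$ is $f$-ample and $C$ is projective, unless the family is a product with trivial CM class, a case one handles separately (and which contributes zero, so does not obstruct ampleness after one checks $C$ is not contracted). Combining the lower bound over all curves with the nefness of $\lambda_{\rm CM}$ and the fact that the good moduli space $X^{\rm Kps}_{n,V}\to$ its image is finite, one concludes $\lambda_{\rm CM}|_Z$ is ample.

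The main obstacle I anticipate is the passage from the \emph{uniform} (with a fixed constant $\eta$, valid over a bounded family) lower bound on $\Fut$ in terms of $J^{\rm NA}_T$ to a genuine \emph{strict positive} lower bound on the CM degree along an arbitrary curve. Two technical points make this delicate: (i) one must ensure $\eta$ can be chosen uniformly, which uses boundedness (Theorem \ref{t-boundedness}) of the family of reduced uniformly K-stable Fano varieties appearing in $Z$, and the openness/constructibility of the relevant invariants (Theorem \ref{t-constructible}); (ii) one must relate the purely algebro-geometric CM intersection number on the \emph{compactified} family $\bar{\mathcal{X}}\to C$ to the test-configuration invariants of individual fibers, which requires the torus-equivariant (reduced) formalism of \cite{His16, XZ19}, because the fibers may have positive-dimensional automorphism groups, so one cannot simply use the non-equivariant $J^{\rm NA}$ — the degeneration coming from the family may be "trivial modulo the torus" yet non-trivial as a scheme. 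Handling this properly is exactly the enhancement of \cite{CP18} carried out in \cite{XZ19}, and I would structure the proof to quote Theorem \ref{t-reducedK} as a black box for the needed inequality and concentrate the work on the intersection-theoretic bookkeeping and the uniformity of constants. The last sentence, $\lambda_{\rm CM}|_{\overline{X}^{\rm Kps,sm}_{n,V}}$ is ample, then follows because smooth K-polystable Fano varieties with finite automorphism group are (uniformly) K-stable when K-stable — and in general the smoothable locus is proper by \cite{DS14} and its points are reduced uniformly K-stable — so the general statement applies.
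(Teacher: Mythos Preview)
There is a genuine gap at the core of your argument. You propose to read off the CM degree along a curve $C$ as the Futaki invariant of ``a test configuration of a fixed fiber'' produced from the family near a point. But a family over a projective curve is not a test configuration of any fiber --- there is no $\mathbb{G}_m$-action --- and the CM degree $-f_*(-K_{\mathcal{X}/C})^{n+1}$ is a global intersection number on the total space, not the Futaki invariant of any single degeneration. The inequality $\Fut\ge\eta\cdot\JNA_T$ from Theorem~\ref{t-reducedK} therefore has nothing to apply to. Your Nakai--Moishezon reduction is also incorrect as stated: nefness together with strict positivity on every curve does not imply ampleness in dimension $\ge 2$; one still needs positivity of higher self-intersections, i.e.\ bigness.

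The paper's bridge between the family over $C$ and K-stability of a fiber is entirely different: it is the \emph{Harder--Narasimhan filtration}. Each bundle $f_*(-mrK_{X/C})$ on $C$ has an HN filtration by slope, and restricting it to a fiber $X_0$ gives a multiplicative linearly bounded filtration $\cF_{\rm HN}$ on $R=\bigoplus_m H^0(-mrK_{X_0})$. Semi-positivity of pushforwards of relative log canonical classes yields $\mu_{X_0}(\cF_{\rm HN})\le 0$, equivalently $\deg(\lambda_f)\ge\beta_{X_0}(\cF_{\rm HN})$; this is precisely why the extension of $\beta$ to arbitrary filtrations (Definition~\ref{d-filbeta}) is needed, since $\cF_{\rm HN}$ comes from neither a test configuration nor a valuation. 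When fibers are reduced uniformly K-stable one twists $\cF_{\rm HN}$ by some rational $\xi\in N_\bQ$ (after a base change making $\xi$ integral) to get $\beta_\delta((\cF_{\rm HN})_\xi)\ge 0$ for a uniform $\delta>1$, which translates into a uniform nef threshold: $-K_{X_\xi/C}+c\,f_\xi^*\lambda_f$ is nef with $c=\frac{\delta}{(n+1)(-K_X)^n(\delta-1)}$ independent of the curve. This uniform nef threshold, together with the product trick, is exactly the input to Koll\'ar's ampleness lemma (as in \cite{Kol90, KP17}, strengthened in \cite{XZ19} to accommodate the twist), which delivers bigness of $\lambda_f$ for families of maximal variation and hence ampleness on the moduli.
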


Putting together  Properness Conjecture \ref{c-proper} and Conjecture \ref{c-reducedk}, Theorem \ref{t-positivity} predicts that $\lambda_{\rm CM}$ is ample on $X^{\rm Kps}_{n,V}$, i.e. we have
the following implications:

{\small

\begin{tikzcd}[column sep=scriptsize]
 & &\mbox{Conjecture \ref{c-reducedk}} \\
\mbox{Conjecture \ref{c-finitegeneration}} \arrow[urr, Rightarrow]  \arrow[rd, Rightarrow] &  & \text{\large$ + $\ \ \ }  \arrow[r, Rightarrow] & {\rm Projectivity \ \  .}\\
&\mbox{Conjecture \ref{c-maxidd} \arrow[r, Rightarrow]}   &  \mbox{Conjecture  \ref{c-proper}}
\end{tikzcd}
}

\begin{proof}[Outline of the proof of Theorem \ref{t-positivity}] We give a sketch of the main ingredients in the proof.

\medskip

 \noindent{\it Step 1: Harder-Narashimhan filtration.}

Consider a family of $\bQ$-Fano varieties $f\colon X\to C$ over a smooth projective curve. One remarkable idea in \cite{CP18} is to relate the positivity of $\lambda_f$ to the  Harder-Narashimhan filtration of $f_*(-mrK_{X/C})$. This becomes more transparent in \cite{XZ19}. One can consider the Harder-Narasimhan filtration $\cF^i_{\rm HN}$ of the $\mathcal{O}_C$-algebra $\cR:=\bigoplus_m f_*(-mrK_{X/C})$, where $\cF^{\la}_{\rm HN}\subset f_*(-mrK_{X/C})$ is defined to be the union of all subbundles with slope at least $\la$. Restricting to a fiber $X_0$, and putting all $\la$ and $m$ together, we obtain {\it the Harder-Narashimhan filtration $\cF^{\la}_{\rm HN}$} on $R=\bigoplus_{m\in \bN}H^0(-mrK_{X_0})$, which can be easily proved to be a multiplicative linearly bounded filtration. 

 In \cite{CP18},  the proof uses the characterization of $\delta$-invariants as the limit of the log canonical thresholds of basis type divisors (see Equation \eqref{e-delta}). In \cite{XZ19}, we turned their argument into an inequality $\deg(\lambda_{f})\ge \beta(\cF_{\rm HN})$, which is equivalent to saying $\mu_{X_0}(\cF_{\rm HN})\le 0$. 
 In fact, for any $t> 0$ and any section of $D_0\in \cF^{tm}_{\rm HN}R_m$ (for $m$ sufficiently large with $tm\ge 2g+1$) can be extended to a section 
 $$D\in |-rmK_{X/C}-f^*P|.$$ Therefore $(X_0,\frac{1}{mr}D_0)$ can not be log canonical since otherwise $ K_{X/C}+D\sim_{\bQ} -\frac{1}{mr}f^*P$ and the pushforward of its multiple would violate the semi-positivity of the pushforward of the log canonical class. This implies that the log canonical slope $\mu_{X_0}(\cF_{\rm HN})\le 0$. 

\medskip

\noindent {\it Step 2: Twist the family.}

This is an extra step which is special for a family of reduced uniformly K-stable $\bQ$-Fano varieties, i.e., when there is a positive dimensional torus acting fiberwisely on the family. 
 
Let $f\colon X\to C$ be a $\bQ$-Gorenstein family of $\bQ$-Fano varieties $f\colon X\to C$ where a general fiber is reduced uniformly K-stable with respect to a torus $T$. Fixed a general fiber $X_t$. Then there exists a $\delta>1$ which only depends on $X_t$ such that for the Harder-Narashimhan filtration  $\cF_{\rm HN}$, by Theorem \ref{t-reducedK} there exists a twist $\xi$ such that $\beta_{\delta}((\cF_{\rm HN})_{\xi})\ge 0$. A somewhat subtle property is such $\xi$ can be chosen to be in $N(T)_{\bQ}$ (see \cite[Proposition 5.6]{XZ19}). After a further finite base change $C'\to C$, we can assume $\xi\in N(T)$.
Consider 
$$\cR:=\bigoplus_{m\in \bN}\cR_m=\bigoplus_{m\in \bN}f_*(-mrK_{X/C})$$
which can be decomposed into weight spaces
$$\cR:=\bigoplus_{m\in \bN}\cR_m=\bigoplus_{m\in \bN,\alpha\in M}\cR_{m,\alpha}.$$
Now we can pick up any point $c\in C$, and construct the twisting family $f_{\xi}\colon X_{\xi}:={\rm Proj}_C(\cR_{\xi})\to C$ where
$\cR_{\xi}=\bigoplus_{m\in \bN,\alpha\in M}\cR_{m,\alpha}\otimes \mathcal{O}_{C}(\langle \xi,\alpha \rangle\cdot c).$ It is easy to check $\lambda_{f}\sim_{\bQ}\lambda_{f_{\xi}}$  as a general fiber is K-semistable, and the Harder-Narashimhan filtration of the twisting family $f_{\xi}\colon X_{\xi}\to C$ is $(\cF_{\rm HN})_{\xi}$ (see \cite[Corollary 5.3]{XZ19}).

Then from $\beta_{\delta}((\cF_{\rm HN})_{\xi})\ge 0$, we can show $-K_{X_{\xi}/C}+\frac{\delta}{(n+1)(-K_X)^n(\delta-1)}f^*_{\xi}(\lambda_f)$ is nef (see \cite[Proposition 4.9]{XZ20}).

\medskip

\noindent{\it Step 3: Ampleness lemma.}

In this step we want to show that if there is a $\bQ$-Gorenstein family $f\colon X\to T$ of $\bQ$-Fano varieties with maximal variation such that general fibers are reduced uniformly K-stable,  then $\lambda_{f}$ is big.
By \cite{BDPP13}, it suffices to show that there exists an $\bQ$-ample line bundle $H$ such that for any covering family of curves $\{C\}$ of $T$, $\lambda_{f}\cdot C\ge H\cdot C$.

Using Koll\'ar's ampleness lemma in \cite{Kol90, KP17} and the product trick, it is shown in \cite{CP18} that this is true if there is a uniform constant $c$ such that $-K_{X_C/C}+c f^*\lambda_{f}$ is nef. (Moreover, they show  if $X_t$ is uniformly K-stable, such uniform constant $M$ exists.) 

In our case, applying the construction of the Step 2,  we can twist the family to get  $X_{C,\xi}\to C$ such that $\beta_{\delta}((\cF_{\rm HN})_{\xi})\ge 0$ for a uniform constant $\delta>1$ (see Theorem \ref{t-reducedK}), which only depends on $X/C$. Then we can take $c=\frac{\delta}{(n+1)(-K_X)^n(\delta-1)}$ and we know that $-K_{X_{C,\xi}/C}+c f^*\lambda_{f}$ is nef.

To conclude, we have to track the proof of the ampleness lemma to strengthen it to a version that includes all twists. For more details, see \cite[Section 6]{XZ19}.
\end{proof}

\begin{rem}[Case of log pairs] Unlike Part \ref{p-what}, in various steps of the moduli theory, to treat the case of log pairs could post substantial new difficulty. Among them, maybe the most difficult one is to give an appropriate definition of a family of log pairs over a general base. This is settled in \cite{Kol19}, by considering {\it K-flat} pairs over the base. While to generalize the boundedness and openness to the log pair case, the argument is essentially the same. To get the good moduli space, extra care has to been took for the subtle degeneration behaviour for the divisors. And for projectivity, it needs one more step to get the positivity of the CM line bundle from the ampleness lemma.  In the KSB case, this is worked out in \cite{KP17}. For K-moduli, see \cite{Pos19} and \cite{XZ19}*{Sec. 7}. 
\end{rem}

\medskip

In the below, we give a comparison of the ingredients appeared in the construction of KSB and K-moduli spaces. 

{\small

\begin{center}
\vspace{4mm}

\begin{tabular}{ | c | c | c | c | c |   }
 \hline
      & KSB moduli & K-moduli \\
 \hline       
Local closedness &\cite{Kol08, AH11, Kol19}  &  Contained in the KSB case\\
 \hline       
 Boundedness &\cite{HMX18}  &  \cite{Jia17} after \cite{Bir19} \\     
  & & or \cite{XZ20} after \cite{HMX14}\\
    &  &  (see Theorem \ref{t-boundedness})\\
 \hline
 Openness & --- & \cite{Xu19} or \cite{BLX19} \\
     &  &  (see Theorem \ref{t-openness})\\
 \hline
Separatedness & easy & \cite{BX19} (see Theorem \ref{t-main})\\
 \hline
Existence of the  good & \cite{Kol97, KM97}&\cite{AHH18}, \cite{ABHX19}\\
   moduli  &  &  (see Theorem \ref{t-goodmoduli})\\
 \hline
Properness & \cite{BCHM10, HX13, Kol13}& unknown \\
 \hline
Projectivity and Positivity&\cite{Kol90, Fujino18, KP17} &  \cites{CP18, XZ19} \\
 of CM line bundles  & and \cite{PX17} &  (see Theorem \ref{t-positivity})\\
 \hline
\end{tabular}

\end{center}

}

\section*{Notes on history}

It has been a long mystery for algebraic geometers to find an intrinsic way to construct moduli spaces of Fano varieties. An appropriate definition of families of arbitrary dimensional varieties (or even log pairs)  is made in \cite{Kol21,Kol19}. This was originally worked out for the construction of the KSB moduli, but one can use them to define a family of $\bQ$-Fano varieties (or even a log Fano pairs) as well. However,  more global conditions are needed than only the canonical class being anti-ample, since otherwise, the geometry of the moduli space would be too pathological (e.g. highly non-separated).
Given the canonicity of the K\"ahler-Einstein metric (see \cite{BM87}), one might naturally wonder whether they are parametrized by a nicely behaved moduli space, i.e. the K-moduli (of Fano varieties). In the deep analytic works, e.g. \cite{Tia90,Tia97, Don01, DS14, CDS, Tia15}, one could see some main ingredients of the construction of K-moduli already appeared, but only for smooth KE Fano manifolds and their degenerations.

Built on Donaldson's theorem that all Fano manifolds with a KE metric and a finite automorphism group are asymptotically GIT stable \cite{Don01} as well as \cite{DS14}, one can show such Fano manifolds can be parametrized by a quasi-projective variety  (see \cite{Oda12b, Don15}). Then the K-moduli conjecture for all smoothable K-(semi,poly)stable Fano varieties was intensively studied in \cite{LWX19} (also see \cite{SSY16,Oda15} for results on partial steps), and it was confirmed  except the projectivity which was established a few years later in \cite{XZ19}. All these heavily depend on the analytic tools developed in the solution of Yau-Tian-Donaldson Conjecture in \cite{CDS, Tia15}. 

Another direction started from \cite{Tia92,MM93} is for specific examples, writing down a moduli space which pointwisely parametrizes KE/K-polystable Fano varieties, i.e. constructing explicitly examples of K-moduli spaces. This has been achieved in \cite{MM93, OSS16} for all smoothable del Pezzo surfaces, even before the general construction of $X^{\rm Kps}_{n,V}$ was known. Later it is also done in \cite{SS17} for intersection of two quadratics, and in \cite{LX19, Liu20} for cubic threefolds/fourfolds. In \cite{ADL19, ADL20}, moduli spaces for log Fano pairs were considered, where the authors invented the framework of varying coefficients of the boundary divisor to establish wall-crossings connecting various compactifications. Especially, the examples of low degree plane curves were worked out in details. See Section \ref{s-moduli}.

\medskip

The progress on a purely algebraic method becomes attainable once the valuation criterion was established (see Part \ref{p-what}), combining with main achievements in other branches of algebraic geometry, e.g. MMP, moduli theory etc. In \cite{Jia17}, it was observed that the boundedness of K-semistable Fano varieties with a fixed volume follows from the very general boundedness results established in \cite{Bir19, Bir16}. Later the boundedness is also shown in \cite{XZ20}, which only uses \cite{HMX14} together with the uniqueness of the minimizer of the normalized volume function (see Theorem \ref{t-boundedness}). Then in \cite{BLX19, Xu19}, it is realized that the existence of bounded complements proved in \cite{Bir19} combining with the invariance of log plurigenera proved in \cite{HMX13}, can be used to deduce the openness of the K-semistable locus in the base parametrising a family of Fano varieties (see Theorem \ref{t-openness}). All these results together yield the K-moduli stack $\mathfrak X^{\rm Kss}_{n,V}$ as an Artin stack of finite type, which is indeed a global quotient stack. 

To construct a good moduli space of $\mathfrak X^{\rm Kss}_{n,V}$, we need to understand its orbital geometry, which was established in a trilogy. First in \cite{LWX18}, we prove that the original definitions of K-semistability/K-polystability indeed coincide with the semistability/polystability in the sense of $S$-equivalence classes (see Theorem \ref{t-sequivalence}). In \cite{BX19}, this is generalized to the case of  a relative family and the uniqueness of the $S$-equivalence class of a K-semistable degeneration is proved, which amounts to saying that the good moduli space if exists has to be separated (see Theorem \ref{t-main}). In both papers, the valuative criterion of $\beta$ was adopted, to conclude MMP type facts. Finally, in \cite{ABHX19}, it was realized that the criteria found in \cite{AHH18}  to guarantee the existence of a good moduli, namely $S$-completeness and $\Theta$-reductivity,  can be verified for $\mathfrak X^{\rm Kss}_{n,V}$ based on improvements of the arguments in \cite{BX19} and \cite{LWX18}. As a result, the moduli space $X^{\rm Kps}_{n,V}$ as a separated algebraic space is constructed purely algebraically (see  Theorem \ref{t-goodmoduli}). 

Partial results on positivity of the CM line bundle for families of smoothable KE Fano varieties were  established in \cite{LWX18a} in an analytic manner, using the fact that the curvature form of  Deligne's metric of the CM line bundle is given by a current extending the Weil-Petersson form on the open locus parametrising smooth Fano manifolds.  It was first shown in \cite{CP18}, with an algebro-geometric argument, that the CM line bundle is nef. More precisely, in the current language, for a family $X$ of $\bQ$-Fano varieties over a curve $C$, if we restrict the Harder-Narashimhan $\cF_{\rm HN}$ filtration to a general fiber,  the non-negativity of the CM degree can be deduced from the K-semistability of the fiber, together with the classical results on the positivity of the push forward of pluri-log canonical classes. This implication was analyzed further in \cite{XZ19}, and indeed it inspired the definition for $\beta$ of an arbitrary filtration (see Definition \ref{d-filbeta}). Moreover, in \cite{CP18}, the strict positivity of the CM line bundle was obtained for complete families of uniformly K-stable Fano varieties with maximal variation, where the condition of $\delta>1$ for a general fiber is used to get a uniform nef threshold for $-K_{X/C}$ with respect to $f^*(P)$ so that one can apply a version of Koll\'ar's ampleness lemma as in \cite{Kol90, KP17}. The log case of \cite{CP18} was also treated in \cite{Pos19} (see Theorem \ref{t-positivity}). All these results were extended to families of reduced uniformly K-stable varieties in \cite{XZ19}, where to get the uniform nef threshold,  the key construction of twisting the family, which does not change the CM line bundle on the base, is invented.
\clearpage

\part{Explicit  K-stable Fano varieties}\label{p-example}

Telling whether an explicit given $\mathbb{Q}$-Fano variety is K-(semi,poly)stable is a quite challenging question.  The case of smooth surfaces was solved by Tian in \cite{Tia90} decades ago, but in higher dimension, the knowledge is  incomplete. 
In \cite{CDS}, the authors noted

\medskip

{\it `` On the other hand, we should point out that as things stand at present the result is of very limited use in concrete
cases, so that there is no manifold $X$ known to us, not covered by other existence
results and where we can deduce that $X$ has a K\"ahler-Einstein metric. This is
because it seems a very difficult matter to test K-stability by a direct study of all
possible degenerations. However, we are optimistic that this situation will change
in the future, with a deeper analysis of the stability condition.''}

\medskip

As one will see, the situation indeed has changed a lot since then. With the development of foundational theories, verifying K-stability for Fano varieties becomes a rapidly moving forward subject. 

One guiding question is to look at all smooth hypersurfaces. Since the Fermat hypersurfaces are known to have KE metrics by \cite{Tia00}*{P 85-87} (an algebraic approach is given in \cite{Zhu20}), we know a general smooth hypersurface is K-stable. 
The following folklore conjecture is then natural.
\begin{ques*}
 Are all smooth degree $d$ hypersurfaces with $3\le d \le n$ in $\mathbb{P}^{n+1}$ K-stable? 
\end{ques*}

The recent progress provides  strong tools to verify the K-stability of concrete Fano varieties. For instance, combing \cite{Fuj19, LX19,  AZ20, Liu20}, it is proved that smooth hypersurfaces of degree at least 3 up to dimension four are all K-stable (see Corollary \ref{c-hypersurface}, Theorem \ref{t-index2}, Theorem \ref{t-cubics} and Theorem \ref{t-cubicfourfold}). When dimension is larger than four, besides the simple case of degree 1 and 2,  all degree $n$ and $n+1$ smooth hypersurfaces in $\mathbb{P}^{n+1}$ are known to be K-stable (see \cite{Fuj19, SZ19} or Corollary \ref{c-hypersurface} and Theorem \ref{t-index2}). 

\medskip

In the below, we will discuss two ways of showing Fano varieties are K-(semi,poly)stable. The first one is estimating $\delta(X)$ by studying the singularity in $|-K_X|_{\mathbb{Q}}$, and the second approach is constructing explicit K-moduli spaces. 

Both of them had a long history. The first approach dated back to the invention of $\alpha$-invariant in \cite{Tia87} (and also \cite{OS12}), which gave a sufficient condition. Then after the $\delta$-invariant (see Section \ref{sss-delta}) was defined in a similar fashion in \cite{FO18}, people have developed a number of ways to estimate $\delta(X)$ for a Fano variety $X$, see e.g. \cite{Fuj19, SZ19, AZ20}. In particular, the recent paper by \cite{AZ20} provides a powerful approach, namely proving K-stability by `adjunction'.  The second approach, namely, the moduli method, first appeared implicitly in \cite{Tia90} and then explicitly in \cite{MM93}.
Thereafter, various cases were established in \cite{OSS16, SS17, LX19, Liu20}. A new perspective, which considers the setting of varying coefficients and describes the wall crossing birational maps between moduli spaces, was investigated in \cite{ADL19, ADL20} (see also \cite{GMS18}), and put a number of moduli spaces constructed from other perspective into a uniform framework.

\section{Estimating $\delta(X)$ via $|-K_X|_{\mathbb{Q}}$}\label{s-sing}
For a long time, there have been a very limited number of methods to prove that a given Fano manifold admits a KE metric. Among them, probably the most well known one is Tian's $\alpha$-invariant criterion which says if $\alpha(X)>\frac{n}{n+1}$, then $X$ is K-polystable. See \cite{Tia87} (and also \cite{OS12} for an algebraic treatment). 
It established the philosophy that if members in $|-K_X|_{\mathbb{Q}}$ is not `too singular', then $X$ should be close to be K-stable.

By now, we have the new invariant $\delta(X)$, which is precisely computed by the infimum of log canonical thresholds of basis type divisor in $|-K_X|_{\bQ}$ (see \eqref{e-delta}).  Computing $\delta$ for general $X$ is a challenging problem. Nevertheless,  by the machineries we have developed, there are many new cases including which people have speculated for a long time that now one can verify the K-stability.  

\subsection{Fano varieties with index one}
In this section, we discuss two classes of examples, which one can prove the K-stability in a rather straightforward way. However, historically these examples had been mysterious to people for a while. Only after the development of the foundational theory, e.g. Theorem \ref{t-valkstable} was established, the proof became accessible.  
\subsubsection{Fano manifolds with $\alpha=\frac{n}{n+1}$}
The first class of examples provided by the new theory are Fano manifolds with $\alpha(X)=\frac{n}{n+1}$. It has been known  for a long time that any $\mathbb{Q}$-Fano variety $X$ with $\alpha(X)>\frac{n}{n+1}$ is K-stable and $\alpha(X)\ge \frac{n}{n+1}$ is K-semistable (see \cites{Tia87,OS12} or the proof Theorem \ref{t-alphaequal} and Example \ref{e-alpha} for another proof). In \cite{Fuj19}, Fujita used the valuation criterion to study the equality case, and obtained the following somewhat surprising fact. 
\begin{thm}[{\cite{Fuj19}}]\label{t-alphaequal}
If $X$ is an $n$-dimensional smooth Fano manifold, $\alpha(X)= \frac{n}{n+1}$, then either $X\cong \mathbb{P}^1$ or $X$ is K-stable.
\end{thm}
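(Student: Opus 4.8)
\emph{Proof proposal.} The plan is to play the valuative criterion off against the hypothesis $\alpha(X)=\tfrac{n}{n+1}$. First I would dispose of $n=1$: then $X$ is the only smooth Fano curve, $\mathbb{P}^1$, so we are in the first alternative. Assume now $n\ge 2$; I will show $X$ is K-stable. The inputs are Theorem \ref{t-valkstable} and the comparison $S_X(E)\le\tfrac{n}{n+1}T(E)$ for every divisor $E$ over $X$ (the inequality $T(E)\ge\tfrac{n+1}{n}S_X(E)$ recalled in the proof of Proposition \ref{p-deltaapproximation}), together with its equality case: equality holds precisely when $\vol(\mu^*(-K_X)-tE)=(-K_X)^n\bigl(1-t/T(E)\bigr)^n$ for all $t\in[0,T(E)]$, which I would extract from the Okounkov-body proof of that inequality (a cone over the bottom facet is the extremal body in the underlying centroid estimate). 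Since $A_X(E)\ge\alpha(X)\,T(E)=\tfrac{n}{n+1}T(E)$, the inequality
\[
\beta_X(E)=A_X(E)-S_X(E)\ \ge\ \tfrac{n}{n+1}T(E)-S_X(E)\ \ge\ 0
\]
shows $X$ is K-semistable. If $X$ were not K-stable, Theorem \ref{t-valkstable}(2) would give a divisor $E$ over $X$ with $\beta_X(E)\le 0$, hence $\beta_X(E)=0$; then equality holds throughout, so $A_X(E)=S_X(E)=\tfrac{n}{n+1}T(E)$ and the equality case applies to $E$.

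Next I would identify $E$. Writing $v(t):=\vol(\mu^*(-K_X)-tE)=(-K_X)^n(1-t/T(E))^n$ we have $v'(0^+)=-n(-K_X)^n/T(E)<0$; but if $E$ were exceptional over $X$ (center of codimension $\ge 2$) then $v'(0^+)=-n\,(\mu^*(-K_X))^{n-1}\cdot E=-n(-K_X)^{n-1}\cdot\mu_*E=0$, a contradiction. So $E$ is the valuation of a prime divisor on the smooth variety $X$; hence $A_X(E)=1$, and then $A_X(E)=\tfrac{n}{n+1}T(E)$ forces $T(E)=\tfrac{n+1}{n}$.

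Finally I would read off numerical consequences. For small $t>0$ the divisor $-K_X-tE$ is ample, so on an interval $v(t)=(-K_X-tE)^n$ is a polynomial agreeing with $(-K_X)^n(1-\tfrac{n}{n+1}t)^n$; comparing coefficients gives $(-K_X)^{n-k}\cdot E^k=\bigl(\tfrac{n}{n+1}\bigr)^k(-K_X)^n$ for every $k$. Put $\gamma:=E-\tfrac{n}{n+1}(-K_X)\in N^1(X)_{\mathbb{R}}$; the cases $k=1,2$ give $(-K_X)^{n-1}\cdot\gamma=0$ and $(-K_X)^{n-2}\cdot\gamma^2=0$, so the Hodge index theorem yields $\gamma=0$, i.e.\ $E\equiv\tfrac{n}{n+1}(-K_X)$. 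Since $\mathrm{Pic}(X)$ is torsion-free, setting $H:=-K_X-E$ we get $-K_X\equiv(n+1)H$; because $-K_X$ is ample and $(H^{\dim Z}\cdot Z)=((-K_X)^{\dim Z}\cdot Z)/(n+1)^{\dim Z}>0$ for every subvariety $Z$, $H$ is ample by Nakai--Moishezon, so the Fano index of $X$ is at least $n+1$ and Kobayashi--Ochiai gives $X\cong\mathbb{P}^n$. But $\alpha(\mathbb{P}^n)=\tfrac{1}{n+1}\ne\tfrac{n}{n+1}$ for $n\ge 2$, contradicting the hypothesis; therefore $X$ is K-stable.

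The hard part is the equality case of $S_X(E)\le\tfrac{n}{n+1}T(E)$: I need not merely that the top coefficient of $v$ is extremal but the \emph{full} cone shape of $v(t)$ on $[0,T(E)]$, since this is what simultaneously excludes exceptional $E$ (through $v'(0^+)$) and, by polynomial comparison, produces all the intersection identities feeding Hodge index. An alternative, avoiding the Okounkov-body equality analysis, would be to note $\delta(X)=\delta_X(E)=1$ and invoke Theorem \ref{t-delta=1} to produce a non-trivial special degeneration $\mathcal{X}$ with $\Fut(\mathcal{X})=0$ and then run the numerical argument on its central fiber; but the direct route above looks shorter and I would pursue it first.
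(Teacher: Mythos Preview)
Your equality case is backwards, and this derails the whole argument. If $v(t):=\vol(\mu^*(-K_X)-tE)$ satisfies $v(t)=(-K_X)^n(1-t/\tau)^n$, then
\[
S_X(E)=\frac{1}{(-K_X)^n}\int_0^\tau v(t)\,dt=\int_0^\tau(1-t/\tau)^n\,dt=\frac{\tau}{n+1},
\]
which is the \emph{lower} extremal value, not $\frac{n}{n+1}\tau$. The equality $S_X(E)=\frac{n}{n+1}\tau$ forces instead the shape $v(t)=(-K_X)^n\bigl(1-(t/\tau)^n\bigr)$, for which $v'(0^+)=0$ when $n\ge 2$. Your own computation $v'(0^+)=-n(-K_X)^{n-1}\cdot\mu_*E$ then gives $\mu_*E=0$: the divisor $E$ is \emph{exceptional}, not a prime divisor on $X$. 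So $A_X(E)=1$ is unjustified, and the Hodge--index / Kobayashi--Ochiai steps have no footing.

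The paper proceeds from the correct equality case. Writing $Q(t)=-\tfrac{1}{n}v'(t)$ (the restricted volume, hence concave), the centroid identity $\frac{\int tQ}{\int Q}=S_X(E)=A_X(E)\ge\frac{n}{n+1}\tau$ together with concavity forces $Q(t)=(t/A)^{n-1}Q(A)$ on $[0,\tau]$. Expanding $Q$ near $0$ on the extraction model $\mu\colon Y\to X$ gives $E^i\cdot(\mu^*K_X)^{n-i}=0$ for all $i\le n-1$, so $E$ is contracted to a point, and $v(t)=(-K_X)^n-t^n(-E|_E)^{n-1}$ on the whole interval; in particular the nef and pseudo-effective thresholds coincide. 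Since $E$ is dreamy (Theorem \ref{t-delta=1}), $\mu^*(-K_X)-\tau E$ is semiample and defines a fibration $\rho\colon Y\to Z$ whose general fibre is a curve $\ell$ with $E\cdot\ell=1$; then $K_Y\cdot\ell=-2$ forces $\tau=n+1$, $A=n$, so $\mu$ is the blow-up of a smooth point with Seshadri constant $n+1$, and one concludes $X\cong\mathbb{P}^n$ by the Liu--Zhuang characterization \cite{LZ18}. For $n\ge2$ this contradicts $\alpha(\mathbb{P}^n)=\frac{1}{n+1}$. Once you correct the extremal shape, your $v'(0^+)$ test points you to the exceptional case, and the endgame necessarily shifts from Kobayashi--Ochiai to this Seshadri-constant argument.
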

\begin{proof}
If $X$ is not K-stable, then we know there exists a divisor $E$ over $X$ such that $\delta(E)=\delta(X)=1$. We want to show that this implies $X\cong \mathbb{P}^1$.

Consider the restricted volume function 
\[
Q:=-\frac{1}{n}\frac{d}{dt}\vol(-\mu^*K_X-tE)dt \mbox{ \ \ for } t\in [0,\tau),\]
 where $\tau:=\tau(E)$ is the pseudo-effective threshold of $E$ with respect to $-K_X$. The main property we need about $Q$ is that  it is a concave function which follows from that it is equal to the restricted volume and the latter is log concave (see \cite{ELMNP09}). Moreover, $Q$ is smooth on $[0,\tau)$ and can be extended to a continuous function on $[0,\tau]$.
We have
 \begin{eqnarray}
 \frac{\int^{\tau}_0 tQdt}{\int^{\tau}_0Q dt}=\frac{\frac{1}{n}\int^{\tau}_{0}\vol(\mu^*(-K_X)-tE)dt}{\frac{1}{n}(-K_X)^n}=S_X(E)=A_{X}(E)\ge\frac{n}{n+1}\tau,
 \end{eqnarray}
  as $\beta(E)=0$ and $\alpha(X)=\frac{n}{n+1}$. We denote by $A:=A_{X}(E)$.
  
  By the concavity, we know that $Q(t)\ge (\frac{t}{A})^{n-1}Q(A)$ for $t\in [0,A]$ and $Q(t)\le (\frac{t}{A})^{n-1}Q(A)$ for $t\in [A,\tau].$
  So 
  \begin{eqnarray*}
  0&=&\int^{\tau}_{0}(t-A)Q(t)dt\\
  &\le& Q(A)\int^{\tau}_{0}(t-A)(\frac{t}{A})^{n-1}dt\\
  &=&\frac{Q(A) \tau^n}{A^{n-1}}(\frac{\tau}{n+1}-\frac{A}{n})\\
  &\le &0.
  \end{eqnarray*}
This implies that $A=\frac{n}{n+1}\tau$ and $Q(t)= (\frac{t}{A})^{n-1}Q(A)$ for $t\in[0,\tau]$.

To proceed, since $E$ computes $\delta(X)=1$, we know we can find a model $\mu\colon Y\to X$ only extracting $E$, so we know that for $t\ll 1$,
$$Q=-\frac{1}{n}\frac{d}{dt}\vol(-\mu^*K_X-t E)|_{t=t_0}=E\cdot(-\mu^*K_X-tE)^{n-1}.$$
Compared to $Q(t)= (\frac{t}{A})^{n-1}Q(A)$, we know $E^i\cdot (\mu^*K_X)^{n-i}=0$ and all $0\le i \le n-1$, which in particular implies $E$ is mapped to a point.   Moreover, $Q=t^{n-1}(-E|_E)^{n-1}$, which implies for $t\in [0,\tau]$,
$$\vol(-K_X-tE)=n\int^{\tau}_{t}Qdu=(\tau^n-t^n)E(-E)^{n-1}=(-K_X)^n-t^nE(-E)^{n-1}.$$
 This implies $\tau=\epsilon$, which is the nef threshold of $E$ with respect to $-K_X$. 
 
  
  Since $E$ is dreamy by Theorem \ref{t-delta=1}, we know $\mu^*(-K_X)-\tau E$ is semi-ample but not big. Since $\mu^*(-K_X)-\tau E$ is ample on $E$, we know a sufficiently divisible multiple of $\mu^*(-K_X)-\tau E$ will give a fibration struction $\rho\colon Y\to Z$, whose restrict on $E$ is finite. Thus a general fiber of $\rho$ is a curve $l$. Since $Y$ is normal, $l$ is in the smooth locus. Thus $K_Y\cdot l=-2$ and $0=(\mu^*(-K_X)-\tau E)\cdot l=2-(1+\frac{1}{n+1}\tau) E\cdot l$, which implies $E\cdot l=1$, $\tau=n+1$ and $A=n$. So $Y\to X$ is the blow up of the smooth point, and we know the Seshadri constant at this point is $n+1$, therefore $X\cong \mathbb{P}^n$ by \cite{LZ18}. Then $\alpha(\mathbb{P}^n)=\frac{1}{n+1}$.
\end{proof}
\begin{rem}The smooth assumption in Theorem \ref{t-alphaequal} is indeed necessary. In \cite{LZ19}, they found a class of singular $\mathbb{Q}$-Fano varieties $X$ with $\alpha(X)=\frac{n}{n+1}$, but they are only strictly K-semistable. 
\end{rem}
\begin{cor}\label{c-hypersurface}
For $n\ge 2$, smooth degree $n+1$ hypersurfaces in $\mathbb{P}^{n+1}$ are all K-stable.
\end{cor}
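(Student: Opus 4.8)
The plan is to deduce Corollary \ref{c-hypersurface} directly from Theorem \ref{t-alphaequal}, so the entire task reduces to computing the $\alpha$-invariant of a smooth hypersurface $X_{n+1}\subset\mathbb{P}^{n+1}$ of degree $n+1$ and showing it equals $\frac{n}{n+1}$ (for $n\ge 2$, in which case $X\not\cong\mathbb{P}^1$). Recall $-K_X\sim_{\mathbb{Q}}\mathcal{O}_X(1)$ by adjunction, so $X$ is a Fano manifold of index one, and $\alpha(X)=\inf\{\lct(X,D)\mid D\sim_{\mathbb{Q}}-K_X\}$ runs over effective $\mathbb{Q}$-divisors in the hyperplane class. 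The first step is to produce an upper bound $\alpha(X)\le\frac{n}{n+1}$: take a point $x\in X$ and a hyperplane section $H$ of $\mathbb{P}^{n+1}$ that is tangent to $X$ to high order at $x$, or more simply exhibit an effective divisor $D\sim\mathcal{O}_X(1)$ with a point of multiplicity close to the expected bound. Concretely, a generic hyperplane section through $x$ has multiplicity $1$, but one can do better: choosing coordinates so that $X=\{f=0\}$ with $x=[0:\cdots:0:1]$ and writing $f$ in terms of the affine chart, one finds a hyperplane section whose intersection with $X$ has a singular point, giving $\lct\le\frac{n}{n+1}$ by a local computation (the relevant worst case is a divisor that looks like $(x_1\cdots x_n=0)$ or a single component with multiplicity $n$ at a point, both of which have log canonical threshold $\le\frac{n}{n+1}$ when normalized to lie in $|-K_X|_{\mathbb{Q}}$).

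The harder step is the lower bound $\alpha(X)\ge\frac{n}{n+1}$, i.e. showing that no $D\sim_{\mathbb{Q}}-K_X$ is too singular: one must prove $\lct(X,D)\ge\frac{n}{n+1}$ for every effective $\mathbb{Q}$-divisor $D$ linearly equivalent to $\mathcal{O}_X(1)$. The approach I would take is the standard one for hypersurfaces: if $\lct(X,D)<\frac{n}{n+1}$ then there is a point $x\in X$ and a subvariety (non-klt center) through which $D$ is very singular; one then intersects $D$ with general hyperplane sections through $x$ to cut down dimension, using that $\mathcal{O}_X(1)$ restricts to $\mathcal{O}_{X'}(1)$ on a general complete intersection surface/curve section $X'$, and reduces to a statement about multiplicities on a curve or surface where $(-K)$-degree is $n+1$. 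The key numerical input is that $\deg_{\mathbb{P}^{n+1}}X = n+1$, so the intersection number of $D$ with a line on $X$ (if one exists) or more robustly $D^n\cdot[X]$-type bounds are controlled by $n+1$; combined with the inequality $\mult_x(D)\le$ (degree input) and the elementary fact $\lct(X,D)\ge\frac{1}{\mathrm{mult}_x(D)}$ in dimension considerations, one forces $\lct\ge\frac{n}{n+1}$. This is essentially the content of Cheltsov's and others' computations of $\alpha$-invariants of Fano hypersurfaces of index one, which I would cite or reproduce in the low-codimension reduction.

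The main obstacle I anticipate is the lower bound argument in its sharpest form: getting $\alpha(X)\ge\frac{n}{n+1}$ rather than merely $\alpha(X)\ge\frac{n-1}{n}$ or some weaker bound requires care at the non-klt center, and one genuinely needs that $X$ is a \emph{hypersurface} (not just any index-one Fano) to control multiplicities via Bézout/degree arguments. The inductive reduction to surfaces must be done so that the general section remains smooth (Bertini, using $\dim X\ge 2$) and so that the non-klt locus is preserved. Once $\alpha(X)=\frac{n}{n+1}$ is established, Theorem \ref{t-alphaequal} applies verbatim: since $X$ is an $n$-dimensional smooth Fano with $n\ge 2$, it is not $\mathbb{P}^1$, hence K-stable. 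I would also remark that the case $n=2$ (degree $3$ in $\mathbb{P}^3$, the cubic surface) is classical and recovers Tian's result, serving as a consistency check. Thus the proof is short modulo the $\alpha$-invariant computation, and the only real work is that computation.
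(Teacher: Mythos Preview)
Your approach is essentially the paper's: reduce to the $\alpha$-invariant bound and invoke Theorem~\ref{t-alphaequal}. The paper's proof is one line---it cites \cite{Che01} for $\alpha(X)\ge \frac{n}{n+1}$ and then applies Theorem~\ref{t-alphaequal}.

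There is, however, a genuine misstep in your plan. You set out to prove the \emph{equality} $\alpha(X)=\frac{n}{n+1}$, including an upper bound $\alpha(X)\le\frac{n}{n+1}$. This is both unnecessary and, in general, false. The upper bound is unnecessary because the case $\alpha(X)>\frac{n}{n+1}$ is already covered by Tian's classical criterion (stated just before Theorem~\ref{t-alphaequal}); Theorem~\ref{t-alphaequal} is only needed to handle the boundary case. So all you need is $\alpha(X)\ge\frac{n}{n+1}$, and then you split into the two cases $>$ and $=$. The upper bound is also not true for every smooth degree $n+1$ hypersurface: already for smooth cubic surfaces, $\alpha(X)$ depends on the configuration of Eckardt points and can exceed $\tfrac{2}{3}$. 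Your sketched upper-bound argument (finding a sufficiently singular hyperplane section) would therefore fail for generic $X$.

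Once you drop the upper bound, what remains is exactly the paper's proof: the lower bound $\alpha(X)\ge\frac{n}{n+1}$ is the Cheltsov input, and your description of how one would prove it (cutting by general hyperplanes, multiplicity bounds via degree) is a reasonable outline of that argument, though the paper simply cites it rather than reproducing it.
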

\begin{proof}For any such hypersurface $X$, we have $\alpha(X)\ge\frac{n}{n+1}$ (see e.g. \cite{Che01}), thus we can apply Theorem \ref{t-alphaequal}.
\end{proof}

\subsubsection{Birationally Superrigid $\mathbb{Q}$-Fano varieties}

A special class of Fano varieties, called {\it birationally superrigid Fano varieties}, has been studied for a long time, dated back to Gino Fano.  
\begin{defn}
A Fano variety $X$ with terminal $\bQ$-factorial singularities, Picard number one  is said to be {\it birationally superrigid }if
 every birational map
$f \colon X \dasharrow Y$ from $X$ to a Mori fiber space $Y$ is an isomorphism. 
\end{defn}

The {\it Noether-Fano method} is a criterion to determine a Fano variety to be birationally superrigid.  We have the following equivalent characterization of birational superrigidity.
\begin{thm}[Noether-Fano method, {see \cite[Section 5]{KSC04}}]\label{t-NFmethod}Let $X$ be a terminal Fano variety, which is $\mathbb{Q}$-factorial with Picard number one. Then it is birationally
superrigid if and only if for every movable boundary $M \sim_{\mathbb{Q}} -K_X$ on $X$, the pair $(X, M)$ has canonical singularities.
\end{thm}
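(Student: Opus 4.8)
The statement is the Noether--Fano criterion (Theorem \ref{t-NFmethod}), characterizing birational superrigidity of a $\bQ$-factorial terminal Fano variety $X$ of Picard number one in terms of canonicity of pairs $(X,M)$ for movable boundaries $M\sim_{\bQ}-K_X$. The plan is to prove the two implications separately, following the classical Noether--Fano--Iskovskikh--Manin strategy adapted to the MMP language.

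\emph{Sufficiency ($\Rightarrow$ birational superrigidity).} Suppose $(X,M)$ is canonical for every movable $M\sim_{\bQ}-K_X$, and let $f\colon X\dasharrow Y$ be a birational map to a Mori fiber space $\rho\colon Y\to S$. First I would pull back a sufficiently general movable linear system $\cM_Y$ on $Y$ (for instance the pullback of a very ample system from $S$ together with a multiple of $-K_Y$, or $|{-}m K_Y|$ when $\dim S=0$) and let $\cM=f^{-1}_*\cM_Y$ be its birational transform on $X$, writing $\cM\sim_{\bQ}\mu(-K_X)$ for the unique rational number $\mu>0$ determined by $\Pic(X)\otimes\bQ\cong\bQ$. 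Resolving $f$ by a common model $W\to X$, $W\to Y$, one compares discrepancies: the standard computation shows that if $f$ is not an isomorphism, then some $\rho$-vertical divisor on $Y$ (or the fact that $-K_Y$ is not relatively rigid in the appropriate sense) forces the "Noether--Fano inequality", i.e. $(X,\tfrac1\mu\cM)$ is \emph{not} canonical; since $\tfrac1\mu\cM\sim_{\bQ}-K_X$ is movable, this contradicts the hypothesis. The cleanest route is to phrase this via the canonical threshold and the fact that on $Y$ the pair $(Y,\tfrac1\mu\cM_Y)$ is canonical (as $\cM_Y$ is very general and $K_Y+\tfrac1\mu\cM_Y$ is $\rho$-trivial or anti-ample), then transport the inequality $c(X,\cM)<\tfrac1\mu\le c(Y,\cM_Y)$ across the birational map using the negativity lemma. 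Hence $f$ is an isomorphism.

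\emph{Necessity ($\Leftarrow$).} Conversely, suppose some movable $M\sim_{\bQ}-K_X$ gives a non-canonical pair $(X,M)$. The goal is to construct an honest birational map to a Mori fiber space that is not an isomorphism, using untwisting. Here I would run a $(K_X+M')$-MMP for a suitable $M'=(1-\epsilon)M$ or, more precisely, apply the MMP to the pair $(X,\lambda M)$ where $\lambda=c(X,M)<1$ is the canonical threshold: since $K_X+M\sim_{\bQ}0$, we have $K_X+\lambda M\sim_{\bQ}(\lambda-1)M$ which is not pseudo-effective, so by \cite{BCHM10} the MMP on $(X,\lambda M)$ (with scaling) terminates with a Mori fiber space $Y\to S$, and it is divisorial/flipping contractions and flips that realize a nontrivial birational map $X\dasharrow Y$. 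It remains to check that this $Y$ is genuinely a different Mori fiber space, i.e. that the map is not an isomorphism: this is exactly where the non-canonicity is used --- a divisor $E$ over $X$ with $a(E;X,\lambda M)<0$ must be extracted (or contracted) during the MMP, so the map cannot be an isomorphism onto a variety of Picard number one over a point unless some step is nontrivial; one checks that the Picard number or the existence of the fibration $Y\to S$ changes the Mori structure.

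\emph{Main obstacle.} The delicate point is the \emph{necessity} direction: ensuring that the MMP on $(X,\lambda M)$ actually produces a Mori fiber space \emph{different} from $X\to\mathrm{pt}$, rather than terminating trivially, and that all intermediate varieties remain terminal $\bQ$-factorial (so that the output is a legitimate competitor in the birational superrigidity problem). This requires carefully tracking the log discrepancies and invoking that $X$ has Picard number one so that the very first nontrivial extraction already breaks the isomorphism. The sufficiency direction, by contrast, is the classical Noether--Fano inequality and is essentially a discrepancy bookkeeping on a common resolution; I expect it to be routine given \cite{KSC04} and the negativity lemma.
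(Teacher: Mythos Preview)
The paper does not supply a proof of this theorem; it is quoted as a standard fact with a reference to \cite[Section 5]{KSC04}. So there is no ``paper's own proof'' to compare against, and I evaluate your proposal on its merits.

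Your sufficiency direction (canonicity of all movable $M\sim_{\bQ}-K_X$ implies superrigidity) is the classical Noether--Fano inequality argument and is outlined correctly: resolve the birational map, transport discrepancies, and derive a contradiction from non-canonicity of $(X,\tfrac1\mu\cM)$. This is exactly what is in \cite{KSC04}.

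The necessity direction, however, has a real gap. You propose to run the $(K_X+\lambda M)$-MMP on $X$ with $\lambda=c(X,M)<1$, but since $\rho(X)=1$ and $K_X+\lambda M\sim_{\bQ}(\lambda-1)(-K_X)$ is anti-ample, the \emph{only} extremal contraction available is $X\to\mathrm{pt}$, and the MMP terminates immediately at the Mori fibre space you started with. There is no ``first nontrivial extraction'' inside this MMP: extractions are not steps of the MMP, which only contracts rays. What is actually needed is the Sarkisov link construction: first use \cite{BCHM10} to perform an \emph{extremal extraction} $p\colon Z\to X$ of a divisor $E$ with $a(E;X,cM)=0$ (a ``maximal singularity''), so that $\rho(Z)=2$ and $(Z,cM_Z)$ is crepant over $(X,cM)$; then play the two-ray game on $Z$ --- the other $(K_Z+cM_Z)$-negative extremal ray yields either a fibration or a sequence of flips/contractions ending in a new Mori fibre space $Y\to S$. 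This produces a genuine Sarkisov link $X\dasharrow Y$ that is not an isomorphism, contradicting superrigidity. Your ``main obstacle'' paragraph shows you sensed the issue, but the resolution is not ``carefully tracking discrepancies'' within the MMP from $X$; it is inserting the extraction step \emph{before} running any MMP. (Also note that at the threshold $\lambda=c(X,M)$ the relevant discrepancy is $=0$, not $<0$.)
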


By the above characterization, we know being birationally superrigid posts a very strong restriction on the mobile linear subsystem of $|-K_X|_{\mathbb{Q}}$. So it is natural to ask whether all of them are K-stable. After some early partial result in \cite{OO13}, in a recent work \cite{SZ19}, the question is affirmatively answered under a mild assumption. 
\begin{thm}[\cite{SZ19}]Let $X$ be a birationally superrigid. Assume $\alpha(X)>\frac{1}{2}$ (resp. $\alpha(X)\ge \frac{1}{2}$), then $X$ is K-stable (resp. K-semistable). 
\end{thm}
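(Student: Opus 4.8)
The plan is to use the valuative criterion for K-stability (Theorem \ref{t-valkstable}, combined with Theorem \ref{t-delta}) and show that for a birationally superrigid Fano variety $X$ with $\alpha(X) \ge \frac12$, one has $\delta(X) \ge 1$ (resp. $>1$ under the strict inequality). So I would fix an arbitrary divisor $E$ over $X$ and estimate $\beta_X(E) = A_X(E) - S_X(E)$ from below; equivalently, bound $\delta_X(E) = A_X(E)/S_X(E)$. The two hypotheses feed in from opposite ends: the $\alpha$-invariant controls $A_X(E)$ from below in terms of the pseudo-effective threshold $T(E) = T_X(E)$ (since $\alpha(X) = \inf_E A_X(E)/T_X(E)$, we get $A_X(E) \ge \alpha(X) T_X(E) \ge \frac12 T_X(E)$), while birational superrigidity, via the Noether--Fano characterization (Theorem \ref{t-NFmethod}), should force $S_X(E)$ to be substantially smaller than $T_X(E)$ — heuristically, one expects $S_X(E) \le \frac12 T_X(E)$, which combined with the $\alpha$-bound would give $\delta_X(E) \ge 1$.

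The key technical step is thus to prove an inequality of the shape $S_X(E) \le \frac12 T_X(E)$ (or a slightly weaker version with an error term that can be absorbed) using birational superrigidity. Here I would exploit the fact that superrigidity, through Theorem \ref{t-NFmethod}, says that every movable $M \sim_{\mathbb Q} -K_X$ has canonical singularities, i.e. $\mathrm{ord}_E(M) \le A_X(E) - 1 < A_X(E)$ for every divisor $E$ over $X$ and every such movable $M$. The strategy is to relate the truncated volume integrand $\mathrm{vol}(\mu^*(-K_X) - tE)$ to mobile linear systems: for $t$ below the movable threshold, the positive part of $\mu^*(-K_X) - tE$ gives rise (after taking a large multiple and moving) to a movable boundary on $X$, and the canonicity bound restricts how $E$ can appear. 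One clean way to package this is via the multiplicity/Zariski-decomposition estimates as in \cite{SZ19}: superrigidity implies a bound on $\mathrm{mult}$ of the base locus of $|-mK_X - \lceil mt\rceil E|$, hence on $\mathrm{vol}(\mu^*(-K_X)-tE)$, which on integration yields $S_X(E) \le \frac12 T_X(E)$. A cleaner conceptual route, which I would try first, is to use the "$2n$-inequality"-type local estimates that are standard in the superrigidity literature together with the concavity of the restricted volume function $t \mapsto -\frac1n \frac{d}{dt}\mathrm{vol}(\mu^*(-K_X)-tE)$ (log-concavity, as used in the proof of Theorem \ref{t-alphaequal}) to compare $S_X(E)$ against $T_X(E)$.

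The main obstacle I anticipate is precisely this comparison $S_X(E) \le \frac12 T_X(E)$: translating the birational-geometric input (canonicity of all movable boundaries) into an asymptotic-volume estimate is not formal, because $S_X(E)$ is an integral of volumes over the whole interval $[0, T_X(E)]$, whereas the Noether--Fano condition most directly controls a single threshold (the movable/canonical threshold). One must argue that the volume cannot stay large all the way up to $T_X(E)$ — that the "tail" of the integral is suppressed — and this is where the superrigidity hypothesis has to do real work, likely via an argument controlling the restricted base loci or via an inductive application of superrigidity to the generic fiber of the map associated to $E$. Once the inequality $S_X(E) \le \frac12 T_X(E)$ is in hand, the conclusion is immediate: $\delta_X(E) = A_X(E)/S_X(E) \ge (\alpha(X) T_X(E))/(\frac12 T_X(E)) = 2\alpha(X) \ge 1$, with strict inequality when $\alpha(X) > \frac12$, and since $E$ was arbitrary we get $\delta(X) \ge 1$ (resp. $>1$), hence K-semistability (resp. K-stability) by Theorem \ref{t-delta} and Theorem \ref{t-valkstable}. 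I would also need to double-check that the strict inequality case genuinely gives $\delta(X) > 1$ and not merely $\delta_X(E) > 1$ for each $E$ — but this follows from Theorem \ref{t-quasimini}, since if $\delta(X) \le 1$ it is computed by some quasi-monomial valuation, to which the same estimate applies.
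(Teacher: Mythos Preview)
Your high-level plan---reduce to the valuative criterion and compare $A_X(E)$ with $S_X(E)$ via the restricted volume function---matches the paper's. The gap is in your intermediate target: you aim to prove $S_X(E)\le\tfrac12 T_X(E)$ from superrigidity alone, then combine with $A_X(E)\ge\alpha(X)T_X(E)$. This decoupling does not work. The bound that actually falls out of superrigidity (see below) is $S_X(E)\le\frac{(n-1)A_X(E)+T_X(E)}{n+1}$, and this quantity is \emph{at least} $\tfrac12 T_X(E)$ exactly in the regime $A_X(E)\ge\tfrac12 T_X(E)$ supplied by the $\alpha$-hypothesis. So the two hypotheses cannot be separated the way you propose; they have to be fed into the same estimate.

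The paper's key observation is sharper than a volume bound. Set $A:=A_X(F)$, $\tau:=T_X(F)$. By Noether--Fano (plus $\rho(X)=1$), there is up to scaling a \emph{unique} prime divisor $D\sim_{\mathbb Q}-K_X$ with $\ord_F(D)>A$, and necessarily $\ord_F(D)=\tau$. Hence for $t\in[A,\tau]$ one has
\[
-\mu^*K_X-tF=\tfrac{\tau-t}{\tau-A}(-\mu^*K_X-AF)+\tfrac{t-A}{\tau-A}(-\mu^*K_X-\tau F),
\]
which forces the derivative $Q(t):=-\tfrac1n\tfrac{d}{dt}\vol(-\mu^*K_X-tF)$ to satisfy the \emph{equality} $Q(t)=\bigl(\tfrac{\tau-t}{\tau-A}\bigr)^{n-1}Q(A)$ on $[A,\tau]$. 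Combined with the concavity inequality $Q(t)\ge(t/A)^{n-1}Q(A)$ on $[0,A]$, one computes (assuming for contradiction $b:=S_X(F)>A$)
\[
0=\int_0^\tau(t-b)Q(t)\,dt\le Q(A)\,\tau\Bigl(\tfrac{\tau-2A}{n(n+1)}+\tfrac{A-b}{n}\Bigr),
\]
and the right side is strictly negative precisely because $A/\tau\ge\alpha(X)\ge\tfrac12$. So the $\alpha$-hypothesis enters inside the integral estimate, not as a separate bound, and the missing idea in your plan is this exact formula for $Q$ beyond the threshold $A$ coming from the uniqueness of the ``bad'' divisor.
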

\begin{proof}The analysis is similar to the one in the proof of Theorem \ref{t-alphaequal}. By Theorem \ref{t-NFmethod}, we know there is only one $\bQ$-divisor $D\sim_{\mathbb{Q}}-K_X$ with irreducible support, such that $\ord_{F}D>A_{X}(F)=:A$. 
Assume $F$ such that $\beta(F)<0$. We denote the pseudoeffective threshold by $\tau=\ord_{F}(D)$.

We define $Q$ the restricted volume function as in Theorem \ref{t-alphaequal}, and by our assumption, we have
$$b:= \frac{\int^{\tau}_0 tQdt}{\int^{\tau}_0Q dt}={S_X(F)}>A.$$
Thus we have $0< A< b < \tau$. 
Using the concavity again (see Theorem \ref{t-alphaequal}), we know for $t\in [0,A]$, $Q(t)\ge (\frac{t}{A})^{n-1}Q(A)$. For $t\in [A,\tau]$,  $Q(t)=\big(\frac{\tau-t}{\tau-A}\big)^{n-1}Q(A)$
since 
$$-\mu^*(K_X)-tF=\frac{\tau-t}{\tau-A}(-\mu^*(K_X)-A\cdot F)+\frac{t-A}{\tau-A}(-\mu^*(K_X)-\tau\cdot F).$$
Thus 
\begin{eqnarray*}
  0&=&\int^{\tau}_{0}(t-b)Q(t)dt\\
  &\le & \int^{A}_{0}(t-b) (\frac{t}{A})^{n-1}Q(A)dt+\int^{\tau}_{A}(t-b)\big(\frac{\tau-t}{\tau-A}\big)^{n-1}Q(A)dt\\
  &=& Q(A)\tau(\frac{\tau-2A}{n(n+1)}+\frac{A-b}{n} )<0.
\end{eqnarray*}
since  $\frac{A}{\tau}\ge \alpha(X)\ge \frac{1}{2}$. And the K-stable case is proved in the same way. 
\end{proof}

Combining with \cite{deF16}, this gives a different proof on Corollary \ref{c-hypersurface}. So far all known birationally superrigid Fano varieties satisfy $\alpha(X)>\frac{1}{2}$.

\subsection{Index 2 Fano hypersurfaces}
In this section, we will discuss the work in \cite{AZ20} which introduced an induction framework to verify K-stability of Fano varieties, using some ideas from \cite{Zhu20a}. This is a quite  powerful method. While we will start with the general strategy,  by the end we will focus on one main case, which is given in the following theorem. 
\begin{thm}[{\cite{AZ20}}]\label{t-index2}
Any degree $n$ smooth hypersurface $X$ in $\mathbb{P}^{n+1}$ is K-stable.
\end{thm}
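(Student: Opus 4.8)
The plan is to estimate $\delta(X)$ for a smooth degree $n$ hypersurface $X\subset\mathbb{P}^{n+1}$, which is a Fano variety of index $2$ (since $-K_X\sim\mathcal{O}_X(2)$), and show $\delta(X)>1$, hence $X$ is K-stable by Theorem \ref{t-valkstable}. The engine is the ``adjunction'' / inversion-type inductive scheme of Abban--Zhuang: rather than directly bounding $A_X(E)/S_X(E)$ over all divisorial valuations $E$, one picks a well-chosen flag on $X$ and reduces the computation of $S$-invariants to lower-dimensional strata, where K-stability (or explicit computation) is already known or easier. Concretely, I would first set up the refinement of $-K_X$ along a flag $X\supset Y\supset Z\supset\cdots$, where $Y$ is a carefully chosen hyperplane section (or a more subtle divisor built to witness any potential destabilizing valuation), and apply the Abban--Zhuang estimate: $\delta(X)\ge\min\{\text{contribution from }Y,\ \delta_Y(\text{refined polarization})\}$, iterating down the flag.

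The key steps, in order, would be: (1) Record that $-K_X=\mathcal{O}_X(2)$, compute $(-K_X)^n=2^n\cdot n$, and set up the volume/Newton--Okounkov body bookkeeping needed for the $S$-invariants. (2) Choose the flag: the natural first choice is $Y=X\cap H$ a general hyperplane section, which is a smooth degree $n$ hypersurface in $\mathbb{P}^n$, i.e. a Calabi--Yau-type or lower-index Fano, so that the inductive hypothesis (or a base case, e.g. low-dimensional explicit results such as cubic surfaces/threefolds cited earlier, and the Fermat case) applies; when $Y$ is not ``good enough'' for certain valuations whose center lies in a special position, replace it by a divisor adapted to that center. (3) For each possible center of a valuation on $X$, run the Abban--Zhuang recursion down to a point or a curve, at each stage estimating the relevant $S$-invariant of the refined linear system via the concavity/log-concavity of restricted volumes (as in the proof of Theorem \ref{t-alphaequal}) and via the Ext/Okounkov body formulas. (4) Combine the local estimates: show that in every case the resulting bound on $A/S$ exceeds $1$, with the tightest case dictating the actual value of $\delta(X)$; handle separately the cases where the center is a point versus positive-dimensional.

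I expect the main obstacle to be Step (3) in the situation where a candidate valuation is centered at a point lying on a hyperplane section that is itself singular or tangent to high order — there the naive flag $X\supset Y\supset\cdots$ degenerates and one must engineer a substitute divisor (often a weighted blow-up or a divisor in a small multiple of $-K_X$ chosen to isolate that point) and then carefully compute the refined $S$-invariant, since the Newton--Okounkov body becomes non-polyhedral or the restricted volume function is only piecewise analytic. A secondary difficulty is making the estimates \emph{uniform in $n$}: the concavity argument gives clean bounds only after one controls the pseudo-effective and nef thresholds of the relevant divisors on $X$ and its sections, which requires knowing enough positivity of $\mathcal{O}_X(1)$ (e.g. via the Euler sequence and adjunction on the hypersurface). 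Once the worst-case center is identified and its $A/S$ ratio is shown to be $>1$ by an explicit, dimension-independent inequality, the theorem follows; I would organize the write-up so that the generic case is dispatched quickly and the bulk of the work is concentrated on the finitely many ``bad'' geometric configurations.
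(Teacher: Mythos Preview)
Your overall framework is correct---this theorem is indeed proved via the Abban--Zhuang adjunction/flag method---but your proposed execution differs from the paper's in a way that leaves a genuine gap. The paper does \emph{not} argue by induction on the K-stability of a hyperplane section. A general hyperplane section $Y=X\cap H$ is an index-$1$ Fano (covered by Corollary~\ref{c-hypersurface}), but knowing $\delta(Y)>1$ does not bound $\delta(W_\bullet)$ for the refined linear system in Proposition~\ref{p-kadjunction}: the polarization on $Y$ coming from the refinement of $-K_X$ by $Y$ is not $-K_Y$, and the multi-graded system $W_\bullet$ is not a priori complete. Moreover, a \emph{general} hyperplane need not contain the center $Z$ of the valuation you are testing, so your step~(2) as written does not feed into the adjunction inequality at all.

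What the paper actually does: (i)~Reduce via Lemma~\ref{l-multiplier} (a multiplier-ideal trick) to checking $\delta_Z(X)\ge\frac{n+1}{n}$ for every \emph{positive-dimensional} $Z$, together with $\beta(E_x)>0$ for the ordinary blow-up of each closed point (an easy direct computation). (ii)~For positive-dimensional $Z$, pick two distinct points $y_1,y_2\in Z$ and take the flag $Y_1\supset\cdots\supset Y_{n-1}$ to be general hyperplane sections \emph{through both $y_1$ and $y_2$}, with $Y_n$ a general point on the curve $Y_{n-1}$. This two-point constraint is the key device you are missing: it forces $\#(Z\cap Y_{n-1})\ge 2$, and on the curve one gets the sharp estimate
\[
\delta_{Z\cap Y_{n-1}}(W_\bullet^{n-1})\ \ge\ \frac{2\cdot\#(Z\cap Y_{n-1})}{\deg(W_\bullet^{n-1})}\ \ge\ \frac{4}{4n/(n+1)}\ =\ \frac{n+1}{n}.
\]
(iii)~Along the way one verifies that each refined system $W_\bullet^i$ is ``almost complete'', which is what makes the explicit formulas $c_1(W_\bullet^i)=2(1-\tfrac{i}{n+1})\mathcal{O}(1)|_{Y_i}$ and $S(W_\bullet^{i-1},Y_i)=\tfrac{2}{n+1}$ available. (iv)~There is exactly one exceptional configuration---when the line $\overline{y_1y_2}$ lies on $X$---handled by taking $Y_{n-1}$ to be that line and redoing the surface computation on $Y_{n-2}$; this is far more concrete than the ``weighted blow-up'' you anticipate. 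Replace your inductive step by (i)--(iv) and the proof goes through.
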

This was only previously known for $n\le 3$ (see \cite{Tia90, LX19}). In the rest of this section, we will give a sketch of the proof of Theorem \ref{t-index2}, following \cite{AZ20}.

\subsubsection{Double filtrations}

The starting point in \cite{AZ20} is the following useful observation that if we can always choose basis which are compatible for two filtrations.  
\begin{lem}\label{l-doublefil}
Let $V$ be a finite dimensional vector space, and $\cF$ and $\cG$ are two filtrations of $V$ by subspaces, then we can picture up a basis $\{e_1,...,e_m\}$ of $V$ compatible with both $\cF$ and $\cG$, i.e. for any $i$ (resp. $j$), we can find a subset of $\{e_1,...,e_m\}$  which forms a basis of $\cF^iV$ (resp. $\cG^j V$). 
\end{lem}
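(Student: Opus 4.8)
The plan is to prove something a bit stronger: that there is a finite direct sum decomposition $V = \bigoplus_{i,j} V_{i,j}$, indexed by pairs of integers (with $V_{i,j}=0$ for all but finitely many $(i,j)$), which simultaneously splits both filtrations, in the sense that $\cF^a V = \bigoplus_{i \ge a}\bigoplus_j V_{i,j}$ and $\cG^b V = \bigoplus_i \bigoplus_{j \ge b} V_{i,j}$ for all $a,b$. Picking a basis $\cB_{i,j}$ of each summand and setting $\{e_1,\dots,e_m\}=\bigsqcup_{i,j}\cB_{i,j}$ then gives a basis of $V$ for which, for each $a$, the subset $\bigsqcup_{i\ge a,\,j}\cB_{i,j}$ lies in $\cF^aV$ and spans it — and symmetrically for $\cG$ — so the basis is compatible with both. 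Since $V$ is finite-dimensional, each filtration has only finitely many distinct terms, and we may assume both are exhaustive and separated.

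To build the $V_{i,j}$, set $A_{i,j} := \cF^i V \cap \cG^j V$ and $B_{i,j} := (\cF^{i+1}V\cap\cG^j V) + (\cF^i V\cap\cG^{j+1}V)\subseteq A_{i,j}$, and let $V_{i,j}$ be any complement: $A_{i,j} = B_{i,j}\oplus V_{i,j}$. The heart of the argument is the identity
\[
\cF^a V\cap\cG^b V \;=\; \bigoplus_{i\ge a,\; j\ge b} V_{i,j}\qquad\text{for all }a,b,
\]
which I would establish by descending induction on the pair $(a,b)$ in the finite poset of relevant indices (ordered componentwise). The base case is $(a,b)$ large enough that $A_{a,b}=0$, where both sides vanish. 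For the inductive step one writes $A_{a,b}=B_{a,b}\oplus V_{a,b}$, observes $B_{a,b}=A_{a+1,b}+A_{a,b+1}$ and $A_{a+1,b}\cap A_{a,b+1}=A_{a+1,b+1}$, applies the inductive hypothesis to the three pieces $A_{a+1,b},A_{a,b+1},A_{a+1,b+1}$, and then combines $\dim(U+W)=\dim U+\dim W-\dim(U\cap W)$ with an inclusion–exclusion on the index sets $\{i\ge a+1,j\ge b\}$ and $\{i\ge a,j\ge b+1\}$ (whose union is $\{i\ge a,j\ge b\}\setminus\{(a,b)\}$) to deduce that $\dim B_{a,b}=\sum_{(i,j)\ne(a,b),\,i\ge a,\,j\ge b}\dim V_{i,j}$, i.e. that this sum is \emph{direct}. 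Since $V_{a,b}\cap B_{a,b}=0$, this upgrades to $A_{a,b}=\bigoplus_{i\ge a,\,j\ge b}V_{i,j}$, closing the induction. Taking $a,b$ small enough yields $V=\bigoplus_{i,j}V_{i,j}$, and taking $b$ (resp.\ $a$) small enough yields the stated splitting of $\cF$ (resp.\ $\cG$); the compatibility of the basis follows immediately.

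I expect the only genuine obstacle to be the directness of the sum defining $B_{a,b}$ in the inductive step: this cannot be seen by a naive projection onto a single summand, but only by exploiting the compatibility of the intersections $A_{a+1,b}$ and $A_{a,b+1}$ through $A_{a+1,b+1}$ to run the dimension count. As a reserve route, one can instead induct on $\dim V$: choose $e_1$ in the smallest nonzero term of $\cF$ and, among those, in the smallest nonzero term of $\cG$, pass to $V/\langle e_1\rangle$ with the induced filtrations, apply induction there, and lift the resulting basis back; but the same subtlety reappears as the need to check that intersecting $\cF$ with $\cG$ commutes with the quotient by $\langle e_1\rangle$, which is precisely what the special placement of $e_1$ guarantees. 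I would present the decomposition argument as the main proof since it avoids repeatedly re-verifying this point.
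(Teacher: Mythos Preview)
Your proof is correct. The paper's own proof is simply ``Left to the reader,'' so there is no approach to compare against; your argument via the bigraded decomposition $V=\bigoplus_{i,j}V_{i,j}$ (choosing complements of $B_{i,j}$ in $A_{i,j}=\cF^iV\cap\cG^jV$) is a standard and complete way to establish this well-known linear algebra fact, and the inductive dimension count you outline goes through exactly as you describe.
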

\begin{proof}Left to the reader. 
\end{proof}
So when we compute $S(\cF)$, we can always choose an auxiliary filtration $\cG$.
\begin{exmp}[{\cite[Lem. 4.2]{AZ20}}]\label{e-alpha}
We choose $\cG$ to be the filtration induced by a general element $H\in |-rK_X|$ for a sufficiently divisible $r$. Then for any $m$-basis type divisor $D_m\sim_{\mathbb Q}K_X$ compatible with $\cG$, we can write it as $D_m=a_mH_m+\Gamma_m$ where ${\rm Supp}(\Gamma)$, where $a_m\to \frac{1}{r}\int_0^{1}(1-x)^ndx=\frac{1}{r(n+1)}$. 

This implies that if $\cF$ is induced by an $\ord_E$ for some divisor, then choosing $H$ not containing the center of $E$, we have $\ord_{E}(D_m)=\ord_{E}(\Gamma_m)\le (1-ra_m)T(E)$, thus $S(E)\le \frac{n}{n+1}T(E)$. In particular, 
$$\delta(X)=\inf_E\frac{A_{X}(E)}{S(E)}\ge\frac{n+1}{n} \inf_E\frac{A_X(E)}{T(E)}=\frac{n+1}{n} \alpha(X),$$ 
which gives a proof of Tian's $\alpha$-invariant criterion.  
\end{exmp}

\begin{defn}For a subvariety $Z\subset X$, we define $\delta_Z(X)=\inf_E \frac{A_X(E)}{S(E)}$ where the infimum runs through over all divisors whose center on $X$ contains $Z$. In fact, we allow $Z$ to be reducible, then we define 
$$ \delta_{Z,m}(X):=\sup\{\lambda\ | \ Z\nsubseteq \mbox{ Nlc}(X,\lambda D_m \mbox{ for any $m$-basis type divisor}\} $$
and $\delta_{Z}(X)=\limsup_{m\to \infty}\delta_{Z,m}$.
\end{defn}
Similar to \eqref{e-delta}, we can prove when $Z$ is irreducible and reduced, the above two definitions coincide. 

Then we have the following lemma, which is a combination of various estimates.
\begin{lem}[{see \cite[Theorem 1.6]{Zhu20b}}]\label{l-multiplier}
Let $X$ be an $n$-dimensional Fano manifold. Then $X$ is K-stable if
\begin{enumerate}
\item   $\delta_Z(X)\ge\frac{n+1}{n}$ for any positive dimensional variety $Z$,
\item $\beta(E_x)>0$ for any ordinary blow up of $x\in X$.
\end{enumerate}
\begin{proof}For any divisor $F$ over $X$, let $\cF=\cF_F$ be the induced filtration and $\cG$ as chosen in the notation of Example \ref{e-alpha}. We want to show $\beta(F)>0$. By our first assumption, it suffices to assume that the center of $F$ is a smooth close point $x$ and $F\neq E_x$.  Using the notation there, we know for we can choose a sequence of constant $\mu_m\to 1$, such that $(X, \mu_m\frac{n+1}nD_m)$ is klt in a punctured neighborhood of $x$ and $\mu_m(1-ra_m)<\frac n{n+1}$. We claim the following fact, which implies what we need.
\begin{claim}$S_X(F)<A_X(F)$ for any $F\neq E_x$.
\end{claim}
\begin{proof}
 Since $(X,\fm^n_x)$ is plt with the only lc place $E_x$, for any $F\neq E_x$, $b:=\frac{A_{X}(F)}{\ord_F(\fm_x)}>n$.  Let $B:=a_m\frac{n+1}nD_m$. If $(X,B)$ is klt then $\ord_F(B)\le A_X(F)$. Otherwise, $(X,B)$ is not klt and  the multiplier ideal $ \mathcal{I}(X,B)$ cosupports on $x$. Since $H^0(X,\mathcal{O}_X)\to H^0(X, \mathcal{O}_X/ \mathcal{I}(X,B))$ is surjective, as $-K_X-B$ is ample, which implies the multiplier ideal $\mathcal{I}(X,B)=\fm_x$. In particular, by the definition of the multiplier ideal, $b^{-1}A_X(F)=\ord_F(\fm_x)\ge \ord_{F}B-A_{X}(F)$. 
 This implies $\ord_F(B)\le \frac{b+1}{b}A_X(F)$.  
 
 Thus we have
 $$\ord_F(D_m)= \frac{n}{(n+1)\cdot \mu_m} \ord_F(B) \le\frac{n(b+1)}{(n+1)b\mu_m}A_X(F) \mbox{\ \  for $m\gg 0$}.$$
Since  $b>n$, $S_X(F)=\limsup_m \ord_F(D_m)<A_X(F)$.
 \end{proof}
\end{proof}
\end{lem}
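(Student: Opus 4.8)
The plan is to deduce K-stability from the valuative criterion (Theorem~\ref{t-valkstable}(2), which uses \cite{BX19}): it suffices to check that $\beta(F)>0$ for every divisorial valuation $F$ over $X$. I would organize the verification according to the center $Z:=\mathrm{Cent}_X(F)$ of $F$ on $X$. When $\dim Z\ge 1$, hypothesis~(1) applies at once, since by definition $\delta_X(F)\ge\delta_Z(X)\ge\frac{n+1}{n}>1$, so $A_X(F)>S_X(F)$ and $\beta(F)>0$. When $Z=\{x\}$ is a closed point — automatically a smooth point, as $X$ is a manifold — and $F$ is the exceptional divisor $E_x$ of the blow-up of $x$, the desired inequality $\beta(E_x)>0$ is precisely hypothesis~(2). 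So the whole problem reduces to the case $Z=\{x\}$ a smooth point and $F\ne E_x$, where I would aim to prove the strict inequality $S_X(F)<A_X(F)$ (equivalently $\beta(F)>0$).

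For that case the key local input is that $(X,\fm_x^n)$ is plt with $E_x$ as its unique lc place — standard once $x$ is smooth — which gives $b:=A_X(F)/\ord_F(\fm_x)>n$ for every such $F\ne E_x$. Next, for $m\gg 0$ I would take an $m$-basis type divisor $D_m\sim_{\mathbb{Q}}-K_X$ computing $S_m(F)$ and, using Lemma~\ref{l-doublefil}, choose it compatible with the auxiliary filtration $\cG$ attached to a general $H\in|-rK_X|$; writing $D_m=a_mH_m+\Gamma_m$ as in Example~\ref{e-alpha} with $H$ (hence $\mathrm{Supp}\,H_m$) avoiding $x$, one has $\ord_F(D_m)=\ord_F(\Gamma_m)$, $\Gamma_m\sim_{\mathbb{Q}}(1-ra_m)(-K_X)$, and $1-ra_m\to\frac{n}{n+1}$. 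Hypothesis~(1) — via $\delta_{Z,m}(X)\to\delta_Z(X)\ge\frac{n+1}{n}$ for every positive-dimensional $Z\ni x$ — lets me choose $\mu_m\to 1$ with $(X,\mu_m\tfrac{n+1}{n}\Gamma_m)$ klt away from $x$ and $\mu_m(1-ra_m)<\frac{n}{n+1}$; in particular $B:=\mu_m\tfrac{n+1}{n}\Gamma_m$ has $-K_X-B$ ample. If $(X,B)$ is klt, then $\ord_F(B)<A_X(F)$ immediately; otherwise the non-klt locus of $(X,B)$ is the single point $x$, and Nadel vanishing — which gives surjectivity of $H^0(\mathcal{O}_X)\to H^0(\mathcal{O}_X/\mathcal{I}(X,B))$ — forces $\mathcal{I}(X,B)=\fm_x$, whence $\ord_F(\fm_x)\ge\ord_F(B)-A_X(F)$ and so $\ord_F(B)\le(1+b^{-1})A_X(F)$. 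Either way $\ord_F(D_m)=\ord_F(\Gamma_m)\le\frac{n(b+1)}{(n+1)b\mu_m}A_X(F)$ for $m\gg 0$, and letting $m\to\infty$ yields $S_X(F)\le\frac{n(b+1)}{(n+1)b}A_X(F)$, which is $<A_X(F)$ exactly because $b>n$.

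The step I expect to be the main obstacle is the middle one: arranging simultaneously that, at a coefficient $\mu_m\tfrac{n+1}{n}$ just below $\tfrac{n+1}{n}$, the non-klt locus of the pair is exactly the point $x$ and the corresponding multiplier ideal is exactly $\fm_x$ rather than a smaller ideal. The first half is where hypothesis~(1) is genuinely consumed — it controls the singularities of $(X,\lambda D_m)$ along every positive-dimensional subvariety through $x$ — together with the convergence $\delta_{Z,m}\to\delta_Z$ and the $\alpha$-invariant–type estimate of Example~\ref{e-alpha}; the second half relies on keeping the ample class $-K_X$ only slightly used up, which is precisely why the splitting $D_m=a_mH_m+\Gamma_m$ and the passage to $\Gamma_m$ in place of $D_m$ is built into the argument. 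By contrast, the reduction by center, the plt-ness and uniqueness of the lc place of $(X,\fm_x^n)$, and the final numerical comparison using $b>n$ are all routine.
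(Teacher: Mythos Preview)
Your proof is correct and follows essentially the same route as the paper's: reduce by the center of $F$, use hypothesis~(1) for positive-dimensional centers and hypothesis~(2) for $F=E_x$, and in the remaining case invoke the plt pair $(X,\fm_x^n)$ to get $b>n$, then bound the multiplier ideal via Nadel vanishing. The one difference is that you define $B:=\mu_m\tfrac{n+1}{n}\Gamma_m$ rather than $\mu_m\tfrac{n+1}{n}D_m$; this is actually a genuine correction, since with the full $D_m$ one has $B\sim_{\mathbb{Q}}\mu_m\tfrac{n+1}{n}(-K_X)$ and $-K_X-B$ is \emph{not} ample, whereas with $\Gamma_m\sim_{\mathbb{Q}}(1-ra_m)(-K_X)$ the condition $\mu_m(1-ra_m)<\tfrac{n}{n+1}$ gives exactly the ampleness needed for Nadel vanishing (and near $x$ the two pairs agree since $H$ avoids $x$).
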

\subsubsection{Adjunction}\label{sss-adjunction} Next we will introduce a key idea from \cite{AZ20}, namely taking the adjunction of a filtration, and apply inversion of adjunction to get an estimate of $\delta$. 

Given two prime divisors where $E$ is a Cartier divisor on $X$ and $F$ either a Cartier divisor or a divisor which is the exceptional divisor from a plt blow up $\pi\colon Y\to X$. Let $V_m:=H^0(-mK_X)$ for some sufficiently divisible $m$. For a fix $j$, the following linear systems on $F$
$$(W_m)_{i,j}=\cF^j_F(\cF^i_E(V_m))/\cF_F^{j+1}(\cF^i_E(V_m))\cong $$
$${\rm Im}\Big( |\pi^*(-mK_X-iE)-jF)| \to  |\pi^*(-mK_X-iE)-jF)|_F\Big),$$
form a decreasing filtration of $W_{m,j}:=(W_m)_{0,j}$ indexed by $i\in \bN$.  

Therefore, if we denote $R:=\bigoplus_m H^0(-mrK_X)$ and $R_F:=\bigoplus_m W_m$ where $W_m=\bigoplus_j W_{m,j} $, we obtain an $\bN^2$-filtration on each $W_m$ (by $i$ and $j$), which is clearly multiplicative if we put all $m\in \bN$ together.   Then we can show (see \cite[Section 2]{AZ20})
\begin{defn-lem}For each $m$, we can define 
$$c_1(W_m):=\frac{1}{m\cdot \dim(W_m)}\sum_{(i,j)\in \bN^2}D_{i,j},$$ where $\{D_{i,j}\}$ is a basis type divisor compatible with the $\bN^2$-filtration of $W_m$. And we define $S_m(R;F):=\frac{1}{m\cdot \dim V_m}\sum_{i,j} j\cdot \big(\dim (\cF^j_FV_m)-\dim(\cF^{j+1}_FV_m)\big)(=rS_m(F)).$

Then  $c_1(W_{\bullet}):=\lim_m c_1(W_m)$, $S(R;F):=\lim_m S_m(R;F)$ exist and we have
$$ (-rK_X-S(R;F)F)|_F\sim_{\bR} c_1(W_{\bullet}).$$
\end{defn-lem}
\begin{proof}\cite{AZ20}*{(3.1)}.
\end{proof}
For $W_{\bullet}$ as above and any prime divisor $G$ over $F$, we can define $S(W_{\bullet},G)=\lim_m (\sup_{D_m} \ord_{G} D_m)$, where $D_m$ is an $m$-basis type divisor of $W_m$, and similarly define $\delta(W_{\bullet})=\inf_G \frac{A_{X}(G)}{S(W_{\bullet},G)}$ and the local version $\delta_{Z'}(W_{\bullet})$ for subset $Z'\subset F$.

The following (inversion of) adjunction type result is a key to establish the estimate.
\begin{prop}\label{p-kadjunction}
Notation as above, 
$$\delta_Z(X)\ge \min \{\frac{A_X(F)}{S(F)}, \delta_{\pi(F)\cap Z}(W_{\bullet}) \}.$$
\end{prop}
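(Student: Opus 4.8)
\textbf{Proof plan for Proposition \ref{p-kadjunction}.}

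The statement is an inversion-of-adjunction estimate comparing the stability threshold of $X$ along $Z$ with two quantities: the ``outer'' slope $A_X(F)/S(F)$ measured by the divisor $F$, and the ``restricted'' stability threshold $\delta_{\pi(F)\cap Z}(W_\bullet)$ of the adjoined filtered linear series $W_\bullet$ on $F$. The plan is to reduce to a statement about $m$-basis type divisors for each fixed $m$, apply the classical inversion of adjunction for log canonical thresholds, and then pass to the limit $m\to\infty$ using the existence results recorded in Definition--Lemma in Section \ref{sss-adjunction}.

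First I would set up the finite-level picture. Fix $m$ sufficiently divisible and let $\delta_1 = A_X(F)/S(F)$ and $\delta_2 = \delta_{\pi(F)\cap Z}(W_\bullet)$; write $\lambda = \min\{\delta_1,\delta_2\}$. Given any $m$-basis type divisor $D_m \sim_\Q -K_X$, the goal is to show that $(X,\lambda' D_m)$ is log canonical in a neighborhood of the generic point of $Z$ for any $\lambda' < \lambda$ (with $m$ large). Using Lemma \ref{l-doublefil}, I would choose the basis defining $D_m$ to be \emph{compatible with the filtration $\cF_F$ induced by $F$}; this forces $\ord_F(D_m) \le S_m(F)$, which is $\le S(F)$ up to the $o(1)$ corrections controlled by Proposition \ref{p-Sapproximation}. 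Hence $A_{X}(F) - \lambda'\ord_F(D_m) > 0$ for $m \gg 0$ since $\lambda' < \delta_1$, i.e. $F$ is not a non-klt place of $(X,\lambda' D_m)$. By inversion of adjunction (Kollár's theorem, or the plt-blowup version when $F$ is exceptional over a plt blowup $\pi\colon Y\to X$), it then suffices to control the different $D_F := (\text{restriction of } \lambda' D_m \text{ to } F \text{ via adjunction on } Y)$: if $(F, D_F)$ is lc near $\pi(F)\cap Z$, then $(X,\lambda' D_m)$ is lc near $Z$. The restriction $D_F$ is, up to the explicit coefficient $S(R;F)$ appearing in Definition--Lemma of Section \ref{sss-adjunction}, an element built from the $\bN^2$-filtered series $W_\bullet$ — more precisely, after subtracting the part of $D_m$ supported along $F$, the residual restricts into $(-rK_X - S(R;F)F)|_F \sim_\R c_1(W_\bullet)$, and an $m$-basis type divisor of $V_m$ compatible with the double filtration restricts to (something close to) an $m$-basis type divisor of $W_m$. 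Since $\lambda' < \delta_2 = \delta_{\pi(F)\cap Z}(W_\bullet)$, for $m\gg 0$ this restricted divisor scaled by $\lambda'$ keeps $(F, \cdot)$ log canonical near $\pi(F)\cap Z$, which is exactly what adjunction needs.

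Then I would pass to the limit: the definitions $\delta_Z(X) = \limsup_m \delta_{Z,m}(X)$, $\delta(W_\bullet) = \limsup_m$ of the analogous finite-level invariant, together with the convergence of $S_m(F)\to S(F)$, $c_1(W_m)\to c_1(W_\bullet)$ and $S_m(R;F)\to S(R;F)$ (Definition--Lemma of Section \ref{sss-adjunction} and Proposition \ref{p-Sapproximation}), upgrade the per-$m$ log canonicity statement to the asymptotic inequality $\delta_Z(X)\ge \lambda = \min\{A_X(F)/S(F),\ \delta_{\pi(F)\cap Z}(W_\bullet)\}$. One technical point to handle carefully here is that when passing to the divisor on $F$ one must argue that the discrepancy contributions from the boundary-part of $D_m$ along $F$ are accounted for by the shift $S(R;F)$ and the $\epsilon$-error terms — i.e., that the ``leftover'' coefficient of $F$ in $D_m$ converges to $S(F)$ precisely because the basis was chosen compatibly.

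\textbf{Main obstacle.} The delicate step is the bookkeeping in the adjunction/restriction argument: making precise that an $m$-basis type divisor of $V_m$ compatible with the $\bN^2$-filtration restricts, after removing its $F$-part, to a divisor on $F$ that is controlled by an $m$-basis type divisor of $W_m$ up to errors that vanish as $m\to\infty$ — and simultaneously that the coefficient of $F$ in $D_m$ does not exceed $S_m(F)$. This is where the compatible-basis trick of Lemma \ref{l-doublefil} is doing all the work, and where one must invoke inversion of adjunction in the possibly-singular (plt blowup) setting rather than the naive Cartier case. Everything else — log concavity of restricted volumes, the limit existence statements, semicontinuity of lct — is either routine or already available in the excerpt.
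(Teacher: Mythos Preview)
Your proposal is correct and follows essentially the same route as the paper's proof: reduce via Lemma~\ref{l-doublefil} to $m$-basis type divisors $D_m$ compatible with $\cF_F$, write $\pi^*D_m=a_mF+\Gamma_m$ with $a_m\le S_m(F)$, observe that $\Gamma_m|_F$ is an $m$-basis type divisor of $W_m$, and then apply inversion of adjunction together with the limit $m\to\infty$. One phrasing point: you write ``Given any $m$-basis type divisor $D_m$\dots I would choose the basis defining $D_m$ to be compatible with $\cF_F$,'' but of course for an \emph{arbitrary} $D_m$ you cannot choose its basis; the correct logic (which the paper makes explicit and which you clearly intend) is that for any test divisor $G$ with center containing $Z$, the supremum defining $S_m(G)$ is attained by a basis simultaneously compatible with $\cF_G$ and $\cF_F$, so it suffices to bound the lct only for $\cF_F$-compatible $D_m$.
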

\begin{proof} For any prime divisor $G$ over $X$, by Lemma \ref{l-doublefil}, it suffices to prove
$$\lim_m\inf_{D_m}\lct(X,D_m)\ge \min \{\frac{A_X(F)}{S(F)}, \delta_{\pi(F)\cap Z}(W_{\bullet})\},$$
where $D$ is running through all $m$-basis type divisor compatible with the filtration induced by $F$. 

Fix such an $m$-basis type divisor $D_m$. Then $\pi^*D_m=a_mF+\Gamma_m$ and $\Gamma_m|_{F}$ gives a basis type divisor of $W_m$.
Thus $\pi^*(K_X+\lambda D_m)=K_Y+(1-A_{X}(F)+\lambda\cdot a_m)F+\lambda \Gamma_m$. From our assumption, if we choose $\lambda< \delta_{\pi(F)\cap Z}(W_{\bullet})$ and $m\gg 0$, we know that  $(F, {\rm Diff}_F(\lambda\cdot \Gamma_m)|_F)$ is klt along the preimage of the generic points of $\pi(F)\cap Z $, which implies that $(Y,\lambda\Gamma_m+F)$ is klt along the preimage of the generic point of $\pi(F)$. If moreover, $\lambda< \frac{A_X(F)}{S(F)}$, then for $m\gg 0$, $1-A_{X}(F)+\lambda\cdot a_m<1$ as $a_m\le S_m(F)$. Thus $(X,\lambda D_m)$ is klt. This is true for any $m$-basis type divisor as long as $m$ is sufficiently large, therefore, we conclude. 
\end{proof}
\begin{rem}The restricted linear system $(W_m)_{i,j}$ is often not a complete linear system, therefore to compute the corresponding $\delta$-invariant could be involving. Nevertheless, there are cases that the asymptotic invariant $\delta(W_{\bullet})$ is easy to compute. See \ref{say-flag}.
\end{rem}
\subsubsection{Flags} To proceed, we need to inductively apply the discussion in \ref{sss-adjunction} to consecutively cut down the dimension.
\begin{say}[Induction and flags]\label{say-flag} We choose a flag $Y_1\supset Y_2\supset \cdots \supset Y_{n}$. (We will impose more assumptions later.) We can inductively applying the adjunction result Proposition \ref{p-kadjunction} (for $X=Y_i$ and $F=Y_{i+1}$). Moreover, we replace the filtration by $E$ of the linear system by a $\bN^n$-filtration given by the vanishing order along the flags, which we can still apply Lemma \ref{l-doublefil}. Thus for any $\overrightarrow{a}=(a_1,...,a_i)\in \bN^i$, we can consider the the linear system 
$$W_{m,\overrightarrow{a}}^{i}\subset \Big(\cdots \big((-mK_X-a_1Y_1)_{Y_1}-a_2Y_2\big)_{Y_2}-\cdots  - a_iY_i\Big)_{Y_i}$$ on $Y_i$.

One issue in general is that  $W_{m,\overrightarrow{a}}^{i}$ is not a complete linear system. However, one will see in our specific case, if we put all $\overrightarrow{a}\in \bN^i$ together and let $m\to \infty$,  the limit $W^i_{\bullet}$ is {\it almost complete} in the sense that $c_1(W^i_{\bullet})$ is the same as the one if we replace $W_{m,\overrightarrow{a}}^{i}$  by the complete linear system  $W_{m,\overrightarrow{a}}^{i}\subset \Big(\cdots \big((-mK_X-a_1Y_1)_{Y_1}-a_2Y_2\big)_{Y_2}-\cdots - a_iY_i\Big)_{Y_i}$.
\end{say}

To apply Proposition \ref{p-kadjunction} repeatedly, we know that
\begin{prop}
$\delta_Z(X)\ge \min\{\min_{1\le i \le n}\frac{1}{S_{Y_i}(Y_{i-1})},  \delta_{Z\cap Y_{n-1}}(W_{\bullet})  \}. $
\end{prop}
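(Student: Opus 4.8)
The plan is to prove the displayed inequality
\[
\delta_Z(X)\ge \min\Big\{\min_{1\le i \le n}\frac{1}{S_{Y_i}(Y_{i-1})},\ \delta_{Z\cap Y_{n-1}}(W_{\bullet})\Big\}
\]
by a finite induction on the length of the flag, using Proposition \ref{p-kadjunction} as the single-step tool. At each step we descend from $Y_{i-1}$ to $Y_i$ (with the convention $Y_0=X$), accumulating one more term $\frac{A_{Y_{i-1}}(Y_i)}{S_{Y_i}(Y_{i-1})}=\frac{1}{S_{Y_i}(Y_{i-1})}$ into the minimum each time. Here the equality $A_{Y_{i-1}}(Y_i)=1$ holds because each $Y_i$ is a Cartier divisor on $Y_{i-1}$ (or more precisely we take each $Y_i$ cut out in the smooth locus, e.g. a hyperplane section of the relevant projective embedding), so the log discrepancy of $Y_i$ with respect to $Y_{i-1}$ is $1$; this is exactly the normalization under which Proposition \ref{p-kadjunction} was stated with $F=Y_{i+1}$, $X=Y_i$.

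\textbf{Key steps.} First I would set up the induction cleanly: for $0\le j\le n-1$, let $W^j_\bullet$ denote the graded linear series on $Y_j$ obtained by the iterated restriction construction of \ref{say-flag} (with $W^0_\bullet = R$, the full anticanonical ring of $X$), and claim
\[
\delta_{Z\cap Y_j}(W^j_\bullet)\ \ge\ \min\Big\{\min_{j< i\le n}\tfrac{1}{S_{Y_i}(Y_{i-1})},\ \delta_{Z\cap Y_{n-1}}(W^{n-1}_\bullet)\Big\}.
\]
The base case $j=n-1$ is a tautology. For the inductive step, apply Proposition \ref{p-kadjunction} with $X$ replaced by $Y_{j}$, the prime divisor $F$ replaced by $Y_{j+1}$, and the ambient linear series $R$ replaced by $W^j_\bullet$; its conclusion reads
\[
\delta_{Z\cap Y_j}(W^j_\bullet)\ \ge\ \min\Big\{\tfrac{A_{Y_j}(Y_{j+1})}{S_{W^j_\bullet}(Y_{j+1})},\ \delta_{Z\cap Y_{j+1}}(W^{j+1}_\bullet)\Big\}.
\]
Using $A_{Y_j}(Y_{j+1})=1$ and identifying $S_{W^j_\bullet}(Y_{j+1})$ with $S_{Y_{j+1}}(Y_j)$ — this is where the "almost complete" property of \ref{say-flag} is used, namely that $c_1(W^j_\bullet)$ agrees with the complete-linear-series computation so that $S_{W^j_\bullet}(Y_{j+1})=S_{Y_{j+1}}(Y_j)$ — together with the induction hypothesis applied to $j+1$, gives the claim for $j$. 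Taking $j=0$ yields the statement, since $\delta_Z(X)=\delta_Z(W^0_\bullet)$ and the accumulated minimum is exactly $\min_{1\le i\le n}\frac{1}{S_{Y_i}(Y_{i-1})}$ together with the final term $\delta_{Z\cap Y_{n-1}}(W_\bullet)$.

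\textbf{Main obstacle.} The routine bookkeeping (indices, discrepancy normalizations, the convention $Y_0=X$) is harmless; the genuine subtlety is the identification $S_{W^j_\bullet}(Y_{j+1})=S_{Y_{j+1}}(Y_j)$ and, more broadly, that Proposition \ref{p-kadjunction} may legitimately be applied with the \emph{incomplete} graded linear series $W^j_\bullet$ in place of the anticanonical ring $R$. One must check that the proof of Proposition \ref{p-kadjunction} (which only used log-concavity of volumes, Lemma \ref{l-doublefil}, and inversion of adjunction for the differents $\mathrm{Diff}$) goes through verbatim for a linearly bounded multiplicative graded linear series, and that $S(W^j_\bullet; Y_{j+1})$ is computed by the same $c_1$ as in the Definition--Lemma of \ref{sss-adjunction}. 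The "almost complete" hypothesis of \ref{say-flag} is precisely what guarantees no loss occurs in passing between $W^j_{m,\overrightarrow a}$ and the complete linear system, so the inductive estimate does not degrade. I expect this to be where the real content lies; everything else is formal assembly of the one-step adjunction inequality along the flag.
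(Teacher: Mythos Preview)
Your approach is correct and matches the paper's: the proposition is stated immediately after ``To apply Proposition~\ref{p-kadjunction} repeatedly, we know that'', and your induction along the flag is exactly that repeated application, spelled out in full. One small clarification: the quantity written $S_{Y_i}(Y_{i-1})$ in the statement is already, by the paper's conventions in~\ref{sss-adjunction} and~\ref{say-flag}, shorthand for $S(W^{i-1}_\bullet;Y_i)$, so no separate identification is needed and the ``almost complete'' hypothesis is not used to prove the inequality itself---it enters only afterwards, when the paper computes the explicit values $S(W^{i-1}_\bullet,Y_i)=\tfrac{2}{n+1}$.
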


To apply this in our situation, we choose two distinct points $y_1$ and $y_2$ on $Z$ (as $\dim(Z)\ge 1$). 
\begin{lem}Let $Y\subset \mathbb{P}^n$ be a smooth subvariety and $y_1, y_2\in Y$. Then a general hyperplane section containing $y_1$ and $y_2$ is smooth, except $\dim Y=2$ and the line passing through $y_1$ and $y_2$ is contained in $Y$.
\end{lem}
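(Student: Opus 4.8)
The statement to prove is a Bertini-type assertion: for a smooth subvariety $Y\subseteq \mathbb{P}^n$ and two given points $y_1,y_2\in Y$, a general hyperplane $H$ through $y_1$ and $y_2$ meets $Y$ in a smooth variety, with the sole exception $\dim Y=2$ together with the line $\overline{y_1 y_2}\subseteq Y$. The plan is to reduce this to the classical Bertini theorem applied to the linear system of hyperplanes through the base locus $\{y_1,y_2\}$, and to analyze the possible failures coming from that base locus.

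First I would set up the linear system. Let $\Lambda\subseteq |\mathcal{O}_{\mathbb{P}^n}(1)|$ be the sublinear system of hyperplanes containing both $y_1$ and $y_2$; this is a linear subspace of codimension $2$ (codimension $1$ if $y_1=y_2$, but we take $y_1\neq y_2$) with base locus exactly the line $\ell=\overline{y_1 y_2}$ if $n\ge 2$. Away from $\ell$, the system $\Lambda$ is base-point free, so by the classical Bertini theorem (in characteristic $0$, which is our standing assumption) a general member $H\in\Lambda$ has $H\cap (Y\setminus \ell)$ smooth of dimension $\dim Y - 1$ (or empty). Thus the only possible singular points of $H\cap Y$ lie on $\ell\cap Y$. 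I would then split into cases according to $\dim(\ell\cap Y)$.

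Next, the case analysis on $\ell\cap Y$. If $\ell\cap Y$ is finite (in particular if $\ell\not\subseteq Y$), then since $Y$ is smooth, for a general $H\supseteq\ell\cap\{y_1,y_2\}$ one checks that $H$ is transverse to $Y$ at each point of $\ell\cap Y$: the tangent hyperplanes to $Y$ at these finitely many points impose finitely many linear conditions on $\Lambda$, and $\dim\Lambda$ is large enough (using $\dim Y < n$, i.e.\ $Y\neq\mathbb{P}^n$, and the fact that $\{y_1,y_2\}$ are only two points) that a general $H\in\Lambda$ avoids all of them; hence $H\cap Y$ is smooth. If instead $\ell\subseteq Y$, then every $H\in\Lambda$ contains $\ell$, so $\ell\subseteq H\cap Y$ and one must check smoothness of $H\cap Y$ along $\ell$. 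Here the generic transversality argument shows that for a general $H\in\Lambda$ and a general point $p\in\ell$, $H$ is transverse to $Y$ at $p$; the locus of $p\in\ell$ where transversality can fail is closed, and one shows it is all of $\ell$ precisely when the normal directions to $\ell$ in $Y$ cannot be separated by hyperplanes through $\ell$ — which, a dimension count shows, happens exactly when $\dim Y = 2$ (so that $Y$ is a surface, $\ell$ a curve on it of codimension $1$, and the single normal direction is forced). When $\dim Y\ge 3$ and $\ell\subseteq Y$, there is still enough freedom in $\Lambda$ to achieve transversality along a dense open subset of $\ell$, and combined with Bertini off $\ell$ this gives smoothness of the general $H\cap Y$; one must also handle the finitely many points of $\ell$ where transversality fails by a further general-position argument, or by noting they can be absorbed into a secondary application of Bertini.

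\textbf{Expected main obstacle.} The routine part is Bertini away from the base locus; the delicate part is the precise dimension count along $\ell$ when $\ell\subseteq Y$, showing that $\dim Y=2$ is the \emph{only} obstruction. Concretely, one must verify that for $\dim Y\ge 3$ the linear conditions ``$H$ tangent to $Y$ at $p$'' for $p$ ranging over $\ell$ do not cover all of $\Lambda$, whereas for $\dim Y = 2$ they do; this requires identifying the relevant restriction map from $\Lambda$ to the space of $1$-jets of $Y$ along $\ell$ and computing its rank. This is where I would spend the real effort, the rest being standard Bertini bookkeeping.
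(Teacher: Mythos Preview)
The paper does not actually prove this lemma: it writes ``Left to the reader. See \cite{AZ20}*{Lemma 4.25}.'' So there is no argument in the paper to compare your approach against. Your plan is the standard and correct one: restrict to the linear system $\Lambda$ of hyperplanes through $\ell=\overline{y_1y_2}$, apply classical Bertini off the base locus $\ell$, and then do an incidence/dimension count along $\ell\cap Y$. The key count is exactly what you identify: when $\ell\subseteq Y$, the incidence variety $I=\{(p,H)\in \ell\times\Lambda:\ T_pY\subseteq H\}$ has fibers over $\ell$ of dimension $n-1-\dim Y$, hence $\dim I=n-\dim Y$, and this is $<\dim\Lambda=n-2$ precisely when $\dim Y\ge 3$.

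One small sharpening: in the case $\dim Y\ge 3$ with $\ell\subseteq Y$, you do not need to ``handle finitely many points of $\ell$ where transversality fails'' separately. Since $\dim I<\dim\Lambda$, the image of $I\to\Lambda$ is a proper closed subset, so a general $H\in\Lambda$ lies outside it, meaning $H$ is transverse to $Y$ at \emph{every} point of $\ell$, not merely on a dense open. This removes the secondary argument you anticipated and makes the proof cleaner.
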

\begin{proof}Left to the reader. See \cite{AZ20}*{Lemma 4.25}.
\end{proof}

We first assume the line connecting $y_1$ and $y_2$  is not contained in $X$. Then we choose  $Y_1$,..., $Y_{n-1}$ general hyperplane section on on $X$ passing though $y_1$ and $y_2$ and $Y_n$ a general point on $Y_{n-1}$. We easily see the graded linear system $W^i_{\bullet}$ is almost complete for all $i$. Now we calculate,  for $1\le i\le n-1$,
$$c_1(W^i_{\bullet})=(1-\frac{1}{n-i+2})c_1(W^{i-1}_{\bullet})|_{Y_j}=2(1-\frac{i}{n+1})\mathcal{O}(1)|_{Y_i},$$
 and $S(W^{i-1}_{\bullet}, Y_i)=\frac{2}{n+1}$.
Taking $i=n-1$, we have $c_1(W^{n-1}_{\bullet})=\frac{4}{n+1}\mathcal{O}(1)|_{Y_{n-1}}$ which has degree $\frac{4n}{n+1}$.
Then a calculation on curve shows that
$$\delta_{Z\cap Y_{n-1}}(W^{n-1}_{\bullet})\ge\frac{2\cdot \#(Z\cap Y_{n-1})}{\deg(W^{n-1}_{\bullet})}\ge\frac{n+1}{n},$$
hence we can conclude by Lemma \ref{l-multiplier} and the fact that $\beta(E_x)>0$ for any exceptional divisor $E_x$ of the blow up of a smooth point $x\in X$ (see \cite[Lemma 4.24]{AZ20}).

Now we consider the case the line connecting $y_1,y_2$ is contained in $X$. Thus we choose $Y_1,..., Y_{n-2}$ as above, but $Y_{n-1}\cong \mathbb{P}^1$ the line, and $Y_n$ a general point on $Y_{n-1}$ which is distinct from $y_1,y_2$ (if $Y_{n-1}\subset Z$, then we do not need $Y_n$). 
We still can easily see the graded linear system $W^i_{\bullet}$ is almost complete for $i\le n-2$.  To see the same thing for $i=n-1$, we need to make a computation of the invariants on the smooth projective surface $Y_{n-2}$. See the proof of \cite{AZ20}*{Lemma 4.28} for more details.
Then we can make a similarly conclude that $\delta_{Z\cap Y_{n-1}}(W^{n-1}_{\bullet})\ge\frac{n+1}{n}$.
\begin{rem}In \cite{Zhu20}, it is proved that equivariant K-semistability (resp. equivariant K-polystability) for any group $G$ (any reductive group $G$) acts on $X$ implies K-semistability (resp. K-polystability). This provides a powerful tool to verify K-(semi,poly)stability of a Fano variety $X$ when it has a {\it large} symmetry (see e.g. \cite{Del20}).
\end{rem}
 
\section{Explicit K-moduli spaces}\label{s-moduli}
We can also prove a $\mathbb{Q}$-Fano variety is K-(semi,poly)-stable by showing that they appear on a K-moduli stack/space.
Note that although we have not yet known the algebraic construction of the compact K-moduli space, we can use the analytic depended construction \cite{LWX19}  for the closed component parametrising smoothable $\mathbb{Q}$-Fano varieties.  Here the compactness is crucial, since often we prove $X$ is K-(semi,poly)stable by showing it is a limit of a family of K-stable Fano manifolds, and all other possible limits are not K-(semi,poly)stable. This approach first  appeared explicitly in \cite{MM93} where they solved the case of degree 4 del Pezzo surface. Then in \cite{OSS16}, all cases of smoothable surfaces were completed. In \cite{SS17}, the authors settled the case of intersections of two quadrics. In \cite{LX19} and \cite{Liu20}, the compact K-moduli of cubic threefolds and fourfolds are proved to be the same as the GIT moduli of cubic threefolds.   More recently in \cite{ADL19,ADL20}, cases of log surfaces are also studied, and by varying the coefficients of the boundary, a sequence of birational moduli spaces were established (see also \cite{GMS18}). 

\subsection{Deformation and degeneration}\label{ss-compactmoduli}
 We start with a Fano manifold $X_{t^*}$ which is a fiber of a family $\kX\to B$ over an irreducible base, and assume we know that $X_{t^*}$ is K-semistable. Then we know that there is a Zariski open set $B^0\subset B$ such that for any $t\in B^0$, the fiber $X_t$ is K-semistable (see \cite{BLX19, Xu19}). However, usually we do not a priori know how large $B^0$ is. Nevertheless, let  $C\to B$ be a map from a curve  whose  image contains $t_0$. Then by the compactness of the K-moduli space, we know that for any $s\in C$, we can extend the family of $\kX\times (B^0\cap C)\to (B^0\cap C)$ over $s$ such that the special fiber $X_{s}$ is K-semistable.  

To determine what $X_s$ is, the local restriction could be useful:
By Theorem \ref{t-localtoglobal}, we know that any point $x\in X_{s}$ is a smoothable singularity with  
\begin{eqnarray}\label{e-localglobal}
 \hvol(x, X_{s})\ge (-K_{X_{s}})^n\cdot \big(\frac{n}{n+1}\big)^n.
 \end{eqnarray}
When $(-K_{X_{s}})^n$ is large, this local condition could post strong restrictions for possible $X_{s}$. For instance, by \cite{LX18, XZ20}, we know that 
\begin{eqnarray}\label{e-cover}
{\pi}_1^{\rm loc}(x, X_{s})\le \frac{n^n}{\hvol(x, X_{s})}\le \frac{(n+1)^n}{(-K_{X_{s}})^n},
\end{eqnarray}
and the first inequality holds if and only if $x\in X_{s}$ is a quotient singularity. 

For surfaces, the above approach gives a robust method. In \cite{OSS16} this approach has been used to identify the singular surfaces parametrized by the boundary of the moduli space of smooth del Pezzo surfaces of degree 1, 2 or 3.

However, in higher dimension, in general it is much harder to explicitly write down a compact K-moduli space. There are two difficulties: first there are not that many known compact moduli spaces parametrizing Fano varieties, which could be a candidate of K-moduli; secondly, to calculate the normalized volume of  klt singularities of dimension larger or equal to three is difficult. One technical result in dimension three is the following. 

\begin{thm}[\cite{LX19}]\label{t-threefold}
For any three dimension singularity $x\in X$, $\hvol(x,X)\le 16$, and the equality holds if and only if $x\in X$ is the rational double point.  
\end{thm}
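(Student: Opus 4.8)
\emph{Proof proposal.}
The plan is to translate the local invariant $\hvol(x,X)$ into intersection-theoretic data on a surface and then appeal to the classification of log del Pezzo surfaces. Recall first that for any klt singularity $x\in X$ the infimum defining $\hvol(x,X)$ is a minimum attained by a quasi-monomial valuation \cite{Blu18,Xu19}, and, more usefully, that $\hvol(x,X)=\inf_E \hvol_{X,x}(\ord_E)$ where $E$ runs over all Koll\'ar components over $x$ (the non-equivariant form of Proposition~\ref{p-kollar}, see \cite{LX20}). Fix such a component, with plt blow-up $\mu:(Y',E)\to (X,x)$; by adjunction $(E,\Delta_E:=\Diff_E(0))$ is a two-dimensional klt log del Pezzo pair, and since $K_{Y'}+E=\mu^*K_X+A_X(E)\cdot E$ restricts to $A_X(E)\cdot(E|_E)$ on $E$ (the term $\mu^*K_X|_E$ vanishing locally), we get $-(K_E+\Delta_E)=-A_X(E)\,(E|_E)=:L_E$, an ample $\Q$-divisor on $E$. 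A direct computation of $\vol(\ord_E)=(-E|_E)^{\,n-1}$ in the spirit of Lemma~\ref{l-dervolume} then yields
$$\hvol_{X,x}(\ord_E)=A_X(E)^n\cdot\vol(\ord_E)=A_X(E)\cdot L_E^{\,n-1}=A_X(E)\cdot\big(-(K_E+\Delta_E)\big)^{\,n-1}.$$
For $n=3$ this is $A_X(E)\cdot(-(K_E+\Delta_E))^2$, so the theorem reduces to the statement that for a suitable Koll\'ar component $E$ this product is $\le 16$, with equality only in one case.

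Next I would bound this product. Since $\hvol(x,X)\le 3^3=27$ always, and we only need the infimum over Koll\'ar components to be $\le 16$, the strategy is to exhibit a good component: one argues that a Koll\'ar component realizing (or approximating) the minimal normalized volume can be chosen with $(E,\Delta_E)$ K-semistable, or at least lying in a bounded family, so that $(E,\Delta_E)$ ranges over a bounded set of two-dimensional log del Pezzo surfaces. Combining boundedness of such surfaces with the elementary estimate $(-(K_E+\Delta_E))^2\le 9$, while controlling simultaneously $A_X(E)$ (equivalently the ``index'' of $E|_E$) and the degree, a finite check shows $A_X(E)\cdot(-(K_E+\Delta_E))^2\le 16$, with the sole exception $(E,\Delta_E)=(\PP^2,0)$ together with $E|_E=-\cO_{\PP^2}(1)$ and $A_X(E)=3$, which forces $X$ to be smooth at $x$. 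Discarding this case, as $x$ is a singular point, gives $\hvol(x,X)\le 16$.

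Finally, for the equality case, I would trace the inequalities in the previous step: $\hvol(x,X)=16$ forces the extremal configuration $E\cong\PP^1\times\PP^1$ with $\Delta_E=0$, $-(K_E+\Delta_E)=\cO(2,2)$ and $A_X(E)=2$, hence $-E|_E=\cO_{\PP^1\times\PP^1}(1,1)$; then $\mu:(Y',E)\to(X,x)$ is forced to be the blow-up of the vertex of the affine cone $C(\PP^1\times\PP^1,\cO(1,1))$, which identifies $X$ (locally at $x$) with that cone, i.e.\ the three-dimensional ordinary double point, the ``rational double point'' of the statement. The main obstacle is entirely in the second step: verifying that the minimizing Koll\'ar component can be placed in a bounded (K-semistable) family, and then pushing the classification of log del Pezzo surfaces far enough to pin down the constant $16$ and its unique extremal configuration. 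This is where \cite{LX19} does the substantive work, relying on effective surface geometry rather than the general normalized-volume formalism; the three-dimensional nature of the problem is essential here, since in higher dimension the analogous sharp constant is not known.
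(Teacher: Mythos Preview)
Your reduction via Koll\'ar components and the identity $\hvol_{X,x}(\ord_E)=A_X(E)\cdot\big(-(K_E+\Delta_E)\big)^2$ are correct, as is the identification of the equality case with the ordinary double point. The gap you flag in the second step is indeed where all the content lies, but the strategy you outline cannot close it: the log del Pezzo pair $(E,\Delta_E)$ does \emph{not} determine $A_X(E)$. For instance, both the smooth point $\bA^3$ and the cone over the $3$-uple Veronese $\mathbb{P}^2\hookrightarrow\mathbb{P}^9$ have Koll\'ar component $(\mathbb{P}^2,0)$, with $A_X(E)=3$ and $A_X(E)=1$ respectively; what distinguishes them is the normal bundle $-E|_E$, i.e.\ the polarization realizing the cone. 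So no ``finite check'' on a bounded family of K-semistable log del Pezzo surfaces can bound the product $A_X(E)\cdot\big(-(K_E+\Delta_E)\big)^2$. The missing constraint---which polarizations actually occur---comes from the requirement that $(Y',E)\to(X,x)$ be an honest plt blow-up of a $3$-fold klt point, and unpacking that is a $3$-dimensional problem, not a surface one.

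Accordingly, your description of what \cite{LX19} does is off. The survey does not reproduce the proof but records that it ``relies heavily on the classification of three dimensional canonical and terminal singularities'', not on effective surface geometry. The actual argument works through the $3$-fold classification (reducing to and then analysing the canonical and terminal cases), bounding or computing $\hvol$ type by type; this is what produces the constant $16$ and its rigidity. Your Koll\'ar-component framework is a fine way to package each case, but the decisive input is on the $3$-fold side.
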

The proof of Theorem \ref{t-threefold} relies heavily on the classification of three dimensional canonical and terminal singularities, which is classical in MMP. 
This is a key ingredient to prove the following theorem, which gives the first example of using the explicit K-moduli space to find out new smooth K-stable Fano manifolds in dimension at least three. 
\begin{thm}[{\cite{LX19}}]\label{t-cubics}
The GIT moduli of cubic threefolds is the same as the K-moduli.
\end{thm}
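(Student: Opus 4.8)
The plan is to identify the K-moduli space of (smoothable) cubic threefolds with the GIT moduli space $\mathcal{M}^{\mathrm{GIT}} := |\mathcal{O}_{\mathbb{P}^4}(3)| /\!\!/ \mathrm{PGL}(5)$, by matching the two compactifications orbit by orbit. First I would recall the classical GIT analysis of cubic threefolds (due to Allcock, and Yokoyama): a cubic threefold is GIT-stable iff it has at worst $A_1,\dots,A_4$ singularities, GIT-semistable with closed orbit iff it is either the unique (up to projectivity) strictly semistable cubic with an $A_5$ or $D_4$ configuration, or the chordal cubic (secant variety of a rational normal curve), and the strictly semistable boundary point in $\mathcal{M}^{\mathrm{GIT}}$ is represented by the specific cubic $\{x_0^3 + x_1^3 + x_2 x_3 x_4 = 0\}$ (the $3A_5$-cubic / $T_{3,3,3}$ cubic). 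Since smooth cubic threefolds are K-stable — this follows, e.g., from the estimate $\alpha(X) \ge \tfrac{3}{4}$ or more directly from Theorem \ref{t-index2} in the case $n = 3$ (degree $3$ hypersurface in $\mathbb{P}^4$) — and since the smooth locus is dense in both moduli spaces, both $\overline{X^{\mathrm{Kps},\mathrm{sm}}_{3,V}}$ (with $V = (-K_X)^3 = 3 \cdot 2^3 = 24$) and $\mathcal{M}^{\mathrm{GIT}}$ are projective compactifications of the same open variety of smooth cubic threefolds modulo automorphisms.

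The core of the argument is to show that the $\mathrm{PGL}(5)$-action and the "intrinsic" K-stability degeneration theory agree on the cubic locus, which would produce a morphism between the two spaces that is an isomorphism over the (common) smooth locus; then properness of both sides upgrades this to an isomorphism. Concretely, I would proceed in steps. (i) Set up the relevant parameter space: the locus $Z \subset |\mathcal{O}_{\mathbb{P}^4}(3)|$ of cubics with klt (equivalently canonical, by the hypersurface/adjunction dichotomy here) singularities, which are automatically $\mathbb{Q}$-Fano of volume $24$, together with its embedding into the K-moduli stack $\mathfrak{X}^{\mathrm{Kss}}_{3,24}$ once we know the relevant cubics are K-semistable. (ii) Show every GIT-semistable cubic with only $A_k$ ($k \le 4$) singularities is K-semistable: for these cubics one can bound $\delta$ from below using $\alpha$-type estimates local to the singular point, or invoke the inequality of Theorem \ref{t-localtoglobal} together with the fact that $\widehat{\mathrm{vol}}$ of an $A_k$ singularity ($k\le 4$) on a threefold is large enough — here Theorem \ref{t-threefold} and the explicit normalized-volume computations for ADE and cDV singularities are exactly the technical input, since one needs $\widehat{\mathrm{vol}}(x, X) \ge 24 \cdot (3/4)^3 = 81/8$ to be consistent and, conversely, to rule out worse singularities. (iii) Conversely, show every K-semistable degeneration of a smooth cubic threefold is again a cubic threefold and is GIT-semistable: by \eqref{e-localglobal} any point $x$ on such a degeneration $X_s$ has $\widehat{\mathrm{vol}}(x,X_s) \ge 81/8 > 8$, and Theorem \ref{t-threefold} then forces the singularity to be very mild (a cDV/ADE point, not worse), which combined with the classification of Gorenstein canonical Fano threefolds of the right numerical type pins $X_s$ down to being a cubic hypersurface with at worst $A_k$ singularities — and the $\widehat{\mathrm{vol}}$ threshold excludes $A_5$ and higher in the non-strictly-semistable case. (iv) Match the strictly semistable / polystable boundary points: check directly that the $3A_5$ cubic $\{x_0^3+x_1^3+x_2x_3x_4=0\}$ is K-polystable (it has a $2$-torus action, so one can test K-polystability via $T$-equivariant test configurations / reduced uniform stability, or compute $\beta$ for the finitely many relevant $T$-invariant divisorial valuations), and that it is the unique K-polystable cubic in its S-equivalence class, matching the unique closed GIT orbit; likewise handle the chordal cubic, verifying it is K-unstable (equivalently $\beta(E) < 0$ for a suitable valuation, e.g. the one extracting the rational normal curve), so it is excluded from the K-moduli exactly as it is the "chordal" non-closed point in the GIT picture — actually it must be excluded, so one shows the K-limit of cubics degenerating toward the chordal one is instead the $3A_5$ cubic.

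Finally, with the set-theoretic bijection in hand, I would promote it to an isomorphism of varieties: both $\mathcal{M}^{\mathrm{GIT}}$ and $\overline{X^{\mathrm{Kps},\mathrm{sm}}_{3,24}}$ are normal projective (the latter by \cite{LWX19} plus the projectivity of the CM line bundle, Theorem \ref{t-positivity}, together with properness of the smoothable component), they agree on the dense open smooth locus, and a standard Zariski-main-theorem argument (or the fact that a bijective proper morphism between normal varieties that is an isomorphism on a dense open set and respects the natural polarizations is an isomorphism) finishes it; one uses here that the universal family of GIT-(semi)stable cubics over the stable locus, descended appropriately, defines a map to the K-moduli stack because its fibers are now known to be K-semistable, and S-equivalence on the GIT side corresponds to S-equivalence on the K side by step (iv). The main obstacle I expect is step (ii)–(iii): precisely controlling which singular cubics are K-semistable, which hinges on sharp normalized-volume estimates for the relevant threefold singularities (the content of Theorem \ref{t-threefold} and its refinements for cDV points), and on ruling out exotic non-hypersurface K-semistable degenerations — i.e. proving that a K-semistable $\mathbb{Q}$-Fano threefold with $(-K)^3 = 24$, index $\le 2$, and suitably mild singularities must actually be anticanonically embedded as a cubic in $\mathbb{P}^4$. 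This boundedness-plus-classification input is where the argument is genuinely three-dimensional and cannot be bypassed by the general machinery of Part \ref{p-what}.
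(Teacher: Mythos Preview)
Your step (iii) captures the heart of the paper's proof, but your overall strategy adds unnecessary work and misses the key shortcut. The paper never attempts your steps (ii) and (iv): it does \emph{not} verify by hand that GIT-semistable singular cubics are K-semistable, nor does it analyze the $3A_5$ cubic or the chordal cubic directly. Instead, once every K-semistable limit $X_s$ is shown to be a cubic threefold, the paper uses the \emph{easy} direction K-(semi,poly)stable $\Rightarrow$ GIT-(semi,poly)stable (for hypersurfaces the CM weight is a positive multiple of the GIT weight, by Paul--Tian). This yields a morphism from K-moduli to GIT-moduli that is an isomorphism on the dense smooth locus, and properness plus normality finish. Your reverse direction (GIT $\to$ K) would require proving directly that, say, every cubic with an $A_4$ point is K-semistable; $\alpha$-invariant bounds do not give this, and it is essentially as hard as the theorem itself.

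Your step (iii) is also less sharp than the paper's. The paper's mechanism is: $\widehat{\mathrm{vol}}(x,X_s)\ge 81/8$ together with \eqref{e-cover} forces $|\pi_1^{\mathrm{loc}}(x,X_s)|\le 2$, so each point is either Gorenstein or a $\tfrac12(1,1,0)$ point; in either case the limit of $\mathcal{O}(1)$ is Cartier, and then Takao Fujita's classification of degree-$3$ del Pezzo varieties pins $X_s$ down as a cubic hypersurface. Your appeal to a general ``classification of Gorenstein canonical Fano threefolds'' is vaguer, and your claim that the volume threshold ``excludes $A_5$'' is wrong: the argument does not and need not exclude $A_5$ singularities (the $3A_5$ cubic is a genuine K-polystable limit), it only excludes non-cubic limits.
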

\begin{proof}[Sketch of the proof]By the discussion above, the volume of any $x\in X_s$ is at least $\frac{81}{8}$ (see \eqref{e-localglobal}). So by Theorem \ref{t-threefold},  it is either $\frac{1}{2}(1,1,0)$ or the local fundamental group ${\pi}_1^{\rm loc}(x,X_s)$ is trivial. In each  case, we can easily show that the degeneration of the hyperplane class $\mathcal{O}(1)$ on  $X_s$ is still a Cartier divisor. Then a classical result by Takao Fujita shows that $X_s$ is indeed also a cubic threefold.

Then we know it is GIT (semi,poly)stable if it is K-(semi,poly)stable. 
\end{proof}
In \cite{SS17}, it was shown that the compact  K-moduli space of the intersection of two quadrics in $\mathbb{P}^{n+2}$ is identical to the GIT moduli. In fact, \eqref{e-localglobal} directly yields that for any point $x$ on the K-semistable degeneration $X_s$,  we have $\hvol(x, X_s)\ge \frac{n^n}{2}$, which implies that the degeneration of $\mathcal{O}(1)$ is Cartier, and $X_s$ is also an intersection of two quadrics.

An analogue result of Theorem \ref{t-cubics} for cubics in higher dimensions would follow a similar result of Theorem \ref{t-threefold} in higher dimensions, but the latter seems to be hard to establish in general. Nevertheless, in \cite{Liu20}, by combining Theorem \ref{t-threefold} and the effective non-vanishing theorem (see \cite{Kaw00}) to analyze the linear system $\mathcal{O}(1)$ and $\mathcal{O}(2)$ on $X_s$, Liu also solved the 4 dimensional case. 

\begin{thm}[\cite{Liu20}]\label{t-cubicfourfold}
The GIT moduli of cubic fourfolds is the same as the K-moduli.
\end{thm}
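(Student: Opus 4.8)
The plan is to follow the template established for cubic threefolds in \cite{LX19} (Theorem~\ref{t-cubics}) and for complete intersections of two quadrics in \cite{SS17}; the only genuinely new difficulty is that the classification of the relevant singularities, available in dimension $\le 3$, must be bypassed using local volume estimates together with an effective non-vanishing theorem. Set $n=\dim X=4$: a smooth cubic fourfold $X\subset\mathbb P^5$ has $-K_X=\mathcal O_X(3)$, so it is Gorenstein of index one with $(-K_X)^4=3^4\cdot 3=243$. By Theorems~\ref{t-boundedness}, \ref{t-openness} and \ref{t-goodmoduli} there is a separated good moduli space of $4$-dimensional K-semistable $\mathbb Q$-Fano varieties of volume $243$, and its smoothable part $\overline{X^{\rm Kps, sm}_{4,243}}$ is \emph{proper} by \cite{LWX19}. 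The Fermat cubic fourfold admits a K\"ahler--Einstein metric (\cite{Tia00}; see also \cite{Zhu20}), hence is K-polystable; since K-semistability is open in $\mathbb Q$-Gorenstein families (Theorem~\ref{t-openness}), the smooth cubic fourfolds form a dense open substack of $\overline{X^{\rm Kps, sm}_{4,243}}$, and a general one is K-stable. Thus the theorem reduces to the assertion that \emph{every K-polystable $\mathbb Q$-Fano fourfold $X_s$ arising as a degeneration of smooth cubic fourfolds is again a cubic fourfold}, together with the matching of GIT and K (semi/poly)stability on the locus of cubic fourfolds.

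The heart of the proof is the identification of $X_s$. From the $\mathbb Q$-Gorenstein family structure (Definition~\ref{d-Qfamily}) one gets that $X_s$ is Gorenstein canonical with $-K_{X_s}$ Cartier, and the limit of the relative polarization $\mathcal O(1)$ produces a rank-one reflexive sheaf $\mathcal O_{X_s}(H_s)$ with $\mathcal O_{X_s}(3H_s)\cong\omega_{X_s}^{\vee}$ and $H_s^4=3$. One wants: $H_s$ is Cartier and very ample with $h^0(X_s,\mathcal O(1))=6$, realizing $X_s$ as a cubic hypersurface in $\mathbb P^5$. First, Theorem~\ref{t-localtoglobal} gives, for every $x\in X_s$,
\[
\hvol(x,X_s)\ \ge\ (-K_{X_s})^4\big(\tfrac{n}{n+1}\big)^n\ =\ 243\cdot\big(\tfrac45\big)^4\ =\ \tfrac{62208}{625}\ >\ 99 ,
\]
so by \eqref{e-cover} (using \cite{LX18, XZ20}) ${\pi}_1^{\rm loc}(x,X_s)\le n^n/\hvol(x,X_s)<\tfrac{625}{243}<3$, hence ${\pi}_1^{\rm loc}(x,X_s)\le 2$, with equality only for a $\mathbb Z/2$-quotient singularity whose index-one cover has normalized volume $\ge 199$. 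To pin these down I would cut down to dimension three: taking a general member through $x$ of a suitable Cartier multiple of $-K_{X_s}$ and using the comparison of local volumes under hyperplane sections, one lands at a three-dimensional Gorenstein canonical singularity $x\in Y_s$ to which Theorem~\ref{t-threefold} applies, forcing $\hvol(x,Y_s)\le 16$ with equality only at ordinary double points; combined with the inherited lower bound this leaves only ordinary double points together with a short explicit list of order-$\le 2$ quotient singularities, and in every surviving case the reflexive sheaf $\mathcal O_{X_s}(H_s)$ is locally free at $x$. Therefore $H_s$ is Cartier on $X_s$. Finally, applying the effective non-vanishing theorem of \cite{Kaw00} to $H_s$ and $2H_s$ (each of the form $K_{X_s}+(\text{ample }\mathbb Q\text{-divisor})$ after rescaling) one computes $h^0(\mathcal O(1))=6$, shows $|H_s|$ is base-point free and gives a finite morphism to $\mathbb P^5$, and then a degree count ($H_s^4=3$) with $h^0(\mathcal I_{X_s}(1))=0$ and $h^0(\mathcal I_{X_s}(3))=1$ identifies $X_s$ with a cubic hypersurface; this is precisely the argument of \cite{Liu20}.

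Granting this, the comparison of moduli spaces proceeds as in \cite{LX19, SS17}. On the smooth locus the open substacks of the GIT quotient stack of $(\mathbb P H^0(\mathbb P^5,\mathcal O(3)),\mathrm{PGL}_6)$ and of $\mathfrak X^{\rm Kss}_{4,243}$ agree, hence the two good moduli spaces are birational; both are normal and proper (the GIT quotient by construction, the K-moduli by \cite{LWX19} and Theorem~\ref{t-positivity}), and both carry ample polarizations that pull back to one another (the CM line bundle on the K-side, ample by Theorem~\ref{t-positivity}, versus the natural GIT polarization), so the birational map extends to an isomorphism. The remaining point — that a cubic fourfold is GIT-semistable (resp. polystable) iff it is K-semistable (resp. polystable) — follows from the identification above in one direction and, in the other, by degenerating a GIT-semistable cubic inside the bounded family of Theorem~\ref{t-boundedness} to a K-semistable one and invoking separatedness of the K-moduli (Theorem~\ref{t-main}).

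The main obstacle is the step in the second paragraph: in dimension four there is no classification of canonical (or even Gorenstein canonical) singularities, and the crude inequality ${\pi}_1^{\rm loc}\le 2$ does \emph{not} by itself force $H_s$ to be Cartier. What makes the argument work is the combination of (i) the sharp local-to-global volume inequality, Theorem~\ref{t-localtoglobal}; (ii) descending to a general hyperplane section to reach dimension three, where Theorem~\ref{t-threefold} is both available and sharp; (iii) control of how normalized volume and Cartier index behave under hyperplane sections; and (iv) the effective non-vanishing theorem of \cite{Kaw00} to reconstruct the embedding $X_s\hookrightarrow\mathbb P^5$. Steps (iii) and (iv) carry essentially all of the technical weight: one must rule out the few exotic order-$2$ quotient fourfold singularities and then show that the surviving singular limits are genuinely cut out by a single cubic equation rather than some other Fano fourfold sharing the invariants $(n,V)=(4,243)$.
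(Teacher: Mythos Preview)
The paper does not actually give a proof of this theorem; it only states it and provides the one-sentence summary that Liu ``combin[es] Theorem~\ref{t-threefold} and the effective non-vanishing theorem (see \cite{Kaw00}) to analyze the linear system $\mathcal{O}(1)$ and $\mathcal{O}(2)$ on $X_s$.'' Your proposal correctly identifies these two ingredients and organizes the argument along the same template as Theorem~\ref{t-cubics}, so at the level the paper operates your sketch is faithful.

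That said, your step (iii) --- ``taking a general member through $x$ of a suitable Cartier multiple of $-K_{X_s}$ and using the comparison of local volumes under hyperplane sections'' to reduce to Theorem~\ref{t-threefold} --- is the one place where your description is loose and probably does not match what \cite{Liu20} actually does. There is no general inequality of the form $\hvol(x,Y_s)\ge f\bigl(\hvol(x,X_s)\bigr)$ for a hyperplane section $Y_s$ that would let you combine the upper bound $\hvol(x,Y_s)\le 16$ with your fourfold lower bound $\hvol(x,X_s)>99$ in the way you suggest. In Liu's argument the effective non-vanishing \cite{Kaw00} is used first, to produce actual members of $|\mathcal O(1)|$ and $|\mathcal O(2)|$ on $X_s$; the threefold bound of Theorem~\ref{t-threefold} is then applied to those members (and their singularities), not via an abstract hyperplane-section volume comparison. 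You are right that this is where the technical weight lies, and you flagged it honestly, but the mechanism you wrote down would not go through as stated. The rest of your outline --- the local--global inequality giving $\pi_1^{\rm loc}\le 2$, the Gorenstein/Cartier analysis, and the GIT/K-moduli matching --- is in line with the paper's description.
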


\begin{rem}From a computational viewpoint, an explicit description of the boundary point of the K-moduli should often be easier to reach than the KSB moduli case, since the limit has to be klt, in particular normal. 
\end{rem}

\begin{rem} While the approach in Section \ref{s-sing} is mostly for Fano varieties with a small volume, the approach in this section is mainly for Fano varieties with a big volume. However, it seems there is still a gap between these two approaches, and we do not know how to deal with general Fano varieties with intermediate volumes. See Remark \ref{e-dim3}.
\end{rem}

\subsection{Wall crossings and log surfaces}
One interesting new direction is to consider the moduli space of log Fano varieties $(X,t\Delta)$, and vary the coefficient $t$. This reminisces the work in \cite{Has03} which studies a similar setting for KSB stable log curves $(C,tD)$.

In \cite{ADL19}, the authors develop the framework: first they show that the moduli space $\overline{\cM}^{\rm sm, Kss}_{n,c,V,r}$ which parametrizes $n$-dimensional K-semistable log Fano pairs $(X,cD)$ with $ (-K_X)^{n}=V$  that can be deformed to smooth Fano manifolds $X_t$ with a smooth divisor $D_t\in |-rK_{X_t}|$ for some $r\in \bQ$, is represented by an Artin stack of finite type. Moreover, it admits a proper good moduli space $\overline{M}^{\rm sm, Kps}_{n,c,V,r}$. While this is a straightforward generalization of \cite{LWX19} into the log situation, the main new structure, established in \cite{ADL19}, is that there is a wall crossing if we vary $c$ in the following sense.
\begin{thm}[{\cite{ADL19}*{Thm. 3.2}}]There exists a sequence of rational numbers 
$$0=c_0<c_1<c_2<...<c_k=\min \{1,\frac{1}{r}\},$$ such that $\overline{\cM}^{\rm sm, Kss}_{n,c,V,r}$ is the same for any $c\in (c_{i-1},c_i)$. And for any $0<j<k$, we have
$$\overline{\cM}^{\rm sm, Kss}_{n,c_j-\epsilon,V,r}\varhookrightarrow \overline{\cM}^{\rm sm, Kss}_{n,c_j,V,r} \varhookleftarrow \overline{\cM}^{\rm sm, Kss}_{n,c_j+\epsilon,V,r},$$
which when pass to good moduli space gives projective morphisms admitting a local VGIT description (see \cite{AFS17}*{Definition 2.4}).
\end{thm}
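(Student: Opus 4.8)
The plan is to mirror the Part \ref{p-moduli} construction of $\mathfrak{X}^{\rm Kss}_{n,V}$ and $X^{\rm Kps}_{n,V}$, now done \emph{uniformly in the coefficient $c$}, and then to extract the finitely many wall values from a constructibility argument. First I would recall that for each fixed rational $c\in (0,\min\{1,\tfrac1r\})$ the log analogue of Theorem \ref{c-moduli} and Theorem \ref{t-goodmoduli} gives an Artin stack $\overline{\cM}^{\rm sm, Kss}_{n,c,V,r}$ of finite type with a proper (here properness is available because we restrict to the smoothable locus, as in \cite{LWX19}) good moduli space $\overline{M}^{\rm sm, Kps}_{n,c,V,r}$; all of the ingredients (boundedness via Theorem \ref{t-boundedness}, openness via Theorem \ref{t-openness}, separatedness via Theorem \ref{t-main}, existence of the good moduli space via \cite{AHH18}) carry over to log pairs, using \cite{Kol19} for the correct notion of a family of pairs over a general base. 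The key point for the wall structure is that all these pairs, for all $c$ simultaneously, lie in one bounded family: fixing $r$ and $V$, there is a uniform $M$ so that $-MK_X$ is very ample and $D\in |-rK_X|$, hence a single parameter scheme $Z$ (a locally closed subscheme of a relative Hilbert scheme over $\mathrm{Hilb}(\mathbb{P}^N)$) carries \emph{all} the pairs $(X,D)$ that can occur, independently of $c$.

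Next I would invoke the key numerical input: by Theorem \ref{t-delta} (log version) and the valuative criterion Theorem \ref{t-valkstable}, K-semistability of $(X,cD)$ is governed by the function $c\mapsto \delta(X,cD)=\inf_E \frac{A_{X,cD}(E)}{S_{X}(E)}$, where $A_{X,cD}(E)=A_{X}(E)-c\,\ord_E(D)$ is \emph{affine-linear in $c$} for each fixed $E$, while $S_X(E)$ does not depend on $c$. Thus for each $E$ the function $c\mapsto \delta_{X,cD}(E)$ is affine-linear, and the condition ``$(X,cD)$ is K-semistable'' is ``$\delta(X,cD)\ge 1$'', an infimum of affine-linear functions of $c$, hence a concave piecewise-linear condition; the locus of $c$ for which it holds is a (closed) interval. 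The finitely-many-walls claim then follows by the same constructibility machinery used in the proof of Theorem \ref{t-constructible}: by Proposition \ref{p-deltaapproximation} and the boundedness of complements (Lemma \ref{l-complementbounded}, here in the log form of \cite{Bir19}) the infimum over all $E$ is computed by lc places of $N$-complements for a uniform $N$, and after stratifying the (finite-type) parameter space of such complements over $Z$ and applying the invariance-of-log-plurigenera argument (\cite{HMX13}) as in Paragraphs \ref{say-stratification}--\ref{say-locallyconstant}, the function $(b,c)\mapsto \delta(X_b,cD_b)$ is, on each of finitely many strata, the minimum of finitely many affine-linear functions of $c$ with coefficients constant along the stratum. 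Taking the common refinement of all the resulting breakpoints over the finitely many strata produces the finite set $0=c_0<c_1<\cdots<c_k=\min\{1,\tfrac1r\}$ such that on each open interval $(c_{i-1},c_i)$ the K-semistable locus inside $Z$ (hence the stack $\overline{\cM}^{\rm sm, Kss}_{n,c,V,r}$, which is the corresponding quotient by $\mathrm{PGL}$) is unchanged.

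For the wall-crossing maps themselves, at a wall $c_j$ one has obvious inclusions of semistable loci $\overline{\cM}^{\rm sm, Kss}_{n,c_j-\epsilon,V,r}\hookrightarrow \overline{\cM}^{\rm sm, Kss}_{n,c_j,V,r}\hookleftarrow \overline{\cM}^{\rm sm, Kss}_{n,c_j+\epsilon,V,r}$, since strengthening stability (moving $c$ to a wall value shrinks the inequality $A_{X,cD}(E)-S_X(E)\ge 0$ for the extremal $E$) can only shrink the semistable set; these are open immersions of Artin stacks. Passing to good moduli spaces, one must check the induced maps $\overline{M}^{\rm sm, Kps}_{n,c_j\pm\epsilon,V,r}\to \overline{M}^{\rm sm, Kps}_{n,c_j,V,r}$ are proper (from properness of source and separatedness of target) and then identify them, locally on the target, with a variation-of-GIT picture in the sense of \cite{AFS17}*{Def. 2.4}: near a point of $\overline{M}^{\rm sm, Kps}_{n,c_j,V,r}$ the stack is, by the local structure of good moduli spaces, a quotient $[W/\mathrm{Aut}]$ and the semistable loci for $c_j\pm\epsilon$ are cut out by a linearization that varies linearly in the perturbation parameter — exactly the VGIT setup — so the morphisms are projective with the expected flip/divisorial-contraction structure. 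The main obstacle I anticipate is \emph{not} the wall-finiteness (that is essentially the constructibility argument already in the excerpt) but rather making the last identification rigorous: one needs the log versions of separatedness (Theorem \ref{t-main}) and of the $S$-completeness/$\Theta$-reductivity criteria of \cite{AHH18} to hold with the divisor varying, which requires the delicate treatment of degenerations of the boundary divisor (the ``K-flat'' families of \cite{Kol19}) and a careful check that the canonical filtration of \cite{AHH18} interacts well with the $c$-dependence; this is where the real work lies, and where one must be most careful that nothing degenerates pathologically as $c$ crosses a wall.
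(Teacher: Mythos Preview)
This theorem is only cited in the survey, not proved; the paper defers to \cite{ADL19}*{Thm.~3.2}, so there is no in-paper argument to compare against. Your outline is broadly the strategy of \cite{ADL19}: uniform boundedness of the pairs $(X,D)$ independent of $c$, constructibility of the stability condition via bounded complements and stratification (as in Paragraphs \ref{say-stratification}--\ref{say-locallyconstant}), and the local VGIT description via Luna-type \'etale slices for good moduli spaces \cite{AHH18, AFS17}.

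Two points need fixing. First, $S_{X,cD}(E)$ \emph{does} depend on $c$: since $D\sim_{\bQ}-rK_X$ one has $-K_X-cD\sim_{\bQ}-(1-cr)K_X$, and a rescaling shows $S_{X,cD}(E)=(1-cr)\,S_X(E)$. Fortunately the condition $\beta_{X,cD}(E)=A_X(E)-c\cdot\ord_E(D)-(1-cr)S_X(E)\ge 0$ remains affine-linear in $c$ for each fixed $E$, so your closed-interval argument survives unchanged once you correct the formula. Second, and more seriously, your justification for the open immersions at a wall is backwards. You write that moving $c$ to the wall ``can only shrink the semistable set'', but that would produce inclusions \emph{out of} the wall stack, contradicting the displayed maps. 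The correct reason the immersions point \emph{into} $\overline{\cM}^{\rm sm,Kss}_{n,c_j,V,r}$ is the one you already set up but did not use: for each fixed $(X,D)$ the set $\{c:\beta_{X,cD}(E)\ge 0\text{ for all }E\}$ is a \emph{closed} interval, and by the chamber structure any pair K-semistable at $c_j-\epsilon$ is K-semistable on all of $(c_{j-1},c_j)$, hence also at the endpoint $c_j$ by closedness. Thus the $(c_j\pm\epsilon)$-semistable locus sits inside the $c_j$-semistable locus, which is what the theorem asserts.
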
  
One natural case to consider is the compactification of the family containing $(\mathbb P^n, cD)$ where $D$ is a degree $d$ hypersurface. It can be easily shown that for a general $D\in |\mathcal{O}(d)|$, and $c<\min\{1,\frac{n+1}{d}\}$, $(\mathbb{P}^n,cD)$ is K-semistable.  Then it follows for the different choice of $c$, we get  compactifications which are birational to each other.  Moreover, when $0<c\ll 1$, the compactification is the same as the GIT moduli of degree $d$ hypersurfaces (see \cite[Theorem 1.4]{ADL19}). 
When one increases from $c\ll 1$ to find the walls, in general, such a computation will be difficult. In \cite{ADL19}, the authors worked out all walls for the case $n=2$ and $d=4,5,6$, using the strategy in Section \ref{ss-compactmoduli} but in the log setting. 

Using a similar framework,  in \cite{ADL20} the authors worked out the case for $(4,4)$ curves in $\mathbb{P}^1\times \mathbb{P}^1$ and showed the moduli spaces are identical to the ones constructed in \cite{LO18} constructed from a variation of GIT process. 

\begin{rem}[Fano threefolds]\label{e-dim3} 
The K\"ahler-Einstein problem for smooth del Pezzo surfaces has been completely answered by Tian \cite{Tia90}. So it is natural to take a look at all smooth Fano threefolds which are classified by Iskovskih when $\rho(X)=1$ (see \cite{Isk78}) and Mori-Mukai when $\rho(X)\ge 2$ (see \cite{MM81}).

 When $\rho(X)=1$, there are nineteen families. Among them,  the one with genus 12 (i.e. $(-K_X)^3=22$) which contains the famous Mukai-Umemura manifold has attract lots of interests (see \cite{CS18} for recent progress).




 Let 
$$r(X):=\{r | -K_X\sim -rH, \mbox{ for $H$ generator of ${\rm Pic}(X)$} \}$$ be the Fano index. 
There are seven families with $r(X)\ge 2$ and they are all known to be K-polystable or K-stable. 

It is tempting to speculate
\begin{conj}\label{c-threefold}
All smooth Fano threefolds with $\rho(X)=1$ are K-semistable. 
\end{conj}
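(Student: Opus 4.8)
The plan is to proceed through Iskovskikh's classification of the nineteen deformation families of smooth Fano threefolds with $\rho(X)=1$, and to verify $\delta(X)\ge 1$ for \emph{every} smooth member of each family, which by Theorem \ref{t-delta} gives K-semistability. The seven families of Fano index $r(X)\ge 2$ are dispatched first: $\mathbb{P}^3$, the quadric $Q^3$ and the quintic del Pezzo threefold $V_5$ (with $\mathrm{Aut}(V_5)\cong \mathrm{PGL}_2$) are rigid with a dense orbit of a reductive group, so their K-polystability — hence K-semistability — is classical; the cubic threefold and $V_4=Q_1\cap Q_2$ are already known to be K-stable by the explicit K-moduli results of Section \ref{s-moduli}; and the degree $1$ and $2$ del Pezzo threefolds are handled by $\alpha$-invariant estimates, using Example \ref{e-alpha} (Tian's criterion) where $\alpha(X)\ge \tfrac34$ and Theorem \ref{t-alphaequal} at the borderline $\alpha(X)=\tfrac34$. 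Among the index-one families, the ones of small anticanonical volume — the double sextic solid, the quartic threefold, and the complete intersections of type $(2,3)$ and $(2,2,2)$ — are birationally superrigid, so K-semistability follows from the criterion proved via the concavity-of-restricted-volume argument in Section \ref{s-sing}, since in all these cases $\alpha(X)\ge\tfrac12$.

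The substantive part is the remaining index-one families of intermediate anticanonical volume, i.e.\ genus $6,7,8,9,10$ and the genus-twelve family $V_{22}$. For these I would run the Abban--Zhuang induction. Given a putative destabilizing center $Z\subseteq X$, pick a flag $Y_1\supset Y_2\supset \cdots \supset Y_{n-1}$ of generic hyperplane or anticanonical sections adapted to the Mukai model of $X$ (so that the restricted graded linear systems $W^i_\bullet$ on the flag turn out to be \emph{almost complete}, and the successive $c_1(W^i_\bullet)$ are computable intersection classes), and iterate Proposition \ref{p-kadjunction} to get
\[
\delta_Z(X)\ \ge\ \min\Big\{\ \min_{1\le i\le n-1}\tfrac{1}{S_{Y_i}(Y_{i-1})},\ \ \delta_{Z\cap Y_{n-1}}(W^{n-1}_\bullet)\ \Big\},
\]
reducing the required bound $\delta_Z(X)\ge 1$ (for every positive-dimensional $Z$) to an explicit computation on a curve or surface. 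The surviving case of a zero-dimensional center is handled exactly as in the index-two hypersurface argument: $\beta(E_x)>0$ for the ordinary blow-up of a smooth point, combined with Lemma \ref{l-multiplier}; and when the destabilizing valuation is centered on a line or conic contained in $X$, one chooses the tail of the flag to contain that curve and makes the analogous curve-level computation. Uniform lower bounds on the $S_{Y_i}(Y_{i-1})$ and on the curve-level $\delta$'s across the whole family then yield $\delta(X)\ge 1$.

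The hard part will be $V_{22}$, and to a lesser extent the genus-$10$ family: within $V_{22}$ the projective geometry genuinely varies — some smooth members carry a one-dimensional, even unipotent, automorphism group, so K-polystability fails there, and the values of $c_1(W^i_\bullet)$ depend on the configuration of lines and conics through the chosen points, so the computation must be carried out robustly for every smooth member rather than the generic one. A complementary route for the high-volume end is the degeneration method of Section \ref{ss-compactmoduli}: start from a K-semistable member (e.g.\ the most symmetric one), propagate K-semistability in the family, and constrain the K-semistable limits via the local-to-global volume inequality of Theorem \ref{t-localtoglobal}. But for the volumes occurring here the inequality does not force the limit to be a mild degeneration of the same family — this is precisely the gap flagged in Remark \ref{e-dim3} — so moduli theory alone does not close the argument and one must return to the $\delta$-estimates above. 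Bridging these two halves, or equivalently proving a strong enough classification of three-dimensional klt singularities of normalized volume at least $(-K_X)^3\cdot(3/4)^3$, is the real obstacle.
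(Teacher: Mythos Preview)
The statement you are attempting to prove is labeled a \emph{Conjecture} in the paper, introduced with ``It is tempting to speculate''; the paper offers no proof and treats it as an open problem. There is therefore nothing to compare your proposal against: the paper's ``proof'' does not exist.

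Your write-up is in fact not a proof either, and you say so yourself. The outline for the seven index $\ge 2$ families and the low-genus index-one families (sextic double solid, quartic, $(2,3)$, $(2,2,2)$) is a fair summary of what is already established in the literature and in the paper. But for the intermediate-volume families of genus $6$--$10$ and especially $V_{22}$, your plan reduces to ``run Abban--Zhuang adjunction and hope the numbers work out,'' with no actual computation carried through; and you explicitly concede at the end that neither the $\delta$-estimate route nor the moduli/degeneration route closes the argument, pointing to the gap in Remark~\ref{e-dim3}. That concession is accurate: this is precisely why the paper states the assertion as a conjecture rather than a theorem. A genuine proof would require, for each remaining family and each smooth member, either an explicit verification that the flag invariants satisfy the required inequalities (which is delicate and case-dependent, particularly for $V_{22}$ where the automorphism group can be non-reductive) or new structural input not available in the paper.
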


Using the classification of smooth Fano threefolds, it is indeed an active research topic to completely determine all their K-(semi,poly)stability.  For $\rho(X)\ge 2$, in \cite[Section 10]{Fuj16}, $\beta$-invariants for divisors on $X$ itself are computed, and many cases are shown to be K-unstable. 

\begin{ques}

For Fano threefolds with $\rho(X)=1$ and $r(X)=1$,  it will be very interesting to connect various compactifications of moduli of K3 surfaces of genus $g$ ($1\le g \le 10$ and $g=12$) and the K-moduli of the corresponding Fano threefolds with the volume $2g-2$, i.e. to extend the well studied correspondence to the boundary of the moduli spaces. 

\end{ques}
 
The higher dimensional analogue of Conjecture \ref{c-threefold}, as originally proposed in \cite[Conjecture 5.1]{OO13} which predicted that smooth Picard number 1 Fano  manifolds in any dimension  are K-semistable, was \emph{disproved} in \cite{Fuj17a}, where it is shown that a del Pezzo manifold $X$ of degree five with dimension four or five are K-unstable.

\end{rem}

\section*{Notes on history}

For a long time, Tian's $\alpha$-invariant criterion, established in \cite{Tia87} (see also \cite{OS12}), and its equivariant version was the main tool that people used to verify the existence of a KE metric on a given Fano manifold. There is a long list of literature, in which people estimate the lower bound of the $\alpha$-invariant for various Fano manifolds. See e.g. \cite{Che01, Che08, CS09, CS18} etc.

However, the $\alpha$-invariant criterion is only a sufficient condition, and there are many Fano manifolds which can not be treated using it.  The Fujita-Li's criterion (see \cite{Fuj19b, Li17} or Theorem \ref{t-valkstable}) provides a necessary and sufficient condition. Using it Fujita shows that a Fano manifold $X$ with $\alpha=\frac{\dim(X)}{\dim(X)+1}$ is always K-stable if $\dim(X)\ge 2$ (see \cite{Fuj19}). As far as I know, this is the first result that one can show certain Fano manifolds are K-stable, before differential geometers find KE metric on them. Then the proof of the K-stability of a birational superrigid Fano manifold $X$ if $\alpha(X)\ge\frac{1}{2}$ (see \cite{Zhu20b, SZ19}) gives another example that the new criterion can be adapted to verify cases that people have wondered for a while. 

The basis type divisor introduced in \cite{FO18} gives an explicit way to check the Fujita-Li's criterion. Quickly after it was invented, it has been used to estimate the $\delta$-invariant of a number of families of del Pezzo surfaces, hence reverify their K-stability algebraically (see \cites{PW18, CZ19}) etc.. Then in the remarkable work \cite{AZ20},  a powerful method, using inversion of adjunction for the basis type divisor,  was established, and it is used to prove the K-stability of all smooth degree $n$-Fano hypersurfaces in $\mathbb{P}^{n+1}$ ($n\ge 3$).

\smallskip

The moduli method was already implicitly contained in \cite{Tia90}, when he tried to establish the KE metric on a smooth del Pezzo surface (of degree at most 4) by continuously extending the KE metrics on a sequence of nearby del Pezzo surfaces. It was first explicitly applied in \cite{MM93} to find all K-(semi,poly)stable degree 4 smoothable del Pezzo surfaces. Then in \cite{OSS16}, the case of K-(semi,poly)stable smoothable del Pezzo surfaces was completely solved. In \cite{ADL19, ADL20} (see also \cite{GMS18}), the log surface cases were studied. In particular, the cases of $\mathbb{P}^2$ and $\mathbb P^1\times \mathbb P^1$ together with low degree curves were systematically studied. As a result, the wall crossing phenomena arose when one varies the coefficients, and put a natural framework to connect various moduli spaces, many of which have already appeared in the literatures by construction from different theories.  

In higher dimensions, there are few examples which one can identify the entire compact moduli space, including the singular ones. The known examples include the intersection of two quadratics (see \cite{SS17}), cubic threefolds/fourfolds (see \cite{LX19, Liu20}), and $(\mathbb{P}^n, cD)$ when $c\ll 1$ (\cite{ADL19}). 

\clearpage

\begin{bibdiv}
\begin{biblist}

   \bib{ABHX19}{article}{
   AUTHOR={Alper, J.} 
   AUTHOR={Blum, H.}
    AUTHOR = {Halpern-Leistner, D.} 
    AUTHOR={ Xu, C.},
     TITLE = {Reductivity of the automorphism group of K-polystable Fano varieties},
    journal={Invent. Math.},
   volume={222},
   date={2020},
   number={3},
   pages={995--1032},
 }
 
 \bib{ADL19}{article}{
    AUTHOR = {Ascher, K.} 
    AUTHOR={DeVleming, K.}
    AUTHOR= {Liu, Y.},
     TITLE = {Wall crossing for K-moduli spaces of plane curves},
   JOURNAL = {arXiv:1909.04576},
      YEAR = {2019},
  }
 
 \bib{ADL20}{article}{
    AUTHOR = {Ascher, K.} 
    AUTHOR={DeVleming, K.}
    AUTHOR= {Liu, Y.},
     TITLE = {K-moduli of curves on a quadric surfaces and K3 surfaces},
   JOURNAL = {arXiv:2006.06816},
      YEAR = {2020},
  }

 \bib{AFS17}{article}{
    AUTHOR = {Alper, J.} 
    AUTHOR={Fedorchuk, M.}
    AUTHOR= {Smyth, D.},
     TITLE = {Second flip in the {H}assett-{K}eel program: existence of good
              moduli spaces},
   JOURNAL = {Compos. Math.},
  FJOURNAL = {Compositio Mathematica},
    VOLUME = {153},
      YEAR = {2017},
    NUMBER = {8},
     PAGES = {1584--1609},
  }

\bib{AGP06}{article}{
   author={Arezzo, C.},
   author={Ghigi, A.},
   author={Pirola, G.},
   title={Symmetries, quotients and K\"{a}hler-Einstein metrics},
   journal={J. Reine Angew. Math.},
   volume={591},
   date={2006},
   pages={177--200},
}

\bib{AH11}{incollection}{ 
  author={Abramovich, D.},
    AUTHOR={Hassett, B.},
     TITLE = {Stable varieties with a twist},
 BOOKTITLE = {Classification of algebraic varieties},
    SERIES = {EMS Ser. Congr. Rep.},
     PAGES = {1--38},
 PUBLISHER = {Eur. Math. Soc., Z\"urich},
      YEAR = {2011},
  }

   \bib{AHH18}{article}{
   AUTHOR={Alper, J.} 
    AUTHOR = {Halpern-Leistner, D.} 
    AUTHOR={ Heinloth, J.},
     TITLE = {Existence of moduli spaces for algebraic stacks},
   JOURNAL = {arXiv:1812.01128},
      YEAR = {2018},
 }

 \bib{Alp13}{article}{
    AUTHOR = {Alper, J.},
     TITLE = {Good moduli spaces for {A}rtin stacks},
   JOURNAL = {Ann. Inst. Fourier (Grenoble)},
  FJOURNAL = {Universit\'e de Grenoble. Annales de l'Institut Fourier},
    VOLUME = {63},
      YEAR = {2013},
    NUMBER = {6},
     PAGES = {2349--2402},
  }
  
  \bib{AZ20}{article}{
   AUTHOR={Ahmadinezhad, H.} 
    AUTHOR = {Zhuang, Z.} 
     TITLE = {K-stability of Fano varieties via admissible flags },
   JOURNAL = {arXiv:2003.13788},
      YEAR = {2020},
}

	
  \bib{BBEGZ19}{article}{
   AUTHOR={Berman, R.} 
    AUTHOR = {Boucksom, S.} 
    AUTHOR={ Eyssidieux, P.},
      AUTHOR={Guedj, V.},
         AUTHOR={Zeriahi, A.},
   title={K\"{a}hler-Einstein metrics and the K\"{a}hler-Ricci flow on log Fano
   varieties},
   journal={J. Reine Angew. Math.},
   volume={751},
   date={2019},
   pages={27--89},
 }

  \bib{BBJ15}{article}{
   AUTHOR={Berman, R.} 
    AUTHOR = {Boucksom, S.} 
    AUTHOR={ Jonsson, M.},
     TITLE = {A variational approach to the Yau-Tian-Donaldson conjecture},
   JOURNAL = {to appear in J. Amer. Math. Soc., arXiv:1509.04561},
      YEAR = {2018},
 }

\bib{BC11}{article}{
   author={Boucksom, S.},
   author={Chen, H.},
   title={Okounkov bodies of filtered linear series},
   journal={Compos. Math.},
   volume={147},
   date={2011},
   number={4},
   pages={1205--1229},
 }

\bib{BCHM10}{article}{
   author={Birkar, C.},
  author={Cascini, P.},
  author={Hacon, C.},
  author={McKernan, J.},
 title={Existence of minimal models for varieties of log general type},
  journal={J. Amer. Math. Soc.},
  volume={23},
   date={2010},
   number={2},
   pages={405--468},
}


\bib{BdFFU15}{incollection}{
    AUTHOR = {Boucksom, S.}
    AUTHOR={de Fernex, T. }
    AUTHOR={ Favre, C.} 
    AUTHOR={Urbinati, S.},
     TITLE = {Valuation spaces and multiplier ideals on singular varieties},
 BOOKTITLE = {Recent advances in algebraic geometry},
    SERIES = {London Math. Soc. Lecture Note Ser.},
    VOLUME = {417},
     PAGES = {29--51},
 PUBLISHER = {Cambridge Univ. Press, Cambridge},
      YEAR = {2015},
 }

\bib{BDPP13}{article}{
   author={Boucksom, S.},
   author={Demailly, J.},
   author={P\u{a}un, M.},
   author={Peternell, T.},
   title={The pseudo-effective cone of a compact K\"{a}hler manifold and
   varieties of negative Kodaira dimension},
   journal={J. Algebraic Geom.},
   volume={22},
   date={2013},
   number={2},
   pages={201--248},
}

\bib{Ber16}{article}{
   author={Berman, R.},
   title={K-polystability of ${\Bbb Q}$-Fano varieties admitting
   K\"{a}hler-Einstein metrics},
   journal={Invent. Math.},
   volume={203},
   date={2016},
   number={3},
 }


\bib{BHLX20}{article}{
    AUTHOR = {Blum, H.} 
    AUTHOR={Halpern-Leistner, D.} 
    AUTHOR={Liu, Y.}
    AUTHOR = {Xu, C.} ,
     TITLE = {On properness of K-moduli spaces and optimal degenerations of Fano varieties},
     Journal={arXiv:2011.01895}
        YEAR = {2020},
   }

\bib{BHJ17}{article}{
    AUTHOR = {Boucksom, S.} 
    AUTHOR={Hisamoto, T.} 
    AUTHOR={ Jonsson, M.},
     TITLE = {Uniform {K}-stability, {D}uistermaat-{H}eckman measures and
              singularities of pairs},
   JOURNAL = {Ann. Inst. Fourier (Grenoble)},
  FJOURNAL = {Universit\'e de Grenoble. Annales de l'Institut Fourier},
    VOLUME = {67},
      YEAR = {2017},
    NUMBER = {2},
     PAGES = {743--841},
 }

\bib{Bir16}{article}{
   author={Birkar, C.},
     TITLE = {Singularities of linear systems and boundedness of Fano varieties},
   JOURNAL = {arXiv:1609.05543 },
      YEAR = {2016},
   }    

    \bib{Bir19}{article}{
   author={Birkar, C.},
     TITLE = {Anti-pluricanonical systems on Fano varieties},
  journal={Ann. of Math. (2)},
   volume={190},
   date={2019},
   number={2},
   pages={345--463},
    } 
  \bib{BoJ18}{article}{
    AUTHOR = {Boucksom, S.} 
    AUTHOR={ Jonsson, M.},
    TITLE={ A non-Archimedean approach to K-stability},
     YEAR={2018},
     JOURNAL= {arXiv:1805.11160},
     SHORTHAND = {BoJ18},
 }

\bib{BJ20}{article}{
   author={Blum, H.},
   AUTHOR={Jonsson, M.} 
     TITLE = {Thresholds, valuations, and K-stability},
   journal={Adv. Math.}
    volume={365},
   date={2020}, 
     PAGES = {107762, 57pp}, 
   }


\bib{BL18}{article}{
   author={Blum, H.},
   AUTHOR={Liu, Y.} 
   TITLE = {Openness of uniform K-stability in families of $\mathbb{Q}$-Fano varieties},
   JOURNAL = {to appear in  Ann. Sci. \'Ecole Norm. Sup., arXiv:1808.09070},
      YEAR = {2018},
   }

\bib{Blu18}{article}{
    AUTHOR = {Blum, H.},
     TITLE = {Existence of valuations with smallest normalized volume},
   JOURNAL = {Compos. Math.},
  FJOURNAL = {Compositio Mathematica},
    VOLUME = {154},
      YEAR = {2018},
    NUMBER = {4},
     PAGES = {820--849},
  }
  
  \bib{BLX19}{article}{
   author={Blum, H.},
   AUTHOR={Liu, Y.} 
        AUTHOR = {Xu, C.},
   TITLE = {Openness of K-semistability for Fano varieties},
   JOURNAL = {arXiv:1907.02408},
      YEAR = {2019},
   }

  \bib{BLZ19}{article}{
   author={Blum, H.},
   AUTHOR={Liu, Y.} 
        AUTHOR = {Zhou, C.},
   TITLE = {Optimal destabilization of K-unstable Fano varieties via stability thresholds},
   JOURNAL = {arXiv:1907.05399},
      YEAR = {2019},
   }	
  
  \bib{BM87}{article}{
   author={Bando, S.},
   author={Mabuchi, T.},
   title={Uniqueness of Einstein K\"{a}hler metrics modulo connected group
   actions},
   conference={
      title={Algebraic geometry, Sendai, 1985},
   },
   book={
      series={Adv. Stud. Pure Math.},
      volume={10},
      publisher={North-Holland, Amsterdam},
   },
   date={1987},
   pages={11--40},
 }

  \bib{BX19}{article}{
    AUTHOR = {Blum, H.},
     AUTHOR = {Xu, C.},
     TITLE = {Uniqueness of K-polystable degenerations of Fano varieties},
   journal={Ann. of Math. (2)},
   volume={190},
   date={2019},
   number={2},
   pages={609--656},
 }

 \bib{Che01}{article}{
    AUTHOR = {Cheltsov, I.},
   title={Log canonical thresholds on hypersurfaces},
   language={Russian, with Russian summary},
   journal={Mat. Sb.},
   volume={192},
   date={2001},
   number={8},
   pages={155--172},
   issn={0368-8666},
   translation={
      journal={Sb. Math.},
      volume={192},
      date={2001},
      number={7-8},
      pages={1241--1257},
      issn={1064-5616},
   },
 }

\bib{Che08}{article}{
   author={Cheltsov, I.},
   title={Log canonical thresholds of del Pezzo surfaces},
   journal={Geom. Funct. Anal.},
   volume={18},
   date={2008},
   number={4},
   pages={1118--1144},
}



\bib{CDS}{article}{
    AUTHOR = {Chen, X.},
    AUTHOR={Donaldson, S.} 
    AUTHOR={Sun, S.},
     TITLE = {K\"ahler-{E}instein metrics on {F}ano manifolds. {I}:
              {A}pproximation of metrics with cone singularities, II: Limits with cone angle less than $2\pi$, III: Limits as cone angle approaches $2\pi$ and completion of the main proof.}
   JOURNAL = {J. Amer. Math. Soc.},
  FJOURNAL = {Journal of the American Mathematical Society},
    VOLUME = {28},
      YEAR = {2015},
    NUMBER = {1},
     PAGES = {183--197, 199--234, 235--278},
 }

\bib{CP18}{article}{
   author={Codogni, G.},
   AUTHOR={Patakfalvi, Z.} 
   TITLE = {Positivity of the CM line bundle for families of K-stable klt Fanos},
   JOURNAL = {to appear in Invent. Math., arXiv:1806.07180},
      YEAR = {2018},
   }	
\bib{CRZ18}{article}{
 author={Cheltsov, I.},
   AUTHOR={Rubinstein, Y. }
   AUTHOR={Zhang, K.}
   title={Basis log canonical thresholds, local intersection estimates, and asymptotically log del Pezzo surfaces} 
 journal={Selecta Math. (N.S.)},
   volume={25},
   date={2019},
   number={2},
   pages={Paper No. 34, 36},
}

\bib{CS09}{article}{
   author={Cheltsov, I.},
   AUTHOR={Shramov, C.} 
  title={Extremal metrics on del Pezzo threefolds},
    language={Russian, with Russian summary},
   journal={Tr. Mat. Inst. Steklova},
   volume={264},
   date={2009},
   number={Mnogomernaya Algebraicheskaya Geometriya},
   pages={37--51},
   issn={0371-9685},
   translation={
      journal={Proc. Steklov Inst. Math.},
      volume={264},
      date={2009},
      number={1},
      pages={30--44},
      issn={0081-5438},
}
   }

\bib{CS18}{article}{
   author={Cheltsov, I.},
   AUTHOR={Shramov, C.} 
   TITLE = {K\"ahler-Einstein Fano threefolds of degree 22},
   JOURNAL = {arXiv:1803.02774},
      YEAR = {2018},
   }	 
   
 \bib{CS15}{article}{
  author={Collins, T.},
   AUTHOR={Sz\'ekelyhidi, G.} 
   title={Sasaki-Einstein metrics and K-stability},
   journal={Geom. Topol.},
   volume={23},
   date={2019},
   number={3},
   pages={1339--1413},
}  
  
 \bib{CSW18}{article}{
   author={Chen, X.},
   author={Sun, S.},
   author={Wang, B.},
   title={K\"{a}hler-Ricci flow, K\"{a}hler-Einstein metric, and K-stability},
   journal={Geom. Topol.},
   volume={22},
   date={2018},
   number={6},
   pages={3145--3173},
 }  
 
\bib{CZ19}{article}{
   author={Cheltsov, I.},
   author={Zhang, K.},
   title={Delta invariants of smooth cubic surfaces},
   journal={Eur. J. Math.},
   volume={5},
   date={2019},
   number={3},
   pages={729--762},
 } 

\bib{deF16}{article}{
   author={de Fernex, T.},
   title={Erratum to: Birationally rigid hypersurfaces},
   journal={Invent. Math.},
   volume={203},
   date={2016},
   number={2},
   pages={675--680},
}


\bib{Del20}{article}{
   author={Delcroix, T.},
   title={K-stability of Fano spherical varieties},
   journal={Ann. Sci. \'{E}c. Norm. Sup\'{e}r. (4)},
   volume={53},
   date={2020},
   number={3},
   pages={615--662},
 }


\bib{Der16}{article}{
    AUTHOR = {Dervan, R.},
     TITLE = {Uniform stability of twisted constant scalar curvature
              {K}\"{a}hler metrics},
   JOURNAL = {Int. Math. Res. Not. },
  FJOURNAL = {International Mathematics Research Notices. IMRN},
      YEAR = {2016},
    NUMBER = {15},
     PAGES = {4728--4783},
}

\bib{Din88}{article}{
   author={Ding, W.},
   title={Remarks on the existence problem of positive K\"{a}hler-Einstein
   metrics},
   journal={Math. Ann.},
   volume={282},
   date={1988},
   number={3},
   pages={463--471},
 }

  \bib{Don01}{article}{    
 AUTHOR = {Donaldson, S.},
     TITLE = {Scalar curvature and projective embeddings. {I}},
   JOURNAL = {J. Differential Geom.},
  FJOURNAL = {Journal of Differential Geometry},
    VOLUME = {59},
      YEAR = {2001},
    NUMBER = {3},
     PAGES = {479--522},
}
		
  \bib{Don02}{article}{
    AUTHOR = {Donaldson, S.},
     TITLE = {Scalar curvature and stability of toric varieties},
   JOURNAL = {J. Differential Geom.},
  FJOURNAL = {Journal of Differential Geometry},
    VOLUME = {62},
      YEAR = {2002},
    NUMBER = {2},
     PAGES = {289--349},
    }
    
    \bib{Don12}{article}{
   author={Donaldson, S.},
   title={K\"{a}hler metrics with cone singularities along a divisor},
   conference={
      title={Essays in mathematics and its applications},
   },
   book={
      publisher={Springer, Heidelberg},
   },
   date={2012},
   pages={49--79},
}

\bib{Don15}{article}{
   author={Donaldson, S.},
   title={Algebraic families of constant scalar curvature K\"{a}hler metrics},
   conference={
      title={Surveys in differential geometry 2014. Regularity and evolution
      of nonlinear equations},
   },
   book={
      series={Surv. Differ. Geom.},
      volume={19},
      publisher={Int. Press, Somerville, MA},
   },
   date={2015},
   pages={111--137},
 }
	
\bib{DS14}{article}{
   author={Donaldson, S.},
   author={Sun, S.},
   title={Gromov-Hausdorff limits of K\"{a}hler manifolds and algebraic
   geometry},
   journal={Acta Math.},
   volume={213},
   date={2014},
   number={1},
   pages={63--106},
 }

\bib{DS17}{article}{
   author={Donaldson, S.},
   author={Sun, S.},
   title={Gromov-Hausdorff limits of K\"{a}hler manifolds and algebraic
   geometry, II},
   journal={J. Differential Geom.},
   volume={107},
   date={2017},
   number={2},
   pages={327--371},
}

\bib{ELMNP09}{article}{
   author={Ein, L.},
   author={Lazarsfeld, R.},
   author={Musta\c{t}\u{a}, M.},
   author={Nakamaye, M.},
   author={Popa, M.},
   title={Restricted volumes and base loci of linear series},
   journal={Amer. J. Math.},
   volume={131},
   date={2009},
   number={3},
   pages={607--651},
 }

 \bib{FS90}{article}{
   author={Fujiki, A.},
   author={Schumacher, G.},
   title={The moduli space of extremal compact K\"{a}hler manifolds and
   generalized Weil-Petersson metrics},
   journal={Publ. Res. Inst. Math. Sci.},
   volume={26},
   date={1990},
   number={1},
   pages={101--183},
 }

\bib{Fujino18}{article}{
   author={Fujino, O.},
   title={Semipositivity theorems for moduli problems},
   journal={Ann. of Math. (2)},
   volume={187},
   date={2018},
   number={3},
   pages={639--665},
  }

    \bib{Fuj16}{article}{
    AUTHOR = {Fujita, K.},
     TITLE = {On {$K$}-stability and the volume functions of
              {$\Bbb{Q}$}-{F}ano varieties},
   JOURNAL = {Proc. Lond. Math. Soc. (3)},
  FJOURNAL = {Proceedings of the London Mathematical Society. Third Series},
    VOLUME = {113},
      YEAR = {2016},
    NUMBER = {5},
     PAGES = {541--582},
   }

\bib{Fuj17a}{article}{
   author={Fujita, K.},
   title={Examples of K-unstable Fano manifolds with the Picard number 1},
   journal={Proc. Edinb. Math. Soc. (2)},
   volume={60},
   date={2017},
   number={4},
   pages={881--891},
 }

    \bib{Fuj18}{article}{
  author={Fujita, K.},
   title={Optimal bounds for the volumes of K\"ahler-Einstein Fano manifolds},
   journal={ Amer. J. Math.},
     VOLUME = {140},
      YEAR = {2018},
    NUMBER = {2},
     PAGES = {391-414},
}

    \bib{Fuj19}{article}{
   author={Fujita, K.},
   title={K-stability of Fano manifolds with not small alpha invariants},
   journal={J. Inst. Math. Jussieu},
   volume={18},
   date={2019},
   number={3},
   pages={519--530},
     }

    \bib{Fuj19b}{article}{
  author={Fujita, K.},
  title={A valuative criterion for uniform K-stability of $\Bbb Q$-Fano
   varieties},
   journal={J. Reine Angew. Math.},
   volume={751},
   date={2019},
   pages={309--338},
 }

    \bib{Fuj19c}{article}{
   AUTHOR={Fujita, K.},
     TITLE = {Uniform K-stability and plt blowups of log Fano pairs},
   title={Uniform K-stability and plt blowups of log Fano pairs},
   journal={Kyoto J. Math.},
   volume={59},
   date={2019},
   number={2},
   pages={399--418},
  }

\bib{FO18}{article}{
    AUTHOR = {Fujita, K.} 
   AUTHOR={Odaka, Y.} 
     TITLE = {On the K-stability of Fano varieties and anticanonical divisors},
   JOURNAL = {Tohoku Math. J.},
    VOLUME = {70},
    NUMBER = {4},
     PAGES = {511-521},
      YEAR = {2018},
   }

\bib{Fut83}{article}{
   author={Futaki, A.},
   title={An obstruction to the existence of Einstein K\"{a}hler metrics},
   journal={Invent. Math.},
   volume={73},
   date={1983},
   number={3},
   pages={437--443},
}  
   

   \bib{GMS18}{article}{
   AUTHOR={Gallardo, P.},
    AUTHOR={Martinez-Garcia, J.},
     AUTHOR={Spotti, C.},
     TITLE = {Applications of the moduli continuity method to log K-stable pairs},
   JOURNAL = {to appear in J. Lond. Math. Soc, arXiv:1811.00088},
     YEAR = {2018},
   }

\bib{Has03}{article}{
   author={Hassett, B.},
   title={Moduli spaces of weighted pointed stable curves},
   journal={Adv. Math.},
   volume={173},
   date={2003},
   number={2},
   pages={316--352},
 }

   \bib{His16}{article}{
   AUTHOR={Hisamoto, T.},
     TITLE = {Stability and coercivity for toric polarizations},
   JOURNAL = {arXiv:1610.07998 },
     YEAR = {2016},
   }

\bib{HL10}{book}{
   author={Huybrechts, D.},
   author={Lehn, M.},
   title={The geometry of moduli spaces of sheaves},
   series={Cambridge Mathematical Library},
   edition={2},
   publisher={Cambridge University Press, Cambridge},
   date={2010},
   pages={xviii+325},
}

\bib{HL14}{article}{
    AUTHOR = {Halpern-Leistner, D.},
    TITLE = {On the structure of instability in moduli theory},
    JOURNAL = {\texttt{arXiv:1411.0627v4}},
    YEAR = {2014}
}

\bib{HX13}{article}{
  AUTHOR = {Hacon, C.}
     AUTHOR={Xu, C.},
   title={Existence of log canonical closures},
   journal={Invent. Math.},
   volume={192},
   date={2013},
   number={1},
   pages={161--195},
  }

\bib{HMX13}{article}{
     AUTHOR = {Hacon, C.}
    AUTHOR={McKernan, J.}
     AUTHOR={Xu, C.},
   title={On the birational automorphisms of varieties of general type},
   journal={Ann. of Math. (2)},
   volume={177},
   date={2013},
   number={3},
   pages={1077--1111},
   }

\bib{HMX14}{article}{
    AUTHOR = {Hacon, C.}
    AUTHOR={McKernan, J.}
     AUTHOR={Xu, C.},
     TITLE = {A{CC} for log canonical thresholds},
   JOURNAL = {Ann. of Math. (2)},
    VOLUME = {180},
      YEAR = {2014},
    NUMBER = {2},
     PAGES = {523--571},
 } 
 
\bib{HMX18}{article}{
   AUTHOR = {Hacon, C.}
    AUTHOR={McKernan, J.}
     AUTHOR={Xu, C.},
   title={Boundedness of moduli of varieties of general type},
   journal={J. Eur. Math. Soc. (JEMS)},
   volume={20},
   date={2018},
   number={4},
   pages={865--901},
 }

\bib{Isk78}{article}{
   author={Iskovskih, V.},
   title={Fano threefolds. II},
   language={Russian},
   journal={Izv. Akad. Nauk SSSR Ser. Mat.},
   volume={42},
   date={1978},
   number={3},
   pages={506--549},
 }

\bib{Jia17}{article}{
    AUTHOR = {Jiang, C.} 
     TITLE = {Boundedness of $\Q$-Fano varieties with degrees and alpha-invariants bounded from below},
   journal={ Ann. Sci. \'Ecole Norm. Sup. },
     VOLUME = {53},
      YEAR = {2020},
    NUMBER = {5},
     PAGES = {1235-1248},
}

\bib{JM12}{article}{
    AUTHOR = {Jonsson, M.}
    AUTHOR={Musta\c t\u a, M.},
     TITLE = {Valuations and asymptotic invariants for sequences of ideals},
   JOURNAL = {Ann. Inst. Fourier (Grenoble)},
  FJOURNAL = {Universit\'e de Grenoble. Annales de l'Institut Fourier},
    VOLUME = {62},
      YEAR = {2012},
    NUMBER = {6},
     PAGES = {2145--2209 (2013)},
   }
   
 \bib{Kaw00}{article}{
   author={Kawamata, Y.},
   title={On effective non-vanishing and base-point-freeness},
   note={Kodaira's issue},
   journal={Asian J. Math.},
   volume={4},
   date={2000},
   number={1},
   pages={173--181},
 }

\bib{KM97}{article}{
   author={Keel, S.},
   author={Mori, S.},
   title={Quotients by groupoids},
   journal={Ann. of Math. (2)},
   volume={145},
   date={1997},
   number={1},
   pages={193--213},
   issn={0003-486X},
 }

\bib{KM98}{book}{
   author={Koll{\'a}r, J.},
   author={Mori, S.},
   title={Birational geometry of algebraic varieties},
   series={Cambridge Tracts in Mathematics},
   volume={134},
   note={With the collaboration of C. H. Clemens and A. Corti;
   Translated from the 1998 Japanese original},
   publisher={Cambridge University Press},
   place={Cambridge},
   date={1998},
   pages={viii+254},
}
	
	\bib{Kol90}{article}{
   author={Koll{\'a}r, J.},
   title={Projectivity of complete moduli},
   journal={J. Differential Geom.},
   volume={32},
   date={1990},
   number={1},
   pages={235--268},
   issn={0022-040X},
   review={\MR{1064874}},
}
\bib{Kol97}{article}{
   author={Koll{\'a}r, J.},
   title={Quotient spaces modulo algebraic groups},
   journal={Ann. of Math. (2)},
   volume={145},
   date={1997},
   number={1},
   pages={33--79},
   issn={0003-486X},
}

	 \bib{Kol08}{article}{
    AUTHOR = {Koll{\'a}r, J.},
     TITLE = {Hulls and Husks},
   journal= {arXiv:0805.0576},
    year = {2008},
      }
    
    \bib{Kol13b}{incollection}{
    AUTHOR = {Koll{\'a}r, J.},
     TITLE = {Moduli of varieties of general type},
 BOOKTITLE = {Handbook of moduli. {V}ol. {II}},
    SERIES = {Adv. Lect. Math. (ALM)},
    VOLUME = {25},
     PAGES = {131--157},
 PUBLISHER = {Int. Press, Somerville, MA},
      YEAR = {2013},
   MRCLASS = {14D20 (14D22)},
  MRNUMBER = {3184176},
MRREVIEWER = {Nicolae Manolache},
}

	 \bib{Kol13}{book}{
    AUTHOR = {Koll{\'a}r, J.},
     TITLE = {Singularities of the minimal model program},
    series= {Cambridge Tracts in Mathematics},
    VOLUME = {200},
      NOTE = {With a collaboration of S{\'a}ndor Kov{\'a}cs},
 PUBLISHER = {Cambridge University Press},
   ADDRESS = {Cambridge},
      YEAR = {2013},
   }
   
   \bib{Kol16}{article}{
  AUTHOR = {Koll{\'a}r, J.},
   title={Maps between local Picard groups},
   journal={Algebr. Geom.},
   volume={3},
   date={2016},
   number={4},
   pages={461--495},
  }

	 \bib{Kol19}{article}{
    AUTHOR = {Koll{\'a}r, J.},
     TITLE = {Families of divisors},
    journal= {arXiv:1910.00937},
       YEAR = {2019},
   }

	 \bib{Kol21}{book}{
    AUTHOR = {Koll{\'a}r, J.},
     TITLE = {Families of varieties of general type},
    series= {Book to appear},
       YEAR = {2021},
   }   
   
  \bib{KP17}{article}{
   author={Kov\'{a}cs, S.},
   author={Patakfalvi, Z.},
   title={Projectivity of the moduli space of stable log-varieties and
   subadditivity of log-Kodaira dimension},
   journal={J. Amer. Math. Soc.},
   volume={30},
   date={2017},
   number={4},
   pages={959--1021},
 } 
   
 \bib{KSC04}{book}{
   author={Koll\'{a}r, J.},
   author={Smith, K.},
   author={Corti, A.},
   title={Rational and nearly rational varieties},
   series={Cambridge Studies in Advanced Mathematics},
   volume={92},
   publisher={Cambridge University Press, Cambridge},
   date={2004},
   pages={vi+235},
}



\bib{Lan75}{article}{
   author={Langton, S.},
   title={Valuative criteria for families of vector bundles on algebraic
   varieties},
   journal={Ann. of Math. (2)},
   volume={101},
   date={1975},
   pages={88--110},
 }

\bib{Laz04}{book}{
   author={Lazarsfeld, R.},
   title={Positivity in algebraic geometry. I, II},
   series={Ergebnisse der Mathematik und ihrer Grenzgebiete. 3. Folge. A
   Series of Modern Surveys in Mathematics},
   volume={48, 49},
      publisher={Springer-Verlag, Berlin},
   date={2004},
   pages={xviii+385},
   }

\bib{Li11}{article}{
   author={Li, C.},
   title={Greatest lower bounds on Ricci curvature for toric Fano manifolds},
   journal={Adv. Math.},
   volume={226},
   date={2011},
   number={6},
   pages={4921--4932},
}

  \bib{Li17}{article}{
  author={Li, C.},
   title={ K-semistability is equivariant volume minimization},
 JOURNAL = {Duke Math. J.},
  FJOURNAL = {Duke Mathematical Journal},
    VOLUME = {166},
      YEAR = {2017},
    NUMBER = {16},
     PAGES = {3147--3218},
      ISSN = {0012-7094},
}

   \bib{Li18}{article}{
  author={Li, C.},
   title={Minimizing normalized volumes of valuations},
   journal={Math. Zeit.},
    VOLUME = {289},
      YEAR = {2018},
    NUMBER = {1-2},
     PAGES = {491--513},
  date={2018},
}

   \bib{Li19}{article}{
  author={Li, C.},
   title={On equivariantly uniform stability and Yau-Tian-Donaldson conjecture for singular Fano varieties},
   journal={arXiv:1907.09399},
  date={2019},
}
  
 \bib{Liu18}{article}{
  author={Liu, Y.},
   TITLE = {The volume of singular {K}\"ahler--{E}instein {F}ano varieties},
   JOURNAL = {Compos. Math.},
  FJOURNAL = {Compositio Mathematica},
    VOLUME = {154},
      YEAR = {2018},
    NUMBER = {6},
     PAGES = {1131--1158},}

 \bib{Liu20}{article}{
  author={Liu, Y.},
   TITLE = {K-stability of cubic fourfolds},
   JOURNAL = {arXiv:2007.14320},
      YEAR = {2020},
 }

 \bib{LL19}{article}{
  author={Li, C.},
  author={Liu, Y.},
   title={ K\"ahler-Einstein metrics and volume minimization},
   JOURNAL = {Adv. Math.},
  FJOURNAL = {Advances in Mathematics},
    VOLUME = {341},
      YEAR = {2019},
     PAGES = {440--492},
}


\bib{LLX18}{article}{
    AUTHOR = {Li, C.}
   AUTHOR={Liu, Y.},
    AUTHOR={Xu, C.},
     TITLE = {A guided tour to normalized volumes},
        conference={
      title={Geometry Analysis, In Honor of Gang Tian's 60th Birthday},
   },
   book={
      series={Progress in Mathematics},
      volume={333},
      publisher={Birkhuser/Springer, Cham},
   },
   date={2020},
   pages={167-219},
 }

\bib{LM09}{article}{
   author={Lazarsfeld, R.},
   author={Musta\c{t}\u{a}, M.},
   title={Convex bodies associated to linear series},
   journal={Ann. Sci. \'{E}c. Norm. Sup\'{e}r. (4)},
   volume={42},
   date={2009},
   number={5},
   pages={783--835},
}

   \bib{LO18}{article}{
  author={Laza, R.},
   author={O'Grady, K.},
   title={GIT versus Baily-Borel compactification for K3's which are double covers of $\mathbb{P}^1\times \mathbb{P}^1$},
   journal={arXiv:1801.04845},
  date={2018},
}

   \bib{LTW19}{article}{
  author={Li, C.},
   author={Tian, G.},
    author={Wang, F.},
   title={The uniform version of Yau-Tian-Donaldson conjecture for singular Fano varieties},
   journal={arXiv:1903.01215 },
  date={2019},
}
  
   \bib{LWX18a}{article}{
  author={Li, C.},
  author={Wang, X.},
  author={Xu, C.}
   title={ Quasi-projectivity of the moduli space of smooth K\"ahler-Einstein Fano manifolds},
   journal={ Ann. Sci. \'Ecole Norm. Sup. },
     VOLUME = {51},
      YEAR = {2018},
    NUMBER = {3},
     PAGES = {739-772},
}

\bib{LWX18}{article}{
author={Li, C.},
author={Wang, X.},
author={Xu, C.},
title={Algebraicity of the metric tangent cones and equivariant K-stability},
journal={arXiv:1805.03393},
year={2018},
}

 \bib{LWX19}{article}{
  author={Li, C.},
  author={Wang, X.},
  author={Xu, C.}
    title={On the proper moduli spaces of smoothable K\"{a}hler-Einstein Fano
   varieties},
   journal={Duke Math. J.},
   volume={168},
   date={2019},
   number={8},
   pages={1387--1459},
}

\bib{LX14}{article}{
    AUTHOR = {Li, C.}
    AUTHOR={Xu, C.},
     TITLE = {Special test configuration and {K}-stability of {F}ano
              varieties},
   JOURNAL = {Ann. of Math. (2)},
  FJOURNAL = {Annals of Mathematics. Second Series},
    VOLUME = {180},
      YEAR = {2014},
    NUMBER = {1},
     PAGES = {197--232},
}

\bib{LX18}{article}{
    AUTHOR = {Li, C.}
    AUTHOR={Xu, C.},
     TITLE = { Stability of Valuations: Higher Rational Rank},
      volume={1},
   date={2018},
  number={1},
   JOURNAL = {Peking Math. J. },
  pages={1-79},
}

\bib{LX20}{article}{
   AUTHOR = {Li, C.}
    AUTHOR={Xu, C.},
   title={Stability of valuations and Koll\'{a}r components},
   journal={J. Eur. Math. Soc. (JEMS)},
   volume={22},
   date={2020},
   number={8},
   pages={2573--2627},
 }

\bib{LX19}{article}{
    AUTHOR = {{Liu}, Y.}
    AUTHOR={Xu, C.},
    title={K-stability of cubic threefolds},
   journal={Duke Math. J.},
   volume={168},
   date={2019},
   number={11},
   pages={2029--2073},
}

\bib{LZ18}{article}{
    AUTHOR = {Liu, Y.}
    AUTHOR={Zhuang, Z.},
   title={Characterization of projective spaces by Seshadri constants},
   journal={Math. Z.},
  volume={289},
   date={2018},
  number={1-2},
   pages={25--38},
}

\bib{LZ19}{article}{
    AUTHOR = {Liu, Y.}
    AUTHOR={Zhuang, Z.},
     title={On the sharpness of Tian's criterion for K-stability},
   journal={to appear in Nagoya Math. J., arXiv:1903.04719},
       YEAR = {2019},
}

\bib{Mat57}{article}{
   AUTHOR = {Matsushima, Y.},
     TITLE = {Sur la structure du groupe d'hom\'eomorphismes analytiques d'une           certaine vari\'et\'e k\"ahl\'erienne},
   JOURNAL = {Nagoya Math. J.},
    VOLUME = {11},
      YEAR = {1957},
     PAGES = {145--150},
 }

\bib{MM81}{article}{
   author={Mori, S.},
   author={Mukai, S.},
   title={Classification of Fano $3$-folds with $B_{2}\geq 2$},
   journal={Manuscripta Math.},
   volume={36},
   date={1981/82},
   number={2},
   pages={147--162},
}

\bib{MM93}{article}{
   author={Mabuchi, T.},
   author={Mukai, S.},
   title={Stability and Einstein-K\"{a}hler metric of a quartic del Pezzo
   surface},
   conference={
      title={Einstein metrics and Yang-Mills connections},
      address={Sanda},
      date={1990},
   },
   book={
      series={Lecture Notes in Pure and Appl. Math.},
      volume={145},
      publisher={Dekker, New York},
   },
   date={1993},
   pages={133--160},
	
}

\bib{New78}{book}{
   author={Newstead, P.},
   title={Introduction to moduli problems and orbit spaces},
   series={Tata Institute of Fundamental Research Lectures on Mathematics
   and Physics},
   volume={51},
   publisher={Tata Institute of Fundamental Research, Bombay; by the Narosa
   Publishing House, New Delhi},
   date={1978},
   pages={vi+183},
}


\bib{Oda12}{article}{
   author={Odaka, Y.},
   title={The Calabi conjecture and K-stability},
   journal={Int. Math. Res. Not. IMRN},
   date={2012},
   number={10},
   pages={2272--2288},
   issn={1073-7928},
}

\bib{Oda12b}{article}{
   author={Odaka, Y.},
   title={On the moduli of Kahler-Einstein Fano manifolds},
   journal={arXiv:1211.4833},
   date={2012},
  }

 \bib{Oda13}{article}{
    AUTHOR = {Odaka, Y.},
     TITLE = {The GIT stability of polarized varieties via discrepancy},
   JOURNAL = {Ann. of Math. (2)},
          VOLUME = {177},
      YEAR = {2013},
      number={2}
     PAGES = {645-661},
   }
      
   \bib{Oda13b}{article}{
    AUTHOR = {Odaka, Y.},
     TITLE = {A generalization of Ross-Thomas' slope theory},
   JOURNAL = {Osaka J. Math.},
     VOLUME = {50},
      YEAR = {2013},
     PAGES = {171-185},
   }

 \bib{Oda15}{article}{
    AUTHOR = {Odaka, Y.},
     TITLE = {Compact moduli spaces of {K}\"ahler-{E}instein {F}ano varieties},
   JOURNAL = {Publ. Res. Inst. Math. Sci.},
  FJOURNAL = {Publications of the Research Institute for Mathematical         Sciences},
    VOLUME = {51},
      YEAR = {2015},
    NUMBER = {3},
     PAGES = {549--565},
 }
 
\bib{OO13}{article}{
   author={Odaka, Y.},
   author={Okada, T.},
   title={Birational superrigidity and slope stability of Fano manifolds},
   journal={Math. Z.},
   volume={275},
   date={2013},
   number={3-4},
   pages={1109--1119},
 } 
 
 \bib{OS12}{article}{
   author={Odaka, Y.},
   author={Sano, Y.},
   title={Alpha invariant and K-stability of $\Bbb Q$-Fano varieties},
   journal={Adv. Math.},
   volume={229},
   date={2012},
   number={5},
   pages={2818--2834},
}

\bib{OSS16}{article}{
   author={Odaka, Y.},
   author={Spotti, C.},
   author={Sun, S.},
   title={Compact moduli spaces of del Pezzo surfaces and K\"{a}hler-Einstein
   metrics},
   journal={J. Differential Geom.},
   volume={102},
   date={2016},
   number={1},
   pages={127--172},
 }
 
\bib{OSY12}{article}{
   author={Ono, H.},
   author={Sano, Y.},
   author={Yotsutani, N.},
   title={An example of an asymptotically Chow unstable manifold with
   constant scalar curvature},
   journal={Ann. Inst. Fourier (Grenoble)},
   volume={62},
   date={2012},
   number={4},
   pages={1265--1287},
}

\bib{OX12}{article}{
   author={Odaka, Y.},
   author={Xu, C.},
   title={Log-canonical models of singular pairs and its applications},
   journal={Math. Res. Lett.},
   volume={19},
   date={2012},
   number={2},
   pages={325--334},
}

  %
  
 \bib{Pos19}{article}{
   author={Posva, Q.},
   title={Positivity of the CM line bundle for K-stable log Fanos},
   journal={arXiv:1910.12623},
   date={2019},
  }

\bib{PT04}{article}{
   author={Paul, S.},
   author={Tian, G.},
   title={Analysis of geometric stability},
   journal={Int. Math. Res. Not.},
   date={2004},
   number={48},
   pages={2555--2591},
 }

\bib{PT09}{article}{
   author={Paul, S.},
   author={Tian, G.},
   title={CM stability and the generalized Futaki invariant II},
    journal={Ast\'{e}risque},
   number={328},
   date={2009},
   pages={339--354 (2010)},
 }
 
 \bib{PW18}{article}{
   author={Park, J.},
   author={Won, J.},
   title={K-stability of smooth del Pezzo surfaces},
   journal={Math. Ann.},
   volume={372},
   date={2018},
   number={3-4},
   pages={1239--1276},
  }

\bib{PX17}{article}{
   author={Patakfalvi, Z.},
   author={Xu, C.},
   title={Ampleness of the CM line bundle on the moduli space of canonically
   polarized varieties},
   journal={Algebr. Geom.},
   volume={4},
   date={2017},
   number={1},
     pages={29--39},
 }  

  \bib{RT07}{article}{
    AUTHOR = {Ross, J.},
    AUTHOR = {Thomas, R.},
     TITLE = {A study of the Hilbert-Mumford
criterion for the stability of projective varieties},
   JOURNAL = { J. Algebraic
Geom},
    VOLUME = {16},
      YEAR = {2007},
     PAGES = {201-255},
}

\bib{Rub08}{article}{
   author={Rubinstein, Y.},
   title={Some discretizations of geometric evolution equations and the
   Ricci iteration on the space of K\"{a}hler metrics},
   journal={Adv. Math.},
   volume={218},
   date={2008},
   number={5},
   pages={1526--1565},
} 

\bib{Rub09}{article}{
   author={Rubinstein, Y.},
   title={On the construction of Nadel multiplier ideal sheaves and the
   limiting behavior of the Ricci flow},
   journal={Trans. Amer. Math. Soc.},
   volume={361},
   date={2009},
   number={11},
   pages={5839--5850},
 }
 
 \bib{Sho92}{article}{
   author={Shokurov, V. V.},
   title={Three-dimensional log perestroikas},
   language={Russian},
   journal={Izv. Ross. Akad. Nauk Ser. Mat.},
   volume={56},
   date={1992},
   number={1},
   pages={105--203},
   issn={1607-0046},
 }

\bib{Sho00}{article}{
   author={Shokurov, V. V.},
   title={Complements on surfaces},
   note={Algebraic geometry, 10},
   journal={J. Math. Sci. (New York)},
   volume={102},
   date={2000},
   number={2},
   pages={3876--3932},
   issn={1072-3374},
  }

\bib{SS17}{article}{
   author={Spotti, C.},
   author={Sun, S.},
   title={Explicit Gromov-Hausdorff compactifications of moduli spaces of
   K\"{a}hler-Einstein Fano manifolds},
   journal={Pure Appl. Math. Q.},
   volume={13},
   date={2017},
   number={3},
   pages={477--515},
 }

\bib{SSY16}{article}{
    AUTHOR = {Spotti, C.}
    AUTHOR ={Sun, S.} 
    AUTHOR={ Yao, C.},
     TITLE = {Existence and deformations of {K}\"ahler-{E}instein metrics on
              smoothable {$\Bbb{Q}$}-{F}ano varieties},
   JOURNAL = {Duke Math. J.},
  FJOURNAL = {Duke Mathematical Journal},
    VOLUME = {165},
      YEAR = {2016},
    NUMBER = {16},
     PAGES = {3043--3083},
}

\bib{SW16}{article}{
   author={Song, J.},
   author={Wang, X.},
   title={The greatest Ricci lower bound, conical Einstein metrics and Chern
   number inequality},
   journal={Geom. Topol.},
   volume={20},
   date={2016},
   number={1},
   pages={49--102},
 }

\bib{SZ19}{article}{
    AUTHOR = {Stibitz, C.}
    AUTHOR={Zhuang, Z.},
     TITLE = { K-stability of birationally superrigid Fano varieties },
    title={K-stability of birationally superrigid Fano varieties},
   journal={Compos. Math.},
   volume={155},
   date={2019},
   number={9},
   pages={1845--1852},
}

 \bib{Sze11}{article}{
   author={Sz\'ekelyhidi, G.},
   title={Greatest lower bounds on the Ricci curvature of Fano manifolds},
   journal={Compos. Math.},
   volume={147},
   date={2011},
   number={1},
   pages={319--331},
  }
	
 \bib{Sze15}{article}{
    AUTHOR = {Sz\'ekelyhidi, G.},
     TITLE = {Filtrations and test-configurations},
      NOTE = {With an appendix by Sebastien Boucksom},
   JOURNAL = {Math. Ann.},
  FJOURNAL = {Mathematische Annalen},
    VOLUME = {362},
      YEAR = {2015},
    NUMBER = {1-2},
     PAGES = {451--484},
  }

\bib{Sze16}{article}{
   author={Sz\'{e}kelyhidi, G\'{a}bor},
   title={The partial $C^0$-estimate along the continuity method},
   journal={J. Amer. Math. Soc.},
   volume={29},
   date={2016},
   number={2},
   pages={537--560},
}

\bib{Sze18}{article}{
   author={Sz\'ekelyhidi, G.},
   title={K\"{a}hler-Einstein metrics},
   conference={
      title={Modern geometry: a celebration of the work of Simon Donaldson},
   },
   book={
      series={Proc. Sympos. Pure Math.},
      volume={99},
      publisher={Amer. Math. Soc., Providence, RI},
   },
   date={2018},
   pages={331--361},
}

\bib{Tia87}{article}{
   author={Tian, G.},
   title={On K\"{a}hler-Einstein metrics on certain K\"{a}hler manifolds with
   $C_1(M)>0$},
   journal={Invent. Math.},
   volume={89},
   date={1987},
   number={2},
   pages={225--246},
  }

\bib{Tia90}{article}{
   author={Tian, G.},
   title={On Calabi's conjecture for complex surfaces with positive first
   Chern class},
   journal={Invent. Math.},
   volume={101},
   date={1990},
   number={1},
   pages={101--172},
}

\bib{Tia92}{article}{
   author={Tian, G.},
   title={On stability of the tangent bundles of Fano varieties},
   journal={Internat. J. Math.},
   volume={3},
   date={1992},
   number={3},
   pages={401--413},
 }
 
\bib{Tia97}{article}{
    AUTHOR = {Tian, G.},
     TITLE = {K\"ahler-{E}instein metrics with positive scalar curvature},
   JOURNAL = {Invent. Math.},
  FJOURNAL = {Inventiones Mathematicae},
    VOLUME = {130},
      YEAR = {1997},
    NUMBER = {1},
     PAGES = {1--37},
}
	
\bib{Tia00}{book}{
   author={Tian, G.},
   title={Canonical metrics in K\"{a}hler geometry},
   series={Lectures in Mathematics ETH Z\"{u}rich},
   publisher={Birkh\"{a}user Verlag, Basel},
   date={2000},
   pages={vi+101},
}	

\bib{Tia15}{article}{
    AUTHOR = {Tian, G.},
     TITLE = {K-stability and {K}\"ahler-{E}instein metrics},
   JOURNAL = {Comm. Pure Appl. Math.},
  FJOURNAL = {Communications on Pure and Applied Mathematics},
    VOLUME = {68},
      YEAR = {2015},
    NUMBER = {7},
     PAGES = {1085--1156},
      }

 \bib{Wan12}{article}{
    AUTHOR = {Wang, X.}
     TITLE = {Height and GIT weight},
   JOURNAL = {Math. Res. Lett.},
    VOLUME = {19},
      YEAR = {2012},
    NUMBER = {4},
     PAGES = {909-926},
}
\bib{Nys12}{article}{
   author={Witt Nystr\"{o}m, D.},
   title={Test configurations and Okounkov bodies},
   journal={Compos. Math.},
   volume={148},
   date={2012},
   number={6},
   pages={1736--1756},
}

\bib{Xu19}{article}{
    AUTHOR = {Xu, C.}
     TITLE = {A minimizing valuation is quasi-monomial},
   JOURNAL = {Annals of Math.},
    VOLUME = {191},
         NUMBER = {3},
     PAGES = {1003-1030},
       YEAR = {2020},
 }

\bib{Xu20}{article}{
    AUTHOR = {Xu, C.}
     TITLE = {Toward finite generation of higher rational rank valuations},
   JOURNAL = {to appear in  Mat. Sb., Shokurov 70th Birthday Volume,  arXiv:2010.15093},
       YEAR = {2020},
 }

\bib{XZ19}{article}{
    AUTHOR = {Xu, C.}
        AUTHOR = {Zhuang, Z.}
     TITLE = {On positivity of the CM line bundle on K-moduli spaces},
   title={On positivity of the CM line bundle on K-moduli spaces},
   journal={Ann. of Math. (2)},
   volume={192},
   date={2020},
   number={3},
   pages={1005--1068},
  }

\bib{XZ20}{article}{
    AUTHOR = {Xu, C.}
        AUTHOR = {Zhuang, Z.}
     TITLE = {Uniqueness of the minimizer of the normalized volume function},
   JOURNAL = {arXiv:2005.08303 },
       YEAR = {2020},
 }


\bib{Zhu20a}{article}{
 AUTHOR = {Zhuang, Z.}
   title={Product theorem for K-stability},
   journal={Adv. Math.},
   volume={371},
   date={2020},
   pages={107250, 18},
   issn={0001-8708},
} 

\bib{Zhu20b}{article}{
   author={Zhuang, Z.},
   title={Birational superrigidity and K-stability of Fano complete
   intersections of index $1$},
   journal={Duke Math. J.},
   volume={169},
   date={2020},
   number={12},
   pages={2205--2229},
 }

 \bib{Zhu20}{article}{
    AUTHOR = {Zhuang, Z.}
     TITLE = {Optimal destabilizing centers and equivariant K-stability},
   JOURNAL = {arXiv:2004.09413},
       YEAR = {2020},
 }

\end{biblist}
\end{bibdiv}
\bigskip

\end{document}